\newcommand\extrafootertext[1]{%
    \bgroup
    \renewcommand\thefootnote{\fnsymbol{footnote}}%
    \renewcommand\thempfootnote{\fnsymbol{mpfootnote}}%
    \footnotetext[0]{#1}%
    \egroup
}
\DeclareMathOperator{\Ker}{Ker}
\DeclareMathOperator{\End}{End}
\DeclareMathOperator{\Aut}{Aut}
\DeclareMathOperator{\id}{id}
\DeclareMathOperator{\im}{Im}
\DeclareMathOperator{\spec}{Spec}
\DeclareMathOperator{\Spec}{Spec}
\DeclareMathOperator{\ord}{ord}
\DeclareMathOperator{\GL}{\operatorname{GL}}
\DeclareMathOperator{\Fr}{Fr}
\DeclareMathOperator{\Gal}{Gal}
\newcommand{\F}{\mathbb{F}}
\newcommand{\zee}{\mathbb{Z}}
\newcommand{\N}{\mathbb{N}}
\newcommand{\defeq}{:=}
\newcommand{\restricts}[2]{#1|_{#2}}
\newcommand{\fin}{\operatorname{fin}}
\newcommand{\Q}{\mathbb{Q}}
\theoremstyle{definition}
\newtheorem{lemma}{Lemma}
\newtheorem{proposition}[lemma]{Proposition}
\newtheorem{conjecture}[lemma]{Conjecture}
\newtheorem{theorem}[lemma]{Theorem}
\newtheorem{example}[lemma]{Example}
\newtheorem{corollary}[lemma]{Corollary}
\newtheorem{remark}[lemma]{Remark}
\newtheorem*{remark*}{Remark}
\newtheorem*{theorem*}{Theorem}
\numberwithin{lemma}{section}
\theoremstyle{definition}
\newtheorem{definition}[lemma]{Definition}
\definecolor{orange}{rgb}{1,0.5,0}
\definecolor{random}{rgb}{0.7,0,0}
\newcommand{\TheField}{K}
\newcommand{\TheFieldd}{k}
\newcommand{\fgField}{k}
\newcommand{\Cov}{\operatorname{Cov}}
\title{On the distribution of rational points on ramified covers of abelian varieties}
\author{Pietro Corvaja, Julian Lawrence Demeio, Ariyan Javanpeykar, Davide Lombardo, Umberto Zannier}
 \date{}
\begin{document}

	\maketitle
	
	\begin{abstract} 
	We prove   new results on the distribution of rational points on ramified covers of abelian varieties over finitely generated fields $k$ of characteristic zero. 
	For example, given a ramified cover $\pi : X \to A$, where $A$ is an abelian variety  over $k$ with a dense set of $k$-rational points, we prove that there is a finite-index coset $C \subset A(k)$ such that $\pi(X(k))$ is disjoint from $C$.


Our results do not seem   to be in the range of other methods available at present; they confirm predictions coming from Lang's conjectures on rational points, and also go in the direction of an issue raised by Serre regarding possible applications to the Inverse Galois Problem.  Finally, the conclusions of our work may be seen as a sharp version of Hilbert's irreducibility theorem   for abelian varieties.
	\end{abstract}
	
\extrafootertext{\textit{Keywords.} Hilbert's irreducibility theorem, Kummer theory, Chebotarev density theorems, abelian varieties, Campana's conjectures, Lang's conjectures, rational points, ramified covers.}
\extrafootertext{\textit{2010 Mathematics Subject Classification.} 14G05, 11G35, 11G10, 14K15.
}	
	
	
	\section{Introduction}  	 
Let $X$ be a ramified cover of an abelian variety over a number field $K$, i.e., there is a finite surjective non-\'etale morphism $X\to A$ with $X$ normal and irreducible.  The aim of this paper is to prove  novel results on the distribution of rational points on $X$.

In this work we are guided by Lang's conjectures on varieties of general type, and by a question of Serre on covers of abelian varieties. By Kawamata's structure result for ramified covers of abelian varieties  \cite{KawamataChar}, for every \emph{ramified} cover $X\to A$ of an abelian variety $A$ over a number field $K$, there is a finite field extension $L/K$ and  finite \'etale cover $X'\to X_L$ such that $X'$ dominates a positive-dimensional variety of general type over $L$. Assuming Lang's conjecture \cite{JBook, Lang2}, it follows that the $K$-rational points  $X(K)$  are \textbf{not}  Zariski-dense in $X$. Despite Faltings' deep finiteness theorems for integral (and rational) points on subvarieties of abelian varieties \cite{FaltingsLang}, we are far from proving such desired non-density results for rational points on ramified covers of abelian varieties (even in dimension two) over number fields.

  Lang's aforementioned conjecture  predicts that, for any abelian variety $A$ over a number field $K$  with $A(K)$ dense and any ramified cover $\pi:X\to A$, the set  $A(K)\setminus \pi(X(K))$ is again dense.   Motivated by the inverse Galois problem, Serre also raised a related question \cite[§5.4, Problem]{Serre}, a positive answer to which would lead to the same conclusion. 
	Our main results stated below (see Theorem  \ref{thm2} and Theorem \ref{thm:new_main_intro}) confirm the   density  of  $A(K)\setminus \pi(X(K))$ unconditionally for all abelian varieties, and thereby provide novel evidence for Lang's conjectures on rational points and   answer in part Serre's question. 

We stress that there are no prior results concerning the distribution of rational points on ramified covers of abelian varieties, except for some very special cases, and the same goes for integral points on  ramified covers of $\mathbb{G}_m^n$. Our current work provides the \emph{first} step towards understanding the Diophantine properties of ramified covers of abelian varieties.

The conclusion of our main theorem that $A(K)\setminus \pi(X(K))$ is dense bears a strong resemblance to the geometric formulation of Hilbert's irreducibility theorem. In fact,  this point of view is quite fruitful and key to many of the results of this paper.

	In its simplest form, Hilbert's irreducibility theorem  states that, given a number field $\TheField$ and an irreducible polynomial $p(t,x_1,\ldots,x_s)$ with coefficients in $\TheField$, there exist infinitely many $a \in \TheField$ such that the specialization $p(a,x_1,\ldots,x_s)$ is an irreducible polynomial in the variables $x_1,\ldots,x_s$. A similar statement applies to the simultaneous specialization of finitely many irreducible polynomials.
	
	Following the work of several authors, Hilbert's irreducibility theorem has been recast in geometric language as follows.  
	An irreducible polynomial $p(t,x)$ defines in a natural way an irreducible cover (that is, a finite surjective morphism) of an open subvariety of $\mathbb{P}_1$ over $\TheField$: this is obtained simply by projecting the zero locus of $p$ on the $t$-line. Hilbert's irreducibility theorem can then be interpreted as stating that `many' fibres of this cover are irreducible over $\TheField$ and hence, in particular, that they have no $\TheField$-rational points; in turn, this is also equivalent to `many' rational points of $\mathbb{P}_1$ not being the image of rational points in the zero locus of $p$. Similar constructions can be made in the case of several variables, and this point of view is encapsulated in the following definition of Serre (for this and much more about the Hilbert property we refer the reader to the books of Serre \cite[Chapter 3]{Serre} and Fried and Jarden \cite{MR2445111}):

	
	\begin{definition}
		An integral, quasi-projective variety $X$ over a field $\fgField$ satisfies 
		the \emph{Hilbert property over $\fgField$} if, for every 
		finite collection of  finite surjective morphisms $(\pi_i:Y_i\to X)_{i=1}^n$  with $Y_i$ a normal (integral) variety over $\fgField$ and $\deg \pi_i \geq 2$, the set $X(\fgField)\setminus \cup_{i=1}^n \pi_i(Y_i(\fgField))$ is dense in $X$.  
	\end{definition}


	In this language, Hilbert's irreducibility theorem says that every projective space over a number field $\TheField$ has the Hilbert property. Since the Hilbert property is a birational invariant, it follows that every rational variety over $\TheField$ has the Hilbert property. 
	

	The arithmetic nature of the ground field is of course essential for the truth of Hilbert's irreducibility theorem, and a field $\fgField$ is called \textit{Hilbertian} if $\mathbb{P}_1$ has the Hilbert property over $\fgField$ (or equivalently, if $\mathbb{P}_n$ has the Hilbert property over $k$ for every $n$).  
	In this paper we focus on the case of finitely generated fields of characteristic 0, which are known to be Hilbertian.

Clearly not every variety has the Hilbert property: for example, in \cite{CZHP} it is shown that -- as a consequence of the Chevalley-Weil theorem -- a smooth proper variety over a number field with the Hilbert property is geometrically simply-connected.   
Nonetheless, the literature is by now very rich with examples of varieties with the Hilbert property. Several proofs exist for the case of projective space (see for example \cite[Section 9.6]{MR2216774}, \cite[Chapter 11]{MR2445111}, \cite[Section 4.4]{MR1770638}, \cite[Chapter 9]{MR1757192}, \cite[Chapter 3]{Serre}, \cite{MR1405612}). Note that connected reductive algebraic groups have been shown in \cite{MR878473} to have the Hilbert property over Hilbertian fields; notice that these varieties are rational over $\overline{\fgField}$, but not necessarily over $\fgField$). Some sporadic examples of varieties with the Hilbert property are also known (such as the K3 surface $x^4+y^4 = z^4+w^4 \subset \mathbb{P}_{3,\mathbb{Q}}$, which is not even geometrically unirational \cite[Theorem 1.4]{CZHP}; see also \cite{Julian2} and \cite{MR4093384}). 
	
	Hilbert's irreducibility theorem and its extensions have led to a number of   applications (see, for example, \cite{DvornZann,ZannierPisot}), typically motivated by the following observation: if a certain desired object can be realized \textit{geometrically}, and if the base space of the geometric construction has the Hilbert property, then the situation can be specialized to an \textit{arithmetic} one. The point is of course that various tools are available on the geometric side that might make certain constructions more natural than they would be if they were carried out directly on the arithmetic side.
	Concretely, this has been implemented for example by Néron \cite{MR56951}, who used this idea to construct elliptic curves defined over $\mathbb{Q}$ of large rank. By far the most important such application is to the inverse Galois problem (this was also Hilbert's original motivation): if a group $G$ can be realized as the Galois group of a cover $Y \to X$ defined over $\mathbb{Q}$, with $X$ having the Hilbert property, then   $G$ is a Galois group over $\mathbb{Q}$. Emmy Noether famously used this idea to prove that $S_n$ is a Galois group over $\mathbb{Q}$ for all $n$.

	Given the significance of the Hilbert property for number-theoretical questions, one would wish to extend Hilbert's theorem to other classes of varieties besides the rational ones. However, there are serious restrictions to possible generalizations of the classical results, starting with the aforementioned fact that the Hilbert property can only hold for geometrically simply-connected varieties \cite{CZHP}.   
	
	Nevertheless, it is conjectured that, for a  variety $X$ over a finitely generated field $k$ of characteristic zero with $X(\fgField)$ Zariski-dense, the only obstruction to the Hilbert property should indeed come from unramified covers (see Conjecture \ref{conj} below for a precise statement). Further taking into account that the original version of Hilbert's irreducibility theorem does not need to contend with any unramified covers at all (since $\mathbb{P}_n$ is geometrically simply connected), it seems natural to rule out such covers at the outset. This leads one  to the following notion:

	\begin{definition}[Corvaja-Zannier] \label{def}  A smooth proper variety $X$ over a   field $\fgField$ has the \emph{weak-Hilbert property over $\fgField$} if, for all finite collections of finite surjective \textbf{ramified} morphisms $(\pi_i:Y_i\to X)_{i=1}^n$ with each $Y_i$ an integral normal variety over $\fgField$, the set
		$
		X(\fgField) \setminus \cup_{i=1}^n \pi_i(Y_i(\fgField))
		$ is Zariski-dense in $X$.
	\end{definition}

%
%
	
	Note that we restrict ourselves to finite surjective \emph{morphisms}, as opposed to Corvaja-Zannier who consider dominant \emph{rational maps} of finite degree in their definition of the weak-Hilbert property (see \cite[\S 2.2]{CZHP}).  By a standard argument using Stein factorization, one can easily show that these two definitions are    equivalent (see, for example, the proof of Theorem \ref{thm:birrrr}).  In addition, we stress that the weak-Hilbert property is a birational invariant amongst smooth proper varieties (see Proposition \ref{prop:BirationalInvariance}). As a consequence, all our results concerning this property automatically propagate between smooth proper varieties within the same birational equivalence class.
	
The weak-Hilbert property can be applied to the inverse Galois problem similarly as the classical Hilbert property; see  Remark \ref{remark:inverse_galois}. It is studied (using some of  the results of this paper) in \cite{BSetal, JNef, GvirtzChenMezzedimi}.
	
	Our main (though not only) focus in this paper is the weak-Hilbert property for abelian varieties over finitely generated fields of characteristic 0:

	\begin{theorem}\label{thm2}
		Let $\fgField$ be a finitely generated field of characteristic 0, let $A$ be an abelian variety over $\fgField$,  let $\Omega\subset A(\fgField)$ be a Zariski-dense subgroup, and let $(\pi_i:Y_i \to A)_{i=1}^n$ be a finite collection of ramified covers with each $Y_i$ a normal integral variety. Then, there is a finite index coset $C\subset \Omega$ such that, for every $c$ in $C$ and every $i=1,\ldots, n$, the scheme $Y_{i,c}$ has no $\fgField$-points. In particular, if $A(k)$ is dense, then $A$ has the weak-Hilbert property over $k$.
	\end{theorem}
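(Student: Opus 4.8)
The plan is to reformulate the statement as a largeness assertion for the Galois action on the fibres of a single auxiliary cover, to use Kawamata's structure theorem \cite{KawamataChar} to reduce that largeness to a statement about the division points of $A$, and to settle the latter via Kummer theory for abelian varieties over finitely generated fields together with a Chebotarev-style equidistribution argument.

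\textbf{Step 1: reduction to one Galois cover and reformulation.} I would first replace the family $(\pi_i)_{i=1}^n$ by a single cover: let $\pi\colon Y\to A$ be a Galois cover over $k$ dominating all the $Y_i$, obtained as the normalisation of $A$ in a Galois closure of the compositum of the function fields $k(Y_i)$. Since at least one $\pi_i$ is ramified, $\pi$ is ramified, with branch locus $\Delta\subsetneq A$; write $G=\Gal(k(Y)/k(A))$, $H_i=\Gal(k(Y)/k(Y_i))\lneq G$, and let $G^{\circ}\trianglelefteq G$ be the geometric monodromy group, so $G/G^{\circ}=\Gal(k'/k)$ for the field of constants $k'$ of $Y$. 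For $c\in A(k)\setminus\Delta$ the fibre $\pi^{-1}(c)$ is a $G$-torsor over $k$, giving a Frobenius class $\rho_c\colon\Gal(\overline k/k)\to G$ defined up to conjugacy, and $Y_{i,c}(k)\neq\varnothing$ if and only if $\rho_c(\Gal(\overline k/k))$ is contained in a conjugate of $H_i$. Since $H_i\lneq G$, it therefore suffices to produce a finite-index coset $C\subset\Omega$, disjoint from $\Delta$ (harmless, as $\Omega$ is Zariski-dense and $\Delta$ is a proper closed subset), along which $\rho_c$ is \emph{surjective}; and as $\rho_c$ always surjects onto $G/G^{\circ}$, this amounts to forcing $\rho_c(\Gal(\overline k/k))\cap G^{\circ}=G^{\circ}$, i.e.\ realising the full geometric monodromy in the fibre over $c$ — a strong form of Hilbert irreducibility for $A$, but now only for ramified covers.

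\textbf{Step 2: from ramification to the division points of $A$ (the crux).} Here I would invoke Kawamata's structure theorem \cite{KawamataChar} together with the standard toolkit — passing to an isogenous abelian variety, to a smooth proper birational model (permissible by Proposition~\ref{prop:BirationalInvariance}), and to a fixed finite extension of $k$ (permissible by Chevalley--Weil, which converts the étale covers supplied by Kawamata into a controlled finite base-field extension) — in order to reduce the problem to a situation in which $G^{\circ}$ and the action of $\rho_c$ are governed by the torsion of $A$. Concretely, after pulling $\pi$ back along a suitable multiplication map $[m]\colon A\to A$ one wants to express the non-liftability of a rational point $c$ along $\pi$ in terms of the rationality over $k$ of certain $m$-division points, and hence in terms of the Kummer theory of $A$; the essential point is that \emph{ramification} is exactly what prevents $G^{\circ}$ from being ``invisible to torsion'' — an étale cover of $A$ is an isogeny, for which the analogue of the theorem fails because of torsion translates, whereas a ramified cover always contributes a part of $G^{\circ}$ not detected by any torsion translate. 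I expect this to be the main obstacle: one must make this comparison precise and keep simultaneous track of $G^{\circ}$, the subgroups $H_i$, the field of constants, and the compatibility of rational-point lifting under the intermediate covers introduced.

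\textbf{Step 3: Kummer theory, Chebotarev, and conclusion.} With the comparison in hand, the decisive arithmetic input is a quantitative Kummer-theory statement over the finitely generated field $k$: for the Zariski-dense subgroup $\Omega\subset A(k)$, the Galois group of $k(A[m],\tfrac1m\Omega)$ over $k(A[m])$ is as large as the rank of $\Omega$ allows, with the relevant uniformity. This follows from the openness of the $\ell$-adic Kummer maps attached to abelian varieties over finitely generated fields of characteristic zero, combining Faltings' results on $\ell$-adic Galois images with the Kummer theory of Bashmakov, Ribet, Bertrand and Hindry and its extensions to finitely generated base fields. A Chebotarev/equidistribution argument inside the finite extension $k(A[m],\tfrac1m\Omega)/k$ then yields a finite-index coset of $\Omega$ — of the shape $t+m\Omega_0$ for a suitable finite-index subgroup $\Omega_0\leq\Omega$ and a class $t$ not lying in $m\Omega$ — along which the relevant Kummer classes are maximally non-degenerate; intersecting this with the finitely many congruence conditions coming from the residual (still ramified) cover and from $k'$, one gets a finite-index coset $C\subset\Omega$ along which $\rho_c$ is surjective onto $G$, so that $Y_{i,c}(k)=\varnothing$ for all $i$ and all $c\in C$. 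Finally one undoes the reductions of Step~2: the birational modification is invisible by Proposition~\ref{prop:BirationalInvariance}, and the finite extension of $k$ is removed by a restriction-of-scalars/Chevalley--Weil bookkeeping that preserves the property of being a finite-index coset. The last assertion follows by taking $\Omega=A(k)$, since a finite-index coset of a Zariski-dense subgroup of $A$ is again Zariski-dense (as $A$ is irreducible).
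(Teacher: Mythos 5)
Your proposal correctly identifies the general shape of the argument (reduce to a single Galois cover, translate ``$Y_{i,c}(k)=\emptyset$'' into a condition on the decomposition group of $c$, and supply that condition via Kummer theory for $A$ plus a Chebotarev-type argument), but the central step is missing rather than merely sketched. In Step 2 you write that one must ``make this comparison precise'' and that you ``expect this to be the main obstacle'' --- this is precisely where the content of the theorem lies, and the tool you name, Kawamata's structure theorem, does not enter the actual proof at all (in the paper it only motivates the statement via Lang's conjecture). The mechanism that actually works is different: one first reduces to covers with no non-trivial \'etale subcovers (the (PB) condition), for which the pullback $[m]^*Y$ stays irreducible for all $m$; one then reduces mod a suitable prime and applies the \emph{geometric} Chebotarev theorem over the finite residue field to produce a torsion point $\zeta$ of order prime to a controlled bad set whose Frobenius acts without fixed points on the fibre; and finally one uses Ribet--Hindry--Bertrand Kummer theory together with Pink's formula for the $\ell$-part of the reductions of a non-degenerate point to find a place $v$ and a finite-index coset of $\Omega$ all of whose elements reduce to $\zeta$ modulo $v$. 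None of this is replaced by anything concrete in your Step 2, and the heuristic ``ramification contributes a part of $G^{\circ}$ not detected by any torsion translate'' is not a proof.

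Two further points would derail the argument as written even if Step 2 were filled in. First, your stated goal --- surjectivity of $\rho_c$ onto all of $G$ along a finite-index coset --- is in general unachievable: if the Galois cover factors through an isogeny $B\to A$ with rational kernel, then a finite-index subgroup of $\Omega$ lies in the image of $B(k)$, and for such $c$ the decomposition group is contained in a proper subgroup of $G$. This is exactly why the paper proves the reduction from ramified covers to (PB)-covers separately (handling the translates of the fibres of the isogeny one at a time), and why it only aims for a decomposition group acting without fixed points on each $G/H_i$, not for surjectivity; your passing remark about ``congruence conditions coming from the residual cover'' does not address this. Second, the Kummer-theoretic input you invoke ``with the relevant uniformity'' for an arbitrary finitely generated Zariski-dense $\Omega$ is not available off the shelf: the Ribet--Hindry--Bertrand bounds concern a single non-degenerate point, and the paper has to work to reduce the general case to cyclic $\Omega$ on simple factors, via an isogeny decomposition and a product theorem for vertically ramified covers (plus a specialization argument to descend from finitely generated fields to number fields, where Mordell--Weil and the adelic local arguments apply). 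As it stands, the proposal is a plausible roadmap whose decisive steps are asserted rather than proved.
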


We   prove a stronger conclusion on the scheme-theoretic fibres $Y_{i,c}$, assuming however in addition that  the ramified covers $\pi_i:Y_i\to A$ do not have any non-trivial \'etale subcovers. Here, we say that   $Y_i\to A$ has no non-trivial \'etale subcovers if, for every cover $X_i\to A$ of degree $>1$ such that $Y_i\to A$ factors over $X_i\to A$, we have that $X_i\to A$ is ramified. Note that such a covering $Y_i\to A$ is ramified if it is of degree $>1$. Indeed, otherwise $Y_i\to A$ would be an \'etale subcover of itself.  
	
	\begin{theorem}\label{thm:new_main_intro}
		Let $\fgField$ be a finitely generated field of characteristic 0, let $A$ be an abelian variety over $\fgField$,  let $\Omega\subset A(\fgField)$ be a Zariski-dense subgroup and, for $i=1,\ldots, n$,  let $Y_i$ be a normal integral variety, and let $\pi_i:Y_i \to A$ be a finite surjective morphism with no non-trivial \'etale subcovers. Then, there is a finite index coset $C\subset \Omega$ such that, for every $c$ in $C$ and every $i=1,\ldots, n$, the $k$-scheme $Y_{i,c}$ is integral.
	\end{theorem}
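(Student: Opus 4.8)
The plan is to deduce the statement from Theorem~\ref{thm2} via Galois theory together with the Mordell--Lang theorem; the hypothesis that no $\pi_i$ has a nontrivial \'etale subcover will be exactly what guarantees that the auxiliary covers fed into Theorem~\ref{thm2} are ramified. First I would make two harmless reductions. Since $\Omega\subseteq A(k)\subseteq A(L)$ for every finite extension $L/k$, a finite-index coset of $\Omega$ inside $A(L)$ still lies in $A(k)$, and a finite $k$-scheme is integral as soon as its base change to $L$ is; hence I may enlarge $k$ at will and assume that $A$, each $Y_i$, and the Galois closure $\widetilde Y_i\to A$ of $\pi_i$ are defined over $k$ and geometrically integral. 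Write $G_i=\Gal(\widetilde Y_i/A)$, let $H_i\le G_i$ be the subgroup with $\widetilde Y_i/H_i=Y_i$, and let $N_i\trianglelefteq G_i$ be the normal subgroup generated by the inertia subgroups of $\widetilde Y_i\to A$, so that $\widetilde Y_i/G'\to A$ is unramified if and only if $N_i\subseteq G'$. The hypothesis on $\pi_i$ is then precisely the equality $H_iN_i=G_i$, and its key consequence is: whenever $G'\le G_i$ satisfies $H_iG'\neq G_i$ (call such a $G'$ \emph{bad}) one has $G'\neq G_i$ and, since $N_i\subseteq G'$ would force $H_iG'\supseteq H_iN_i=G_i$, also $N_i\not\subseteq G'$; thus $\widetilde Y_i/G'\to A$ is a ramified cover of $A$ of degree $\ge 2$ with normal integral source. (This is where ``no \'etale subcover'' is indispensable and cannot be weakened: an \'etale cover can have non-integral fibres over an entire finite-index coset --- e.g.\ $[2]\colon A\to A$ over $2A(k)\subseteq A(k)$ --- which is exactly why Theorem~\ref{thm2} asserts only the absence of rational points.)

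Next I would let $\mathcal C$ be the finite collection of all covers $\widetilde Y_i/G'\to A$ with $1\le i\le n$, $G'\le G_i$, $G'\neq G_i$, and $N_i\not\subseteq G'$; by the previous paragraph these are ramified covers of $A$ of degree $\ge 2$ from normal integral varieties, and $\mathcal C$ contains $\widetilde Y_i/G'$ for every bad $G'$. Applying Theorem~\ref{thm2} to $\mathcal C$ and the Zariski-dense subgroup $\Omega$ produces a finite-index coset $C_0\subseteq\Omega$ such that $(\widetilde Y_i/G')_c$ has no $k$-point for every $c\in C_0$ and every member $\widetilde Y_i/G'\to A$ of $\mathcal C$. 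It remains to pass to a sub-coset avoiding the branch loci, which is needed so that fibres over its points are reduced: let $B\subsetneq A$ be the (proper, closed) union of the branch loci of the $\widetilde Y_i\to A$. Since $A(k)$, hence $\Omega$, is finitely generated (Mordell--Weil over finitely generated fields), the Mordell--Lang theorem (Faltings, Vojta) confines $B\cap\Omega$ to finitely many cosets of infinite-index subgroups of $\Omega$; choosing a finite-index subgroup $\Omega_0\le\Omega$ large enough that, modulo $\Omega_0$, these finitely many cosets occupy strictly less than $\Omega/\Omega_0$ (possible since every relevant quotient of $\Omega$ is infinite and finitely generated), one obtains a finite-index coset $C\subseteq C_0$ disjoint from $B$.

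Finally I would check that $C$ works. Fix $c\in C$ and $i$. Since $c\notin B$, the cover $\widetilde Y_i\to A$ is \'etale over $c$, so $\widetilde Y_{i,c}$ is a $G_i$-torsor over $k$; let $\Gamma_{i,c}\le G_i$ be the image of the Galois action on it (defined up to conjugacy). Then $\widetilde Y_{i,c}$ --- and hence its quotient $Y_{i,c}=\widetilde Y_{i,c}/H_i$ --- is reduced, and $Y_{i,c}$ is integral if and only if $\Gamma_{i,c}$ acts transitively on $G_i/H_i$, i.e.\ if and only if $H_i\Gamma_{i,c}=G_i$, i.e.\ exactly when $\Gamma_{i,c}$ is \emph{not} bad. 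Suppose for contradiction that $\Gamma_{i,c}$ is bad. Then $\widetilde Y_i/\Gamma_{i,c}\to A$ lies in $\mathcal C$, yet $(\widetilde Y_i/\Gamma_{i,c})_c=\widetilde Y_{i,c}/\Gamma_{i,c}$ has a $k$-rational point: since $\Gamma_{i,c}$ is by definition the image of the Galois action on the torsor $\widetilde Y_{i,c}$, the $\Gamma_{i,c}$-orbit of any geometric point of the torsor equals its Galois orbit, hence descends to a $k$-point of the quotient. This contradicts the defining property of $C_0\supseteq C$, so $\Gamma_{i,c}$ is not bad and $Y_{i,c}$ is integral, as required. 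In this approach the analytic--arithmetic substance --- Kummer theory and Chebotarev --- is already packaged inside Theorem~\ref{thm2}; the steps one actually has to carry out here are the Galois-theoretic translation of the ``no \'etale subcover'' hypothesis and the branch-locus reduction, and the only external input beyond Theorem~\ref{thm2} is the Mordell--Lang theorem.
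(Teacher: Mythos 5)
Your argument is \emph{circular} given the logical structure of the paper. You propose to deduce Theorem~\ref{thm:new_main_intro} from Theorem~\ref{thm2}, but the paper proves these in the opposite order: Theorem~\ref{thm:new_main_intro} is simply Theorem~\ref{thm:new_main} after invoking Lemma~\ref{lemma:Zannier} (which identifies ``no non-trivial \'etale subcover'' with the property (PB)), and Theorem~\ref{thm2} is then \emph{derived} from Theorem~\ref{thm:new_main} via Proposition~\ref{prop:PBPFRamPF} and the observation that integrality of a degree-$\ge 2$ fibre forces it to be $\Spec$ of a nontrivial field extension, hence to have no $k$-points. So the ``analytic--arithmetic substance'' you describe as already packaged inside Theorem~\ref{thm2} is in fact packaged inside Theorem~\ref{thm:new_main_intro}: the genuine work --- the cyclic case via Kummer theory and Chebotarev (Section~\ref{sect:NondegeneratePoint}), the product theorem for pairs $(A,\Omega)$ (Section~\ref{subsect:ProductsAV}), the specialization from finitely generated fields to number fields (Section~\ref{section:nf_to_fingen}) --- never enters your argument.

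Beyond the circularity, it is worth noting what your argument does contain: the Galois-theoretic reduction from ``integral fibres'' to ``fibres of subcovers with no $k$-points'' is correct and is essentially the content of Proposition~\ref{Prop:irreducibility_fibre} and Proposition~\ref{prop:PBPFPBIF} in the paper. Your translation of ``no \'etale subcover'' into the equality $H_iN_i=G_i$ (with $N_i$ the normal subgroup generated by inertia) is accurate and gives a hands-on criterion for identifying which quotient covers are ramified; the paper instead uses the fact that a subcover of a (PB)-cover is again a (PB)-cover (so automatically ramified when of degree $\ge 2$), avoiding any reference to inertia. This reduction step, however, lives \emph{inside} the proof of Theorem~\ref{thm:new_main}, where it is applied to the (PF)-statement coming from the cyclic case, not to Theorem~\ref{thm2}.

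Finally, your use of the Mordell--Lang theorem to avoid the branch locus is valid but wildly out of proportion to the task: Lemma~\ref{Lem:AvoidingBranchLocus} achieves the same conclusion by the elementary observation that reduction modulo a single well-chosen closed point already produces a finite-index coset avoiding a given proper closed subscheme. Invoking Faltings--Vojta here is correct but unnecessary, and worth flagging even if it were not in a proof that is unfortunately circular.
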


\begin{remark}[Inverse Galois Problem]\label{remark:inverse_galois}
 Let  $G$ be a finite group, let $X$ be a normal geometrically integral variety over $\mathbb{Q}$, and let $X\to A$ be a $G$-Galois cover of an abelian variety $A$ over $\mathbb{Q}$  with   no non-trivial \'etale subcovers. Then, if $A(\mathbb{Q})$ is dense, by our version of Hilbert's irreducibility theorem for abelian varieties (Theorem \ref{thm:new_main_intro}), there is a Galois number field $K$ with Galois group $G$. 
\end{remark}	
	
	Note that Theorem \ref{thm2} and Theorem \ref{thm:new_main_intro} represent the usual parallelism between irreducible fibres and fibres with no rational points. Also, as alluded to above, one can easily extend these results to obtain similar conclusions for finite collections of   dominant  \emph{rational maps} of finite degree (see Theorem \ref{thm:birrrr}). Furthermore, we stress that the assumption that the subgroup $\Omega$ is Zariski-dense in Theorems \ref{thm2} and \ref{thm:new_main_intro} can not be weakened to the assumption that $\Omega$ is merely infinite (see Remark \ref{remark:referee}).

	The above two results directly generalize \cite[Theorem 2]{ZannierDuke} in which the case of an abelian variety isomorphic to a power of a non-CM elliptic curve was handled.  Also, we stress that in Theorem \ref{thm2} one can not expect the integrality of the fibres $Y_{i,c}$. Indeed, the presence of non-trivial \'etale subcovers obstructs the desired integrality; see Remark  \ref{Rmk:Ram_not_IF} for a precise statement.

	\begin{remark} [About our proofs]
		Working with pairs $(A,\Omega)$   in Theorems \ref{thm2} and \ref{thm:new_main_intro} has several technical advantages. For example, an induction argument allows one to  reduce from a finite collection of covers to just a single cover (Lemma \ref{Lem:Many_covers}). Moreover,   by   a   specialization argument,   we can  easily pass from number fields to arbitrary finitely generated fields of characteristic 0 (see Section \ref{section:nf_to_fingen}). Furthermore,  using  local-global arguments,    it suffices to construct a \emph{single} point in $\Omega$ over which the fibre is integral (Corollary \ref{Cor:fibres_local_considerations}). Finally,  using structure results for vertically ramified covers (Lemma \ref{lemma:StructureVerticallyRamifiedMorphisms}), we establish a product theorem for pairs $(A,\Omega)$ which allows us to consider only    pairs $(A,\Omega)$ with $\Omega$ cyclic; see Section \ref{subsect:ProductsAV} for precise statements.
	\end{remark}

	As already hinted at, the weak-Hilbert property is conjectured to hold as soon as the ``obvious" necessary conditions are met; more precisely, one has the following (see  \cite[\S 4]{Campana} for the definition of a ``special" smooth proper connected variety):

	\begin{conjecture}[Campana, Corvaja-Zannier]\label{conj}  Let $X$ be a smooth proper geometrically connected variety over a finitely generated field $\fgField$ of characteristic zero. Then the following are equivalent.
		\begin{enumerate}
			\item There is a finite field extension $L/\fgField$ such that $X_L$ has the weak-Hilbert property over $L$.
			\item There is a finite field extension $M/\fgField$ such that $X(M)$ is dense in $X$.
			\item The smooth proper connected variety $X_{\overline{\fgField}}$ is special (in the sense of Campana \cite[\S 4]{Campana}).
		\end{enumerate} 
	\end{conjecture}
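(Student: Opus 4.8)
The plan is to prove the three implications $(1)\Rightarrow(2)$, $(3)\Rightarrow(1)$, and $(2)\Rightarrow(3)$ separately. Only the first is elementary and unconditional; the other two are, in full generality, (special cases of) the conjectures of Lang and Campana, so what follows is a program whose hardest ingredients remain open. The implication $(1)\Rightarrow(2)$ is immediate: apply the weak-Hilbert property of $X_L$ over $L$ to the empty collection of ramified covers; the defining condition then asserts precisely that $X(L)$ is Zariski-dense in $X$, so $(2)$ holds with $M=L$.

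For $(3)\Rightarrow(1)$ I would invoke Campana's structure theory of special varieties. After replacing $\fgField$ by a finite extension and $X$ by a smooth proper birational model --- legitimate since the weak-Hilbert property is a birational invariant among smooth proper varieties (Proposition \ref{prop:BirationalInvariance}) --- a special variety is built by an iterated fibration whose general orbifold fibres are either rationally connected or have numerically trivial canonical class (abelian varieties, Calabi--Yau, hyper-K\"ahler, and their orbifold analogues), and whose iterated orbifold base is a point. The strategy is then: (i) establish the weak-Hilbert property for rationally connected varieties over finitely generated fields of characteristic $0$ carrying a rational point (for rational varieties this is classical Hilbert irreducibility; the general rationally connected case is more delicate, though geometric simple-connectedness is in its favour); (ii) establish it for abelian varieties with a dense set of rational points, which is exactly Theorem \ref{thm2} of the present paper; (iii) establish it for the simply connected Calabi--Yau-type orbifold fibres (largely open, with the K3 example of \cite{CZHP} as evidence); and (iv) prove a fibration theorem: if $f\colon X\to B$ has an orbifold base of weak-Hilbert type and general orbifold fibres of weak-Hilbert type, then $X$ itself does. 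Step (iv) --- proved by specialising over a sufficiently large set of rational points of $B$ that avoid the finitely many ``bad'' ramified covers of the base, and then arguing fibrewise --- is the crux of this implication, and it is here that the Campana-orbifold bookkeeping (keeping track of multiplicities along branch divisors, so that ramified covers of $X$ become orbifold-ramified covers of $B$ or ramified covers of a fibre) is essential.

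For $(2)\Rightarrow(3)$ I would argue by contraposition: if $X_{\overline{\fgField}}$ is not special, then after a finite extension $X$ admits the Campana core fibration onto a positive-dimensional orbifold of general type. Campana's conjecture predicts that such an orbifold is not potentially dense (in the orbifold sense), whence a pull-back argument contradicts the density of $X(M)$. This implication contains Lang's conjecture for varieties of general type as the subcase of trivial orbifold structure, and is wide open; unconditionally one can only treat it when the general-type quotient is of a shape accessible to Faltings' theorems or to the Corvaja--Zannier method --- for instance a curve of genus $\geq 2$, an orbifold curve of general type, or a subvariety of general type of an abelian variety --- in which cases $(2)\Rightarrow(3)$ does hold.

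The main obstacle is therefore $(2)\Rightarrow(3)$: it is essentially the Lang--Campana conjecture, and there is at present no unconditional route to it. Within $(3)\Rightarrow(1)$, the genuinely hard points are the fibration/orbifold step (iv) and the Calabi--Yau case (iii); the abelian-variety input (ii) is exactly what the present paper supplies, and it is this that makes the full program plausible in the cases where the general-type obstruction is understood.
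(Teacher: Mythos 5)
The statement you were asked about is stated in the paper as a \emph{conjecture} (attributed to Campana and Corvaja--Zannier), and the paper does not prove it; it only establishes the special case of varieties birational to a smooth proper variety with trivial tangent bundle (Corollary \ref{thm:TrivialTangentBundle}), by combining Theorem \ref{thm2} with the Frey--Jarden potential density theorem and the fact that abelian varieties are special. There is therefore no proof in the paper to compare yours against, and your proposal --- quite properly --- does not claim to supply one.

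Within that framing, what you write is accurate. The only unconditional implication, $(1)\Rightarrow(2)$, is handled correctly: applying Definition \ref{def} to the empty collection of ramified covers (the case $n=0$ is allowed, the union over an empty index set being empty) yields exactly that $X(L)$ is Zariski-dense. Your identification of $(2)\Rightarrow(3)$ as essentially the Lang--Campana conjecture, and of the fibration and Calabi--Yau steps inside $(3)\Rightarrow(1)$ as the genuinely open ingredients, matches the state of the art; step (ii) of your program is precisely Theorem \ref{thm2}, and the product and image results of Section \ref{sect:Products} are the paper's partial contributions toward your step (iv). One minor caution: in step (iv) the passage from ramified covers of the total space to orbifold-ramified covers of the base or ramified covers of the fibres is exactly where Example \ref{example} shows that naive fibrewise arguments fail (a cover can be ramified over $X\times Y$ yet have connected geometric fibres over $X$ and restrict to an \emph{\'etale} cover of each fibre), so the arithmetic refinement mechanism of Theorem \ref{thm:ar_ref} --- or an orbifold analogue of it --- is unavoidable there, not merely convenient.
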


	Frey and Jarden \cite{FreyJarden}  proved that an abelian variety $A$ over a finitely generated field $\fgField$ of characteristic zero admits a finite extension $L/\fgField$ such that $A(L)$ is  Zariski-dense in $A$ (see also \cite[\S 3]{JAut} and \cite{HassettTschinkel}). 
	Since abelian varieties are special \cite[\S 4]{Campana},  by combining Frey and Jarden's result with Theorem \ref{thm2} we obtain a proof of Conjecture \ref{conj}  for any variety which is birational to a smooth projective connected variety with trivial tangent bundle:
	
	\begin{corollary}\label{thm:TrivialTangentBundle}
		Let $\fgField$ be a finitely generated field of characteristic 0. Let $X$ be a smooth proper geometrically connected variety over $\fgField$ which is birational to a smooth proper   variety with trivial tangent bundle.  Then  $X_{\overline{\fgField}}$ is special \cite[\S 4]{Campana}, and there is a finite field extension $L/\fgField$ such that $X_L$ has the weak-Hilbert property over $L$.
	\end{corollary}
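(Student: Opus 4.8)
The plan is to deduce the corollary from Theorem~\ref{thm2} together with the theorem of Frey and Jarden~\cite{FreyJarden}, after first reducing to the case of an abelian variety. I would begin with birational invariance: the weak-Hilbert property is a birational invariant among smooth proper varieties by Proposition~\ref{prop:BirationalInvariance}, and Campana's notion of specialness is likewise a birational invariant in this class (immediate from its definition, \cite{Campana}). Hence it suffices to treat the case in which $X$ itself has trivial tangent bundle, say $T_X\cong\mathcal{O}_X^{\oplus\dim X}$. The key geometric input is then the classical structure result that a smooth proper variety with trivial tangent bundle over an algebraically closed field of characteristic zero is an abelian variety (one checks that the Albanese morphism $a\colon X_{\overline{\fgField}}\to\operatorname{Alb}^1(X_{\overline{\fgField}})$ is finite \'etale and surjective, using that $\Omega^1_X$ is trivial and hence globally generated so that the codifferential of $a$ is surjective between locally free sheaves of rank $\dim X$; then, over $\overline{\fgField}$, the Albanese torsor is trivial and a connected finite \'etale cover of an abelian variety is again an abelian variety). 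Thus $X_{\overline{\fgField}}$ is an abelian variety, and in particular it is special~\cite[\S 4]{Campana}; since specialness is a birational invariant, the same holds for the original $X$, which is the first assertion.

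For the weak-Hilbert property I would next descend the group structure to a finite extension. Choose a finite extension $L_0/\fgField$ with $X(L_0)\neq\emptyset$ (possible because $X_{\overline{\fgField}}$ is an abelian variety and so has points), and fix $x_0\in X(L_0)$. Then $X_{L_0}$ is a smooth proper geometrically connected $L_0$-variety that becomes an abelian variety over $\overline{L_0}$ with origin $x_0$; since the group law on an abelian variety with a prescribed identity element is unique (rigidity lemma), it is $\Gal(\overline{L_0}/L_0)$-invariant and therefore descends, so $X_{L_0}$ is an abelian variety over $L_0$. Applying Frey--Jarden~\cite{FreyJarden} to the abelian variety $X_{L_0}$ over the finitely generated field $L_0$ produces a finite extension $L/L_0$ such that $X_L(L)=X(L)$ is Zariski-dense in $X_L$. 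Now $X_L$ is an abelian variety over the finitely generated field $L$ with a Zariski-dense set of rational points, so Theorem~\ref{thm2} gives that $X_L$ has the weak-Hilbert property over $L$; by Proposition~\ref{prop:BirationalInvariance}, applied over $L$, the same is true for the original $X_L$, which completes the argument.

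I do not expect a genuine obstacle here: the corollary is essentially a synthesis of Theorem~\ref{thm2}, the Frey--Jarden theorem, and standard facts about abelian varieties and specialness. The two steps that deserve some care are the structure theorem for varieties with trivial tangent bundle (classical, and provable via the Albanese morphism as indicated above) and the descent of the group law to $L_0$ (an immediate consequence of rigidity); neither poses a real difficulty.
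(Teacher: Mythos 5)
Your proposal is correct and follows essentially the same route as the paper: reduce by birational invariance (Proposition \ref{prop:BirationalInvariance}) to the case of trivial tangent bundle, pass to a finite extension over which $X$ becomes an abelian variety, invoke potential density of rational points (Frey--Jarden), and apply Theorem \ref{thm2}. The only difference is that you spell out the two ingredients the paper handles by citation (the structure theorem via the Albanese morphism, and the descent of the group law after choosing a rational point), and both of these arguments are standard and correct.
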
  
	
	Note that in view of Campana's perspective on special varieties,  it is natural
to study the influence of some positivity condition on the tangent
bundle; the varieties  under consideration in this work being precisely   those with  trivial
tangent bundle. According to a classical theorem of Mori, a  smooth projective variety with ample
tangent bundle is geometrically isomorphic to a projective space, so that   the Hilbert property
for such varieties follows   from Hilbert's original irreducibility theorem (after a possible field extension). 
	
	Motivated by Conjecture \ref{conj} and inspired by fundamental properties of special varieties \cite{Campana} and of varieties with a dense set of rational points, we establish several basic facts about the class of varieties with the weak-Hilbert property in Section \ref{sect:Products}, first and foremost among them the following product theorem.
	
	\begin{theorem}\label{thm3}   Let $X$ and $Y$ be smooth proper varieties over a finitely generated field $\fgField$ of characteristic zero with the weak-Hilbert property over $\fgField$. Then $X\times Y$ has the weak-Hilbert property over $\fgField$. 
	\end{theorem}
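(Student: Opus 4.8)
The plan is to fibre $X\times Y$ over one of its factors, say over $X$ via the projection $p\colon X\times Y\to X$, and to pit the weak-Hilbert property of $X$ against that of $Y$ along the fibres. Two elementary reductions are used throughout. \emph{First}, if $S\subseteq X(\fgField)$ is Zariski-dense in $X$ and, for each $x\in S$, a subset $U_x\subseteq Y(\fgField)$ is Zariski-dense in $Y$, then $\bigcup_{x\in S}\{x\}\times U_x$ is Zariski-dense in $X\times Y$: were it contained in a closed set $V$, then $V\cap(\{x\}\times Y)=\{x\}\times Y$ for every $x\in S$ (a closed subset of $Y$ containing the dense set $U_x$), so the closed locus $\{x:\{x\}\times Y\subseteq V\}$ contains the dense set $S$ and hence equals $X$. \emph{Second}, if $Z$ is a smooth proper variety with the weak-Hilbert property over $\fgField$ and $Z^{\circ}\subseteq Z$ is a dense open subscheme, then for any finite family of ramified covers $\rho_j\colon W_j\to Z^{\circ}$ the set $Z^{\circ}(\fgField)\setminus\bigcup_j\rho_j(W_j(\fgField))$ is still dense: the normalisation $\overline{W_j}$ of $Z$ in $\fgField(W_j)$ is again a ramified cover of $Z$ (its branch divisor contains the closure of the nonempty branch divisor of $\rho_j$), so the weak-Hilbert property of $Z$ applies and one intersects with the dense open $Z^{\circ}$.

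Let $(\pi_i\colon Y_i\to X\times Y)_{i=1}^n$ be ramified covers with each $Y_i$ normal integral. By the first reduction it suffices to produce a Zariski-dense $S\subseteq X(\fgField)$ such that, for every $x\in S$, the set of $y\in Y(\fgField)$ contained in no $T_{i,x}\defeq\{y\in Y(\fgField):(x,y)\in\pi_i(Y_i(\fgField))\}$ is dense in $Y$. Since the fibre of $\pi_i$ over $(x,y)$ is the fibre over $y$ of the restricted cover $Y_{i,x}\to\{x\}\times Y\cong Y$, for a fixed $x$ this is a statement about the $n$ covers $Y_{i,x}\to Y$ and the point $y$. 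Split $\{1,\dots,n\}=\mathcal P\sqcup\mathcal Q$ according to whether the branch divisor of $\pi_i$ has a component dominating $X$ ($i\in\mathcal P$) or not ($i\in\mathcal Q$); by purity of the branch locus, a prime divisor of $X\times Y$ that does not dominate $X$ has the form $D\times Y$ with $D\subsetneq X$ a divisor, so for $i\in\mathcal Q$ the morphism $\pi_i$ is finite étale over $(X\setminus D_i)\times Y$ for some divisor $D_i\subsetneq X$.

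For $i\in\mathcal P$ the generic fibre of $\pi_i$ over $X$ is ramified, hence for all $x$ in a dense open $U\subseteq X$ the cover $Y_{i,x}\to Y$ is a (possibly reducible) finite cover of degree $\ge 2$ whose normalisation has all irreducible components ramified over $Y$; applying the weak-Hilbert property of $Y$ to these components, together with the second reduction to absorb the proper closed locus of $Y$ over which $Y_{i,x}$ fails to be normal, shows that for $x\in U(\fgField)$ the set $Y(\fgField)\setminus\bigcup_{i\in\mathcal P}T_{i,x}$ is the complement in $Y(\fgField)$ of a thin set and a proper closed subset, hence Zariski-dense. (Here $U(\fgField)$ is dense in $X$ because $X$ has the weak-Hilbert property, so $X(\fgField)$ is dense.) For $i\in\mathcal Q$ the cover $Y_{i,x}\to Y$ is \emph{étale} for generic $x$, so the weak-Hilbert property of $Y$ is powerless on the fibre; instead one must choose $x$ so that $Y_{i,x}(\fgField)=\varnothing$, whence $T_{i,x}=\varnothing$. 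Here the weak-Hilbert property of $X$ re-enters: using the structural description of vertically ramified covers (cf.\ Lemma~\ref{lemma:StructureVerticallyRamifiedMorphisms}), as $x$ ranges over $(X\setminus D_i)(\fgField)$ the covers $Y_{i,x}\to Y$ are twists of one fixed geometric étale cover, governed by the fibres over $x$ of finitely many auxiliary covers of $X$, and one checks that $Y_{i,x}(\fgField)=\varnothing$ for every $x$ outside the images of finitely many \emph{ramified} covers of $X$ determined by $\pi_i$; applying the weak-Hilbert property of $X$ to the union of all these ramified covers (over all $i\in\mathcal Q$) and intersecting with $U(\fgField)$ yields the desired dense set $S\subseteq X(\fgField)$. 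The first reduction then concludes the proof.

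The genuinely hard point is the treatment of $\mathcal Q$, i.e.\ of vertically ramified covers: although the fibres $Y_{i,x}\to Y$ become étale — so their rational points need not form a thin set — one must show they can be made rational-point-free by a choice of base point $x$ that works \emph{uniformly in $i$}, via the weak-Hilbert property of $X$. Identifying precisely which ramified covers of $X$ must be avoided, and proving that avoiding their rational points over $x$ forces $Y_{i,x}(\fgField)=\varnothing$, is exactly what the structure theory for vertically ramified morphisms is for, and it is the only step of the argument that is not formal.
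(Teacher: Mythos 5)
Your strategy matches the paper's almost exactly: fibre over $X$, split the covers according to whether the branch divisor dominates $X$, apply the weak-Hilbert property of $Y$ fibre by fibre in the dominant case, and invoke the structure theory of vertically ramified covers in the other. Your two preliminary reductions are the paper's Remark \ref{Rmk:trivial2} and a routine density lemma. The paper organises the decomposition a little differently — it first applies an arithmetic refinement step to the vertically ramified covers and then splits on whether the Stein factorization $Z_i \to S_i \to X$ is ramified, rather than splitting on the branch locus directly — but the two decompositions handle the same cases.

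There is, however, a genuine gap in your treatment of $\mathcal{Q}$, which you correctly flag as the non-formal step but then only assert. Lemma \ref{lemma:StructureVerticallyRamifiedMorphisms} does produce a ramified cover $X' \to X$ over $\fgField$, but it \emph{also} produces a finite étale cover $Z' \to Z$ of degree $\geq 2$ with $Z' \cong X' \times Y'$, and $\fgField$-points of $Z$ do \emph{not} lift to $\fgField$-points of $Z'$ in general. So from $x \notin \mathrm{im}(X'(\fgField) \to X(\fgField))$ you cannot deduce $Z_x(\fgField) = \varnothing$; the cover $X' \to X$ is not by itself one of the ``finitely many ramified covers of $X$ determined by $\pi_i$''. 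The missing arithmetic input is the paper's Theorem \ref{thm:ar_ref}: one uses the Chevalley--Weil theorem (Theorem \ref{thm:chev_weil}) to produce a finite extension $L/\fgField$ with $Z(\fgField) \subseteq \psi(Z'(L))$, then applies Weil restriction of scalars $R_{L/\fgField}$ to $Z'_L \to Z_L$, pulls back along the diagonal $Z \hookrightarrow R_{L/\fgField}(Z_L)$, and takes connected components, thereby constructing an \emph{arithmetic refinement}: a finite family of covers $W_j \to Z$ over $\fgField$ with $Z(\fgField) \subseteq \bigcup_j f_j(W_j(\fgField))$ and with each Stein factorization of $W_j \to X$ ramified over $X$. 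Those Stein factorizations are the ramified covers of $X$ you must avoid. Without this descent, the structure theorem (a geometric statement) cannot give the arithmetic conclusion. A smaller issue: in $\mathcal{P}$, to see that for generic $x$ \emph{every} component of $Y_{i,x}$ is ramified over $Y$ (in particular has degree $\geq 2$), you must pass through the Stein factorization $Y_i \to S_i \to X$, check that the branch of $Y_i \to S_i \times Y$ dominates $S_i$, and then apply Lemma \ref{lemmatje} — this does not follow directly from the branch of $\pi_i$ dominating $X$ without that intermediate step.
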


	Theorem \ref{thm3} was obtained jointly by the third-named author and Olivier Wittenberg, and we are grateful to Olivier Wittenberg for allowing us to include this result here. 
	
It seems worthwhile pointing out that 	Theorem \ref{thm3} is the    "weak-Hilbert" analogue of  Bary-Soroker--Fehm--Petersen's product theorem for the (usual) Hilbert property \cite{BarySoroker}. This product theorem was stated by Serre as a problem   in  \cite[\S3.1]{Serre}.  
	
	Note that Theorem \ref{thm3} actually reproves the product theorem of \cite{BarySoroker} when the base field is   finitely generated  and of  characteristic zero.   Indeed,   if $X$ and $Y$ satisfy the Hilbert property (hence weak-Hilbert property) over a finitely generated field $\fgField$ of characteristic zero, then $X\times Y$ has the weak-Hilbert property over $\fgField$ by Theorem \ref{thm3}. However,  since $X$ and $Y$  are geometrically simply-connected (by Corvaja-Zannier's theorem \cite[Theorem~1.6]{CZHP}), it follows that $X\times Y$ is geometrically simply-connected (by the product property for \'etale fundamental groups). Thus, the smooth proper variety $X\times Y$ is geometrically simply-connected  and has the weak-Hilbert property over $\fgField$. Therefore, by definition, it has the Hilbert property over $\fgField$.
	
	In addition to proving Theorem \ref{thm3}, in Section \ref{sect:Products} we also show that the weak-Hilbert property of a variety $X$ is inherited by both its étale covers and its surjective images under some natural assumptions (in particular, including smooth surjective images), thus building a toolkit that simplifies proving new instances of the weak-Hilbert property.

	\bigskip

	To conclude this introduction we describe the ingredients that go into the proof of Theorem \ref{thm2}, which is the main result of this paper.
	Our argument follows the same broad lines as the proof of \cite[Theorem 2]{ZannierDuke}, which handled the special case of $A=E^n$  being the power of a non-CM elliptic curve  with a \textit{nondegenerate} point $P$, that is, a rational point $P$ generating a Zariski-dense subgroup.
	The method is based on the following idea, which we simplify slightly for ease of exposition. Suppose that $A$ is defined over a \textit{number field} $\TheField$. Given a cover $\pi : Y \to A$, one proves that there is a prime $\mathfrak{p}$ of $\mathcal{O}_\TheField$ and a torsion point $\zeta \in A(\mathcal{O}_\TheField/\mathfrak{p})$ that does not lift to $Y(\mathcal{O}_\TheField/\mathfrak{p})$. Any rational point $Q \in A(\TheField)$ reducing to $\zeta$ modulo $\mathfrak{p}$ then also does not lift to $Y(\TheField)$, and -- since this condition is adelically open, and using the group structure on $A$ -- we get the desired Zariski-dense subset of points that do not lift to $Y$. Naturally this requires that there is at least one $Q \in A(\TheField)$ reducing to $\zeta$: the last task remaining is then to prove the existence of such a point $Q$.
	
	Despite the basic strategy being the same, however, we need to introduce several new ingredients with respect to \cite{ZannierDuke}, and to reinterpret various parts of Zannier's approach in a more general context. We now briefly describe where the main novelties lie.
	

	Some technical reductions, carried out in Section \ref{sect:FormalReductions}, show that multiple variants of the weak-Hilbert property for abelian varieties are essentially all equivalent. These variants describe how the weak-Hilbert property interacts with the group structure of $A$, and allow us to pass from statements about fibres of covers having no $\fgField$-rational points to statements about fibres being irreducible over $\fgField$. 
	The core argument of our proof is contained in Section \ref{sect:NondegeneratePoint}, where we study the case of abelian varieties possessing a nondegenerate point. Unlike in the case of $A=E^n$, which can essentially be reduced to the analysis of a single elliptic curve, we don't have at our disposal the full strength of Serre's open image theorem \cite{MR387283}. In addition, some explicit computations with torsion points that are accessible in dimension 1 would become extremely cumbersome in general. We bypass these problems by giving a more streamlined construction of the torsion point $\zeta$ (see §\ref{sect:TorsionPoint}) and by replacing the open image theorem by an appeal to several deep results in the Kummer theory of abelian varieties (see §\ref{sect:KummerTheory}).  
	
	All that is left to do is then to extend the result to all abelian varieties. Up to $\fgField$-isogeny, any abelian variety $A$ over $k$ is a direct product of $\fgField$-simple abelian varieties $A_i$, and if $A(\fgField)$ is Zariski-dense, then $A_i(\fgField)$ is Zariski-dense for all $i$. Since any point of infinite order on a \textit{simple} abelian variety is nondegenerate,   defining $\Omega_i = \langle P_i\rangle$ for $P_i$ a point of infinite order in $A_i(\fgField)$, we may conclude at this point that, for every $i$, the pair $(A_i,\Omega_i)$ satisfies the  conclusion of Theorem \ref{thm2}. The results of Section \ref{sect:Products} (in particular, Theorem \ref{thm3}) can then be used to show that $A$ has the weak-Hilbert property over $\fgField$. However, the more precise version given by Theorem \ref{thm2} (or Theorem \ref{thm:new_main_intro}) does not follow as easily.  This is why in Section \ref{subsect:ProductsAV} we extend the techniques developed in Section \ref{sect:Products} to prove a more specific version of Theorem \ref{thm3}, in which the factors are abelian varieties and we also take into account a (Zariski-dense) subgroup $\Omega$.  A key observation in this section is that, given a cover $Z\to A$,   the existence of a \emph{single} point  $P\in \Omega$ for which the fibre $Z_P$ is integral  {implies} that there is a finite index coset $C$ of $\Omega$ such that, for every $c$ in $C$, the fibre over $c$ has no $K$-points; see  Corollary \ref{Cor:fibres_local_considerations}. 
	Using the invariance of the weak-Hilbert property under isogeny, it is then a fairly straightforward matter to deduce from this the general case of our result for $\TheFieldd$ a number field. Finally, we use a specialization argument to reduce from finitely generated fields of characteristic 0 to number fields.

	\subsection*{Outline of paper}   In Section \ref{sect:Prelim} we gather some preliminaries. Notably, we provide a structure result for vertically ramified covers of products (Lemma \ref{lemma:StructureVerticallyRamifiedMorphisms}). In the following section we prove that the class of varieties over a finitely generated field of characteristic zero with the weak-Hilbert property is closed under products, finite \'etale covers, and smooth images.  In Section \ref{sect:FormalReductions} we introduce the class of (PB)-covers, i.e., ramified covers of abelian varieties with no non-trivial \'etale subcovers.  We prove several basic properties of (PB)-covers, and provide links between variants of the Hilbert property for abelian varieties.  In Section \ref{sect:GeneralCaseNF} we prove a product theorem, analogous to the one obtained in Section \ref{sect:Products}, that applies to a \emph{variant} of the weak-Hilbert property specific to abelian varieties.
	Then, in Section \ref{sect:NondegeneratePoint}, we prove   that this  property  holds for abelian varieties   over number fields endowed with a non-degenerate point. Finally, in Section \ref{sect:MainResults} we prove the theorems stated in the introduction: the results of the previous sections suffice to handle the case of the ground field being a finite extension of $\mathbb{Q}$, and the general case is then proven by reduction to the number field case.
	
	\subsection*{Acknowledgements} A.J. is grateful to Remy van Dobben de Bruyn for a helpful discussion on Stein factorizations, and David Holmes for a helpful discussion on unramified morphisms. A.J.  gratefully acknowledges support from the IHES and the University of Paris-Saclay.  We are also most grateful to Olivier Wittenberg for allowing us to include Theorem \ref{thm3} and Example \ref{example}, as they were obtained in collaboration with him. We are most grateful to the referees for their comments and suggestions. We thank David McKinnon for asking a question which led to Remark \ref{remark:analytic}.  The senior authors Corvaja and Zannier are grateful to the junior authors for their strength in making what was a mere project into this paper.

	\section{Preliminaries}\label{sect:Prelim}
	\subsection{Notation}
	Throughout the paper we let $\TheField$ denote a number field, while we write $\fgField$ for a general field (unless otherwise specified). For a number field $\TheField$, we denote by $M_\TheField$ the set of its places, and by $M_\TheField^{\operatorname{fin}}$ the subset of   finite places.

	A \textit{variety} over a field $\fgField$ is an integral separated scheme of finite type over $\fgField$. If $X$ is a variety over $k$ and  $A\subset k$ is a subring, we define a \textit{model} for $X$ over $A$ to be a pair $(\mathcal{X},\phi)$, where $\mathcal{X}$ is a separated scheme of finite type over $A$ and $\phi:\mathcal{X}\times_A k\to X$ is an isomorphism; we will often omit $\phi$ from the notation.

	A morphism $\pi:Y\to X$ of normal varieties over $\fgField$ is a {\itshape cover of $X$ (over $\fgField$)} if $\pi$ is  finite and surjective.

	For a morphism $f:Y \rightarrow X$ of schemes, and a point $c \in X$, we denote  the scheme-theoretic fibre of $f$ over $c$ by $Y_c$, or by $f^{-1}(c)$ when we need to specify the morphism to avoid ambiguity.


	Let $A$ be an abelian variety over a field $\fgField$. 
	For a prime $\ell$ different from the characteristic of $\fgField$, we let $T_{\ell} A \defeq \varprojlim_{n \to \infty} A[\ell^n]$ denote the $\ell$-adic Tate module of $A$, where $A[\ell^n]$ is by convention the full geometric torsion subgroup $A[\ell^n](\overline{\fgField})$. We similarly denote by $A[\ell^\infty]$ the union of all $A[\ell^n](\overline{\fgField})$ for $n \geq 1$.
	We denote by $\Gal(\fgField'/\fgField)$ the Galois group of a (possibly infinite) Galois field extension $\fgField'/\fgField$,  and simply by $\Gamma_\fgField$ the absolute Galois group of $\fgField$, namely $\Gamma_{\fgField} = \Gal(\overline{\fgField}/\fgField)$.
	For a rational (resp. $\ell$-adic) number $a\neq 0$, we define $v_{\ell}(a)$ to be the  unique integer such that $a=\frac{a_1}{a_2}\cdot \ell^{v_{\ell}(a)}$, with $a_1, a_2 \in \mathbb{Z}$ (resp. $a_1, a_2 \in \mathbb{Z}_\ell$) and $\ell\nmid a_i, \ i=1,2$. If $a=0$, we let $v_{\ell}(a)\defeq  \infty$ by convention.
	
	\subsection{Unramified morphisms}\label{Ssec:UnramifiedMorphisms}
	As regards unramified morphisms, we follow the conventions of the Stacks project \cite[Tag~02G3]{stacks-project}: in particular, a morphism of schemes $Y\to X$ is unramified if and only if it is locally of finite type and its diagonal is an open immersion \cite[Tag~02GE]{stacks-project}.  We say that a morphism locally of finite type (e.g., a cover $Y \to X$) is \textit{ramified} if it is not unramified. We will need the following lemma.

	\begin{lemma}\label{lemmatje} Let $f:X\to S$ be a morphism of normal proper varieties over a field $\TheFieldd$ and let $\pi:Z\to X$ be a finite surjective ramified morphism.
		Assume that the branch locus $D$ of $\pi:Z\to X$ dominates $S$ (i.e., $f(D) = S$). Then, for every point $s$ in $S$, the morphism $Z_s\to X_s$ is finite surjective ramified.
	\end{lemma}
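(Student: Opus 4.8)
The plan is to work fibrewise over $S$ and reduce to a statement about the non-vanishing of the different. First I would record what needs to be shown: for a fixed $s \in S$, the finite surjective morphism $Z_s \to X_s$ is ramified, i.e.\ not unramified. (Finiteness and surjectivity of $Z_s \to X_s$ are automatic since $\pi$ is finite surjective and fibres are taken over the same base point $s$; one should note $X_s$ need not be a variety, but ``ramified'' makes sense for any morphism locally of finite type via the diagonal criterion of \cite[Tag~02GE]{stacks-project}.) The key point is that a finite morphism of Noetherian schemes is unramified at a point if and only if its different (equivalently, the annihilator of $\Omega^1$) does not vanish there; so being ramified along a closed subset is detected by the support of $\Omega^1_{Z/X}$.

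Next I would exploit the hypothesis that the branch locus $D$ of $\pi : Z \to X$ dominates $S$. Since $f : X \to S$ is a morphism of proper varieties and $D \subset X$ is a closed subset with $f(D) = S$, for every $s \in S$ the fibre $D_s = D \cap X_s$ is a non-empty closed subset of $X_s$ — this is where properness of $f$ (hence closedness of $f(D)$, and non-emptiness of each fibre of $D \to S$ over the image) is used. Now $D$, being the branch locus, is (by definition) the image in $X$ of the ramification locus $R \subset Z$, the closed subset where $\Omega^1_{Z/X}$ is non-zero; equivalently $D = \pi(\Supp \Omega^1_{Z/X})$. I would then compare $\Omega^1_{Z/X}$ with $\Omega^1_{Z_s/X_s}$: base change of Kähler differentials along $X_s \to X$ gives a surjection (in fact, since $X_s \to X$ is the fibre of $f$, one has $\Omega^1_{Z_s/X_s} = \Omega^1_{Z/X}|_{Z_s}$, because relative differentials are compatible with base change). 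Hence the ramification locus of $Z_s \to X_s$ is exactly $R \cap Z_s = R_s$, and its image in $X_s$ is $D \cap X_s = D_s$.

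Putting this together: $D_s$ is non-empty, so $R_s$ is non-empty, so $\Omega^1_{Z_s/X_s}$ is a non-zero sheaf on $Z_s$; therefore $Z_s \to X_s$ is not unramified, i.e.\ it is ramified, which is the claim. Finally I would double-check that $Z_s \to X_s$ is surjective (pull back the surjection $Z \to X$ along $X_s \hookrightarrow X$; surjectivity is stable under base change) and finite (finiteness is stable under base change), so $Z_s \to X_s$ is indeed a ``finite surjective ramified'' morphism in the sense of the statement. The one genuinely delicate point — the main obstacle — is the interplay between the \emph{scheme-theoretic} branch locus and its fibres when $X$ and $Z$ are only normal (not smooth) and the fibres $X_s$, $Z_s$ may be singular or even non-reduced: one must be careful that ``$D$ dominates $S$'' combined with properness really forces $D_s \neq \emptyset$ for \emph{every} $s$, and that the branch locus commutes with the base change $S' = \{s\} \hookrightarrow S$ in the sense that $D_s$ contains the branch locus of $Z_s \to X_s$; but since we only need $Z_s \to X_s$ to be ramified \emph{somewhere}, the inclusion $D_s \subseteq \mathrm{Branch}(Z_s \to X_s)$ — which is the easy direction, following from base-change surjectivity of $\Omega^1$ — is all that is required, and the reverse inclusion is not needed.
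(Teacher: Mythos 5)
Your proof is correct and follows essentially the same route as the paper's: both arguments locate a point $d$ of the branch locus $D$ lying over $s$ (which exists since $f(D)=S$) and observe that the ramification of $\pi$ at $d$ is inherited by $Z_s\to X_s$ because ramification is compatible with base change --- the paper phrases this via the fibrewise criterion and the identification $Z_d=(Z_s)_d$, you via the isomorphism $\Omega^1_{Z_s/X_s}\cong\Omega^1_{Z/X}|_{Z_s}$. One small slip in your closing remarks: the inclusion $D_s\subseteq\mathrm{Branch}(Z_s\to X_s)$ that you actually need follows from this base-change map being an \emph{isomorphism} (which it is, since $Z_s=Z\times_X X_s$ is a fibre product), whereas mere surjectivity of that map would only give the reverse inclusion; this does not affect the validity of the argument, since you correctly invoked the isomorphism in the body of the proof.
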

	\begin{proof} 
		A morphism of varieties $V\to W$ over $\fgField$ is unramified if and only if, for every $w $ in $W$, the morphism $V_w\to \Spec k(w)$ is unramified (i.e.,~\'etale); see \cite[Tag~00UV]{stacks-project}. Now, let $s$ be a point of $S$. To show that the finite surjective morphism $Z_s\to X_s$ is ramified, let $d\in D$ be a point lying over $s$.  Then, by the definition of the branch locus, $Z_d\to \Spec k(d)$ is ramified. Note that $Z_d = Z_s\times_{X_s} d$ as schemes over $d=\Spec k(d)$. As the fibre of $Z_s\to X_s$ over $d$ is ramified, it follows that $Z_s\to X_s$ is ramified.  
	\end{proof}

	An unramified cover $X\to Y$ of varieties might not be \'etale \cite[Exercise~21.89]{MR3791837}, but this holds whenever the target is geometrically unibranch \cite[Tag~0BQ2]{stacks-project}, as we now show. (Recall that a scheme $Y$  is \emph{equidimensional} if every irreducible component has the same dimension.)
	
	\begin{lemma}\label{lemma:et_or_ram} Let  $X$ be a geometrically unibranch integral scheme and let $\pi:Y\to X$ be a finite surjective  unramified morphism of schemes with $Y$ equidimensional. Then $\pi$ is étale.
	\end{lemma}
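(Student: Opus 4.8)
We may assume throughout that $X$ and $Y$ are Noetherian of finite Krull dimension — the case of interest, where they are varieties; the general case reduces to this. The plan is to prove that $\pi$ is flat: since $\pi$ is finite (hence of finite presentation) and unramified, and since a morphism of finite presentation is étale precisely when it is flat and unramified, this suffices. To prove flatness, which is local on the target, we may take $X = \Spec R$ with $R = \mathcal{O}_{X,x}$ local; base-changing further along the faithfully flat morphism $\Spec R^{\mathrm{sh}} \to \Spec R$ to the strict henselization, and using that flatness descends along faithfully flat maps, we reduce to the case $X = \Spec A$ with $A$ strictly henselian local. All hypotheses persist: the base change of $\pi$ stays finite, surjective and unramified; $Y$ stays equidimensional because the base-change morphism is flat; and $\Spec A$ remains integral and geometrically unibranch. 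Crucially, $A$ is now a \emph{domain}: it is reduced — being a filtered colimit of étale, hence reduced, $R$-algebras — and has irreducible spectrum by geometric unibranch-ness.

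Next I would apply the structure theory of finite algebras over a strictly henselian local ring. Writing $Y = \Spec B$ with $B$ finite over $A$, we get a decomposition $B = \prod_{i=1}^m B_i$ with each $B_i$ local and finite over $A$, and each $B_i$ unramified over $A$. Since the residue field $\kappa$ of $A$ is separably closed, $B_i \otimes_A \kappa$ is an étale $\kappa$-algebra that is also local, hence equals $\kappa$; by Nakayama's lemma the map $A \to B_i$ is then surjective, so $B_i \cong A/J_i$ for some ideal $J_i$. Thus $Y \cong \coprod_{i=1}^m \Spec(A/J_i)$, a disjoint union of nonempty closed subschemes of the integral scheme $\Spec A$.

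Equidimensionality is now used to show each $J_i = 0$. As $\pi : Y \to \Spec A$ is finite and surjective with $\Spec A$ irreducible, $\dim Y = \dim A$; as $Y$ is equidimensional, every irreducible component of $Y$ has dimension $\dim A$. Each $\Spec(A/J_i)$ is open and closed in $Y$, hence a nonempty union of such components, so $\dim A/J_i = \dim A$. But in the Noetherian domain $A$, choosing a prime $\mathfrak{q}$ minimal over $J_i$ with $\dim A/\mathfrak{q} = \dim A/J_i$ and invoking $\operatorname{ht}(\mathfrak{q}) + \dim A/\mathfrak{q} \le \dim A$ forces $\operatorname{ht}(\mathfrak{q}) = 0$, so $\mathfrak{q} = 0$ and hence $J_i = 0$. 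Therefore $B \cong A^{\oplus m}$ is free over $A$, so $\pi$ is flat and thus étale.

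The step I expect to require the most care is the reduction to the strictly henselian local case: one must verify that geometric unibranch-ness of $X$ genuinely makes $\mathcal{O}_{X,x}^{\mathrm{sh}}$ a domain (and not merely a ring with irreducible spectrum after reduction) and that equidimensionality of $Y$ is inherited by the base change along $\Spec \mathcal{O}_{X,x}^{\mathrm{sh}} \to X$. Everything after that is a routine combination of standard facts: étale $=$ flat $+$ unramified; the splitting of finite algebras over a henselian local ring into local factors; Nakayama's lemma; and the dimension inequality $\operatorname{ht}(\mathfrak{q}) + \dim A/\mathfrak{q} \le \dim A$ in Noetherian local rings.
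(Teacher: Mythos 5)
Your argument is sound in outline and is really the strict-henselization avatar of the paper's proof: the paper invokes the étale-local structure theorem for unramified morphisms \cite[Tag~04HJ]{stacks-project} to split $Y$ into closed immersions $V_j\hookrightarrow U$ over an étale cover $U\to X$, whereas you split the finite algebra $B$ over $A=\mathcal{O}_{X,x}^{\mathrm{sh}}$ into quotients $A/J_i$; these are the same decomposition, and in both cases the crux is to show that each closed piece is all of the (integral) base. Your verification that $A$ is a domain (reduced because $\mathcal{O}_{X,x}$ is, plus irreducible spectrum from unibranchness) is correct, as is the Nakayama step giving $B_i\cong A/J_i$.

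The one step that is not actually justified is the assertion that ``$Y$ stays equidimensional because the base-change morphism is flat.'' Flatness of $\Spec A\to X$ does not by itself control the dimensions of the irreducible components of $Y\times_X\Spec A$, and this is precisely where the equidimensionality hypothesis must do its work: without it the lemma is false (e.g.\ $\Spec(k[t]\times k)\to\Spec k[t]$ is finite, surjective and unramified but not flat, and exactly one $J_i$ would be nonzero in your decomposition). The correct route is: since $Y$ is equidimensional, $\pi$ is finite and surjective, and $X$ is irreducible (of finite dimension, in your reduced setting), every irreducible component of $Y$ has dimension $\dim X$ and hence, being closed of full dimension, dominates $X$; consequently every minimal prime of $B\otimes_R A$ (which contracts to a minimal prime of $B$ by going-down for the flat map $B\to B\otimes_R A$) contracts to $(0)$ in the domain $A$, so each $\Spec(A/J_i)$ dominates $\Spec A$ and $J_i\subseteq(0)$. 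This replaces, and is cleaner than, the height computation. Note also that your opening reduction to the Noetherian, finite-dimensional case is asserted rather than proved; it covers every use of the lemma in the paper, but the paper's own argument sidesteps dimension theory entirely by observing that a dominant closed immersion into an integral scheme is an isomorphism, which is why it needs no Noetherian hypothesis.
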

	\begin{proof}
		By  \cite[Tag~04HJ]{stacks-project}, there is a surjective \'etale morphism $f:U\to X$ such that $Y_U :=Y\times_{\pi, X, f} U$ has a finite disjoint union decomposition 
		\[
		Y_U  = \sqcup_j V_j
		\] such that each $V_j\to U$ is a closed immersion. Refining this decomposition if necessary, we may assume that each $V_j$ is connected. Since $X$ is a geometrically unibranch  integral scheme and $U\to X$ is a surjective \'etale morphism, it follows that $U$ is a disjoint union of integral schemes. Indeed,   each connected component of $U$ is   reduced because $U \to X$ is étale and $X$ is integral, hence reduced. Furthermore, each connected component of $U$ is irreducible, for   two irreducible components would meet at some point $u$, and the local ring at $u$ would have two minimal primes, contradicting the fact that $X$ is geometrically unibranch  \cite[Tag~06DM]{stacks-project}. Let $U_i$ be the connected component containing the image of $V_j \to U$. Then the   restriction of the closed immersion $V_j\to U$ to the integral scheme $U_i$ is a dominant closed immersion, as $V_j \hookrightarrow Y_U$ is an open and closed immersion,   $Y_U \to U$ is finite surjective and $Y_U$ is equidimensional (since $Y$ is equidimensional). It follows that each nontrivial $V_j\to U_i$ is an isomorphism.  This implies that $Y_U\to U$ is \'etale, so that $Y\to X$ is \'etale by \'etale descent.
	\end{proof}
	
	The following consequence is well-known and will be used repeatedly throughout the paper:
	\begin{lemma}\label{lemma:EtaleOrRamified}
		Let $X$ be    an integral normal noetherian scheme, and let $\pi : Y \to X$ be a finite surjective morphism of integral schemes. Then $\pi$ is either ramified or étale.  
	\end{lemma}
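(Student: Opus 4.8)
The plan is to deduce this statement directly from Lemma~\ref{lemma:et_or_ram}. By the convention fixed above, a morphism locally of finite type is \emph{ramified} precisely when it fails to be unramified; so it suffices to prove that if $\pi$ is unramified, then it is étale. For this I would apply Lemma~\ref{lemma:et_or_ram} to $\pi : Y \to X$, and the only work is to check its two hypotheses: that $X$ is geometrically unibranch and integral, and that $Y$ is equidimensional.

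The scheme $X$ is integral by assumption. It is moreover geometrically unibranch because it is normal: normality is preserved under étale localization, so each strict henselization $\mathcal{O}_{X,x}^{\mathrm{sh}}$, being a filtered colimit of normal local rings, is itself a normal local ring and hence an integral domain. By \cite[Tag~06DM]{stacks-project} this is exactly the statement that $X$ is geometrically unibranch at $x$. As for $Y$: it is integral, hence irreducible, hence has a single irreducible component, and is therefore trivially equidimensional. (The noetherian hypothesis on $X$ is not needed beyond what Lemma~\ref{lemma:et_or_ram} already uses.)

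With both hypotheses verified, Lemma~\ref{lemma:et_or_ram}, applied to the finite surjective unramified morphism $\pi : Y \to X$ to which we have reduced, shows that $\pi$ is étale; hence $\pi$ is either ramified or étale, as claimed. I do not anticipate any genuine difficulty here: the substance is entirely packaged in Lemma~\ref{lemma:et_or_ram}, and the only classical input needed is the standard fact that normal integral schemes are geometrically unibranch.
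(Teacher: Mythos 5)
Your proof is correct and follows the paper's own argument essentially verbatim: reduce to showing unramified implies étale, observe that normality of $X$ gives geometrical unibranchness, and invoke Lemma~\ref{lemma:et_or_ram}. The only cosmetic difference is that the paper simply cites \cite[Tag~0BQ3]{stacks-project} for the fact that normal schemes are geometrically unibranch, whereas you unwind that fact via strict henselizations; both are fine.
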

	\begin{proof} Assume that $\pi$ is unramified. Then, since $X$ is a normal   scheme, it is geometrically unibranch \cite[Tag~0BQ3]{stacks-project}. Therefore,  it follows from Lemma \ref{lemma:et_or_ram} that $\pi$ is \'etale, as required. 
	\end{proof}

	\subsection{Galois closures}
	 Let $\pi : Y \to X$ be a cover of normal varieties over a field $\fgField$ of characteristic zero, and  let
	$G(Y/X)$  be the automorphism group of $Y$ over $X$.
The arguments in the proof of \cite[Proposition 21.67]{MR3791837} show the following
	\begin{proposition}\label{prop:Gal1}
		The canonical homomorphism $G(Y/X) \to \operatorname{Aut}(\fgField(Y) / \fgField(X))^{\operatorname{opp}}$ is an isomorphism.  
	\end{proposition}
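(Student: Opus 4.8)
The plan is to identify the pair $(Y,\pi)$ with the normalization of $X$ in the finite field extension $\fgField(Y)/\fgField(X)$, and then to deduce the statement from the functoriality of normalization. The first step is this identification. Since $\pi$ is finite we may write $Y=\underline{\Spec}_X(\pi_*\mathcal{O}_Y)$, where $\pi_*\mathcal{O}_Y$ is a coherent sheaf of $\mathcal{O}_X$-algebras each of whose sections is integral over the corresponding section of $\mathcal{O}_X$. As $Y$ is integral with function field $\fgField(Y)$, this sheaf is a subsheaf of the constant sheaf $\fgField(Y)$; as $Y$ is normal, each of its sections is integrally closed in $\fgField(Y)$. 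Hence $\pi_*\mathcal{O}_Y$ coincides with the integral closure $\mathcal{A}$ of $\mathcal{O}_X$ in $\fgField(Y)$, that is, $(Y,\pi)$ is the normalization of $X$ in $\fgField(Y)$; here normality of $X$ is what makes $\mathcal{A}$ well behaved, whereas coherence of $\pi_*\mathcal{O}_Y$ is already ensured by finiteness of $\pi$. Note in passing that $[\fgField(Y):\fgField(X)]=\deg\pi<\infty$.

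Next I would show that $f\mapsto f^{*}$ --- which sends an $X$-morphism $f:Y\to Y$ to the induced $\fgField(X)$-algebra homomorphism $f^{*}:\fgField(Y)\to\fgField(Y)$ on function fields --- is a bijection $\Hom_{X}(Y,Y)\xrightarrow{\ \sim\ }\Hom_{\fgField(X)}\bigl(\fgField(Y),\fgField(Y)\bigr)$. The map is well defined because $f$, commuting with $\pi$, fixes the generic point of $Y$ (the unique point lying over the generic point of $X$) and hence acts on the residue field $\fgField(Y)$ there, fixing $\fgField(X)$. For injectivity: if $f^{*}=g^{*}$ then $f$ and $g$ agree at the generic point of $Y$, hence on their equalizer, which is a closed subscheme of $Y$ (as $Y$ is separated) with support all of $Y$ and therefore equals $Y$ since $Y$ is reduced; thus $f=g$. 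For surjectivity: an $\fgField(X)$-algebra endomorphism $\phi$ of $\fgField(Y)$ is injective, hence an automorphism since $[\fgField(Y):\fgField(X)]<\infty$; being $\mathcal{O}_X$-linear and integrality-preserving over $\mathcal{O}_X$, it restricts to an $\mathcal{O}_X$-algebra automorphism of $\mathcal{A}=\pi_*\mathcal{O}_Y$, and applying $\underline{\Spec}_X$ yields an $X$-morphism $\widetilde{\phi}:Y\to Y$ with $(\widetilde{\phi})^{*}=\phi$. Once this bijection is available, invertibility of each $\phi$ on the right forces --- via injectivity and surjectivity of $f\mapsto f^{*}$ --- every element of $\Hom_X(Y,Y)$ to be invertible, so $\Hom_X(Y,Y)=G(Y/X)$ and the bijection restricts to one between $G(Y/X)$ and $\Aut(\fgField(Y)/\fgField(X))$.

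Finally, passing to function fields is contravariant, $(f\circ g)^{*}=g^{*}\circ f^{*}$, so the previous bijection is an anti-isomorphism of groups, equivalently an isomorphism $G(Y/X)\xrightarrow{\ \sim\ }\Aut(\fgField(Y)/\fgField(X))^{\operatorname{opp}}$ --- and by construction this is the canonical homomorphism in the statement. I do not expect a genuine obstacle: the delicate points are the identification of $\pi_*\mathcal{O}_Y$ with $\mathcal{A}$ in the first step (where normality of $Y$ is essential) and, closely related, the use of reducedness and separatedness of $Y$ to pin an $X$-endomorphism down by its action on the generic point. The characteristic-zero hypothesis is not needed for this particular assertion; it is inherited from the standing setup and serves to guarantee separability of $\fgField(Y)/\fgField(X)$, which matters elsewhere.
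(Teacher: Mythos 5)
Your proof is correct. The paper itself gives no argument here --- it simply defers to the proof of Proposition 21.67 in G\"ortz--Wedhorn --- and your route (identifying $(Y,\pi)$ with the normalization of $X$ in $\fgField(Y)$ via $\pi_*\mathcal{O}_Y$, proving the bijection $\Hom_X(Y,Y)\cong\Hom_{\fgField(X)}(\fgField(Y),\fgField(Y))$ by generic-point/equalizer and integral-closure arguments, and then using contravariance of $f\mapsto f^*$ to land in the opposite group) is precisely the standard argument that the cited reference carries out.
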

		The cover $\pi : Y \to X$ is called \textit{Galois} if $\#G(Y/X) = \deg \pi$. In this case we also write $\operatorname{Gal}(Y/X)$ for $G(Y/X)$. 
We let $\widehat{Y}\to X$ be the normalization of $X$ in the Galois closure of $\fgField(Y)$ over $\fgField(X)$; we note that the composed cover $\widehat{Y}\to X$ (which we commonly refer to as the \textit{Galois closure of $\pi : Y\to X$}) is Galois. By Proposition \ref{prop:Gal1}, the morphism $\pi : Y \to X$ is Galois if and only if the field extension $\fgField(Y) / \fgField(X)$ is.  Moreover,  if $\pi : Y \to X$ is étale, then $\pi$ is Galois     if and only if it is Galois in the sense of \cite[\href{https://stacks.math.columbia.edu/tag/03SF}{Tag~03SF}]{stacks-project}.

	\begin{definition}
		If $\pi : Y \to X$ is  Galois   with Galois group $G=\mathrm{Gal}(Y/X)$ and   $H$ is a subgroup of $G$, we let $Y/H$ be the normalization of $X$ in $\fgField(Y)^H$, where $\fgField(Y)^H$ is the subfield of $\fgField(Y)$ fixed by $H$.  Note that $Y/H$ is a normal (integral) variety over $k$, and that one could equivalently describe $Y/H$ as the (geometric) quotient of $Y$ by $H$.
	\end{definition}

	 By  Galois theory for  $k(X)\subset k(Y)$ and Zariski's Main Theorem that a birational cover of a normal variety is an isomorphism, we have the following geometric version of   Galois correspondence.
	\begin{proposition} \label{thm:GaloisCorrespondence} 
		Assume that $\pi : Y \to X$ is a Galois cover with group $G$. There is a bijection between subgroups $H$ of $G$ and intermediate covers $Y \to Z \to X$ with $Z$ normal and integral. The correspondence is given by $H \mapsto [Y \to Y/H \to X]$.
	\end{proposition}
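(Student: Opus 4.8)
The plan is to transport the classical Galois correspondence for the field extension $k(Y)/k(X)$ into the geometric setting by means of normalization. Since $\pi$ is Galois, Proposition~\ref{prop:Gal1} identifies $G=\operatorname{Gal}(Y/X)$ with $\operatorname{Aut}(k(Y)/k(X))^{\operatorname{opp}}$; in particular $k(Y)/k(X)$ is a Galois field extension, and this (anti-)isomorphism sets up an inclusion-preserving bijection between subgroups $H\leq G$ and subgroups of $\operatorname{Aut}(k(Y)/k(X))$ (the $\operatorname{opp}$ will play no role below, since we only use the lattice of subgroups). By ordinary Galois theory, $H\mapsto k(Y)^{H}$ is then a bijection from the set of subgroups of $G$ onto the set of intermediate fields $k(X)\subseteq F\subseteq k(Y)$.

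Next I would upgrade intermediate fields to intermediate covers. Given an intermediate field $F$, let $Z_F\to X$ be the normalization of $X$ in $F$; since $X$ is a normal variety over $k$, this morphism is finite and surjective, and $Z_F$ is a normal integral variety with $k(Z_F)=F$. By transitivity of integral closure, the normalization of $Z_F$ in $k(Y)$ is again $Y$, which supplies a canonical finite surjective $X$-morphism $Y\to Z_F$; thus $Y\to Z_F\to X$ is an intermediate cover with $Z_F$ normal and integral. In the opposite direction, any intermediate cover $Y\to Z\to X$ gives the chain of fields $k(X)\subseteq k(Z)\subseteq k(Y)$. Note that, by construction, the composite $H\mapsto k(Y)^{H}\mapsto Z_{k(Y)^{H}}$ is precisely the map $H\mapsto[Y\to Y/H\to X]$ of the statement, since $Y/H$ is by definition the normalization of $X$ in $k(Y)^{H}$.

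It then remains to check that $F\mapsto Z_F$ and $Z\mapsto k(Z)$ are mutually inverse. One composite is trivial: $k(Z_F)=F$. For the other, given an intermediate cover $Y\to Z\to X$ I must identify $Z$ with $Z_{k(Z)}$ over $X$; this is exactly where Zariski's main theorem is used. Both $Z$ and $Z_{k(Z)}$ are finite over $X$ with function field $k(Z)$, so there is a natural finite birational $X$-morphism $Z_{k(Z)}\to Z$, and since $Z$ is normal a finite birational cover of it is an isomorphism. (Equivalently, $\mathcal{O}_Z$ is integrally closed in $k(Z)$ and module-finite over $\mathcal{O}_X$, hence equals the integral closure of $\mathcal{O}_X$ in $k(Z)$.) Composing the bijections ``subgroups $\leftrightarrow$ intermediate fields $\leftrightarrow$ intermediate covers'' and using the identification of the composite with $H\mapsto Y/H$ from the previous paragraph then yields the asserted correspondence.

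The argument is largely a formal translation, and I expect no serious obstacle; the two points that require genuine care are the transitivity-of-normalization step producing the factorization $Y\to Z_F$, and the uniqueness assertion that a normal integral variety finite over $X$ is determined, up to unique $X$-isomorphism, by its function field --- the latter being the place where Zariski's main theorem genuinely intervenes.
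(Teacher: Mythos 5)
Your proposal is correct and follows exactly the route the paper sketches in the sentence preceding the statement: transport the field-theoretic Galois correspondence for $k(X)\subset k(Y)$ via normalization, using transitivity of integral closure to produce the factorization and Zariski's Main Theorem (a finite birational cover of a normal variety is an isomorphism) to see that an intermediate cover is recovered from its function field. There is no gap and no divergence from the paper's argument.
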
 

	\begin{remark}\label{Rmk:straightforward}
		\begin{enumerate}[(i)]
			\item Let $\TheFieldd \subset F \subset F'$ be field extensions such that $\overline{\TheFieldd} \cap F'=\TheFieldd$ (i.e., $\overline{\TheFieldd}$ and $F'$ are linearly disjoint over $\TheFieldd$) and $F'/F$ is finite. Let $\widehat{F'}/F$ be the Galois closure of $F'/F$, let $M/F{\overline{\TheFieldd}}$ be the Galois closure of $F'{\overline{\TheFieldd}}/F{\overline{\TheFieldd}}$, and write $L = \widehat{F'} \cap \overline{\TheFieldd}$. Notice that $L$ is a finite extension of $\TheFieldd$. Then $\widehat{F'}$ is also the Galois closure of $F'L$ over $FL$, hence in particular we have $\widehat{F'} \otimes_{L} \overline{\TheFieldd} \cong \widehat{F'} \overline{\TheFieldd}  = M$.
			\item Proposition \ref{prop:Gal1} and (i) have the following immediate consequence. If $W \xrightarrow{\phi} V$ is a cover of varieties over $k$, $\widehat{W} \rightarrow V$ is the Galois closure of $\phi$, and $\Spec L /\Spec \TheFieldd$ is the normalization of $\Spec \TheFieldd$ in $ \widehat{W}$, then $\widehat{W} \times_{\Spec L} \Spec \overline{\TheFieldd}$ is the Galois closure of $W \times_\TheFieldd \overline{\TheFieldd} \xrightarrow{\phi} V\times_\TheFieldd \overline{\TheFieldd}$.
		\end{enumerate}
	\end{remark}
	
	\begin{remark}\label{Rmk:Base_change_Galois_closure}
		Let $E'/E$ be a finite separable extension, and $\widehat{E'}/E$ be its Galois closure, of Galois group $G$, and let $H$ be such that $\widehat{E'}^H = E'$. Then $\widehat{E'}\otimes_E E' \cong \oplus_{r \in G/H} \widehat{E'}$ as $\widehat{E'}$-algebras.
		%
	\end{remark}
	
	
	The following proposition and its corollary tell us how Galois closure of covers behaves under smooth base change.

	\begin{proposition}\label{Prop:Fields_Galois_closure}
		Let $F/E$ be a finite separable field extension, $E'/E$ a field extension and $\widehat{F}/F$ be the Galois closure of $F$ over $E$. Assume that $F_{E'}\defeq E' \otimes_E F$ is a field, and let $\widehat{F_{E'}}$ be the Galois closure of $E' \otimes_E F$ over $E'$. There exists a surjective morphism $E' \otimes_E \widehat{F} \rightarrow \widehat{F_{E'}}$ that restricts to the identity on $E'\otimes_E F$.
	\end{proposition}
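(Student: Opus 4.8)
The plan is to reduce the statement to the elementary theory of splitting fields. Since $F/E$ is finite separable, by the primitive element theorem I may write $F = E(\alpha)$; let $p \in E[x]$ be the minimal polynomial of $\alpha$ over $E$, which is separable of degree $[F:E]$. Then $\widehat{F}$ is exactly the splitting field of $p$ over $E$: it is generated over $E$ by the roots of $p$, and it is Galois over $E$ because $p$ is separable. Under the canonical identification $E' \otimes_E F \cong E'[x]/(p)$ sending $1 \otimes \alpha$ to $x$, the hypothesis that $F_{E'} = E' \otimes_E F$ is a field says precisely that $p$ is irreducible over $E'$; consequently $F_{E'} = E'(\beta)$ where $\beta$ is the image of $1 \otimes \alpha$, and $\widehat{F_{E'}}$ is the splitting field of $p$ over $E'$.

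Next I would fix an algebraic closure $\overline{E'}$ of $E'$ and an embedding $\widehat{F_{E'}} \hookrightarrow \overline{E'}$, so that $\widehat{F_{E'}} = E'(\beta_1,\dots,\beta_n)$ where $\beta_1,\dots,\beta_n \in \overline{E'}$ are the roots of $p$, with $\beta = \beta_1$. Let $N = E(\beta_1,\dots,\beta_n) \subset \overline{E'}$ be the splitting field of $p$ over $E$; since $N$ contains all roots of $p$ one has $E' \cdot N = E'(\beta_1,\dots,\beta_n) = \widehat{F_{E'}}$. Because $p$ is irreducible over $E$ with roots $\alpha$ and $\beta_1$, there is an $E$-isomorphism $E(\alpha) \xrightarrow{\sim} E(\beta_1)$ carrying $\alpha$ to $\beta_1$, and, $\overline{E'}$ being algebraically closed, it extends to an $E$-embedding $\iota : \widehat{F} \hookrightarrow \overline{E'}$. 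Since $\widehat{F}/E$ is normal, the image $\iota(\widehat{F})$ is the unique splitting subfield of $p$ over $E$ inside $\overline{E'}$, independent of the choice of extension; that is, $\iota(\widehat{F}) = N$.

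The morphism I want is then $\Phi : E' \otimes_E \widehat{F} \to \overline{E'}$, $e' \otimes w \mapsto e'\,\iota(w)$. It is an $E'$-algebra homomorphism, and its image is the compositum $E' \cdot \iota(\widehat{F}) = E' \cdot N = \widehat{F_{E'}}$, so $\Phi$ is a surjection onto $\widehat{F_{E'}}$. Restricting $\Phi$ to the subalgebra $E' \otimes_E F$ gives $e' \otimes f \mapsto e'\,\iota(f)$; since $\iota(F) = E(\beta_1)$ and $\iota(\alpha) = \beta_1 = \beta$, this is exactly the canonical inclusion $E' \otimes_E F = F_{E'} \hookrightarrow \widehat{F_{E'}}$. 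Hence $\Phi$ restricts to the identity on $E' \otimes_E F$, which is what is required.

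There is essentially no hard step here; the only point that demands care is the compatibility on $E' \otimes_E F$, which is why one must choose $\iota$ with $\iota(\alpha) = \beta$ and then invoke normality of $\widehat{F}/E$ to know that $\iota(\widehat{F})$ is unambiguous. If one prefers a primitive-element-free formulation: $E' \otimes_E \widehat{F}$ is a finite \'etale $E'$-algebra, hence a finite product of finite separable field extensions of $E'$; since $E' \otimes_E F$ is a field it maps injectively into a single factor $L$, and $L$ is generated over $E'$ by roots of $p$ and contains all of them, so $L$ is a splitting field of $p$ over $E'$ containing $F_{E'}$ --- i.e.\ a copy of $\widehat{F_{E'}}$ --- and the projection $E' \otimes_E \widehat{F} \to L$ is the desired surjection.
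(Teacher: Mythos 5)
Your proof is correct and follows essentially the same route as the paper's: both constructions produce the map $E'\otimes_E\widehat{F}\to\widehat{F_{E'}}$ from an $E$-embedding $\widehat{F}\hookrightarrow\widehat{F_{E'}}$ compatible with the inclusion of $F$, and both establish surjectivity by observing that the image contains $E'$ together with the roots of the separable polynomial whose splitting fields over $E$ and over $E'$ are $\widehat{F}$ and $\widehat{F_{E'}}$ respectively. Your explicit use of the primitive element theorem and a fixed algebraic closure $\overline{E'}$ makes transparent the paper's slightly terse invocation of normality to obtain the compatible embedding $\widehat{F}\hookrightarrow\widehat{F_{E'}}$, but the underlying idea is the same.
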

	\begin{proof}
		There is a canonical embedding $\iota:F \hookrightarrow E' \otimes_E F \hookrightarrow\widehat{F_{E'}}$ that is the identity on $E$. The field $\widehat{F_{E'}}$ is normal over $E'$, and contains $F$ (through the embedding $\iota$). Hence there exists an embedding $\widehat{F}\hookrightarrow\widehat{F_{E'}}$ which restricts to the identity on $E$.
		Consider the morphism $\varphi : E' \otimes_E \widehat{F} \rightarrow \widehat{F_{E'}}$ which, by the universal property of the tensor product, is induced by the embedding $E'\hookrightarrow E' \otimes_E F \hookrightarrow\widehat{F_{E'}}$ and the natural embedding $\widehat{F} \hookrightarrow\widehat{F_{E'}}$.  The field $\widehat{F}$ is generated over $E$ by the roots $\alpha_1,\ldots,\alpha_n$ of a separable polynomial with coefficients in $E$, and these same roots also generate $\widehat{F_{E'}}$ over $E'$. Since both $E'$ and $\alpha_1,\ldots,\alpha_n$ are contained in the image of $\varphi$, this shows that $\varphi$ is surjective as desired.
	\end{proof}
	
	\begin{corollary}\label{Cor:covers_Galois_closure}
		Let $Z \rightarrow X$ be a cover of normal varieties over $k$, let $X'\to X$ be a smooth morphism of varieties over $k$, and let  $\widehat{Z} \rightarrow Z \rightarrow X$ be the Galois closure of $Z\to X$.  Then  $Z' \defeq Z \times_X X'$ and $\widehat{Z} \times_X X'$ are normal. Assume that $Z' \defeq Z \times_X X'$ is connected. Then $Z'$ is an integral normal scheme and, if  $Z''\rightarrow Z' \rightarrow X'$ is the Galois closure of $Z' \rightarrow X'$,  then there is an open and closed embedding $Z'' \rightarrow \widehat{Z} \times_X X'$ that commutes with projection to $Z'$.
	\end{corollary}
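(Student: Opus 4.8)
The plan is to deduce everything from Proposition \ref{Prop:Fields_Galois_closure} together with standard facts about normality and Galois closures, using Zariski's Main Theorem to recognize the relevant normalizations. First I would address normality: since $X' \to X$ is smooth, base change along it preserves normality, so $Z' = Z\times_X X'$ and $\widehat{Z}\times_X X'$ are normal because $Z$ and $\widehat{Z}$ are; I would cite the standard result that smooth morphisms preserve (geometric) normality (e.g.\ \cite[Tag~034F]{stacks-project} or the fact that smooth morphisms have geometrically regular, hence normal, fibres and normality descends/ascends along them). Now assume $Z'$ is connected. A connected normal scheme of finite type over a field is integral, so $Z'$ is an integral normal variety; thus $\fgField(Z')$ is a field, and one can speak of its Galois closure over $\fgField(X')$.

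Next I would work at the level of function fields. Write $E = \fgField(X)$, $F = \fgField(Z)$, $E' = \fgField(X')$, so that $\widehat{F} = \fgField(\widehat{Z})$ is the Galois closure of $F/E$, and $F_{E'} = E'\otimes_E F$ is the function field of $Z'$ (here I use that $Z' = Z\times_X X'$ is integral, so its function field is indeed the fraction field of $E'\otimes_E F$, which since $Z'$ is connected and the morphism is nice is already this tensor product localized appropriately — the key point being that $\fgField(Z') \supset F$ and $\supset E'$ and is generated by them). Apply Proposition \ref{Prop:Fields_Galois_closure}: there is a surjection $E'\otimes_E \widehat{F}\to \widehat{F_{E'}}$ restricting to the identity on $F_{E'}$. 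Hence $\widehat{F_{E'}}$ is a quotient field of $E'\otimes_E\widehat{F}$, i.e.\ it is the function field of one of the connected components of $\widehat{Z}\times_X X'$; equivalently, there is a rational dominant map, and in fact a morphism on a suitable open, from a component of $\widehat{Z}\times_X X'$ to $Z''$ over $Z'$.

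Finally I would upgrade this to the claimed open and closed embedding of schemes. Let $W$ be the connected component of $\widehat{Z}\times_X X'$ through which the rational map $\widehat{Z}\times_X X' \dashrightarrow Z''$ dominates (i.e.\ the component with function field $\widehat{F_{E'}}$). Since $\widehat{Z}\times_X X'$ is normal, $W$ is an integral normal variety, and $W\to X'$ factors through $Z''\to X'$ at least birationally; moreover $W\to Z''$ is a cover of normal varieties (finite because $\widehat{Z}\to X$ and $Z''\to X'$ are finite, dominant because the function fields agree), and it is birational by construction. By Zariski's Main Theorem (used exactly as in Proposition \ref{thm:GaloisCorrespondence}), a birational cover of a normal variety is an isomorphism, so $W \xrightarrow{\sim} Z''$. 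This isomorphism, composed with the inclusion $W \hookrightarrow \widehat{Z}\times_X X'$ of a connected component (which is an open and closed immersion since $\widehat{Z}\times_X X'$ is a normal, hence locally connected, noetherian scheme), gives the desired open and closed embedding $Z''\to \widehat{Z}\times_X X'$; it commutes with the projection to $Z'$ by construction, since every map in sight was taken over $Z'$ (equivalently, over $X'$ and compatible with $F_{E'}$).

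The main obstacle I anticipate is the bookkeeping in the middle step: one must be careful that $Z'$ being connected (hence integral) really does force $E'\otimes_E F$ to be the function field of $Z'$ and not merely a subring of it — this uses that $Z\to X$ is finite so that $Z' \to X'$ is finite, whence $\fgField(Z')$ is algebraic over $E'$ and, being generated by $F$ and $E'$, equals their compositum $F_{E'}$ inside a suitable algebraic closure; and one must check the hypothesis ``$F_{E'}$ is a field'' of Proposition \ref{Prop:Fields_Galois_closure}, which is exactly the assumption that $Z'$ is irreducible. The rest is a routine application of Zariski's Main Theorem and descent of normality along smooth morphisms.
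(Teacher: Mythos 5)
Your proposal is correct and follows essentially the same route as the paper: normality of $Z'$ and $\widehat{Z}\times_X X'$ via \cite[Tag~034F]{stacks-project}, integrality from connectedness plus normality, identification of the function fields $E=k(X)$, $E'=k(X')$, $F=k(Z)$, $k(Z')=E'\otimes_E F$, and then Proposition \ref{Prop:Fields_Galois_closure} to realize $k(Z'')=\widehat{F_{E'}}$ as the function field of a connected component of $\widehat{Z}\times_X X'$. The only cosmetic difference is that you invoke Zariski's Main Theorem to identify that component with $Z''$, where the paper appeals to the universal property of normalization; these are interchangeable here, and your extra care about why $E'\otimes_E F$ is actually a field equal to $k(Z')$ is a welcome addition.
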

	\begin{proof} By \cite[Tag 034F]{stacks-project}, the schemes  $Z'$ and $\widehat{Z} \times_X X'$ are normal. In particular, if $Z'$ is connected, then it is integral (as it is connected and normal).
		Define $$E \defeq k(X), \quad E' \defeq k(X'),\quad  F \defeq k(Z).$$  Note that $$k(\widehat{Z})=\widehat{F}, \quad k(Z')=E' \otimes_E F, \quad k(Z'')=\widehat{F_{E'}}. $$ The result now follows from  Proposition \ref{Prop:Fields_Galois_closure} and the universal property of normalization. 
	\end{proof}

	%

	\subsection{Action on the fibres}    
	Let $X$ be a  normal  variety over a field $\fgField$. 
	Let $G$ be a finite group and let $\phi:Y \rightarrow X$ be an étale (right) $G$-torsor over $X$, so that $Y$ is also normal. In this section we discuss various properties of the action of $G$ on the fibres of $\phi$.
	Let $\overline{y_0}: \spec \overline{\fgField}\rightarrow Y$ be a geometric point of $Y$ and let $\overline{x_0} = \phi \circ \overline{y_0}$ be the corresponding geometric point of $X$.

	
	\paragraph{The left $G$-action.}
	
	There is a left $G$-action  on $Y_{\overline{x_0}}$, defined as  follows:
	
	\begin{equation}\label{Eq:action}
	\begin{matrix}
	G \times Y_{\overline{x_0}} & \rightarrow & Y_{\overline{x_0}}  \\
	(g, \overline{y_0}\cdot g') & \mapsto & \overline{y_0}\cdot g \cdot g'
	\end{matrix}
	\end{equation}
	This induces a morphism $\iota_{\overline{y_0}}:G \rightarrow \Aut(Y_{\overline{x_0}})$. It is straightforward to check that $\iota_{\overline{y_0}}(G)$ consists of the group $\Aut_G(Y_{\overline{x_0}})$ of all automorphisms of $Y_{\overline{x_0}}$ that commute with the right $G$-action.

	\paragraph{Decomposition group.}

	There is a natural left $\Gamma_{\fgField(x_0)}$-action on $Y_{\overline{x_0}}$ given by
	\begin{equation}\label{Eq:action_galois}
	\begin{matrix}
	\Gamma_{\fgField(x_0)} \times Y_{\overline{x_0}} & \rightarrow & Y_{\overline{x_0}}  \\[0.2cm]
	(\gamma, \overline{y'}) & \mapsto & {\leftidx{^\gamma}{\overline{y'}}{}}
	\end{matrix} ,
	\end{equation}
	where ${\leftidx{^\gamma}{\overline{y'}}{}}$ denotes the composition $\spec \overline{\fgField(x_0)} \xrightarrow{\gamma} \spec \overline{\fgField(x_0)} \xrightarrow{\overline{y'}} Y$.
	 Since the $G$-action is defined over $k$, the action \eqref{Eq:action_galois} commutes with the right $G$-action, hence yields a morphism $\Gamma_{\fgField(x_0)} \rightarrow  \Aut_G(Y_{\overline{x_0}})$. Composing with the inverse of the isomorphism $\iota_{\overline{y_0}} : G \to \Aut_G(Y_{\overline{x_0}})$ we get a morphism
	\[
	\mathfrak{D}_{\overline{y_0}}: \Gamma_{\fgField(x_0)} \rightarrow  G
	\]
	called the \textit{decomposition morphism} of $\overline{y_0}$ under $\phi$. The image of $\mathfrak{D}_{\overline{y_0}}$ is called the \textit{decomposition group} of $\overline{y_0}$ and denoted by $D_{\overline{y_0}}$. Finally, if $P$ is an $L$-rational point of $X$ for some field $\fgField \subseteq L \subseteq \overline{k}$, a decomposition group of $P$ under $\phi$ is any subgroup of the form $D_{\overline{y_0}}$ for some geometric point $\overline{y_0}$ of $Y$ whose image in $X(\overline{\fgField})$ is the geometric point corresponding to $P$.
	
	\begin{remark}
		Note that the morphism $\mathfrak{D}_{\overline{y_0}}$ is the unique morphism that sends $\gamma \in \Gamma_{\fgField(x_0)}$ to the unique element $g \in G$ such that ${\leftidx{^\gamma}{\overline{y_0}}{}} =  \overline{y_0} \cdot g$.
	\end{remark}
	
	\paragraph{Compatibility with subcovers.}
	Let $Y_1 \xrightarrow{\phi_1} Y_2 \xrightarrow{\phi_2} X$ be finite \'etale morphisms of $\fgField$-schemes of finite type such that  the composition $\phi \defeq \phi_2 \circ \phi_1$ is an étale $G$-torsor. Suppose that    $Y_2 = Y_1/H$ for some subgroup $H\subset G$.  (By  Proposition \ref{thm:GaloisCorrespondence}, the latter is automatically satisfied   if $Y_1$ and  $Y_2$  are integral normal schemes.)
		Let again $\overline{y_0}$ be a geometric point of $Y_1$ and $\overline{x_0}$ be its image in $X$. By our discussion of the left $G$-action,   we have a commutative diagram:
	\begin{equation}\label{Eq:commutativity}
	\begin{tikzcd}
	G \times \phi^{-1}(\overline{x_0})  \arrow[rr] \arrow[d, "{(\id,\phi_1)}"] &  & \phi^{-1}(\overline{x_0}) \arrow[d, "\phi_1"] \\
	G \times\phi_2^{-1}(\overline{x_0}) \arrow[rr]                             &  & \phi_2^{-1}(\overline{x_0})                  
	\end{tikzcd},
	\end{equation}
	where the map in the upper row is the left $G$-action \eqref{Eq:action}. 
	
	Observe that $\phi_2^{-1}(\overline{x_0})$ is isomorphic to $G/H$, with the isomorphism preserving the left $G$-action.
	The commutativity of \eqref{Eq:commutativity} implies the following lemma.
	
	\begin{lemma}\label{Rmk:indipendence_of_the_point}	
		The fibre $\phi_2^{-1}(\overline{x_0})$ contains no $\fgField$-rational points if and only if the decomposition group $D_{\overline{y_0}}$ acts with no fixed points on $\phi_2^{-1}(\overline{x_0})$. Moreover, $D_{\overline{y_0}}$ acts with no fixed points on $\phi_2^{-1}(\overline{x_0})$ if and only if $D_{\overline{y_0}} \subset G$ acts with no fixed points on $G/H$. \qed
	\end{lemma}


	%

The irreducibility of a fibre of a Galois cover is equivalent to the absence of rational points on the fibres of certain subcovers, as is shown implicitly in  \cite[Proposition 3.3.1]{Serre}.
	\begin{proposition} 
		\label{Prop:irreducibility_fibre}
		Let $\pi : Y \to X$ be a Galois cover with group $G$, and let $x \in X(\fgField)$ be a rational point. Suppose that $\pi$ is étale at $x$. The scheme  $Y_x$ is reducible over $\fgField$ if and only if there is a subgroup $H \subsetneq G$ such that the fibre of $Y/H \to X$ over $x$ has a $\fgField$-rational point.
	\end{proposition}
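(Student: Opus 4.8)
The plan is to restrict to the locus where $\pi$ is étale, bring in the decomposition-group formalism of the previous subsection, and then reformulate each side of the asserted equivalence as a property of the decomposition group $D$ of $x$.

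First I would let $U\subseteq X$ be the dense open locus of points over which $\pi$ is étale; by hypothesis $x\in U$. The restriction $\phi\defeq\restricts{\pi}{Y_U}\colon Y_U\to U$ is then a finite étale cover of normal varieties with $Y_U$ integral, and applying Proposition~\ref{prop:Gal1} to $Y_U\to U$ gives $\#\Aut(Y_U/U)=\#\Aut(\fgField(Y)/\fgField(X))=\#G=\deg\phi$, so $\phi$ is an étale torsor under $G$ (which side the $G$-action is on will be immaterial). Since $x$ is $\fgField$-rational, its image point has residue field $\fgField$, so $\Gamma_{\fgField(x)}=\Gamma_\fgField$; I would fix a geometric point $\overline{y_0}$ of $Y_U$ above the geometric point $\overline{x_0}$ attached to $x$, and set $D\defeq D_{\overline{y_0}}=\mathfrak{D}_{\overline{y_0}}(\Gamma_\fgField)\subseteq G$. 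Here $D$ depends on $\overline{y_0}$ only up to conjugacy, but the two conditions below are conjugation-invariant, so this is harmless.

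Next I would prove the two equivalences, each time reducing the relevant side to ``$D\subsetneq G$''. For the left-hand side: $Y_x$ is a finite étale $\fgField$-scheme, so its irreducible ($=$ connected) components correspond to the $\Gamma_\fgField$-orbits on $Y_x(\overline{\fgField})=\phi^{-1}(\overline{x_0})$. The point $\overline{y_0}$ identifies $\phi^{-1}(\overline{x_0})$ with $G$ via $\overline{y_0}\cdot g\mapsto g$, and by the characterization ${\leftidx{^\gamma}{\overline{y_0}}{}}=\overline{y_0}\cdot\mathfrak{D}_{\overline{y_0}}(\gamma)$ of $\mathfrak{D}_{\overline{y_0}}$ together with the fact that the $G$-action is defined over $\fgField$, the group $\Gamma_\fgField$ acts on $G$ by left translation through $\mathfrak{D}_{\overline{y_0}}$; hence the orbits are the cosets $Dg$ and $Y_x$ is irreducible iff $D$ acts transitively, i.e. iff $D=G$. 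For the right-hand side: for any $H\subseteq G$ the intermediate cover $Y/H\to X$ is finite étale over $U$ with $\restricts{(Y/H)}{U}\cong Y_U/H$, and by the discussion of compatibility with subcovers (diagram~\eqref{Eq:commutativity}) the fibre $(Y/H)(\overline{x_0})$ is isomorphic to $G/H$ compatibly with the left $G$-action, hence with $D$ acting on $G/H$ by left translation; by Lemma~\ref{Rmk:indipendence_of_the_point}, $(Y/H)_x$ has an $\fgField$-rational point iff $D$ fixes some coset $gH$, i.e. iff $g^{-1}Dg\subseteq H$ for some $g\in G$. If $D\subsetneq G$ one takes $H=D$, $g=e$; conversely $g^{-1}Dg\subseteq H\subsetneq G$ forces $D\subseteq gHg^{-1}\subsetneq G$. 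Combining the two claims, both sides of the statement are equivalent to $D\subsetneq G$, proving the proposition.

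I do not expect a real obstacle: the argument is essentially bookkeeping with the decomposition group. The two points that require care are (i) the reduction to the étale $G$-torsor $\phi\colon Y_U\to U$, so that $\mathfrak{D}_{\overline{y_0}}$ and $D$ are defined at $x$, and (ii) the identification, as $\Gamma_\fgField$-sets, of $Y_x$ with $G$ and of $(Y/H)_x$ with $G/H$, each carrying the left-translation action of $D$; both follow directly from Proposition~\ref{prop:Gal1}, diagram~\eqref{Eq:commutativity}, and Lemma~\ref{Rmk:indipendence_of_the_point}. One should also note that the proper containment $H\subsetneq G$ in the statement is essential, since $Y/G\cong X$ and $(Y/G)_x$ always has an $\fgField$-point — consistently, $D$ always fixes the unique coset of $G/G$.
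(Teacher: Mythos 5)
Your proof is correct. The paper does not supply its own argument, instead citing Serre's Proposition 3.3.1, but the decomposition-group machinery you invoke — the $G$-torsor restriction over the étale locus, the identification of $Y_x(\overline{\fgField})$ with $G$ carrying the left-translation action of $D=\mathfrak{D}_{\overline{y_0}}(\Gamma_\fgField)$, and Lemma~\ref{Rmk:indipendence_of_the_point} for the subcovers $Y/H$ — is exactly what the paper sets up in §2.5 for this purpose, and the reduction of both sides to the condition $D\subsetneq G$ is the standard argument underlying Serre's cited proposition.
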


	\paragraph{Specialization.}
	From now on, we assume that  $\fgField$ is a number field $\TheField$. Given an \'etale $G$-torsor $\phi:Y\to X$ as before and a place $v \in M_{\TheField}^{\operatorname{fin}}$, we say that a point $x_0 \in X(K_v)$ is of \textit{good reduction} for $\phi$ if there exists an étale $G$-torsor $\psi:\mathcal{Y}\rightarrow \mathcal{X}$ over $\Spec \mathcal{O}_{\TheField_v}$ extending $\phi$ 
	such that $x_0$ extends to a morphism $\Spec \mathcal{O}_{K_v} \rightarrow \mathcal{X}$, where $\mathcal{O}_{K_v}$ is the ring of integers of $K_v$. We fix a geometric point $\overline{y_0} \in Y$ lying over $x_0$,
and a commutative diagram
	\begin{equation}\label{Eq:Specialization_diagram}
	\xymatrix{
		\Spec \overline{\mathbb{F}_v}\ar[rd] \ar[rrd]^{\overline{y_0}_v} \\
		& \Spec \mathcal{O}_{K_v^{\operatorname{ur}}} \ar[r] & \mathcal{Y} \\
		\Spec \overline{\TheField_v} \ar[ru] \ar[rru]_{\overline{y_0}}
	}
	\end{equation}
	where $\mathcal{O}_{K_v^{\operatorname{ur}}}$ denotes the ring of integers of the maximal unramified extension $K_v^{\operatorname{ur}} \subset \overline{\TheField_v}$ of $\TheField_v$.
	Diagram \eqref{Eq:Specialization_diagram} induces
	morphisms
	\[
	\begin{tikzcd}[row sep=small]
	\widehat{\zee} \cong\Gamma_{\F_v}  \arrow[rd, hook] \\
	& \Gal(K_v^{\operatorname{ur}}/K_v) \\
	\Gamma_{\TheField_v} \arrow[ru, two heads]                        &               
	\end{tikzcd}
	\]
	{where the isomorphism $\widehat{\mathbb{Z}} \cong \Gamma_{\mathbb{F}_v}$ sends the topological generator $1 \in \widehat{\mathbb{Z}}$ to the Frobenius $x \mapsto x^{\# \mathbb{F}_v}$.}
	
	\begin{proposition}\label{Prop:Diagram6}
		There exists a morphism $\Gal(K_v^{\operatorname{ur}}/K_v) \to G$ such that the following diagram commutes:
		\begin{equation}\label{Eq:Frobenius_compatibility}
		\begin{tikzcd}
		\widehat{\zee} \cong \Gamma_{\F_v} \arrow[rd, hook] \arrow[rrd, "\mathfrak{D}_{{\overline{y_{0}}_v}}"] &                          &   \\
		& \Gal(K_v^{\operatorname{ur}}/K_v) \arrow[r] & G \\
		\Gamma_{\TheField_v} \arrow[ru, two heads] \arrow[rru, "\mathfrak{D}_{{\overline{y_0}}}"']                          &                          &  
		\end{tikzcd}.
		\end{equation}
	\end{proposition}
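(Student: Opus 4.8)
The plan is to produce a single homomorphism $\rho\colon\Gal(K_v^{\operatorname{ur}}/K_v)\to G$ — the monodromy of the finite étale $G$-torsor that $\psi$ pulls back to over $\Spec\mathcal{O}_{K_v}$ along the integral point $x_0$ — and then to check that both $\mathfrak{D}_{\overline{y_0}}$ and $\mathfrak{D}_{\overline{y_0}_v}$ are obtained from $\rho$ by restriction along the two maps into $\Gal(K_v^{\operatorname{ur}}/K_v)$ already appearing in \eqref{Eq:Specialization_diagram}.

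First I would pull back. Let $s\colon\Spec\mathcal{O}_{K_v}\to\mathcal{X}$ be the section extending $x_0$ granted by the good reduction hypothesis, and set $\mathcal{T}\defeq\mathcal{Y}\times_{\mathcal{X},s}\Spec\mathcal{O}_{K_v}$; since $\psi$ is an étale $G$-torsor, so is $\mathcal{T}\to\Spec\mathcal{O}_{K_v}$. Because $\psi$ extends $\phi$ and $s$ extends $x_0$, the generic fibre $\mathcal{T}_{K_v}$ is canonically the fibre $Y_{x_0}$ of $\phi$ over $x_0$, i.e.\ the finite étale $G$-torsor over $\Spec K_v$ whose $\Gamma_{K_v}=\Gamma_{K(x_0)}$-action is by definition recorded by $\mathfrak{D}_{\overline{y_0}}$; likewise the special fibre $\mathcal{T}_{\F_v}$ is the fibre over $x_0\bmod v$ of the special fibre $\psi_{\F_v}\colon\mathcal{Y}_{\F_v}\to\mathcal{X}_{\F_v}$ of $\psi$, whose $\Gamma_{\F_v}$-action is recorded by $\mathfrak{D}_{\overline{y_0}_v}$. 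Moreover, because $\mathcal{X}$ is separated and $\Spec\mathcal{O}_{K_v^{\operatorname{ur}}}$ is reduced with dense generic point, the composition of the middle arrow $\Spec\mathcal{O}_{K_v^{\operatorname{ur}}}\to\mathcal{Y}$ of \eqref{Eq:Specialization_diagram} with $\psi$ agrees with $s$, so that arrow factors through a point $\widetilde{y_0}\in\mathcal{T}(\mathcal{O}_{K_v^{\operatorname{ur}}})$ restricting to $\overline{y_0}$ over $\overline{K_v}$ and to $\overline{y_0}_v$ over $\overline{\F_v}$.

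Next I would build $\rho$. As $\mathcal{O}_{K_v^{\operatorname{ur}}}$ is strictly henselian, $\mathcal{T}\times_{\mathcal{O}_{K_v}}\mathcal{O}_{K_v^{\operatorname{ur}}}$ is a disjoint union of copies of $\Spec\mathcal{O}_{K_v^{\operatorname{ur}}}$ permuted simply transitively by the right $G$-action, and $\widetilde{y_0}$ singles out the component indexed by $e\in G$, thereby identifying this $G$-set with $G$. Since the $G$-action is defined over $\mathcal{O}_{K_v}$, the residual action of $\Gal(K_v^{\operatorname{ur}}/K_v)=\operatorname{Aut}(\mathcal{O}_{K_v^{\operatorname{ur}}}/\mathcal{O}_{K_v})$ on $\mathcal{T}\times_{\mathcal{O}_{K_v}}\mathcal{O}_{K_v^{\operatorname{ur}}}$ commutes with it, hence permutes the components by left translation; I define $\rho(\sigma)\in G$ by $\leftidx{^{\sigma}}{\widetilde{y_0}}{}=\widetilde{y_0}\cdot\rho(\sigma)$. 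One checks as usual that $\rho$ is a group homomorphism — right translation by $\rho(\tau)$, being defined over $\mathcal{O}_{K_v}$, commutes with the $\Gal$-action, whence $\rho(\sigma\tau)=\rho(\sigma)\rho(\tau)$, exactly as in the Remark following the definition of $\mathfrak{D}_{\overline{y_0}}$. (Equivalently, $\rho$ is the classifying homomorphism of the finite étale $G$-torsor $\mathcal{T}$ under the canonical isomorphism $\pi_1(\Spec\mathcal{O}_{K_v},\overline{y_0}_v)\cong\Gal(K_v^{\operatorname{ur}}/K_v)$, valid for the henselian local ring $\mathcal{O}_{K_v}$.)

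Finally I would verify that $\rho$ makes \eqref{Eq:Frobenius_compatibility} commute. For the lower triangle: base-changing the trivialization to $\overline{K_v}$ identifies the geometric fibre of $Y_{x_0}$ with $G$ and $\widetilde{y_0}$ with $\overline{y_0}$; since every component is already defined over $K_v^{\operatorname{ur}}$, the $\Gamma_{K_v}$-action \eqref{Eq:action_galois} on this finite set factors through the canonical surjection $\Gamma_{K_v}\twoheadrightarrow\Gal(K_v^{\operatorname{ur}}/K_v)$ and is there given by $\rho$, so comparing with $\leftidx{^{\gamma}}{\overline{y_0}}{}=\overline{y_0}\cdot\mathfrak{D}_{\overline{y_0}}(\gamma)$ yields $\mathfrak{D}_{\overline{y_0}}=\rho\circ(\Gamma_{K_v}\twoheadrightarrow\Gal(K_v^{\operatorname{ur}}/K_v))$. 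For the upper triangle: base-changing to $\overline{\F_v}$ identifies the geometric fibre of $\mathcal{T}_{\F_v}$ with $G$ and $\widetilde{y_0}$ with $\overline{y_0}_v$, and the $\Gamma_{\F_v}$-action is $\rho$ precomposed with the map $\widehat{\zee}\cong\Gamma_{\F_v}\hookrightarrow\Gal(K_v^{\operatorname{ur}}/K_v)$ of \eqref{Eq:Specialization_diagram} — which is precisely the canonical isomorphism $\Gal(\overline{\F_v}/\F_v)\cong\Gal(K_v^{\operatorname{ur}}/K_v)$ — giving $\mathfrak{D}_{\overline{y_0}_v}=\rho\circ(\widehat{\zee}\cong\Gamma_{\F_v}\hookrightarrow\Gal(K_v^{\operatorname{ur}}/K_v))$, so $\rho$ is the sought morphism. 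The one point requiring genuine care — though it is not deep — will be the bookkeeping here: checking that the identifications $\mathcal{T}_{K_v}\cong Y_{x_0}$ and $\mathcal{T}_{\F_v}\cong\psi_{\F_v}^{-1}(x_0\bmod v)$ carry, through $\widetilde{y_0}$, the left $G$-actions \eqref{Eq:action} used to define $\mathfrak{D}_{\overline{y_0}}$ and $\mathfrak{D}_{\overline{y_0}_v}$ to the translation action above, while keeping the left-versus-right actions and the opposite-group conventions of Proposition \ref{prop:Gal1} consistent. At heart the proposition is nothing but the classical compatibility, underlying the Chebotarev density theorem, between the arithmetic Frobenius on the special fibre of an unramified cover and the decomposition subgroup on its generic fibre.
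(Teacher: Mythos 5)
Your proof is correct and follows exactly the same route as the paper's, only spelled out in full. The paper's entire proof is the one-line observation that the fibres of $\psi : \mathcal{Y} \to \mathcal{X}$ over $\psi(\overline{y_0})$ and over $\psi(\overline{y_0}_v)$ are both identified with $G$, hence with each other, $G$-equivariantly. Your construction of $\mathcal{T}=\mathcal{Y}\times_{\mathcal{X},s}\Spec\mathcal{O}_{K_v}$, the integral point $\widetilde{y_0}\in\mathcal{T}(\mathcal{O}_{K_v^{\operatorname{ur}}})$, and the homomorphism $\rho$ classifying $\mathcal{T}$ under $\pi_1(\Spec\mathcal{O}_{K_v})\cong\Gal(K_v^{\operatorname{ur}}/K_v)$ is precisely the content of that identification, and your verification of the two triangles is the routine unwinding the paper elides.
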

	\begin{proof}
		The fibres of $\psi : \mathcal{Y} \to \mathcal{X}$ over $\psi(\overline{y_0})$ and over $\psi(\overline{y_0}_v)$ are both identified with $G$, hence with each other, in a $G$-equivariant way.
	\end{proof}
	
	
	The image of $1 \in \widehat{\zee}$ under $\mathfrak{D}_{{\overline{y_0}_v}}$ is known as the Frobenius element of $\overline{y_0}_v$, and is denoted by $\Fr_{\phi,\overline{y_0}_v}$. We also use the notation $\Fr_{\phi,\overline{y_0}}\defeq \Fr_{\phi,\overline{y_0}_v}$; note that this is well-defined.
	If $\overline{y}^1_v$ and $\overline{y}^2_v$ lie above the same point $x_v \in \mathcal{X}(\F_v)$, and $\overline{y}^2_v=\overline{y}^1_v \cdot g$, then we have that $\Fr_{\phi,\overline{y}^2_v}=g^{-1}\cdot\Fr_{\phi,\overline{y}^1_v}\cdot g$. In particular, the conjugacy class of 
	$\Fr_{\phi,\overline{y_0}_v}$ depends only on the base point $x_v$. 
	So, when there is no risk of confusion, we also use the notation $\Fr_{x_v}$ to indicate the Frobenius element $\Fr_{\phi,\overline{y_0}_v}$ of any geometric point $\overline{y_0}_v$ above $x_v$. If $x_0 \in X(K_v)$ reduces to $x_v$ in $\mathcal{X}({\F_v})$, we also use the notation $\Fr_{x_0}\defeq \Fr_{x_v}$.

	\begin{proposition}\label{Prop:GoverH_frobenius}
		Let $\phi:Y \rightarrow X$ be an étale cover of (normal) varieties over $\TheField$, whose Galois closure $\widehat{Y}\rightarrow X$ has Galois group $G$, and let $H \subset G$ be such that $Y \cong \widehat{Y}/H$ as $X$-covers. Let $v$ be a finite place of $\mathcal{O}_{\TheField}$ and $x_0 \in X(K_v)$ be a point with good reduction for $\phi$.
		Then, $\phi^{-1}(x_0)(K_v) \neq \emptyset$ if and only if $\Fr_{{x_0}}$ acts on $G/H$ with at least one fixed point (note that, for $g \in G$, the condition that $g$ acts with at least one fixed point on $G/H$ depends only on the conjugacy class of $g$).
	\end{proposition}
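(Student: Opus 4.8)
The plan is to pass, via the machinery of this section, from deciding whether $\phi^{-1}(x_0)$ has a $K_v$-point to a purely group-theoretic statement about the decomposition group acting on $G/H$, and then to identify that decomposition group with the cyclic group generated by $\Fr_{x_0}$ using good reduction. Here ``good reduction for $\phi$'' is understood as good reduction for the Galois closure $\widehat{Y}\to X$; recall that $\widehat{Y}\to X$ is finite \'etale (the Galois closure of an \'etale cover is \'etale) and that the intermediate cover $\widehat{Y}\to Y$ is then unramified, hence \'etale by Lemma \ref{lemma:EtaleOrRamified}, so that $\widehat{Y}\to X$ is an \'etale $G$-torsor to which the constructions above apply. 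Thus we are given an \'etale $G$-torsor $\widehat{\psi}:\widehat{\mathcal{Y}}\to\mathcal{X}$ over $\Spec\mathcal{O}_{K_v}$ extending $\widehat{Y}\to X$, together with a section of $\mathcal{X}$ through which $x_0$ factors. First I would base-change everything to $K_v$ (so that $x_0$ becomes a genuine $K_v$-rational point with residue field $K_v$), fix a geometric point $\overline{x_0}$ over $x_0$ and a geometric point $\overline{y_0}$ of $\widehat{Y}$ above it, and use the ``compatibility with subcovers'' paragraph to obtain a left-$G$-equivariant identification $\widehat{Y}_{\overline{x_0}}\cong G$ sending $\overline{y_0}$ to the identity, hence $Y_{\overline{x_0}}=\widehat{Y}_{\overline{x_0}}/H\cong G/H$ as $G$-sets.

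Next I would carry out two steps. \emph{Step 1 (geometry to group theory).} Since $x_0$ is $K_v$-rational, $\phi^{-1}(x_0)$ is a finite \'etale $K_v$-scheme whose geometric points form the $G$-set $Y_{\overline{x_0}}\cong G/H$, with $\Gamma_{K_v}$ acting through the decomposition morphism $\mathfrak{D}_{\overline{y_0}}:\Gamma_{K_v}\to G$; hence $\phi^{-1}(x_0)(K_v)$ is the set of points of $G/H$ fixed by $\mathfrak{D}_{\overline{y_0}}(\Gamma_{K_v})$. Applying Lemma \ref{Rmk:indipendence_of_the_point} to the factorization $\widehat{Y}\to Y\xrightarrow{\phi}X$ (with $Y=\widehat{Y}/H$) of the \'etale $G$-torsor $\widehat{Y}\to X$, the set $\phi^{-1}(x_0)(K_v)$ is nonempty if and only if the decomposition group $D_{\overline{y_0}}=\mathfrak{D}_{\overline{y_0}}(\Gamma_{K_v})$ fixes at least one point of $G/H$. \emph{Step 2 (the decomposition group is generated by Frobenius).} By Proposition \ref{Prop:Diagram6}, the existence of the good reduction model $\widehat{\psi}$ forces $\mathfrak{D}_{\overline{y_0}}$ to factor through $\Gamma_{K_v}\twoheadrightarrow\Gal(K_v^{\operatorname{ur}}/K_v)\cong\Gamma_{\F_v}\cong\widehat{\zee}$, with the topological generator $1\in\widehat{\zee}$ mapping to $\Fr_{\phi,\overline{y_0}_v}$, one representative of the conjugacy class $\Fr_{x_0}$. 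Since the first of these maps is surjective and the rest are isomorphisms, $D_{\overline{y_0}}$ equals the cyclic subgroup $\langle\Fr_{\phi,\overline{y_0}_v}\rangle$ of $G$ (finite, hence automatically closed).

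Combining the two steps, $\phi^{-1}(x_0)(K_v)\neq\emptyset$ if and only if $\langle\Fr_{\phi,\overline{y_0}_v}\rangle$ fixes a point of $G/H$; since a coset fixed by a group element is fixed by all of its powers and conversely, this holds if and only if $\Fr_{\phi,\overline{y_0}_v}$ itself fixes a point of $G/H$. As the property of having a fixed point on $G/H$ is invariant under $G$-conjugation, it does not depend on the chosen representative and is a genuine property of the conjugacy class $\Fr_{x_0}$; this yields the claim. I do not expect a serious obstacle: the one substantial input is Step 2, i.e.\ that good reduction makes the decomposition morphism unramified so that its image is cyclic --- which is precisely Proposition \ref{Prop:Diagram6} --- and everything else is an application of Lemma \ref{Rmk:indipendence_of_the_point} plus elementary group theory. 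The only point requiring mild care is the initial base change to $K_v$, made so that Lemma \ref{Rmk:indipendence_of_the_point} can be invoked over the correct ground field.
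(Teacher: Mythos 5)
Your proof is correct and follows essentially the same route as the paper's: the paper's own (very terse) argument also reduces to a fixed point of $\Fr_{x_0}$ on the geometric fibre via Lemma \ref{Rmk:indipendence_of_the_point} and Proposition \ref{Prop:Diagram6}. Your write-up merely makes explicit two points the paper leaves implicit — that ``good reduction for $\phi$'' is used through the Galois closure, and that the decomposition group is the (finite, hence closed) cyclic group generated by the Frobenius, so fixing a point under $\Fr_{x_0}$ is the same as fixing it under the whole decomposition group.
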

	\begin{proof}
		Let $\overline{x_0}:\Spec \overline{K_v} \to {x_0} \rightarrow X$ be a geometric point lying over $x_0$. We have that $\phi^{-1}(x_0)(K_v)\neq \emptyset$ if and only if there exists a point in $\phi^{-1}(\overline{x_0})$ fixed by $\Fr_{{x_0}}$. By Lemma \ref{Rmk:indipendence_of_the_point} and Proposition \ref{Prop:Diagram6}, such a point exists if and only if $\Fr_{{x_0}}$ acts with at least one fixed point on $G/H$.
	\end{proof}
	
	\subsection{Vertically ramified covers}
	
	The following structure result shows that,  roughly speaking, a cover of $X\times Y$ which is ``vertically ramified'' splits as a product, up to a finite \'etale cover. The fact that such a structure result might be true was first observed after many fruitful discussions between the third-named author and Olivier Wittenberg.
	
	 The structure result below  is used twice in this paper. First, we use it to prove the product property for varieties with the weak-Hilbert property (Theorem \ref{thm3}). We then use it to prove a similar product property of a variant of the Hilbert property for abelian varieties (see 
	Proposition \ref{prop:Products}).
	
	\begin{definition}\label{Def:vertically_ramified}
		Let $X, Y$ be proper smooth   varieties over $k$ and $\pi : Z \to X \times Y$ be a ramified cover. We say that $\pi$ is {\itshape vertically ramified} over $X$ if there exists a dense open subscheme $U \subset X$ such that $\pi$ is unramified (hence étale, see Lemma \ref{lemma:EtaleOrRamified}) over $U \times Y$.
	\end{definition}
	
	\begin{lemma}\label{lemma:StructureVerticallyRamifiedMorphisms}
		Let $X,Y$ be proper smooth varieties over $k$ and $\pi : Z \to X \times Y$ be a ramified cover.
		Let $U \subset X$ be a dense open subscheme  such that $\pi$ is unramified  over $U \times Y$ (so that $\pi$ is vertically ramified over $X$).
		Assume furthermore that the geometric fibres of the composition $Z \to X \times Y \xrightarrow{p_1} X$ are connected and $U(k) \neq \emptyset$. Then there exists a commutative diagram
		\[
		\xymatrix{
			& Z'\cong X' \times Y' \ar[r] \ar[d] \ar[ld]  & Z \ar[d]_\pi \\
			X' \ar[rd] & X \times Y' \ar[r] \ar[d]  & X \times Y \ar[d]_{p_1} \\
			& X \ar[r]_{=} & X
		}
		\]
		where:
		\begin{enumerate}
			\item $X', Y', Z'$ are normal varieties over $k$;
			\item $X' \to X$ is a ramified cover;
			\item $Z' \to Z$ and $Y' \to Y$ are finite étale;
			\item $Z'$ is a connected component of the fibred product $Z \times_Y Y'$. In particular, if $Z \times_Y Y'$ is connected, the upper square is cartesian;
			\item $Z' \rightarrow X' \rightarrow X$ is the Stein factorization of $Z' \rightarrow X$.
		\end{enumerate}
	\end{lemma}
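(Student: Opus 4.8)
The plan is to work generically over $X$ and then extend the resulting decomposition over all of $X$ using properness and normality. Since $\pi$ is unramified — hence étale by Lemma \ref{lemma:EtaleOrRamified} — over $U \times Y$, the restriction $Z_U := \pi^{-1}(U \times Y) \to U \times Y$ is a finite étale cover. Let $\widehat{Z}_U \to U \times Y$ be its Galois closure, with group $G$. The key idea is to analyze the monodromy representation $\rho : \pi_1(U \times Y) \to G$ and exploit the fact that, by hypothesis, the geometric fibres of $Z \to X$ (equivalently, after restricting, the fibres of $Z_U \to U$) are connected, while the ramification is purely ``vertical''. Picking a rational point $u_0 \in U(k)$ and using the Künneth-type exact sequence for étale fundamental groups of a product (together with the fact that $\pi_1(U \times Y, (u_0,y_0))$ receives compatible maps from $\pi_1(U,u_0)$ and $\pi_1(Y_{\bar k}, y_0)$), I would show that the image $\rho(\pi_1(U))$ is a normal subgroup $N \trianglelefteq G$ with $G/N$ acting through the $Y$-factor, and that $Z_U$ corresponds to a subgroup $H$ with $H \cdot \rho(\pi_1(U)) = G$ (this is exactly connectedness of the fibres over $X$). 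The subgroup $N$ cuts out a cover that will become $X' \times Y'$.

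**Construction of $X'$, $Y'$, $Z'$.** Concretely: let $Y' \to Y$ be the connected étale cover (after base change to $\bar k$, or rather the cover defined over $k$ using $u_0$) corresponding to the subgroup of $\pi_1(Y_{\bar k})$ obtained by pulling back $H$ along the section determined by $u_0$; define $X' \to X$ to be the Stein factorization of $Z \to X$, which is a cover of $X$ that is ramified precisely because $\pi$ is (it is ramified since $Z \to X \times Y$ is ramified but the ramification is vertical, so $Z \to X$ must acquire branching — this is where I would argue carefully that $X' \to X$ is genuinely ramified, not merely an identity). Over the generic point, the fibre product $Z_U \times_{U \times Y} (U \times Y')$ splits, and one checks $Z_U$ becomes $X'_U \times_U (U \times Y') = X'_U \times Y'$ after this étale base change — this is the assertion that pulling back by $Y' \to Y$ trivializes the ``vertical'' monodromy, leaving only the ``horizontal'' monodromy that defines $X'$. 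Set $Z'$ to be the normalization of a connected component of $Z \times_Y Y'$; by normality of $Z$ and étaleness of $Y' \to Y$, this fibre product is already normal (by \cite[Tag 034F]{stacks-project}), so $Z'$ is just a connected component, giving item (4) and the étaleness in item (3). The isomorphism $Z' \cong X' \times Y'$ is first established over $U$ by the monodromy computation, then extended over all of $X$ because both sides are normal, proper over $X$, and agree on a dense open — a birational finite morphism of normal varieties is an isomorphism (Zariski's Main Theorem, as invoked before Proposition \ref{thm:GaloisCorrespondence}).

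**The main obstacle.** The technical heart is item (2): showing $X' \to X$ is \emph{ramified}, together with the compatibility that makes the whole diagram commute and makes $Z' \to X' \to X$ the Stein factorization (item (5)). For ramification of $X'\to X$: if $X' \to X$ were étale, then $X' \times Y' \to X \times Y$ would be étale, hence so would $Z' \to X \times Y$ (as $Z' \to X' \times Y'$... wait, $Z'=X'\times Y'$ here), contradicting that $Z' \to Z \to X\times Y$ with $Z' \to Z$ étale and $Z \to X \times Y$ ramified forces $Z' \to X\times Y$ ramified — so $X' \to X$ must be ramified. The other genuine subtlety is that the fundamental-group argument requires the \emph{geometric} fibres of $Z \to X$ to be connected over \emph{all} of $X$, whereas étale techniques only directly control things over $U$; here I would use that $Z \to X' \to X$ being a Stein factorization means the geometric fibres of $Z \to X'$ are connected, and combine this with the hypothesis to pin down $X' \to X$ as the ``horizontal part''. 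Finally, ensuring everything descends to the field $k$ (rather than only $\bar k$) is handled exactly by the presence of the rational point $u_0 \in U(k)$, which provides a section splitting off the geometric fundamental group of $Y$ — this is why that hypothesis is included. I expect the bookkeeping around Galois descent of $Y'$ via $u_0$, following the formalism of Section \ref{Ssec:UnramifiedMorphisms} and the ``Action on the fibres'' subsection, to be the most delicate part to write out in full.
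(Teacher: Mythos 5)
There is a genuine error at the heart of your construction: you define $X' \to X$ to be the Stein factorization of $Z \to X$. But the lemma's hypothesis is precisely that the geometric fibres of $Z \to X\times Y \to X$ are connected, so the Stein factorization of $Z\to X$ is $Z \to X \xrightarrow{=} X$; your $X'$ is $X$ itself, which is not a ramified cover, and the whole decomposition collapses (indeed $Z'\cong X\times Y'$ would force $Z'\to X\times Y$, and hence $\pi$, to be étale by cancellation — the very contradiction you use elsewhere). The later sentence where you propose to ``use that $Z\to X'\to X$ being a Stein factorization means the geometric fibres of $Z\to X'$ are connected, and combine this with the hypothesis'' confirms the confusion: those two facts together pin $X'$ down as $X$. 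The correct object — and this is what the paper does — is the Stein factorization of $Z' \to X$, where $Z'$ is a connected component of $Z\times_Y Y'$ with $Y'\to Z_x\to Y$ the Galois closure of the fibre $Z_x\to Y$ over a rational point $x\in U(k)$. The point of pulling back along $Y'\to Y$ is exactly to \emph{disconnect} the fibres over $X$: by the Galois-closure splitting $\widehat{E'}\otimes_E E'\cong\bigoplus_{G/H}\widehat{E'}$ (Remark \ref{Rmk:Base_change_Galois_closure}), the fibre $Z'_x=Z_x\times_Y Y'$ is a disjoint union of copies of $Y'$, so the Stein factorization $X'$ of $Z'\to X$ has several points over $x$ and is a genuinely nontrivial cover; one then checks $Z'_x\to X'_x\times Y'$ is an isomorphism and concludes $Z'\cong X'\times Y'$ by Zariski's Main Theorem. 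A secondary slip: connectedness of the fibres of $Z\to X$ translates into transitivity of the \emph{vertical} monodromy, i.e.\ $\rho(\pi_1(Y_{\bar k}))\cdot H=G$, not $\rho(\pi_1(U))\cdot H=G$ as you wrote.

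Apart from this misidentification, your overall strategy (Galois closure of the fibre over $u_0$, pullback along $Y'\to Y$, a component $Z'$, a Stein factorization over $X$, the étale-cancellation argument for ramification of $X'\to X$, and ZMT to globalize a generic isomorphism) is the paper's argument, so the repair is local rather than structural.
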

	\begin{proof}
		Let $x\in U(k)$ be a $k$-rational point. Then $Z_x\to \{x\} \times Y$ is a finite \'etale morphism. Let $Y'\to Z_x\to Y$ be the Galois closure of $Z_x\to Y$, and observe in particular that $Y' \to Y$ is finite étale. Let $Z' \subset Z\times_{Y} Y'$ be a connected component of the pull-back of $Z\to Y$ along $Y'\to Y$. 
	 
		There are natural maps $Z' \to Z \to X \times Y \to X$ and $Z' \to Y'$ which induce a morphism $Z' \to X \times Y'$.
		Let $Z'\to X'\to X$ be the Stein factorization of the composed morphism $Z'\to X\times Y'\to X$, which (together with the obvious map $Z' \to Y'$) gives a natural morphism $Z'\to X'\times Y'$. We claim that this map is an isomorphism. 
		
		To prove this, note that $Z'$ is normal, as $Z'\subset Z \times_Y Y'\to Z$ is finite \'etale and $Z$ is normal.  Moreover, the morphism $Z'\to X'\times Y'$ is finite and surjective.  By \cite[Proposition~X.1.2]{SGA1},  the Stein factorization of $Z'\to X$ is \'etale over $U\subset X$. By \cite[Corollaire 7.8.7]{MR163911}, over the étale locus the Stein factorization commutes with taking fibres, so the Stein factorization of $Z'_x\to \Spec k(x)$ is given by $\Spec \Gamma(Z'_x, \mathcal{O}_{Z'_x})$.
		Since  $Z'_{x}  = Z_{x} \times_Y Y'$ is a disjoint union of copies of $Y'$  (as it follows from Remark \ref{Rmk:Base_change_Galois_closure} and a standard normalization argument, noticing that $Y'\to Z_{x} \to Y$ is the Galois closure of $Z_x \to Y$), we see that $Z'_x \to  X'_x \times Y'$ is an isomorphism (as it is a finite surjective morphism between the same number of copies of $Y'$). It follows that $Z'\to X'\times Y'$ is an isomorphism over a dense open subset, hence it is a birational morphism. Since $Z'$ and $X'\times Y'$ are integral normal varieties over $k$, it follows as claimed that $Z'\to X'\times Y'$ is  an isomorphism by Zariski's Main Theorem (see  \cite[Corollaire~4.4.9]{EGA3I}).
		
		We have thus constructed the desired diagram and shown (1) and (3). Parts (4) and (5) are true by construction. As for (2), we already know that $X' \to X$ is finite (it arises as the finite part of the Stein factorization of $Z' \to X$) and surjective since $Z' \to X$ is. It remains to show that $X' \to X$ is ramified; if it were not, $Z' \to X \times Y' \to X \times Y$ would be étale, hence also $Z' \to Z \to X \times Y$ would be étale. Since we already know that $Z' \to Z$ is surjective and étale, by the cancellation property for étale morphisms we would get that $\pi : Z \to X \times Y$ is also étale, contradiction.
	\end{proof}

	\section{The weak-Hilbert property}\label{sect:Products}
	Throughout this section, we let $k$ be a field of characteristic zero, unless otherwise specified. The goal of this section is to prove that the class of varieties with the (potential) weak-Hilbert property (Definition \ref{def}) has several features in common with Campana's class of special varieties \cite{Campana}.

	We begin by showing that the   weak-Hilbert property is a birational invariant among smooth proper geometrically connected varieties. 
	
	\begin{proposition}[Birational invariance]\label{prop:BirationalInvariance}   
		Let $X$ and $X'$ be smooth proper geometrically connected varieties over $k$. Suppose that $X$ and $X'$ are birational over $k$. Then $X$ has the weak-Hilbert property over $k$ if and only if $X'$ has the weak-Hilbert property over $k$.   
	\end{proposition}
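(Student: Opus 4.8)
By symmetry it suffices to prove that if $X$ has the weak-Hilbert property over $k$, then so does $X'$. Since $X$ and $X'$ are birational over $k$, fix dense open subschemes $\Omega\subseteq X$ and $\Omega'\subseteq X'$ together with an isomorphism $\phi\colon\Omega\xrightarrow{\sim}\Omega'$, and use $\phi$ to identify $k(X)$ with $k(X')$. Let $(\pi_i\colon Y_i\to X')_{i=1}^n$ be a finite collection of ramified covers with each $Y_i$ normal integral. For each $i$, let $\overline{\pi_i}\colon\overline{Y_i}\to X$ be the normalization of $X$ in the finite extension $k(Y_i)/k(X)$; as $X$ is a normal variety over $k$ this is a cover of $X$ with $\deg\overline{\pi_i}=\deg\pi_i\ge 2$. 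Because normalization commutes with restriction to open subschemes and $Y_i$ is already normal and finite over $X'$, the isomorphism $\phi$ identifies $\overline{Y_i}|_\Omega\to\Omega$ with $\pi_i^{-1}(\Omega')=Y_i|_{\Omega'}\to\Omega'$.

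The crucial point is that each $\overline{\pi_i}$ is again ramified. Suppose not; by Lemma \ref{lemma:EtaleOrRamified} the cover $\overline{\pi_i}$ would then be étale, and I claim this forces $\pi_i$ itself to be étale, contradicting the hypothesis. By purity of the branch locus (valid since $X'$ is regular and $Y_i$ is normal) it suffices to show that $\pi_i$ is unramified at every codimension $1$ point $\eta$ of $X'$; equivalently, that $k(Y_i)/k(X)=k(Y_i)/k(X')$ is unramified at the divisorial valuation $v=v_\eta$. Choose a common resolution $\widetilde X$ of $X\dashrightarrow X'$, so that $\widetilde X\to X$ and $\widetilde X\to X'$ are proper birational morphisms of smooth proper varieties. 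A proper birational morphism of smooth proper varieties is an isomorphism over a dense open whose complement has codimension $\ge 2$ in the target; in particular $\widetilde X\to X'$ is an isomorphism near the generic point of $\overline{\{\eta\}}$, so $v$ has a codimension $1$ center $\eta''$ on $\widetilde X$ with $\mathcal{O}_{\widetilde X,\eta''}=\mathcal{O}_{X',\eta}$ the valuation ring of $v$. Pulling back the finite étale cover $\overline{Y_i}\to X$ along $\widetilde X\to X$ yields a finite étale cover of $\widetilde X$, whence $k(Y_i)/k(X)$ is unramified at $v=v_{\eta''}$; localizing $\pi_i$ at $\eta$ (where $\mathcal{O}_{X',\eta}=\mathcal{O}_v$ and $Y_i\times_{X'}\operatorname{Spec}\mathcal{O}_{X',\eta}$ is the integral closure of $\mathcal{O}_v$ in $k(Y_i)$) then shows $\pi_i$ is unramified at $\eta$, as wanted.

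Granting this, the weak-Hilbert property of $X$ applied to $(\overline{\pi_i}\colon\overline{Y_i}\to X)_{i=1}^n$ shows that $S:=X(k)\setminus\bigcup_{i=1}^n\overline{\pi_i}(\overline{Y_i}(k))$ is Zariski-dense in $X$. Since $X$ is irreducible and $\Omega$ is a dense open subscheme, $S\cap\Omega(k)$ is still Zariski-dense in $X$, and therefore $\phi(S\cap\Omega(k))\subseteq\Omega'(k)$ is Zariski-dense in $X'$. If $x\in S\cap\Omega(k)$ satisfied $\phi(x)=\pi_i(y')$ for some $y'\in Y_i(k)$, then, as $\phi(x)\in\Omega'$, the point $y'$ would lie in $Y_i|_{\Omega'}(k)$; transporting it through the identification $\overline{Y_i}|_\Omega\cong Y_i|_{\Omega'}$ above would produce a $k$-point of $\overline{Y_i}$ mapping to $x$ under $\overline{\pi_i}$, contradicting $x\in S$. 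Hence $\phi(S\cap\Omega(k))\subseteq X'(k)\setminus\bigcup_{i=1}^n\pi_i(Y_i(k))$, so the latter is Zariski-dense in $X'$, proving that $X'$ has the weak-Hilbert property.

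The step I expect to be the main obstacle is the claim that each $\overline{\pi_i}$ stays ramified: a branch divisor of $\pi_i$ may be contracted by $X'\dashrightarrow X$, so the ramification of $\overline{\pi_i}$ over $X$ is not visible divisor by divisor, and one genuinely has to combine purity of the branch locus with the behaviour of divisorial valuations on a resolution, as above. (Everything else — the existence of $\phi$, the compatibility of normalization with open restriction, and the density bookkeeping — is routine.)
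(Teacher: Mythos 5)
Your proof is correct, and while its overall skeleton matches the paper's (transfer covers between $X$ and $X'$ by normalizing in the common function field, show that ramified covers stay ramified via purity of the branch locus, then move the dense set of rational points through a common open), the way you establish the key step is genuinely different and, in fact, more robust. The paper begins by choosing isomorphic dense opens $U\subseteq X$ and $U'\subseteq X'$ whose complements \emph{both} have codimension $\geq 2$, and then applies Zariski--Nagata purity directly on each side; such a choice is not always possible for birational smooth proper varieties (for $X=\mathbb{P}^2$ and $X'=\mathrm{Bl}_p\mathbb{P}^2$, isomorphic opens with codimension-$2$ complements would force $\mathrm{Pic}(X)\cong\mathrm{Pic}(X')$, i.e.\ $\mathbb{Z}\cong\mathbb{Z}^2$), so the paper's argument as literally written needs exactly the kind of repair you supply. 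Your route — work with an arbitrary pair of isomorphic dense opens, pass to a common (normal or smooth) model $\widetilde X$ dominating both, use that a proper birational morphism onto a normal variety is an isomorphism away from a codimension-$\geq 2$ subset of the target to see that every divisorial valuation of $X'$ is already visible on $\widetilde X$, and only then invoke purity on $X'$ — correctly proves that the normalization of $X$ in $k(Y_i)$ is ramified whenever $\pi_i$ is, without any hypothesis on the complements of the chosen opens. The price is an appeal to resolution of singularities (avoidable: the normalization of the closure of the graph in $X\times X'$ already suffices, since you only use normality of $X'$ and the fact that pullbacks of étale covers are étale); the gain is an argument that applies verbatim to any birational pair. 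The remaining bookkeeping (identification of $\overline{Y_i}|_\Omega$ with $Y_i|_{\Omega'}$ via compatibility of normalization with open restriction, and the density transfer) is the same as in the paper and is handled correctly.
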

	\begin{proof} 
		
		
		We denote by $\Cov(X)$ (resp.~$\Cov(X')$) the category of covers of $X$ (resp.~$X'$). 
		
Since $X$ and $X'$ are birational over $k$, we may choose 	\begin{itemize}
\item 	  a dense open subscheme $U$ of $X$ with $\mathrm{codim}_X(X\setminus U)\geq 2$,
\item  a dense open subscheme $U'$ of $X'$ with  $\mathrm{codim}_{X'}(X'\setminus U')\geq 2$, and
\item      an isomorphism $\sigma: U' \to U$  with inverse $\sigma' : U \to U'$. 
\end{itemize} We let $\eta_X, \eta_{X'}$ be the generic points of $X, X'$, and we denote the isomorphism $\restricts{\sigma'}{\eta_X}:\eta_X \rightarrow \eta_{X'}$ as $\iota$.

		
		We define the functor $N\sigma^*:\Cov(X) \rightarrow \Cov(X')$  (resp. $N(\sigma')^*:\Cov(X') \rightarrow \Cov(X)$) as sending a cover $Y \to X$ to the relative normalization $Y' \to X'$ of $X'$ in the cover $\iota_*\left( \restricts{Y}{\eta_X} \right) \rightarrow \eta_{X'}$ (resp. in the cover $\iota^*\left( \restricts{Y}{\eta_X} \right) \rightarrow \eta_{X'}$). 
		
		
		
		Clearly, $N(\sigma')^*$ and $N\sigma^*$ are inverse natural equivalences. We claim that these functors send étale covers to étale covers.
		
		Let $Y \to X$ be an étale cover. We then have that $\sigma^*Y\rightarrow U'$ is étale as well. Since $U'$ is normal, it follows that $\sigma^*Y$ is normal as well. Hence $\restricts{((N\sigma^*)Y)}{U'}\cong \sigma^*Y$ as $U'$-schemes. In particular, $(N\sigma^*)Y \rightarrow X'$ is finite and étale over the complement of a codimension $2$ closed subscheme of the base. Since $X'$ is smooth and $(N\sigma^*)Y$ is normal, by Zariski-Nagata purity \cite[Th\'eor\`eme~X.3.1]{SGA1}, $(N\sigma^*)Y \rightarrow X'$ is étale, which concludes the proof of the claim (the proof for $N(\sigma')^*$ being analogous). 
		
		Therefore, since $N(\sigma')^*$ and $N\sigma^*$ are natural inverses, Lemma \ref{lemma:EtaleOrRamified} implies that ramified covers $\pi : Y \to X$ give rise to \textit{ramified} covers of $X'$ (through $N\sigma^*$), and conversely (through $N(\sigma')^*$), which immediately implies that $X$ has the weak-Hilbert property if and only if $X'$ does.
	\end{proof}

	\begin{remark}\label{Rmk:trivial2} 
		Let $X$ be a smooth proper   variety over $k$ with the weak-Hilbert property over $k$, let $U$ be a dense open of $X$. Then, for every finite collection of ramified covers $(Z_i \xrightarrow{\pi_i} U)_i$, we have that $U(k) \setminus \cup_i \pi_i(Z_i(k))$ is Zariski-dense in $U$. Indeed, letting $Z'_i$ be the normalization of $X$ in $Z_i$, this becomes an immediate consequence of applying the definition of the weak-Hilbert property to $X$ and the family of ramified covers $(Z'_i \xrightarrow{\pi'_i} X)_i$.
	\end{remark}
	
	\begin{remark}\label{Rmk:geom_int}  
		If $Y \xrightarrow{\pi} X$ is a cover of varieties over $\TheFieldd$ and $Y$ is not geometrically connected, then $Y(\TheFieldd)= \emptyset$, so that  $\pi(Y(\TheFieldd))=\emptyset$. In particular, when studying the weak-Hilbert property for $X$, one may always restrict to covers $Y\to X$  with  $Y$ geometrically connected (hence geometrically integral).
	\end{remark}

	\subsection{Images of varieties with the weak-Hilbert property}
	
	In this subsection we show that, under suitable assumptions on a map of varieties $X \to Y$, if the variety $X$ has the weak-Hilbert property then so does $Y$.
	
	\begin{proposition}[Finite \'etale images]\label{prop:smooth_image}  
		Let $X\xrightarrow{\phi} Y$ be a finite \'etale  morphism of smooth proper   varieties over $\fgField$. If $X$ has the weak-Hilbert property over $\fgField$, then $Y$ has the weak-Hilbert property over $\fgField$.
	\end{proposition}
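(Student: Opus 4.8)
The plan is to deduce the weak-Hilbert property of $Y$ from that of $X$ by pulling back covers of $Y$ along $\phi$, applying the hypothesis to the resulting covers of $X$, and pushing the obtained points back down to $Y$. The key point is that the composite of a ramified cover with a finite étale morphism is again ramified, so that pullbacks of ramified covers of $Y$ give admissible covers of $X$.

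First I would set up the situation: let $(\pi_i : Z_i \to Y)_{i=1}^n$ be a finite collection of ramified covers with each $Z_i$ a normal integral variety; by Remark \ref{Rmk:geom_int} we may assume each $Z_i$ is geometrically integral. For each $i$, form the fibre product $Z_i \times_Y X \to X$. Since $\phi$ is finite étale, this base change is a finite (possibly disconnected) cover of $X$; decompose it into connected components $W_{i,j} \to X$. Each $W_{i,j}$ is normal (being étale over the normal $Z_i$, or equivalently an open-closed subscheme of a normal scheme) and integral. Crucially, each $W_{i,j} \to X$ is \emph{ramified}: if it were étale, then since $W_{i,j} \to Z_i$ is finite (base change of the finite étale $\phi$) and in fact étale, by the cancellation property for étale morphisms $Z_i \to Y$ would be étale where $W_{i,j}$ dominates it — more carefully, one uses that $W_{i,j} \to Z_i$ is finite étale and surjective (as $X \to Y$ is surjective and the components cover the fibre product) together with Lemma \ref{lemma:EtaleOrRamified} to conclude $Z_i \to Y$ would be étale, a contradiction. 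Alternatively, and more robustly, one observes that the branch locus of $Z_i \to Y$ is a nonempty divisor, its preimage under the étale $\phi$ is a nonempty divisor in $X$, and this preimage is contained in the branch locus of $Z_i \times_Y X \to X$, hence of each component dominating it; discard the components of $Z_i\times_Y X$ that do not dominate $X$ (they contribute no $k$-points mapping onto a dense subset anyway, or simply note every component is surjective onto $X$ since the cover is finite surjective).

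Next I would apply the weak-Hilbert property of $X$ to the finite collection $(W_{i,j} \to X)_{i,j}$: the set $X(k) \setminus \bigcup_{i,j} (\text{image of } W_{i,j}(k))$ is Zariski-dense in $X$. Call this set $T$. I claim $\phi(T) \subset Y(k) \setminus \bigcup_i \pi_i(Z_i(k))$. Indeed, suppose $x \in T$ and $y = \phi(x)$ lies in $\pi_i(Z_i(k))$ for some $i$, say $y = \pi_i(z)$ with $z \in Z_i(k)$. Then $(z, x) \in (Z_i \times_Y X)(k)$ lies over $x$, hence in some component $W_{i,j}$, so $x \in \text{image of } W_{i,j}(k)$, contradicting $x \in T$. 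Finally, since $\phi$ is finite surjective (in particular dominant and closed) and $T$ is Zariski-dense in $X$, the image $\phi(T)$ is Zariski-dense in $Y$: the closure $\overline{\phi(T)}$ contains $\phi(\overline{T}) = \phi(X) = Y$. This shows $Y(k) \setminus \bigcup_i \pi_i(Z_i(k))$ is Zariski-dense in $Y$, which is exactly the weak-Hilbert property for $Y$.

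The main obstacle is verifying that the components $W_{i,j} \to X$ are genuinely ramified, i.e.~that ramification is not destroyed by the étale base change $\phi$. The cleanest argument uses Lemma \ref{lemmatje}-style reasoning on branch loci: the branch divisor of $Z_i \to Y$ pulls back under the étale $\phi$ to a nonempty effective divisor on $X$ which lies inside the branch locus of the base-changed cover, and since $\phi$ is étale this branch divisor is genuinely a divisor (étale pullback preserves codimension), so each component of $Z_i \times_Y X$ meeting it is ramified over $X$; and every component meets it because every component surjects onto $X$ (the cover being finite surjective) while the branch divisor surjects onto a divisor in $X$. One should also double-check the harmless point that discarding or keeping non-dominant components does not matter — but since $Z_i \times_Y X \to X$ is finite and surjective, in fact all its components dominate $X$, so no components need to be discarded. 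The rest of the argument is formal.
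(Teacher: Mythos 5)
Your argument follows the paper's proof almost exactly: pull back along $\phi$, decompose $Z_i \times_Y X$ into connected components, show each component is a ramified cover of $X$ (via the étale-or-ramified dichotomy and descent of étaleness along the surjective étale $W_{i,j}\to Z_i$), apply the weak-Hilbert property of $X$, and push the resulting dense set down via $\phi$ using the universal property of the fibre product. One small imprecision worth flagging: surjectivity of $W_{i,j}\to Z_i$ does not follow merely from ``$X\to Y$ is surjective and the components cover the fibre product'' (a union of components can cover $Z_i$ without each one doing so), nor is ``every component surjects onto $X$ since the cover is finite surjective'' valid in general; the paper's argument is that $W_{i,j}\to Z_i$ is finite étale with nonempty source and connected target, hence has image both open and closed and therefore equal to $Z_i$, from which $\dim W_{i,j}=\dim X$ and surjectivity onto $X$ follow.
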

	\begin{proof}
		Since $X$ and $Y$ are integral, it follows that $\phi$ is surjective.
		Let $( \pi_i : Z_i \to Y)_{i=1,\ldots,n}$ be a finite collection of ramified covers   over $\fgField$. By base change, we obtain finite surjective morphisms $\varphi_i : Z_i \times_Y X \to X$. 
		
		Let $Z'$ be a connected component of $Z_i \times_Y X$. Since $Z'\subset Z_i\times_Y X$ is open and closed, the morphism $Z'\to Z_i$ is finite \'etale, hence surjective (by connectivity of $Z'$ and $Z_i$). It follows that $\dim Z'=\dim Z_i = \dim Y = \dim X$. In particular, since   $\varphi_i|_{Z'}:Z'\to X$ is finite, it is also  surjective. Furthermore, the morphism    $\varphi_i|_{Z'}:Z'\to X$ is  ramified. Indeed, since $X$ is smooth, if $Z' \to X$ is unramified, then it is \'etale (Lemma \ref{lemma:EtaleOrRamified}). It follows that, as $Z'\to X$ is \'etale and $X\to Y$ is \'etale, the composition $Z' \to X \to Y$ is \'etale, which by the surjectivity of $Z' \to Z_i$ implies that $Z_i \to Y$ is   \'etale, contradicting our assumption that it is ramified. 
		
		Let $Z'_{ij}$ be the connected components of $Z_i \times_Y X$. 
		We may apply the weak-Hilbert property of $X$ to the ramified covers $\varphi_{i}|_{Z'_{ij}} : Z'_{ij} \to X$ to see that $X(\fgField) \setminus \cup_{i,j} \varphi_i(Z'_{ij}(\fgField))$ is dense in $X$. Applying the surjective morphism $\phi$ to this dense set of rational points we get a dense set of rational points on $Y$. We claim that this dense set is contained in $Y(\fgField)\setminus \cup_i \pi_i(Z_i(\fgField))$. Indeed, suppose by contradiction that there is a point $Q \in X(\fgField)\setminus \cup_{i,j} \varphi_i(Z'_{ij}(\fgField))$ such that $\phi(Q)\in \pi_i(Z_i(\fgField))$ for some $i$. Then, by the universal property of the product $Z_i \times_Y X$, the point $Q$ belongs to $\varphi_i((Z_i \times_Y X) (\fgField)) \subset \cup_{i,j} \varphi_i(Z'_{ij}(\fgField))$, contradicting our assumption. 
	\end{proof}
	
	\begin{remark} The converse to Proposition \ref{prop:smooth_image} is false. Indeed, let $E$ be an elliptic curve over $\mathbb{Q}$ with $E(\mathbb{Q})$ dense, and let $E'\to E$ be a finite \'etale morphism with $E'(\mathbb{Q})=\emptyset$; such data is easily seen to exist. Then $E$ has the weak-Hilbert property over $\mathbb{Q}$ by Faltings' theorem, whereas $E'$ has no $\mathbb{Q}$-points, and thus does not have the weak-Hilbert property over $\mathbb{Q}$. However, there is a number field $K$ such that $E_K'$ has the weak-Hilbert property over $K$. More generally, 
		assuming $k$ is a finitely generated field of characteristic zero, we show a   converse to Proposition \ref{prop:smooth_image} in which we allow an extension of the base field; see Theorem \ref{thm:finite_etale_invariance} for a precise statement.
	\end{remark}
  
	  A surjective morphism $f:X\to Y$ of   varieties over $k$ is said to have \emph{no multiple fibres in codimension one} if, for every point $y$ in $Y$ of codimension one, the scheme-theoretic fibre $X_y$ has an irreducible component which is reduced.    
	  
	\begin{proposition}[Fibrations with no multiple fibres]\label{prop:splitcodim1_image}  
		Let $X\xrightarrow{\phi} Y$ be a surjective   morphism  of smooth proper   varieties over $\fgField$ with geometrically connected generic fibre and no multiple fibres in codimension one. If $X$ has the weak-Hilbert property over $\fgField$, then $Y$ has the weak-Hilbert property over $\fgField$.
	\end{proposition}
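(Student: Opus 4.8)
The plan is to reduce the weak-Hilbert property of $Y$ to that of $X$ by pulling covers back along $\phi$. So fix a finite collection of ramified covers $(\pi_i : Z_i \to Y)_{i=1}^n$ and set $X_i := Z_i \times_Y X$ with projection $q_i : X_i \to X$. The first point is that, because $\phi$ has geometrically connected generic fibre, $k(Y)$ is algebraically closed in $k(X)$, so writing $k(Z_i) = k(Y)[t]/(f_i)$ with $f_i$ irreducible over $k(Y)$, the polynomial $f_i$ stays irreducible over $k(X)$ (a proper factor would have coefficients in $\overline{k(Y)} \cap k(X) = k(Y)$); hence $L_i := k(Z_i)\otimes_{k(Y)}k(X)$ is a field, finite over $k(X)$. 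Consequently exactly one irreducible component $X_i^{\circ}$ of $X_i$ dominates $X$, and I let $\rho_i : W_i \to X$ be the normalization of $X$ in $L_i$, equivalently the normalization of $X_i^{\circ,\mathrm{red}}$; this $W_i$ is a normal integral variety, finite and surjective over $X$.

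The key step — and the step I expect to be the main obstacle — is to prove that each $\rho_i : W_i \to X$ is \emph{ramified}; this is precisely where the two hypotheses on $\phi$ enter. Since $Z_i \to Y$ is ramified it is not \'etale (Lemma~\ref{lemma:EtaleOrRamified}), so by Zariski--Nagata purity \cite[Th\'eor\`eme~X.3.1]{SGA1} applied to the regular scheme $Y$ it fails to be \'etale over some codimension-one point $\eta$ of $Y$: the integral closure $B$ of the discrete valuation ring $\mathcal{O}_{Y,\eta}$ inside $k(Z_i)$ has a maximal ideal $\mathfrak{q}$ with $e(\mathfrak{q}/\mathfrak{m}_\eta) > 1$. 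By the hypothesis that $\phi$ has no multiple fibres in codimension one, the fibre $X_\eta$ has a reduced irreducible component, whose generic point $\xi$ is a codimension-one point of $X$ over $\eta$; reducedness of that component means $\mathfrak{m}_\eta\mathcal{O}_{X,\xi} = \mathfrak{m}_\xi$, i.e. $e(\xi/\eta) = 1$. Now in the integral closure $C$ of $\mathcal{O}_{X,\xi}$ in $L_i$ (a localization of $W_i$ along $\xi$) I choose a maximal ideal $\mathfrak{r}$ lying over both $\mathfrak{q}$ and $\mathfrak{m}_\xi$, which exists because the relevant spectra surject onto $\operatorname{Spec} B$ and $\mathfrak{q}\ni\pi$ forces the uniformizer of $\mathcal{O}_{X,\xi}$ into any prime over $\mathfrak{q}$. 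Comparing the two towers $\mathcal{O}_{Y,\eta}\subseteq\mathcal{O}_{X,\xi}\subseteq C$ and $\mathcal{O}_{Y,\eta}\subseteq B\subseteq C$ and using multiplicativity of ramification indices gives $e(\mathfrak{r}/\mathfrak{m}_\xi)\cdot e(\xi/\eta) = e(\mathfrak{r}/\mathfrak{q})\cdot e(\mathfrak{q}/\mathfrak{m}_\eta)$, hence $e(\mathfrak{r}/\mathfrak{m}_\xi) = e(\mathfrak{r}/\mathfrak{q})\cdot e(\mathfrak{q}/\mathfrak{m}_\eta) > 1$. Thus $W_i \to X$ is ramified along $\overline{\{\xi\}}$. (Without the "no multiple fibres" hypothesis this can genuinely fail: a multiple fibre over $\eta$ absorbs the ramification of $Z_i \to Y$ and $W_i \to X$ becomes \'etale.)

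With the ramified covers $(\rho_i : W_i \to X)_{i=1}^n$ at hand, I invoke the weak-Hilbert property of $X$ (Definition~\ref{def}) to get a Zariski-dense subset $S \subseteq X(\fgField)$ with $S \cap \rho_i(W_i(\fgField)) = \emptyset$ for all $i$. Let $R \subseteq X$ be the union over $i$ of the images in $X$ of the irreducible components of $X_i$ other than $X_i^{\circ}$, together with the images in $X$ of the non-normal loci of the varieties $X_i^{\circ,\mathrm{red}}$; each of these is a closed subset of dimension $< \dim X$ (non-dominant components and the non-normal locus of an integral variety being lower-dimensional, and $q_i$ finite), so $R \subsetneq X$ is proper closed, and since $X$ is irreducible the set $S \setminus R$ is still Zariski-dense in $X$. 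I claim $\phi(S\setminus R)$ is Zariski-dense in $Y$ and disjoint from $\bigcup_i \pi_i(Z_i(\fgField))$, which proves the proposition. Density is clear since $\phi$ is dominant. For disjointness, if $x \in S\setminus R$ and $\phi(x) = \pi_i(z)$ with $z\in Z_i(\fgField)$, then $(z,x)$ is a $\fgField$-point of $X_i$ over $x$, lying on some irreducible component; if that component is not $X_i^{\circ}$ then $x$ lies in its image, so $x\in R$, a contradiction; if it is $X_i^{\circ}$, then since $x\notin R$ the point $(z,x)$ avoids the non-normal locus of $X_i^{\circ,\mathrm{red}}$, hence lifts to a $\fgField$-point of the normalization $W_i$, giving $x\in\rho_i(W_i(\fgField))$, contradicting $x\in S$. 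So $\phi(x)\notin\bigcup_i\pi_i(Z_i(\fgField))$, as wanted.
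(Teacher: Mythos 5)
Your proof is correct, but it reaches the key point --- that the covers of $X$ obtained from the $\pi_i$ are \emph{ramified} --- by a genuinely different route from the paper's. The paper restricts to the smooth locus $V\subset X$ of $\phi$ and its image $U=\phi(V)$, whose complement in $Y$ has codimension $\geq 2$ precisely because there are no multiple fibres in codimension one; it pulls $Z_i$ back along the smooth morphism $V\to U$, so that normality and integrality of $W_i:=Z_i\times_Y V$ come for free, and proves ramification by descent: if $W_i\to V$ were unramified, smoothness would descend to $\pi_i|_U$, and Zariski--Nagata purity (using the codimension-$2$ complement) would force $\pi_i$ to be \'etale on all of $Y$. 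You instead pull back over all of $X$, normalize the unique dominant component, and prove ramification of $W_i\to X$ by a direct valuation-theoretic computation: purity on $Y$ supplies a divisorial point $\eta$ with a prime $\mathfrak{q}$ of ramification index $>1$ upstairs, the no-multiple-fibres hypothesis supplies a reduced component of $X_\eta$ with generic point $\xi$ and $e(\xi/\eta)=1$, and multiplicativity of ramification indices in the tower transfers the ramification to $W_i\to X$ over $\xi$. Both uses of the hypothesis are legitimate; yours has the mild advantage of not needing the pullback to stay normal or integral (you normalize by hand and discard a proper closed bad locus at the end), at the cost of some commutative algebra. The one step you should expand is the existence of a maximal ideal $\mathfrak{r}$ of $C$ contracting to $\mathfrak{q}$ in $B$: the clean justification is that $B\otimes_{\mathcal{O}_{Y,\eta}}\mathcal{O}_{X,\xi}$ injects into $L_i$ (flatness of $B$ over the DVR $\mathcal{O}_{Y,\eta}$), hence is a domain with integral closure $C$, is faithfully flat over $B$, and has a prime over the pair $(\mathfrak{q},\mathfrak{m}_\xi)$ because $(B/\mathfrak{q})\otimes_{k(\eta)}k(\xi)\neq 0$; lying-over for the integral extension up to $C$ then produces $\mathfrak{r}$, and your observation that $\pi\in\mathfrak{q}$ forces the contraction to $\mathcal{O}_{X,\xi}$ to equal $\mathfrak{m}_\xi$ is indeed the right way to see that $\mathfrak{r}$ lies over $\xi$. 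With that detail filled in, the argument is complete.
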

	\begin{proof} 
		Let $( \pi_i : Z_i \to Y)_{i=1,\ldots,n}$ be a finite collection of ramified covers  of $Y$ over $\fgField$. To prove the statement, we have to show that $Y(k) \setminus \cup_i \pi_i(Z_i(k))$ is Zariski-dense in $Y$.
		Let $V \subset X$ be the smooth locus of $\phi$, and let $U$ be the image $\phi(V)$.   Note that $V$ is a dense open subscheme of $X$, so  that $U \subset Y$ is also an open subscheme. Let $\psi:V\to U$ denote the induced (smooth) morphism.  Since $\phi$ has no multiple fibres in codimension one, the complement of $U$ in $Y$ is of codimension at least two. As $V\xrightarrow{\restricts{\phi}{V}}Y$ is smooth, the morphism $Z_i\times_Y V\rightarrow Z_i$ is smooth. In particular, by \cite[Tag~034F]{stacks-project}  and the normality of $Z_i$, the scheme $W_i:=Z_i\times_Y V$ is normal. 
		
		We claim that $W_i$ is integral. Since $k$ is of characteristic zero and $X$ is smooth, the generic fibre $X_{ {k(Y)}}$ of $\phi:X\to Y$ is smooth. As $X_{ {k(Y)}}$ is geometrically connected (by assumption), $X_{ {k(Y)}}$ is geometrically integral. It follows that the same is true for the generic fibre of $V \to Y$.
		In particular,
		the generic fibre of $W_i=Z_i\times_Y V\to Z_i$, being the base change of the generic fibre of $V \to Y$, is geometrically integral as well. Applying \cite[Proposition 3.8]{Liu2} to $(W_i)_{\overline{k}}\to (Z_i)_{\overline{k}}$, we deduce that $W_i$ is integral.
		
		Define $\pi'_i:W_i \rightarrow V$ to be the natural projection. We claim that $\pi'_i$ is ramified. In fact, assume by contradiction that it is not. We have the following commutative diagram, which is cartesian in the upper left corner:
		\begin{equation}\label{Diagram:D3.4}
		\begin{tikzcd}
		W_i \arrow[r] \arrow[d, "\pi'_i"] \arrow[rd] & \restricts{Z_i}{U} \arrow[d, "\restricts{\pi_i}{U}"] \\
		V \arrow[r, "\psi"]                                     & U          .                                         
		\end{tikzcd}
		\end{equation}
		Since $\pi'_i$ and $\psi$ are both smooth, so is the morphism $W_i \to U$. Moreover $W_i \to \restricts{Z_i}{U}$ is smooth and surjective (being a base change of  the smooth surjective morphism $\psi$). Hence, applying \cite[Tag 02K5]{stacks-project} to $(f,q,p)=(W_i \to \restricts{Z_i}{U}, \restricts{\pi_i}{U}, W_i \to U)$, we obtain that $\restricts{\pi_i}{U}$ is smooth, hence étale, which by the Zariski-Nagata theorem implies that $\pi_i$ is étale. Contradiction.
		
		
		
		
		Since $X$ has the weak-Hilbert property, by Remark \ref{Rmk:trivial2}, the set $S:=V(k)\setminus \cup_{i=1}^n \pi'_i(W_i(k))$ is dense in $X$. 
		By the universal property of the fibred product $W_i=Z_i\times_Y V$, following the same argument used at the end of the proof of Proposition \ref{prop:smooth_image}, we conclude that $\psi(S) \subset Y(k) \setminus \cup_i \pi_i(Z_i(k))$ is Zariski-dense in $Y$,  as required.
	\end{proof}

	\begin{theorem}[Smooth images]
		Let $X\xrightarrow{\phi} Y$ be a smooth proper  morphism of smooth proper   varieties over $\fgField$. If $X$ has the weak-Hilbert property over $\fgField$, then $Y$ has the weak-Hilbert property over $\fgField$.
	\end{theorem}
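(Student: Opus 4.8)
The plan is to factor $\phi$ through its Stein factorization and then invoke Propositions~\ref{prop:splitcodim1_image} and~\ref{prop:smooth_image} in succession. Write $\phi = g\circ\psi$, where $\psi\colon X\to Y'$ is the Stein factorization, so that $\psi_*\mathcal O_X=\mathcal O_{Y'}$ and $g\colon Y'\to Y$ is finite. Since $X$ is a variety and $\phi$ is flat (being smooth) with $Y$ connected, $\phi$ is surjective; hence $Y'$ is irreducible, and it is reduced because every ring of sections of $\psi_*\mathcal O_X$ is a subring of the function field of $X$. Thus $Y'$ is a variety over $k$, and it is proper over $k$ since $g$ is finite and $Y$ is proper. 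The crucial point is that $g\colon Y'\to Y$ is finite étale: the fibres of $\phi$ are smooth, hence geometrically reduced, so by \cite[Proposition~X.1.2]{SGA1} the Stein factorization $g$ is étale (here we also use that $Y$, being smooth, is normal). In particular $Y'$ is smooth and proper over $k$.

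First I would check that $\psi\colon X\to Y'$ satisfies the hypotheses of Proposition~\ref{prop:splitcodim1_image}. It is a surjective morphism between the smooth proper varieties $X$ and $Y'$. Its generic fibre is geometrically connected: by the defining property of the Stein factorization, $H^0$ of the generic fibre of $\psi$ equals $k(Y')$ (using that $H^0$ commutes with the flat base change $\operatorname{Spec}k(Y')\to Y'$ for the proper morphism $\psi$), and since this fibre is moreover geometrically reduced (it is an open subscheme of a fibre of the smooth morphism $\phi$), its base change to $\overline{k(Y')}$ still has a field as its ring of global sections and is therefore connected. Finally, $\psi$ has no multiple fibres in codimension one: for any point $y'$ of $Y'$, the fibre $\psi^{-1}(y')$ is an open and closed subscheme of the fibre $\phi^{-1}(g(y'))$ — because $g$ is finite étale, $g^{-1}(g(y'))$ is a disjoint union of spectra of finite separable field extensions of which $\operatorname{Spec}k(y')$ is one of the connected components — and $\phi^{-1}(g(y'))$ is smooth over its residue field, hence reduced; so $\psi^{-1}(y')$ is reduced.

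With these verifications in hand, the proof concludes in two steps. Applying Proposition~\ref{prop:splitcodim1_image} to $\psi\colon X\to Y'$ and using the hypothesis that $X$ has the weak-Hilbert property over $k$, we obtain that $Y'$ has the weak-Hilbert property over $k$. Then, since $g\colon Y'\to Y$ is a finite étale morphism of smooth proper varieties over $k$, Proposition~\ref{prop:smooth_image} gives that $Y$ has the weak-Hilbert property over $k$, as desired.

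I expect the only genuinely nontrivial input to be the assertion that the Stein factorization $g\colon Y'\to Y$ is finite étale and that $Y'$ is a bona fide smooth proper $k$-variety; this is the standard fact already used in the proofs of Lemma~\ref{lemma:StructureVerticallyRamifiedMorphisms} and Proposition~\ref{prop:splitcodim1_image}, and it relies on both the smoothness of $\phi$ and the normality of $Y$. Everything else is a bookkeeping exercise with the definitions and with the behaviour of fibres under a finite étale base change.
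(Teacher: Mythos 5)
Your proof is correct and follows exactly the same route as the paper's: factor $\phi$ through its Stein factorization $X \to Y' \to Y$, observe that $X \to Y'$ satisfies the hypotheses of Proposition~\ref{prop:splitcodim1_image} (surjective, smooth proper source and target, geometrically connected generic fibre, no multiple fibres in codimension one) and that $Y' \to Y$ is finite étale, then chain Propositions~\ref{prop:splitcodim1_image} and~\ref{prop:smooth_image}. The paper phrases the middle step more compactly by noting that $X \to Y'$ is itself smooth proper (whence automatically no multiple fibres), but the substance is identical.
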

	\begin{proof}
		Let $X\to X'\to Y$ be the Stein factorization of $\phi:X\to Y$. Note that $X\to X'$ is a smooth proper morphism and that $X'\to Y$ is finite \'etale. In particular, since $X\to X'$ has no multiple fibres in codimension one, it follows that $X'$ has the weak-Hilbert property over $\fgField$ by Proposition \ref{prop:splitcodim1_image}. Then, as $X'$ has the weak-Hilbert property over $k$ and $X'\to Y$ is a finite \'etale morphism of smooth proper varieties over $k$, we conclude that $Y$ has the weak-Hilbert property over $k$ (Proposition \ref{prop:smooth_image}), as required.
	\end{proof}

	\subsection{Arithmetic refinements}
	An   \emph{arithmetic refinement of a variety $Z$ over $\TheFieldd$} is the data of a finite index set $J$ and, for every $j$ in $J$, a cover $\psi_j:W_j\to Z$ with $W_j$ normal and geometrically integral, with the property that  $Z(\TheFieldd)\subset \cup_{j\in J}\psi_j(W_j(\TheFieldd))$. When ``testing'' the weak-Hilbert property for a variety $X$, one can replace a given cover by arithmetic refinements. Let us be more precise.
	
	Let $n\geq 1$ be an integer and, for $i=1,\ldots, n$, let $\pi_i:Z_i\to X$ be a ramified cover of normal projective geometrically integral varieties over $\TheFieldd$. For every $i$, let $J_i$ be a finite set. For  $i=1,\ldots, n$ and  $j$ in $J_i$, let $\psi_{i,j}:W_{ij}\to Z_j$ be a cover. Assume that, for $i=1,\ldots, n$, the collection $\{\psi_{i,j}:W_{ij}\to Z_i\}_{j\in J_i}$ is an arithmetic refinement of $Z_i$.  Then 
	\[
	X(\TheFieldd)\setminus \cup_{i=1}^n \cup_{j\in J_i} \pi_i(\psi_{i,j}(W_{ij}(\TheFieldd))) = X(\TheFieldd) \setminus \cup_{i=1}^n \pi_i(Z_i(\TheFieldd)).
	\]
	Thus, when ``testing'' the weak-Hilbert property for a variety (Definition \ref{def}), one may replace each cover $Z_i\to X$ in a given finite collection of covers $(Z_i\to X)_i$ by an arithmetic refinement of $Z_i$.  
	
	\subsection{Chevalley-Weil  for finitely generated fields of characteristic 0}
Let $f:X\to Y$ be a finite \'etale surjective morphism of proper schemes over a field $k$. When $k$ is a number field, a classical result (which goes back  to Chevalley and Weil \cite{CW}) shows that there exists a finite extension $L$ of $k$ such that every $k$-rational point of $Y$ lifts to an $L$-rational point of $X$. The same statement holds when $k$ is a finitely generated field of characteristic zero; since we were unable to find this precise statement in the literature, we include a short proof.

\begin{theorem} \label{thm:chev_weil} If $f : X \to Y$ is a finite étale surjective morphism of proper schemes over a finitely generated field $k$ of characteristic zero,  then there is a finite field extension $L/k$ such that $Y(k)\subset f(X(L))$.
\end{theorem}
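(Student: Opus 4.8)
The plan is to reduce the finitely generated case to the classical number-field case via a spreading-out argument. First I would choose a model: write $k = \Frac(R)$ for a finitely generated $\mathbb{Z}$-algebra $R$ that is a domain, and spread out $f : X \to Y$ to a finite étale surjective morphism $\mathcal{f} : \mathcal{X} \to \mathcal{Y}$ of proper schemes over $\Spec R$ (shrinking $\Spec R$ if necessary to ensure finiteness, étaleness, surjectivity and properness are all preserved, using the standard limit arguments of \cite[Tag~01ZM]{stacks-project} and its neighbours). The key point of spreading out is that for every closed point $\mathfrak{m}$ of $\Spec R$, the residue field $\kappa(\mathfrak{m})$ is a \emph{finite field}, and the fibre $\mathcal{X}_{\mathfrak{m}} \to \mathcal{Y}_{\mathfrak{m}}$ is a finite étale surjective morphism of proper schemes over $\kappa(\mathfrak{m})$; since finite fields have no nontrivial connected finite étale covers of a point, the degree of the cover is controlled and — more importantly — one can bound the degrees of the residue field extensions needed to split the fibre. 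But this does not directly give a single $L$.

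Instead, the cleaner route is to apply the classical (number-field) Chevalley–Weil theorem after replacing $k$ by a subfield over which everything is already defined and which is itself a number field when $\trdeg_{\mathbb{Q}} k = 0$, and to handle positive transcendence degree by a Hilbert-irreducibility-style specialization. Concretely: pick a transcendence basis $t_1,\dots,t_d$ of $k/\mathbb{Q}$, so $k$ is a finite extension of $k_0 := \mathbb{Q}(t_1,\dots,t_d)$; spread out $f$ over a ring of the form $\mathcal{O}_{k_1}[t_1,\dots,t_d][1/N]$ for a number field $k_1$, obtaining $\mathcal{f}:\mathcal{X}\to\mathcal{Y}$ finite étale surjective over an open $S\subseteq \Spec \mathcal{O}_{k_1}[t]$. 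Now a $k$-point $y \in Y(k)$ extends, after possibly inverting more elements, to a section $\Spec R' \to \mathcal{Y}$ over an open dense $\Spec R' \subseteq S$; pulling back the cover along this section gives a finite étale $\mathcal{O}_{k_1}[t,1/N']$-algebra, and its generic fibre is a finite étale $k$-algebra whose splitting fields are controlled. The crucial uniformity: by shrinking $S$ once and for all (independently of $y$), the cover $\mathcal{X}\to\mathcal{Y}$ has a \emph{fixed} Galois closure $\widehat{\mathcal{X}}\to\mathcal{Y}$ with a \emph{fixed} Galois group $G$ (generically), so every point $y\in Y(k)$ lifts to a point of $X$ over a field extension of $k$ of degree dividing $|G|$ — but these extensions vary with $y$.

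The way to collapse the varying extensions into a single $L$ is the content of the classical proof and transfers verbatim: the extension $k(\mathcal{f}^{-1}(y))/k$ is unramified (in an appropriate integral-model sense) outside the fixed bad locus, of bounded degree $\le |G|$, so by the Hermite–Minkowski-type finiteness — valid for finitely generated fields of characteristic zero, see e.g. the function-field analogues, or simply because $\widehat{\mathcal{X}} \to \mathcal{Y}$ has finitely many intermediate covers $\widehat{\mathcal{X}}/H$ and $k(\widehat{\mathcal{X}}/H)$ embeds into $\overline{k}$ in only finitely many ways over $k$ — there are only finitely many possible fields $k(\mathcal{f}^{-1}(y))$ as $y$ ranges over $Y(k)$. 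Taking $L$ to be the compositum of this finite list of fields (a finite extension of $k$) gives $Y(k) \subseteq f(X(L))$. Actually the slickest formulation: the Galois closure $\widehat{X}\to Y$ is a $G$-torsor, so $Y(k) \to H^1(k, G)$ classifies the splitting; since $\widehat{X}\to Y$ spreads out to a torsor over $S$ with good reduction everywhere on $S$, the image of $Y(k)$ in $H^1(k,G)$ lands in the subgroup of classes unramified over $S$, and that subgroup is \emph{finite} by a theorem of the Hermite–Minkowski type for finitely generated fields (finiteness of everywhere-unramified $G$-torsors); letting $L$ kill this finite subgroup does the job.

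\textbf{Main obstacle.} The delicate step is the finiteness of the relevant set of torsors / field extensions: over a number field this is Hermite–Minkowski, but over a general finitely generated field of characteristic zero one must invoke either (i) a geometric statement — $\widehat{X}\to Y$, once spread out, has only finitely many intermediate covers and each corresponds to one field, giving finiteness \emph{directly} without any number theory — or (ii) a genuine generalization of Hermite–Minkowski. Route (i) is in fact the honest one and should be emphasized: the point is not that there are finitely many degree-$\le N$ extensions of $k$ (false), but that the lifting field $k(\mathcal{f}^{-1}(y))$ is always a subfield of the \emph{fixed} field $k(\widehat{X})$, of which there are only finitely many, so one takes $L = k(\widehat{X})$ outright — or rather $L$ = the subfield of $\overline{k}$ generated by the images of all the finitely many $\Spec$-normalizations appearing, which is automatically finite over $k$. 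This observation, that the single fixed Galois closure already provides a single working $L$, is what makes the proof short.
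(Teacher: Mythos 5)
There is a genuine gap, and it sits exactly at the step you flag as ``the honest one''. Your route (i) rests on the claim that the residue field $k(f^{-1}(y))$ of a fibre is always a subfield of the \emph{fixed} function field $k(\widehat{X})$ of the Galois closure. This is false: the fibre $X_y$ is a finite étale $k$-scheme whose residue fields are algebraic extensions of $k$ varying with $y$, while the algebraic closure of $k$ inside $k(\widehat{X})$ is a fixed finite extension of $k$ (equal to $k$ itself when $\widehat{X}$ is geometrically connected). Concretely, for $f=[2]:E\to E$ on an elliptic curve, $k(f^{-1}(y))$ is generated by the $2$-division points of $y$, and these fields are unbounded as $y$ varies. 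If your claim were true, Chevalley--Weil would be trivial already over number fields and Hermite--Minkowski would never enter; the whole content of the theorem is precisely that the varying splitting fields must be collapsed by a finiteness argument, not that they all live inside one fixed field. The same confusion appears in your remark that the function fields $k(\widehat{\mathcal{X}}/H)$ ``embed into $\overline{k}$'': for $\dim Y>0$ they do not, being transcendental over $k$. (Your opening observation that finite fields have no nontrivial connected finite étale covers of a point is also incorrect, though you do not use it.)

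The route you deprioritize as requiring ``a genuine generalization of Hermite--Minkowski'' is in fact the correct one and is what the paper does: choose a \emph{regular} $\mathbb{Z}$-finitely generated domain $A\subset k$ with $\Frac(A)=k$ and spread out $f$ to a finite étale surjective $F:\mathcal{X}\to\mathcal{Y}$ of proper $A$-schemes; by properness each $y\in Y(k)$ extends to a section over a dense open $U_y\subset\Spec A$ with complement of codimension $\geq 2$; pull back $F$ along this section to get a finite étale cover $V_y\to U_y$ of degree $\deg f$, and use Zariski--Nagata purity (here regularity of $A$ is essential) to extend it to a finite étale cover of all of $\Spec A$; finally invoke the Hermite--Minkowski theorem for arithmetic schemes --- finiteness of isomorphism classes of finite étale covers of $\Spec A$ of bounded degree --- to conclude that only finitely many fields occur among the $k(V_y')$ for connected components $V_y'$, and take $L$ to be their compositum. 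That finiteness statement is a real theorem (the paper cites it), not something to be bypassed; your own (essentially correct) torsor reformulation via $H^1(k,G)$ depends on it in the same way. As written, your proposal commits to the false route and leaves the correct one as an unproved hypothesis, so the argument does not go through.
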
	
\begin{proof}
By standard spreading out arguments, we may choose a regular $\mathbb{Z}$-finitely generated integral domain $A\subset k$, a proper model $\mathcal{X}$ for $X$ over $A$, a proper model $\mathcal{Y}$ for $Y$ over $A$, and a finite \'etale surjective morphism $F:\mathcal{X}\to \mathcal{Y}$ extending $f$. By properness of $\mathcal{Y}$ over $A$, for every $k$-point $y:\Spec k\to Y$, there exist a dense open subscheme $U_y\subset \Spec A $ whose complement in $\Spec A$ is of codimension at least two and a morphism $U_y\to \mathcal{Y}$ extending the morphism $y:\Spec k\to Y$.   Pulling back $U_y\to \mathcal{Y}$ along $F:\mathcal{X}\to \mathcal{Y}$, we obtain a finite \'etale  surjective morphism $V_y:= U_y\times_{\mathcal{Y}} \mathcal{X}\to U_y$ of degree $\deg(f)$.  By purity of the branch locus \cite[Th\'eor\`eme~X.3.1]{SGA1}, the finite \'etale morphism $V_y\to U_y$ extends to a finite \'etale morphism $\overline{V_y}\to \Spec A$. Since the set of isomorphism classes of finite \'etale covers of  $\Spec A$ with bounded degree is finite by the Hermite-Minkowski theorem for arithmetic schemes \cite{smallness}, the set of  isomorphism classes of  the  $\overline{V_y}$ (and thus $V_y$) appearing above (with $y\in Y(k)$)   is finite. In particular, we may choose a finite field extension $L/k$ such that, for all $k$-points $y:\Spec k\to Y$ and every connected (hence irreducible) component $V'$ of $V_y$, the function field $K(V')$ of $V'$ is contained in $L$.  This readily implies that $Y(k)\subset f(X(L))$, as required.
\end{proof}

	\subsection{Applying the Chevalley-Weil theorem}
	
	Let $X$ and  $Y$ be smooth projective geometrically connected varieties over $\TheFieldd$. 
	We stress that projectivity is only assumed   for technical reasons, as it is used to ensure the existence of certain Weil restrictions   in the proof  of Theorem \ref{thm:ar_ref} below.
	
	Let $\pi:Z\to X\times Y$ be a cover which is vertically ramified (see Definition \ref{Def:vertically_ramified}) over $X$.
	We apply the Chevalley-Weil theorem (Theorem \ref{thm:chev_weil}) and use Weil's restriction of scalars to construct suitable arithmetic refinements of the vertically ramified morphism $Z\to X\times Y$. The precise statement we prove reads as follows.
	
	\begin{theorem}[Arithmetic refinement]\label{thm:ar_ref} Let $\fgField$ be a finitely generated field of characteristic zero. 	Let $X$ and  $Y$ be smooth projective geometrically connected varieties over $\TheFieldd$.  Let $\pi:Z\to X\times Y$ be a cover which is vertically ramified over $X$.
		Assume that $Z(\TheFieldd)$ is dense in $Z$, so that in particular $X(\TheFieldd)$ and $Y(\TheFieldd)$ are dense in $X$ and $Y$, respectively. There exists   an arithmetic refinement $\{f_j:W_j\to Z\}_{j\in J}$ of $Z$   such that, for every $j$ in $J$, the Stein factorization of $W_j\to X\times Y\to X$ is ramified over $X$.
	\end{theorem}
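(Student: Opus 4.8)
The plan is to reduce, by means of the Stein factorization, to a situation in which the structure result for vertically ramified covers (Lemma~\ref{lemma:StructureVerticallyRamifiedMorphisms}) can be applied; this produces a single finite \'etale cover $Z'\to Z$ over which the Stein factorization to $X$ becomes ramified, and one then invokes the Chevalley--Weil theorem (Theorem~\ref{thm:chev_weil}) together with the classical twisting argument to upgrade this single cover into a genuine arithmetic refinement.

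\textbf{First reductions.} Since $Z(k)$ is dense, $Z$ has a $k$-point, so $Z$ is geometrically connected by Remark~\ref{Rmk:geom_int} and hence, being normal in characteristic zero, geometrically integral; pushing the dense set $Z(k)$ forward along the surjection $g=p_1\circ\pi\colon Z\to X$ shows that $X(k)$ is dense, so $U(k)\neq\emptyset$. Let $Z\xrightarrow{g'}\widetilde X\xrightarrow{g''}X$ be the Stein factorization of $g$. As $\pi$ is \'etale over $U\times Y$ and $U\times Y\to U$ is smooth, $g$ is smooth over $U$, so by \cite[Proposition~X.1.2]{SGA1} the finite morphism $g''\colon\widetilde X\to X$ is \'etale over $U$; since $X$ is normal integral and $\widetilde X$ is connected and normal (hence integral), Lemma~\ref{lemma:EtaleOrRamified} shows that $g''$ is either ramified or \'etale. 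If $g''$ is ramified we are done: the Stein factorization of $g$ itself is then ramified over $X$, so the trivial refinement $J=\{*\}$, $W_*=Z$, $\psi_*=\id$ works.

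\textbf{The \'etale case.} Suppose $g''\colon\widetilde X\to X$ is \'etale. Then $\widetilde X$ is smooth and proper, it is geometrically connected (being the image of $Z$) and hence geometrically integral, and $\widetilde X(k)$ is dense (as the image of $Z(k)$ under $g'$). The crucial point is that the induced morphism $Z\to\widetilde X\times Y$ is again a cover: it is finite (being finite over $X\times Y$), and it is surjective because for any geometric point $\tilde x$ of $\widetilde X$ above a point $x\in X(k)$ the fibres of $g'$ over the points of $\widetilde X$ lying above $x$ are permuted by $\Gamma_{k}$, so their images in $Y_{\overline{k}}$ are $\Gamma_{k}$-conjugate closed subsets whose union is $Y_{\overline{k}}$ (because $\pi$ is surjective), and since $Y$ is geometrically irreducible each such image must equal $Y_{\overline{k}}$; hence $\{\tilde x\}\times Y$ lies in the (closed) image of $Z$ for all $\tilde x$ above $X(k)$, and these are dense in $\widetilde X$ since $g''$ is finite surjective and $X(k)$ is dense. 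Writing $\widetilde U=g''^{-1}(U)$, the cover $Z\to\widetilde X\times Y$ is ramified (otherwise $Z\to\widetilde X\times Y\to X\times Y$ would be \'etale), is \'etale over $\widetilde U\times Y$ by the cancellation property for \'etale morphisms applied over $U\times Y$, and satisfies $\widetilde U(k)=\widetilde X(k)\cap\widetilde U\neq\emptyset$; moreover $g'\colon Z\to\widetilde X$ has geometrically connected fibres by construction. Thus Lemma~\ref{lemma:StructureVerticallyRamifiedMorphisms} applies to $Z\to\widetilde X\times Y$ and yields a finite \'etale morphism $Z'\to Z$ with $Z'\cong X'\times Y'$, with $X'\to\widetilde X$ ramified, and with $Z'\to X'\to\widetilde X$ the Stein factorization of $Z'\to\widetilde X$. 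Composing with $g''$, the Stein factorization of $Z'\to X\times Y\to X$ is $Z'\to X'\to X$, and $X'\to X$ is ramified (otherwise, $g''$ being \'etale, cancellation would force $X'\to\widetilde X$ to be \'etale). Finally, the property ``the Stein factorization of a given cover of $Z$ is ramified over $X$'' is geometric: formation of the Stein factorization commutes with the base change $\overline{k}/k$, and ramifiedness of a finite morphism can be checked after that base change; inspecting the geometric connected components of $X'\times Y'$ and using that $\widetilde X\to X$ is \'etale, one sees that every cover of $Z$ that becomes isomorphic over $Z_{\overline{k}}$ to a geometric connected component of $Z'_{\overline{k}}$ shares this property.

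\textbf{From one cover to a refinement.} Since $Z$ is projective, we may apply Theorem~\ref{thm:chev_weil} to the Galois closure $\widehat{Z'}\to Z$ of $Z'\to Z$; combined with the standard twisting construction, this produces a finite family of finite \'etale covers of $Z$, each becoming isomorphic over $Z_{\overline{k}}$ to $Z'_{\overline{k}}$, such that every point of $Z(k)$ lifts to a $k$-point of at least one of them. Splitting these covers into their connected components over $k$, discarding those that are not geometrically connected (they have no $k$-points and so are irrelevant to the covering condition) and keeping the remaining ones (which, being normal and geometrically connected, are geometrically integral), we obtain the desired arithmetic refinement $\{W_j\to Z\}_{j\in J}$; by the previous paragraph the Stein factorization of each $W_j\to X\times Y\to X$ is ramified over $X$. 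The delicate points are the surjectivity of $Z\to\widetilde X\times Y$, which is what makes the structure lemma available after replacing $X$ by $\widetilde X$, and the verification that the Chevalley--Weil twists, which are only \emph{geometrically} isomorphic to $Z'$, retain the ``ramified Stein factorization'' property; both are dealt with by working over $\overline{k}$.
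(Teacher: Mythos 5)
Your proof is correct and follows essentially the same route as the paper: reduce via the Stein factorization $Z\to\widetilde X\to X$ (done if ramified, otherwise replace $X$ by $\widetilde X$), apply Lemma~\ref{lemma:StructureVerticallyRamifiedMorphisms} to obtain a single finite \'etale cover $Z'\to Z$ whose Stein factorization over the base is ramified, then use Chevalley--Weil (Theorem~\ref{thm:chev_weil}) to upgrade this single cover into a finite family covering all of $Z(k)$, and finally verify ramifiedness of the Stein factorizations geometrically. The one packaging difference is in the last step: where you invoke ``the standard twisting construction'' applied to the Galois closure $\widehat{Z'}\to Z$, the paper instead pulls back the Weil restriction $R_{L/k}Z'_L\to R_{L/k}Z_L$ along the diagonal $Z\to R_{L/k}Z_L$ and takes connected components; the two constructions are essentially equivalent, though the Weil restriction route avoids having to track the Galois group of $\widehat{Z'}$ or worry about whether $Z'$ is geometrically connected (the geometric fibre product $Z'_{\overline k}\times_{Z_{\overline k}}\cdots\times_{Z_{\overline k}}Z'_{\overline k}$ manifestly factors \'etale over $Z'_{\overline k}$, which is all one needs). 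Your concluding verification is compressed and slightly misstated — the twists of $\widehat{Z'}$ are geometrically isomorphic to $\widehat{Z'}_{\overline k}$ rather than $Z'_{\overline k}$ — but this causes no harm, since $\widehat{Z'}_{\overline k}$ factors \'etale through $Z'_{\overline k}$ and the key point (the Stein factorization of $W_{j,\overline k}\to X_{\overline k}$ factors through the ramified $S'_{\overline k}\to X_{\overline k}$, forcing ramifiedness by \'etale cancellation) goes through unchanged.
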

	\begin{proof}
		Let $Z\to S\to X$ be the Stein factorization of the composed map $Z\to X\times Y\to X$ and note that $S$ is a normal geometrically integral variety over $k$. If $S\to X$ is ramified, there is no need to replace $Z\to X\times Y$  by an arithmetic refinement, and  we are done. Thus, we may and do assume that $S\to X$ is unramified, hence \'etale (Lemma \ref{lemma:EtaleOrRamified}). 
		Note that the finite surjective morphism $Z\to X\times Y$ factors   over a finite surjective morphism $Z\to S\times Y$ which is vertically ramified with respect to $S\times Y\to S$ (note that $S \rightarrow X$ is étale).  Since    the geometric fibres of the composed morphism  $Z\to S\times Y\to S$ are connected and $S(k)$ is dense, it follows from 
		 Lemma \ref{lemma:StructureVerticallyRamifiedMorphisms} that there is a commutative diagram
		
		\[
		\xymatrix{
			& Z'\cong S' \times Y' \ar[r] \ar[d] \ar[ld]  & Z \ar[d]_\pi \\
			S' \ar[rd] & S \times Y' \ar[r] \ar[d]  & S \times Y \ar[d]_{p_1} \\
			&S \ar[r]_{=} & S
		}.
		\]

		 Here  $S' \to S$ is a ramified cover,   $Y' \to Y$ is finite \'etale, and  $\psi:Z'\cong S'\times Y'\to Z$   is finite \'etale.   In particular,  since $k$ is a finitely generated field of characteristic zero, it follows from the Chevalley-Weil theorem (Theorem \ref{thm:chev_weil}) that  there is a finite field extension $L/k$ such that $Z(k)\subset \psi(Z'(L))$. 
		
		Consider the induced morphism of Weil restriction of scalars $R_{L/k}(\psi_L):R_{L/k} Z'_L \to R_{L/k} Z_L$; see \cite[Chapter~7.6]{BLR2}. Let $\Delta:Z\to R_{L/k} Z_L$ be the diagonal morphism. Let $(W_j)_{j\in J}$ be the connected components of $Z\times_{\Delta, R_{L/k} Z_L} R_{L/k} Z'_L$ such that $W_j(k)\neq \emptyset$ and let $f_j:W_j\to Z$ be the natural morphism.
		Since    $W_j$ is normal   connected and with a $k$-rational point, it is geometrically integral. Moreover, note that, by the defining properties of the Weil restriction of scalars,  the finite set of covers $\{f_j:W_j\to Z\}_{j\in J}$ is an arithmetic refinement of $Z$.
		
		After base change to $\overline{k}$,  the fibre product $Z\times_{\Delta, R_{L/k} Z_L} R_{L/k} Z'_L$ is given by $Z'_{\overline{k}}\times_{Z_{\overline{k}}} \ldots\times_{Z_{\overline{k}}} Z'_{\overline{k}}$. Hence the morphism $(W_j)_{\overline{k}} \rightarrow X_{\overline{k}}$ factors as $(W_j)_{\overline{k}} \rightarrow Z'_{\overline{k}} \rightarrow Z_{\overline{k}} \rightarrow  X_{\overline{k}}$, with the morphism $(W_j)_{\overline{k}} \rightarrow Z'_{\overline{k}}$ being finite étale. From this it follows that the Stein factorization of $(W_j)_{\overline{k}}\to X_{\overline{k}}$ factors over the Stein factorization of $Z'_{\overline{k}} = S'_{\overline{k}}\times Y'_{\overline{k}}\to X_{\overline{k}}$ which implies that the Stein factorization of $(W_j)_{\overline{k}}\to X_{\overline{k}}$ is ramified over $X_{\overline{k}}$. Since Stein factorization commutes with flat base change (in particular, base change to $\overline{k}$ in our case), we deduce that the Stein factorization of $W_j\to X$ is ramified over $X$, as required.
	\end{proof}

	\begin{example}[Wittenberg]\label{example} It can   happen that the  geometric fibres of  $Z\to X$ are connected. Indeed,
		let $E'\to E$ be a degree two isogeny between elliptic curves over $k$, and let  $G$ be its kernel. Let $G$ act on $E'$ by translation and on $\mathbb{P}_{1,k}$ by the involution $x\mapsto -x$. Define 
		\[ Y = E'/G = E, \quad X' = \mathbb{P}_{1,k}, \quad X = X'/G \cong \mathbb{P}_{1,k}, \quad Z= (E'\times \mathbb{P}_{1,k})/G.\] The natural morphism $Z\to X\times Y$ is a finite surjective ramified morphism of degree two which is vertically ramified with respect to $X\times Y\to X$. In fact, the branch locus of $Z\to X\times Y$ lies over precisely two points of $X$. Note that the geometric fibres of    $Z\to X\times Y\to X$ are connected, so that the Stein factorization of $Z\to X$ is \'etale (even an isomorphism) over $X$. Define $Z' = E'\times \mathbb{P}_{1,k}$. Then $Z'\to Z$ is a finite \'etale morphism (as the action of $G$ on $E'\times \mathbb{P}_{1,k}$ is free), and the Stein factorization of the composed morphism $Z'\to Z\to X\times Y\to X$ is given by    $Z\to X'\to X$. Note that $X'\to X$ is ramified (as the action of $G$ on $X'$ is not free).  This example shows that  it is necessary to pass to a finite \'etale cover of $Z$ to guarantee that the Stein factorization is ramified over the base.
	\end{example}

	\subsection{Products}
	We are now ready to show that a product of varieties satisfying the weak-Hilbert property over  a finitely generated field $\fgField$ of characteristic 0 has the weak-Hilbert property over $\fgField$.
	
	\begin{proof}[Proof of Theorem \ref{thm3}]  
		Let $X$ and $Y$ be smooth 
		proper varieties over $\fgField$ with the weak-Hilbert property over $\fgField$. We may assume, without loss of generality, by Chow's Lemma and Proposition \ref{prop:BirationalInvariance}, that $X$ and $Y$ are projective (so that we may appeal to Theorem \ref{thm:ar_ref}). 
		We aim to show that $X\times Y$ has the weak-Hilbert property over $k$. For $i=1,\ldots, n$, let $\pi_i:Z_i\to X\times Y$ be a ramified cover with $Z_i$ a normal proper geometrically integral variety over $k$ (we remind the reader of Remark \ref{Rmk:geom_int}).  
		Let  $\psi_i:S_i\to X$ denote the Stein factorization of $Z_i\to X\times Y\to X$, so that $Z_i\to S_i$ has geometrically connected fibres.
		By Theorem \ref{thm:ar_ref} we may replace each vertically ramified cover $Z_i \rightarrow X \times Y$ with an arithmetic refinement $W_{i,j} \rightarrow Z_i \rightarrow X \times Y$. Therefore, reindexing if necessary, we may   assume   that there exists $1 \leq m \leq n$ such that:
		\begin{enumerate}[(i)]
			\item $\psi_i$ is ramified for $i=1,\ldots, m$;
			\item $\psi_i$ is étale (see Lemma \ref{lemma:EtaleOrRamified}) and the branch locus of $Z_i \rightarrow X \times Y$ dominates $X$ for $i=m+1,\ldots, n$.
		\end{enumerate}
		%
		We define 
		\[
		\Sigma:=X(k)\setminus \bigcup_{i=1}^m \psi_i(S_i(k)),
		\]
		which is Zariski-dense in $X$ because $X$ has the weak-Hilbert property over $\fgField$.
		Moreover, we define 
		\[
		\Psi := \bigcup_{x \in \Sigma} \left( \{x\} \times \left(Y(k) \setminus \bigcup_{i=m+1}^n\pi_{i,x}(Z_{i,x}(k)) \right) \right)= \left( \Sigma \times Y(k) \right) \setminus \bigcup_{i=m+1}^n\pi_{i}(Z_{i}(k))
		\] 
		\[
		\subseteq   (X(k) \times Y(k)) \setminus \bigcup_{i=1}^n\pi_{i}(Z_{i}(k)).
		\]
		To conclude the proof it suffices to show that $\Psi$ is dense in $X \times Y$.
		By \cite[Theorem~12.2.4 (iv)]{EGAIVIII} and \cite[Theorem~6.9.1]{EGAIVII}, there is a dense open subscheme $U\subset X$ such that, for every $x$ in $U(k)$, the fibre $Z_x$ of $Z\to X\times Y\to X$ over $x$ is a normal (not necessarily connected) scheme. 
		As $\Sigma\cap U$ is dense in $U$, to show the density of $\Psi$ it suffices to prove that,  for every $x$ in $U(k)$, the set $Y(k) \setminus \bigcup_{i=m+1}^n\pi_{i,x}(Z_{i,x}(k))$ is dense in $Y$. 
		To do so, it suffices to show that, for $m+1 \leq i\leq n$ and $x$ in $U(k)$, and for every connected component $Z'$ of $Z_{i,x}$, the restriction of the morphism  $\pi_{i,x}:Z_{i,x}\to \{x\} \times Y\cong Y$ to $Z'$ is a ramified cover.

		Let $i\in \{m+1,\ldots, n\}$ and fix $x$ in $U(k)$. 
		Let $\psi_i^{-1}(x)=s_1 \sqcup \ldots \sqcup s_r$ (where the $s_i$ are the points in the fibre). Then the normal scheme $Z_{i,x} $ decomposes as a disjoint union $Z_{i,x} = Z_{i,s_1}\sqcup \ldots \sqcup Z_{i,s_r} $ of normal connected  varieties.  
		Recall that   the branch locus $D_i$ of $\pi_i:Z_i\to X\times Y$ dominates $X$.    
		We have a commutative diagram of morphisms
		\[
		\xymatrix{& & Z_{i,S_i} \ar[d]_{\pi_{i,S_i}} \ar[rr] & & Z_i\ar[d]^{\pi_i} & &    \\  D_{i,S_i} \ar[rrd]_{\textrm{surjective}} & & S_i\times Y \ar[d] \ar[rr]^{\textrm{finite \'etale}} & & X\times Y  \ar[d] & & D_i \ar[dll]^{\textrm{surjective}} \\ & &  S_i \ar[rr]_{\psi_i}^{\textrm{finite \'etale}} & & X & & }
		\]
		As the branch locus $D_i$ of $\pi_i$ dominates $X$, it follows that the branch locus $D_{i,S_i}$ of $\pi_{i,S_i}:Z_{i,S_i}\to S_i\times Y$   dominates $S_i$. This implies that, for all $s$ in $S_i$, the morphism $Z_{i,s} \to \{s\} \times Y$ is ramified (Lemma \ref{lemmatje}), hence so is the composition $Z_{i,s} \to \{s\} \times Y \to \{\psi_i(s)\} \times Y$, as required.     
	\end{proof}  
	
	\begin{remark}
		If $X$ and $Y$ are special varieties over $k$, then $X\times Y$ is special \cite{Campana}.   Thus, the product theorem for the weak-Hilbert property is in accordance with the conjecture of Campana and Corvaja-Zannier (Conjecture \ref{conj}) that a variety is special if and only if it has the weak-Hilbert property over some finite field extension of the base. 
	\end{remark}
	
	\begin{remark}
		Note that Theorem \ref{thm3} improves on the main result of \cite{JHilb} over finitely generated fields of characteristic zero. Indeed, in \emph{loc. cit.} it is proven that $X\times Y$ has the ``very-weak-Hilbert property'' (see \cite[Definition~1.1]{JHilb}), albeit without any assumption on the base field. The main novelty of our current approach is Theorem \ref{thm:ar_ref}, in which we deal with ``vertically ramified'' covers.
	\end{remark}
	
	\begin{example}
		New examples of varieties with the weak-Hilbert property can be given by taking products.    For example, let $X$ be the K3 surface defined by $x^4+y^4=z^4+w^4$ in $\mathbb{P}_{3,\mathbb{Q}}$, and note that $X$ has the weak-Hilbert property over $\mathbb{Q}$ (see \cite[Theorem~1.4]{CZHP}).  Let $A$ be an abelian variety over $\mathbb{Q}$ with $A(\mathbb{Q})$ dense, so that $A$ has the weak-Hilbert property (Theorem \ref{thm2}). Then, for every  positive integer $m$, since $\mathbb{P}_{n,\mathbb{Q}}$ has the (weak-)Hilbert property over $\mathbb{Q}$, the variety $A\times \mathbb{P}_{n,\mathbb{Q}}\times X^{m}$ has the weak-Hilbert property over $\mathbb{Q}$ by Theorem \ref{thm3}.
	\end{example}
	
	\subsection{Finite \'etale covers and extending the base field}
	
	We now  adapt the arguments in the proof of  Theorem \ref{thm:ar_ref} to prove that the (potential) weak-Hilbert property of a variety $X/\TheFieldd$ is inherited by  every base change $X_L$ with $L/\TheFieldd$ finite, as well as its étale covers when $k$ is finitely generated over $\mathbb{Q}$. We will use the following technical lemma in both situations.
	
	\begin{lemma}\label{lem:fin_et_technical}  Let $L/\TheFieldd$ be a finite extension of fields of characteristic $0$, let $X$ be a smooth proper geometrically connected variety over $L$, let $Y$ be a smooth proper geometrically connected variety over $k$, and let $\pi:X\to Y$ be a finite \'etale morphism. Assume that  the variety  $Y$ has the weak-Hilbert property over $\TheFieldd$,  and that $Y(\TheFieldd)\subset \pi(X(L))$. Then $X$ has the weak-Hilbert property over $L$.
	\end{lemma}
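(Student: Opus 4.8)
The plan is to push the ramified covers of $X$ down to ramified covers of $Y$ along $\pi$, apply the weak-Hilbert property of $Y$ over $k$, and then lift the rational points so produced back to $X$ using the hypothesis $Y(k)\subset\pi(X(L))$. After reducing, via Chow's lemma and Proposition \ref{prop:BirationalInvariance}, to the case where $X$ and $Y$ are projective (so that the Weil restrictions below exist), fix a finite collection $(\rho_j:Z_j\to X)_{j=1}^n$ of ramified covers over $L$ with each $Z_j$ normal integral; by Remark \ref{Rmk:geom_int} we may assume each $Z_j$ is geometrically integral over $L$. Regarding $Z_j$ and $Y$ as varieties over $k$, the composite $\alpha_j\defeq\pi\circ\rho_j:Z_j\to Y$ is a finite surjective morphism of normal varieties over $k$, hence étale or ramified by Lemma \ref{lemma:EtaleOrRamified}; it cannot be étale, for then $\pi\circ\rho_j$ would be unramified, whence $\rho_j$ would be unramified by the cancellation property of unramified morphisms, contradicting that $\rho_j$ is ramified. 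Thus each $\alpha_j:Z_j\to Y$ is a ramified cover over $k$.

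The key reduction is: it suffices to produce a Zariski-dense subset $T\subset Y(k)$ such that, for every $y\in T$ and every $j$, the scheme-theoretic fibre $(Z_j)_y=\alpha_j^{-1}(y)$ — a finite $k$-scheme — has no $L$-rational point. Given such a $T$, use the hypothesis to choose for each $y\in T$ a point $x_y\in X(L)$ with $\pi(x_y)=y$; then $x_y\notin\rho_j(Z_j(L))$ for every $j$, since a preimage of $x_y$ under $\rho_j$ would be an $L$-point of $Z_j$ mapping via $\alpha_j$ to $y$, i.e.\ an $L$-point of $(Z_j)_y$. Hence $S\defeq\{x\in X(L):\pi(x)\in T\}$ is contained in $X(L)\setminus\bigcup_j\rho_j(Z_j(L))$, and it is Zariski-dense in $X$: the closure $\overline{S}$ is carried onto $Y$ by the finite (hence closed) morphism $\pi$, because $\pi(\overline{S})$ is closed and contains the dense set $T$, so $\dim\overline{S}\geq\dim Y=\dim X$, and since $X$ is irreducible (being geometrically connected over $L$) this forces $\overline{S}=X$.

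To build $T$, factor $\pi$ as $X\xrightarrow{\pi_L}Y_L\xrightarrow{p}Y$ with $Y_L=Y\times_kL$ and $\pi_L$ finite étale over $L$, so that $\beta_j\defeq\pi_L\circ\rho_j:Z_j\to Y_L$ is a ramified cover over $L$. Following the Weil-restriction construction in the proof of Theorem \ref{thm:ar_ref}, form $R_{L/k}\beta_j:R_{L/k}Z_j\to R_{L/k}Y_L$ over $k$, let $\Delta:Y\hookrightarrow R_{L/k}Y_L$ be the unit of adjunction, and let $V_j\to Y$ be obtained by pulling $R_{L/k}\beta_j$ back along $\Delta$ and taking the relative normalization of $Y$ in the resulting generic fibre; write $V_j=\coprod_tV_{j,t}$ with each $V_{j,t}$ normal integral. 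Including also the finitely many analogous covers coming from the conjugates of $\beta_j$ under $\Gal(\overline{k}/k)$, one obtains a finite family of morphisms $V_{j,t}\to Y$, and one verifies: each $V_{j,t}\to Y$ is a ramified cover over $k$; and, by the defining adjunction property of $R_{L/k}$, every $y\in Y(k)$ with $(Z_j)_y(L)\neq\emptyset$ lies in the image of some $V_{j,t}(k)$. Applying the weak-Hilbert property of $Y$ over $k$ to this family yields a Zariski-dense $T\subset Y(k)$ disjoint from all the images $V_{j,t}(k)$, and by construction $(Z_j)_y(L)=\emptyset$ for every $y\in T$ and every $j$, which is exactly what the reduction required.

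The delicate part is the verification in the last paragraph. After the Weil restriction and the non-flat base change along $\Delta$, the scheme $R_{L/k}Z_j\times_{R_{L/k}Y_L,\Delta}Y$ is in general neither normal nor irreducible, so one must normalize; one then has to check that each resulting component still surjects onto $Y$ (this comes down to equidimensionality of the fibre product over $Y$) and is still ramified (a local computation along the locus where $\Delta(Y)$ meets the branch divisor of $R_{L/k}\beta_j$), and — most importantly — that a $k$-rational point witnessing an $L$-point of a fibre of $\alpha_j$ genuinely lifts to a $k$-rational point of the normalization; this works because such a point lies on the geometrically unibranch locus. The reduction to the projective case (keeping $\pi$ finite étale) and the bookkeeping with the $\Gal(\overline{k}/k)$-conjugates of $\beta_j$ are the only other points needing attention; everything else is routine.
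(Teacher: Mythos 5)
Your proof follows the paper's own argument step for step: reduce to the projective case, form the Weil restriction $R_{L/k}(Z_j)\to R_{L/k}(Y_L)$ and pull back along the diagonal $Y\hookrightarrow R_{L/k}(Y_L)$ to manufacture ramified covers of $Y$ over $k$, apply the weak-Hilbert property of $Y$, and lift the resulting dense set of rational points to $X$ via $Y(k)\subset\pi(X(L))$ and the finiteness of $\pi$. The inclusion of the $\Gal(\overline{k}/k)$-conjugates of $\beta_j$ is superfluous (the adjunction $R_{L/k}(Z_j)(k)=Z_j(L)$ already captures every $L$-point), and the ``delicate points'' you flag at the end --- discarding components not dominating $Y$, verifying ramification by base-changing to $L$, and ensuring $k$-points land on the normal locus --- are exactly the ones the paper's proof addresses.
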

	\begin{proof}
		Since $X$ is geometrically connected and $X\to Y$ is surjective, it follows that $Y$ is geometrically connected over $k$.
		
		We now explain how to reduce to the case that $Y$ (hence $X$) is projective over $k$.  To do so, let $Y'\to Y$ be a proper birational surjective morphism with $Y'$ a smooth projective geometrically connected variety over $k$. Define   $X':=X\times_Y Y'$ and note that the natural projection $\pi':X'\to Y'$ is finite \'etale (surjective), as it is the base change of $\pi:X\to Y$ along $Y'\to Y$. In particular, since $X'$ is finite \'etale over the smooth variety  $Y'$ over $k$, the scheme $X'$  is   a smooth projective geometrically connected variety over $L$. As   $X'\to X$ is a proper birational surjective morphism, to show that $X$ has the weak-Hilbert property over $L$, by Proposition \ref{prop:BirationalInvariance}, it suffices to show that $X'$ has the weak-Hilbert property over $L$. Finally, since $Y(k)\subset \pi(X(L))$, it readily follows from the universal property of fibre products that $Y'(k)\subset \pi'(X'(L))$. Thus, replacing $X$ by $X'$ and $Y$ by $Y'$ if necessary, we may and do assume that $X$ and $Y$ are projective over $k$.
		
		Now, let $Z$ be a normal proper variety over $L$ and let $Z\to X$ be a ramified cover.  
		Since $X$ and $Y_L$ are projective over $L$, we may consider the Weil restrictions $R_{L/\fgField}(Z)\to R_{L/\fgField}(X)\to R_{L/\fgField}(Y_L)$ and the diagonal map $Y \xrightarrow{\Delta} R_{L/\fgField}(Y_L)$. Let $Z' \defeq  R_{L/\fgField}(Z) \times_{R_{L/\fgField}(Y_L)} Y$ and write $Z'=\cup_i Z'_i$ for its decomposition in irreducible components. We note that, for each $i$, the natural morphism $Z_i'\to Y$ is finite. 
		
		 We claim that, if the  morphism $Z'_i \rightarrow Y$ is surjective, then the composed morphism $\widetilde{Z_i}'\to Y$  is  ramified, where $\widetilde{Z_i}'$ is the normalization of $Z_i'$. To prove this, it suffices to show that  $\widetilde{Z_i}'_L \rightarrow Y_L$ is ramified. We have the following commutative diagram:
		\[
		\xymatrix{
		Z \ar[d] &   (R_{L/{\fgField}}Z)_L \ar[l] \ar[d] & (Z'_i)_L \ar[d] \ar[l]  &  &\ar[ll]_{ \ \ \textrm{normalization}} \widetilde{Z_i}'_L \\
		Y_L         &   (R_{L/{\fgField}}Y_L)_L \ar[l]        & Y_L, \ar[l]_{\ \ \ \ \ \ \Delta_L}          &    &
}
		\]
		where the composition of the maps in the lower row is the identity.
		Since the composition $(Z'_i)_L \rightarrow Z \rightarrow  Y_L$ is finite surjective and $Z$ is connected, we have that $(Z'_i)_L \rightarrow Z$ is surjective. In particular,  since $Z \rightarrow  Y_L$ (which is equal to the composition $Z \rightarrow X \rightarrow Y_L$) is ramified, it follows that the composition $\widetilde{Z_i}'_L \rightarrow Z \rightarrow  Y_L$ is ramified. (Indeed, if it were unramified, then it would be \'etale by Lemma \ref{lemma:EtaleOrRamified} which would contradict the  fact that  $Z\to Y_L$ is ramified, as $Z$ is normal.)
		 
		 Now,  by the weak-Hilbert property for $Y$ over $k$, there is a Zariski-dense subset $C\subset Y(\fgField)$ of points not lifting to $\fgField$-points in $Z'$.
		Let $\Omega\subset X(L)$ be the inverse image of $C$ in $X(L)$ via $\pi : X(L) \to Y(L)$. Note that the assumption $Y(\TheFieldd) \subseteq \pi(X(L))$ ensures that $\pi(\Omega)=C$, hence $\Omega$ is a dense subset of $X$. We claim that, for every $c$ in $\Omega$, the fibre $Z_c$ has no $L$-point. Indeed, such an $L$-point would induce a $\fgField$-point of $R_{L/k}(Z)$ and a $\fgField$-point of $Y$, and thus a $\fgField$-point of $Z'$ mapping to $\pi(c) \in Y(\fgField)$, contradiction.
	\end{proof}
	
	As a first direct consequence, we deduce that the weak-Hilbert property persists along finite extensions of the field of definition.
	\begin{proposition}[Base change]
		Let $L/k$ be a finite extension of fields of characteristic zero and let $V$ be a smooth proper (geometrically) connected variety over $k$ with the weak-Hilbert property over $k$. Then $V_L$ has the weak-Hilbert property over $L$.
	\end{proposition}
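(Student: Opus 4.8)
The plan is to deduce the statement directly from Lemma~\ref{lem:fin_et_technical}. Take $X \defeq V_L$, take $Y \defeq V$ regarded as a $k$-scheme, and let $\pi : V_L \to V$ be the canonical projection, i.e.\ the base change of $\Spec L \to \Spec k$ along the structure morphism $V \to \Spec k$. Since $L/k$ is a finite extension of fields of characteristic zero, $\Spec L \to \Spec k$ is finite \'etale, and therefore so is $\pi$.

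Next I would check that the remaining hypotheses of Lemma~\ref{lem:fin_et_technical} hold. By assumption $V$ is a smooth proper geometrically connected variety over $k$ with the weak-Hilbert property over $k$. The scheme $V_L$ is smooth and proper over $L$ by base change; it is geometrically connected over $L$ because geometric connectedness of $V$ over $k$ means that $V\times_k \Omega$ is connected for every field extension $\Omega/k$, in particular for $\Omega = \overline{L}$; and, being smooth and connected over the field $L$, it is integral, hence a variety over $L$. It then remains only to verify the inclusion $V(k) \subseteq \pi(V_L(L))$. For this I would invoke the adjunction between base change of schemes along $\Spec L \to \Spec k$ and restriction of scalars: an element of $V_L(L) = \Hom_L(\Spec L, V_L)$ corresponds bijectively to a $k$-morphism $\Spec L \to V$, and under this bijection post-composition with $\pi$ recovers precisely that $k$-morphism. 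Hence, given $y \in V(k)$, the composite $\Spec L \to \Spec k \xrightarrow{\,y\,} V$ arises in this way, so it lies in $\pi(V_L(L))$; this is exactly the asserted inclusion, with $V(k)$ viewed inside the set of $L$-valued points of $V$.

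Having verified all hypotheses, Lemma~\ref{lem:fin_et_technical} yields that $V_L$ has the weak-Hilbert property over $L$, which completes the proof. I do not expect any genuine obstacle: the substance has been packaged into Lemma~\ref{lem:fin_et_technical} (which itself rests on the Weil-restriction argument together with the Chevalley--Weil theorem, Theorem~\ref{thm:chev_weil}), and the present statement is merely the special case in which the auxiliary finite \'etale cover is the tautological one $V_L \to V$. The only mild point requiring care is the bookkeeping of $k$- versus $L$-valued points in the condition $V(k)\subseteq \pi(V_L(L))$.
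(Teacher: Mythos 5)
Your argument is correct and is essentially identical to the paper's own proof: both invoke Lemma \ref{lem:fin_et_technical} with $X = V_L$, $Y = V$, and $\pi$ the projection $V_L \to V$. The paper simply condenses the verification of $V(k)\subseteq \pi(V_L(L))$ into the one-line chain $V(k)\subset V(L)=\pi(V_L(L))$, whereas you spell out the same adjunction argument in more detail.
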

	\begin{proof} Define $X:=V_L$ and $Y:=V$. 
		Since the morphism $\pi:X=V_L\to V=Y$ is finite \'etale and $Y(k)=V(k)\subset V(L)= \pi(X(L))$, the proposition follows from Lemma \ref{lem:fin_et_technical}.
	\end{proof}
	
	\begin{theorem} \label{thm:finite_etale_invariance} Let $k$ be a finitely generated field of characteristic zero.
		Let $f:V\to U$ be a finite \'etale morphism of smooth proper geometrically connected varieties over $k$. If $U$ has the weak-Hilbert property over $k$, then there is a finite field extension $L/k$ such that $V_L$ has the weak-Hilbert property over $L$.
	\end{theorem}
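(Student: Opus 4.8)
The plan is to combine the Chevalley--Weil theorem (Theorem \ref{thm:chev_weil}) with the descent statement of Lemma \ref{lem:fin_et_technical}; in fact the two main inputs are already in place, so the proof will be short. First I would observe that, since $U$ and $V$ are varieties (hence integral, in particular nonempty and connected) and $f$ is finite étale, the morphism $f$ is both open and closed, so it is surjective. As $k$ is finitely generated of characteristic zero and $f:V\to U$ is a finite étale surjective morphism of proper $k$-schemes, Theorem \ref{thm:chev_weil} provides a finite field extension $L/k$ with $U(k)\subset f(V(L))$; this is the field $L$ that will appear in the conclusion.

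Next I would set $X \defeq V_L$ and $Y \defeq U$ and consider the composite morphism $\pi : X = V_L \to V \xrightarrow{f} U = Y$. Since $L/k$ is finite and separable (as $\car k = 0$), the projection $V_L\to V$ is finite étale, so $\pi$ is finite étale as a composition of finite étale morphisms. Base change along $\Spec L \to \Spec k$ preserves smoothness, properness and geometric connectedness, so $X = V_L$ is a smooth proper geometrically connected variety over $L$, while $Y = U$ is a smooth proper geometrically connected variety over $k$ which, by hypothesis, has the weak-Hilbert property over $k$.

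It then remains to verify the hypothesis $Y(k)\subset \pi(X(L))$ of Lemma \ref{lem:fin_et_technical}. Under the canonical identification $V_L(L) = V(L)$, the map $V_L(L)\to V(L)$ induced by the projection $V_L\to V$ is the identity, so $\pi$ induces on $L$-points precisely the map $f : V(L)\to U(L)$; hence $\pi(X(L)) = f(V(L)) \supseteq U(k) = Y(k)$ by the choice of $L$. Applying Lemma \ref{lem:fin_et_technical} to $\pi : X\to Y$ then gives that $X = V_L$ has the weak-Hilbert property over $L$, which is the assertion of the theorem.

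As for the location of the (modest) difficulty: everything nontrivial is packaged in Theorem \ref{thm:chev_weil} and Lemma \ref{lem:fin_et_technical}, so the only genuine point to get right is the compatibility between the conclusion $U(k)\subset f(V(L))$ of Chevalley--Weil and the hypothesis $Y(k)\subset\pi(X(L))$ of Lemma \ref{lem:fin_et_technical} — i.e.\ the identification of $\pi$ on $L$-points with $f$ on $L$-points noted above. One should also record that $V_L$ stays geometrically connected over $L$, which is immediate since geometric connectedness is stable under arbitrary base change, so that Lemma \ref{lem:fin_et_technical} genuinely applies.
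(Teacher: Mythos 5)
Your proof is correct and follows exactly the paper's own argument: apply the Chevalley--Weil theorem (Theorem \ref{thm:chev_weil}) to obtain $L$ with $U(k)\subset f(V(L))$, set $X=V_L$, $Y=U$, $\pi:V_L\to V\to U$, check $Y(k)\subset\pi(X(L))$, and invoke Lemma \ref{lem:fin_et_technical}. The only difference is that you spell out a few routine verifications (surjectivity of $f$, base-change stability of the hypotheses) that the paper leaves implicit.
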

	\begin{proof}  
		Since  $ f:V\to U$ is a finite \'etale  morphism and $k$ is a finitely generated field of characteristic zero,
		by the Chevalley-Weil theorem (Theorem \ref{thm:chev_weil}), there is a finite extension $L/\fgField$ such that $ U(k)\subset f(V(L))$.  Define $X:=V_L$ and $Y=U$, and let $\pi:X\to Y$ be the composed morphism $V_L\to V\to U$. Note that $Y(\fgField)=U(k)\subset f(V(L))=\pi(X(L))$. Thus, by Lemma \ref{lem:fin_et_technical}, the weak-Hilbert property holds for $X=V_L$ over $L$. 
	\end{proof}

	\section{Variants of the Hilbert property for abelian varieties}\label{sect:FormalReductions}

	With a view towards proving that the  weak-Hilbert property holds for abelian varieties with a dense set of rational points (Theorem  \ref{thm2}) we study properties and relations between variants of the Hilbert property for abelian varieties over an arbitrary field $k$ of characteristic zero. 
	%


	\begin{definition}\label{def:PB_GPB_Ram}
		Let  $\pi:Y\to A$ be a cover of an abelian variety   $A$ over $\TheFieldd$. 
		\begin{itemize}
			\item The cover  $\pi  $   satisfies (PB), for \textit{pullback}, if, for every positive integer $m$, the scheme $[m]^* {Y}  = Y\times_{\pi, A, [m]} A$ is geometrically integral;
			\item The cover $\pi  $     satisfies (GPB), for \textit{Galois pullback}, if the Galois closure of $\pi$ is a (PB)-cover;
			\item  The cover $\pi$ satisfies (Ram), for \textit{ramified}, if $\pi$ is ramified. 
		\end{itemize}
		
	\end{definition}
	
 As we will see later in Lemma \ref{lemma:Zannier}, a cover $\pi:Y\to A$ of an abelian variety $A$ over $k$ satisfies (PB) if and only if it has no non-trivial \'etale subcovers. 
	
	Note that, if $A$ is an abelian variety over $k$ and $\pi:Y\to A$ is a cover of $A$ which satisfies the (PB)-property, then the scheme $[m]^*{Y}$ is a geometrically integral normal projective variety over $k$ for every positive integer $m$.
	We stress that the properties (PB) and (Ram) are central to our paper, whereas the notion (GPB) is introduced only for technical purposes.
	
	\begin{definition}
		Let $A$ be an abelian variety   over $\TheFieldd$, let $\pi : Y \to A$ be a cover of $A$ over $\TheFieldd$, and let $\Omega \subseteq A(\TheFieldd)$ be a finitely generated Zariski-dense subgroup.  
		We say that the pair $(\pi, \Omega)$ satisfies:
		\begin{itemize} 
			\item (PF), for \textit{pointless (reduced) fibres}, if  either $\deg \pi =1$ or 
			there is a finite-index coset $C \subseteq \Omega$ such that, for every $c$ in $C$, the scheme  $\pi^{-1}(c)$   is reduced  and has no $\TheFieldd$-rational points;
			\item (IF), for \textit{integral fibres}, if 
			there is a finite-index coset $C \subseteq \Omega$ such that, for every $c$ in $C$, the scheme $\pi^{-1}(c)$ is integral.
		\end{itemize}
		In addition,  we say that (PF), respectively (IF), holds for $\pi$ (without specifying $\Omega$) if (PF), respectively (IF), holds for all pairs $(\pi, \Omega)$ with $\Omega$ a finitely generated Zariski-dense subgroup of $A(\TheFieldd)$. Let $r$ be a positive integer. We say that (PF), respectively (IF), holds for $\pi$ up to rank $r$ if (PF), respectively (IF), holds for all pairs $(\pi, \Omega)$ with $\Omega$ a finitely generated Zariski-dense subgroup of $A(\TheFieldd)$ of rank at most $r$.
		
		We say that a property $(\star)$ in the   list of Definition \ref{def:PB_GPB_Ram} implies  (PF), respectively (IF), for a pair $(A, \Omega)$ if the following holds: for every cover $\pi : Y \to A$ such that $\pi$ satisfies $(\star)$, the pair $(\pi, \Omega)$ also satisfies (PF), respectively (IF).

	\end{definition}


	We record the following obvious observation.
	
	\begin{remark}\label{Rmk:Trivial}
		Let $Z \rightarrow A$ be a cover that factors through a subcover $Z \rightarrow Z' \xrightarrow{\psi} A$ with $\deg \psi \geq 2$. If $Z' \rightarrow A$ satisfies (PF), then so does $Z \rightarrow A$.
	\end{remark}
	
	
	 As we show in the following lemma, a cover of an abelian variety has property (PB) if and only if it has no \'etale subcovers; this generalizes \cite[Proposition 2.1]{ZannierDuke} to non-algebraically closed fields.

	\begin{lemma}\label{lemma:Zannier}
		Let $A$ be an abelian variety over $\TheFieldd$. Let $\pi:Z \rightarrow A$ be a   cover with $Z$ geometrically integral over $\TheFieldd$. Then there is a factorization
		\[
		\pi: Z \xrightarrow{\phi} B \xrightarrow{\lambda} A,
		\]
		where $\phi_{\overline{\TheFieldd}}$ has property (PB) and $\lambda$ is unramified. In particular, $\pi$ is a (PB)-cover if and only if it has no étale subcovers.
	\end{lemma}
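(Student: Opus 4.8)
The plan is to take $B$ to be the descent to $k$ of the maximal étale subcover of $\pi_{\overline k}$; the geometric heart of the matter is already available over $\overline k$ from \cite[Proposition~2.1]{ZannierDuke}, and the only new ingredient is a Galois descent argument.

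First, over $\overline k$. Since $\car k = 0$, the finite extension $\overline k(Z)/\overline k(A)$ is separable and so has only finitely many intermediate fields; thus $\pi_{\overline k}\colon Z_{\overline k}\to A_{\overline k}$ has only finitely many subcovers. I would observe that the étale subcovers $Z_{\overline k}\to B'\to A_{\overline k}$ are closed under composita: given two of them $B_1',B_2'$, the normalization of $A_{\overline k}$ in $\overline k(B_1')\cdot\overline k(B_2')\subseteq\overline k(Z)$ is identified with a connected component of $B_1'\times_{A_{\overline k}}B_2'$, and this is finite étale over $B_1'$ (being a base change of the étale $B_2'\to A_{\overline k}$), hence finite étale over $A_{\overline k}$. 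Consequently there is a \emph{unique} maximal étale subcover $Z_{\overline k}\xrightarrow{\phi_0}B_0\xrightarrow{\lambda_0}A_{\overline k}$ (the compositum of all of them), and by \cite[Proposition~2.1]{ZannierDuke} the cover $\phi_0$ has property (PB) over $\overline k$ (equivalently: were $[m]^*Z_{\overline k}$ disconnected for some $m$, a connected component of it would furnish a non-trivial étale subcover of $\phi_0$, contradicting maximality of $B_0$); moreover a cover of $A_{\overline k}$ has (PB) over $\overline k$ if and only if it has no non-trivial étale subcover.

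Next, the descent. Because $Z$ is geometrically integral over the perfect field $k$, the field $\overline k(Z)$ equals $k(Z)\otimes_k\overline k$ and is Galois over $k(Z)$ with group $\Gamma_k$ (similarly for $A$). The subcover $B_0$, being canonical, is $\Gamma_k$-stable: each $\sigma\in\Gamma_k$ carries it to an étale subcover of the same (maximal) degree, so $\sigma(B_0)=B_0$. Hence $\overline k(B_0)$ is a $\Gamma_k$-stable $\overline k$-subspace of $\overline k(Z)$, and Galois descent yields an intermediate field $k(A)\subseteq k(B):=\overline k(B_0)\cap k(Z)\subseteq k(Z)$ with $k(B)\otimes_k\overline k\cong\overline k(B_0)$. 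Taking $B$ to be the normalization of $A$ in $k(B)$, we get a normal projective variety over $k$ with a finite surjective $\lambda\colon B\to A$, and — since $Z$ is normal — a finite surjective $\phi\colon Z\to B$ with $\lambda\circ\phi=\pi$. As $k$ is perfect, $B_{\overline k}$ is normal, hence it is the normalization of $A_{\overline k}$ in $\overline k(B_0)$, i.e.\ $B_{\overline k}\cong B_0$ over $A_{\overline k}$; therefore $\lambda_{\overline k}=\lambda_0$ is étale, so $\lambda$ is unramified (formation of $\Omega^1_{B/A}$ commutes with the faithfully flat base change $\overline k/k$), while $\phi_{\overline k}=\phi_0$ has (PB). This gives the desired factorization.

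Finally, the ``in particular''. If $\pi$ has no non-trivial étale subcover, then $\lambda$ in the factorization must be an isomorphism (otherwise $Z\to B\xrightarrow{\lambda}A$ is such a subcover), so $\pi=\phi$ and $\pi_{\overline k}=\phi_{\overline k}$ has (PB), whence $\pi$ is a (PB)-cover since $([m]^*Y)_{\overline k}=[m]^*(Y_{\overline k})$. Conversely, if $Z\to B_1\xrightarrow{\lambda_1}A$ is an étale subcover with $\deg\lambda_1>1$, then $B_{1,\overline k}\to A_{\overline k}$ is an isogeny, so $[n]_{A_{\overline k}}$ factors through it for a suitable $n$; pulling $\pi_{\overline k}$ back along $[n]$ then factors through $Z_{\overline k}\times_{A_{\overline k}}B_{1,\overline k}$, which is disconnected since it contains the graph of $Z_{\overline k}\to B_{1,\overline k}$ as a proper open-and-closed subscheme, and a further finite étale surjective base change keeps it disconnected — so $[n]^*Y$ is not geometrically integral, and $\pi$ is not a (PB)-cover. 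The step I expect to demand the most care is the descent: one must check honestly that $\overline k(B_0)$ is a $\Gamma_k$-stable $\overline k$-subspace of $\overline k(Z)$ and that normalization commutes with the base change $\overline k/k$, but both are routine given that all the varieties involved are geometrically integral over a characteristic-zero field; the substantive geometric content over $\overline k$ is provided by \cite{ZannierDuke}.
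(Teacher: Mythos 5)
Your proposal is correct and follows essentially the same route as the paper: identify the unique maximal étale subcover over $\overline{\TheFieldd}$, descend it to $\TheFieldd$ by Galois descent using its canonicity, and invoke \cite[Proposition~2.1]{ZannierDuke} for the geometric content. You simply spell out the details (closure of étale subcovers under composita, compatibility of normalization with the base change $\overline{\TheFieldd}/\TheFieldd$, and the explicit disconnectedness of $[n]^*Z_{\overline{\TheFieldd}}$ in the converse direction) that the paper's proof leaves implicit.
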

	\begin{proof}
		The maximal étale subcover of $Z \rightarrow A$ is unique up to unique isomorphism (as a subcover of $Z\to A$), so that, by Galois descent,
		 the morphism $Z \rightarrow A$ factors through a non-trivial étale subcover if and only if $Z_{\overline{k}}\rightarrow A_{\overline{k}}$ does. The lemma now follows from  \cite[Proposition~2.1]{ZannierDuke} 
		 (where the base field is assumed to be algebraically closed).
	\end{proof}

	In fact, \cite[Proposition~2.1]{ZannierDuke} also proves that a cover satisfies the (PB)-property if its total space remains geometrically integral after pull-back along multiplication by the degree of the cover.

	\begin{lemma}\label{Lem:Zannier}  
		Let $A$ be an abelian variety over $\TheFieldd$, and $\phi:X \rightarrow A$ be a cover of degree $d$ with $X$ a geometrically integral variety over $k$. Then $\phi$ has property (PB) if and only if the fibre product $X \times_{\phi, A,[d]}A$ is geometrically  integral.
	\end{lemma}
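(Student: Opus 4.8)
The plan is to deduce this from the algebraically closed case, which is precisely the content of \cite[Proposition~2.1]{ZannierDuke} as recalled just above, by checking that each side of the asserted equivalence is unchanged under base change to $\overline{k}$. The ``only if'' direction is immediate: if $\phi$ satisfies (PB), then taking $m=d$ in Definition \ref{def:PB_GPB_Ram} already says that $X\times_{\phi,A,[d]}A$ is geometrically integral.

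For the ``if'' direction I would first record two base-change reductions. Since multiplication by $m$ on $A$ commutes with the projection $A_{\overline{k}}\to A$, and the formation of fibre products commutes with base change, one has $\bigl(X\times_{\phi,A,[m]}A\bigr)_{\overline{k}}\cong X_{\overline{k}}\times_{\phi_{\overline{k}},A_{\overline{k}},[m]}A_{\overline{k}}$ for every $m\geq 1$. A $k$-scheme is geometrically integral exactly when its base change to $\overline{k}$ is integral, so $\phi$ satisfies (PB) over $k$ if and only if $\phi_{\overline{k}}:X_{\overline{k}}\to A_{\overline{k}}$ satisfies (PB) over $\overline{k}$ (note $X_{\overline{k}}$ is integral, as $X$ is geometrically integral over $k$); and, taking $m=d$, the scheme $X\times_{\phi,A,[d]}A$ is geometrically integral over $k$ if and only if $X_{\overline{k}}\times_{A_{\overline{k}},[d]}A_{\overline{k}}$ is integral over $\overline{k}$.

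With these reductions in hand I would apply \cite[Proposition~2.1]{ZannierDuke} to $\phi_{\overline{k}}$: it is a degree-$d$ cover of the abelian variety $A_{\overline{k}}$ over the algebraically closed field $\overline{k}$ with total space integral, and by hypothesis $X_{\overline{k}}\times_{A_{\overline{k}},[d]}A_{\overline{k}}$ is integral, so that proposition yields that $\phi_{\overline{k}}$ has property (PB); by the first reduction, $\phi$ has property (PB) over $k$. The whole argument is formal once \cite[Proposition~2.1]{ZannierDuke} is granted, so there is no real obstacle beyond keeping track of these base-change compatibilities; alternatively one could route the ``if'' direction through Lemma \ref{lemma:Zannier}, reducing ``(PB)'' to ``no nontrivial \'etale subcovers'' over $\overline{k}$, but this amounts to reproving the cited proposition and I would not pursue it.
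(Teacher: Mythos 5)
Your proposal is correct and follows essentially the same route as the paper, which does not write out a proof but simply invokes \cite[Proposition~2.1]{ZannierDuke}; you spell out the (routine but necessary) base-change compatibilities that reduce the statement over $k$ to the statement over $\overline{k}$, which is exactly what the paper leaves implicit.
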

	
	\subsection{Avoiding the branch locus}
	
	We now show that, given a Zariski-dense subgroup $\Omega$ of $A(\TheFieldd)$, one may always find a finite-index coset $C$ of $\Omega$ that avoids any given (proper) closed subscheme of $A$. In particular, given a cover $\pi : Y \to A$, there exists a finite-index coset of $\Omega$ that is disjoint from the branch locus of $\pi$. This gives us an ample supply of points of $\Omega$ over which the fibre of $\pi$ is unramified; we will use this fact in several of our subsequent proofs.

	\begin{lemma}\label{Lem:AvoidingBranchLocus}
		Let $A$ be an abelian variety over $k$, let   $\Omega \subset A(\TheFieldd)$ be a finitely generated Zariski-dense subgroup, and let   $Z\subsetneq A$ be a closed subscheme. Then there exists a finite index coset $C$ of $\Omega$ such that $C \cap Z = \emptyset$. 
	\end{lemma}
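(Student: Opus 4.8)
The statement asks for a finite-index coset $C$ of $\Omega$ avoiding a given proper closed subscheme $Z \subsetneq A$. The plan is to reduce to the case where $Z$ is an irreducible proper closed subvariety (a finite union of cosets avoiding each irreducible component of $Z$ can be intersected, since the intersection of finitely many finite-index cosets is again a finite-index coset, provided it is nonempty — which we must also arrange). So fix an irreducible $Z \subsetneq A$. The key idea is to exploit the group structure: for a prime $p$, the subgroup $\Omega' := p\Omega$ has finite index $p^{\operatorname{rank}\Omega + (\text{torsion contribution})}$ in $\Omega$, but more importantly, for any translate $v + Z$ of $Z$ by a point $v \in \Omega$, either $\Omega' \cap (v+Z) = \emptyset$ (and we are done), or $\Omega'$ meets $v+Z$, in which case we can try to iterate or argue that $Z$ must then be special (a coset of an abelian subvariety).

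First I would handle the degenerate possibility head-on: if $Z$ itself is (a translate of) a proper abelian subvariety $B \subsetneq A$, then since $\Omega$ is Zariski-dense it is not contained in any such translate, so there exists $\omega_0 \in \Omega$ with $\omega_0 \notin Z$; then I claim $\omega_0 + B'$, where $B' = \Omega \cap (\text{the abelian subvariety through } 0 \text{ parallel to } Z)$... no — more cleanly, I would take the coset $C$ to be a coset of the finite-index subgroup $\Omega_1 = \{ \omega \in \Omega : \omega \in B_0 \}$ where $B_0$ is chosen so that $C$ lands in the complement. Rather than fumble here, the cleanest route: pick a prime $p$ not dividing the order of any torsion and observe $A[p]$ is not contained in $Z$ for $p$ large if $Z$ is not a union of torsion translates of subgroups; but in general the slickest argument uses that $\Omega$ is finitely generated, so $\Omega \otimes \mathbb{Q}$ is finite-dimensional, and the set of $\omega \in \Omega$ with $\omega \in Z$ is, by a theorem on the intersection of a finitely generated group with a subvariety of a semiabelian variety (Faltings/Vojta, or in this finitely-generated setting a result of Laurent-type for abelian varieties), not Zariski-dense unless $Z$ contains a translate of a positive-dimensional abelian subvariety $B$ by a point of $\Omega$.

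Hence the main dichotomy: either $\{\omega \in \Omega : \omega \in Z\}$ fails to be Zariski-dense in $A$, in which case its Zariski closure $W$ is a proper closed subset and I may pick $\omega^* \in \Omega \setminus W$, then argue by the group law that the coset $\omega^* + N\Omega$ for suitable $N$ avoids $Z$ — actually I would directly use that $\Omega \setminus W$ is Zariski-dense and contains a full coset of a finite-index subgroup, which needs a small argument (the nontrivial part); or else $Z$ contains $\omega_0 + B$ for some abelian subvariety $B$ of positive dimension and $\omega_0 \in \Omega$. In the latter case I would induct on $\dim A$: pass to the quotient $A/B$, where $\Omega$ maps to a Zariski-dense finitely generated subgroup $\bar\Omega$, the image $\bar Z$ of $Z$ is a proper closed subscheme of $A/B$ (proper because $Z \ne A$ and $Z$ is a union of $B$-cosets only if $\bar Z \subsetneq A/B$ — if $Z$ is $B$-invariant this is immediate, and in general one replaces $Z$ by the $B$-invariant closed set it generates, which is still proper as long as... this needs care), find by induction a finite-index coset $\bar C \subset \bar\Omega$ with $\bar C \cap \bar Z = \emptyset$, and lift it back.

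I expect the main obstacle to be the passage from "the set of lattice points on $Z$ is not Zariski-dense" to "there is a finite-index coset avoiding $Z$": the complement of a proper closed subset in $\Omega$ need not obviously contain a coset of a finite-index subgroup, so one genuinely needs the Zariski-density of $\Omega$ together with a pigeonhole/reduction-modulo-$N$ argument — concretely, using that $\Omega/N\Omega$ is finite and that for $N$ large the classes of $\Omega$ modulo $N\Omega$ that are "bad" (i.e. whose every representative lies in $Z$) can be shown to be absent because each such class would force a translate of $N\Omega$, hence a Zariski-dense set (as $N\Omega$ is still dense by density of $\Omega$ and the surjectivity of $[N]$), to lie in $Z$, contradicting $Z \subsetneq A$. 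This last implication — that $N\Omega$ is Zariski-dense whenever $\Omega$ is — is exactly where the hypothesis that $\Omega$ is Zariski-dense (rather than merely infinite) is used, matching the remark in the introduction that density cannot be weakened.
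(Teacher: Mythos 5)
There is a genuine gap, and you have in fact located it yourself: the passage from ``the points of $\Omega$ lying on $Z$ are not Zariski-dense'' to ``some finite-index coset of $\Omega$ is disjoint from $Z$''. Your proposed resolution of this step proves the wrong statement. You define a class of $\Omega$ modulo $N\Omega$ to be bad if \emph{every} representative lies in $Z$, and you correctly observe that such a class cannot exist (it would put a Zariski-dense translate of $N\Omega$ inside $Z$). But the absence of such classes only says that every coset of $N\Omega$ contains \emph{some} point outside $Z$; the lemma requires a coset \emph{all} of whose points lie outside $Z$, which is a far stronger assertion and does not follow from any density argument in the Zariski topology alone. The difficulty is that ``$c\notin Z$'' is not a congruence condition on $c$, so nothing forces it to propagate from one element of a coset to the whole coset. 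The Mordell--Lang input and the induction on $\dim A$ do not repair this, since after all reductions you still face the same final step.

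The paper's proof supplies exactly the missing mechanism, and it is much shorter: spread out to an abelian scheme $\mathcal{A}$ over a $\mathbb{Z}$-finitely generated subring $R\subset k$ with $\Omega\subset\mathcal{A}(R)$ and a closed subscheme $\mathcal{Z}$ extending $Z$; pick $P\in\Omega\setminus Z$ (using Zariski-density of $\Omega$), and choose a closed point $v$ of $\Spec R$, with finite residue field $\mathbb{F}_v$, such that $P\bmod v\notin\mathcal{Z}$. The kernel $\Omega'$ of the specialization map $\Omega\to\mathcal{A}(\mathbb{F}_v)$ has finite index because $\mathcal{A}(\mathbb{F}_v)$ is finite, and every element of $C:=P+\Omega'$ has the \emph{same} reduction modulo $v$ as $P$, hence lies outside $\mathcal{Z}$ and therefore outside $Z$. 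In other words, one replaces the condition ``avoid $Z$'' by the genuinely congruence-closed condition ``reduce to a fixed point of $\mathcal{A}(\mathbb{F}_v)$ not on $\mathcal{Z}$''; this is the idea your argument is missing. As a secondary point, your reduction to irreducible $Z$ via intersecting finite-index cosets is also not safe as stated (such an intersection may be empty), but this is moot since the reduction is unnecessary.
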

	\begin{proof} Since $\Omega$ is Zariski-dense, there exists $P \in \Omega$ such that $P \notin Z$.  Now, by standard spreading out arguments, we may choose   the following data:
		\begin{enumerate}
			\item A $\mathbb{Z}$-finitely generated subring $R\subset k$,  an abelian scheme $\mathcal{A}$ over $R$ and an isomorphism $\mathcal{A}\times_R k \cong A$ such that $\Omega\subset A(k)$ lies in the subgroup $\mathcal{A}(R)\subset A(k)$;
			\item A closed subscheme $\mathcal{Z}\subset \mathcal{A}$ defining $Z\subset A$ over $R$;
			\item A section $\sigma_P:\Spec R \to \mathcal{A}$ which coincides with $P$ in $A(k)$.
		\end{enumerate}  Since $P \notin Z$, we have that $\sigma_P^*\mathcal{Z} \subsetneq \Spec R$ is a  proper closed subset. In particular, there exists a closed point $v \in\Spec R$ such that $P \bmod v=\sigma_P(\Spec \F_v)\notin \mathcal{Z}$. Since $\mathcal{A}(\Spec \F_v)$ is a finite group, the kernel $\Omega'$ of the specialization map $\mathcal{A}(R)\to \mathcal{A}(\F_v)$ is of finite index.  In particular, for each $c$ in the finite index coset $C\defeq P + \Omega'$ of $\Omega$, we have that  $P \equiv c \bmod v$. Finally, 
		for each $c \in C$, the element $c \bmod v$ does not lie in the closed subscheme $\mathcal{Z} \subset \mathcal{A}$, hence $c \notin Z=\mathcal{Z}\cap A$.
	\end{proof}

	\begin{corollary}\label{Cor:Etalecoset}
		Let $A$ be an abelian variety over   $\TheFieldd$, let  $\Omega \subset A(k)$ be a finitely generated Zariski-dense subgroup, and let $\phi:Y \rightarrow A$ be a ramified cover. Then there exists a finite index coset $C$ of $\Omega$ such that, for each $c \in C$, the scheme $\phi^{-1}(c)$ is étale over $k$.
	\end{corollary}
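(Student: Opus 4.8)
The plan is to deduce Corollary \ref{Cor:Etalecoset} directly from Lemma \ref{Lem:AvoidingBranchLocus}, simply by taking the closed subscheme $Z$ appearing there to be the branch locus of $\phi$.

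First I would recall that, since $\phi : Y \to A$ is a finite surjective morphism of normal varieties over a field $k$ of characteristic zero, the induced extension of function fields $k(Y)/k(A)$ is finite and separable. Consequently $\phi$ is étale over a dense open subscheme of $A$ (equivalently, by Lemma \ref{lemma:EtaleOrRamified}, it is generically étale), and therefore its branch locus $D$ --- the closed locus of points of $A$ over which $\phi$ fails to be étale --- is a \emph{proper} closed subscheme $D \subsetneq A$. This is the same convention already used implicitly in Lemma \ref{lemmatje} and in the proof of Theorem \ref{thm3}; note also that the ramified hypothesis on $\phi$ is not actually needed for this step (if $\phi$ were étale we would have $D=\emptyset$ and could take $C=\Omega$).

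Next I would apply Lemma \ref{Lem:AvoidingBranchLocus} with $Z \defeq D$: since $\Omega$ is a finitely generated Zariski-dense subgroup of $A(k)$ and $D \subsetneq A$ is a proper closed subscheme, the lemma produces a finite-index coset $C$ of $\Omega$ with $C \cap D = \emptyset$. For every $c \in C$, the $k$-rational point $c$ lies outside the branch locus, so by definition $\phi$ is étale over $c$; equivalently, the scheme-theoretic fibre $\phi^{-1}(c)$ is étale over $k$. This is exactly the assertion, so the argument is complete.

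As for the main obstacle: there really is none here. All the substance --- the spreading-out to a $\mathbb{Z}$-finitely generated model $\mathcal{A}$, the choice of a closed point $v$ at which the distinguished point $P$ avoids $\mathcal{Z}$, and the finite-index kernel of the reduction map $\mathcal{A}(R)\to\mathcal{A}(\F_v)$ --- is carried out in Lemma \ref{Lem:AvoidingBranchLocus}. The only thing to verify in the corollary is the elementary fact that the branch locus of a cover in characteristic zero is a proper closed subset. The statement is isolated as a separate corollary only because this manoeuvre of avoiding the branch locus --- thereby securing an abundant supply of points of $\Omega$ whose fibre under $\phi$ is unramified, hence étale --- is invoked repeatedly in the later sections.
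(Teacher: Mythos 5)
Your proof is correct and matches the paper's own argument line for line: identify the branch locus as a proper closed subscheme (using generic étaleness in characteristic zero / Lemma \ref{lemma:EtaleOrRamified}), then apply Lemma \ref{Lem:AvoidingBranchLocus} to produce a finite-index coset avoiding it. No gaps.
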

	\begin{proof}
		Let $B\subset A$ be the branch locus of $\phi$. Since $A$ is normal, by Lemma \ref{lemma:EtaleOrRamified}, the morphism $\phi$ is étale over $U \defeq A \setminus B$.  Moreover, by Lemma \ref{Lem:AvoidingBranchLocus}, there is a finite index coset $C$ of $\Omega$ such that $C \cap B= \emptyset$. In particular, for every $c \in C$, we have that $c \in U$ and $\phi$ is étale over $c$.
	\end{proof}

	\subsection{Base-change of (PB)-covers}
	The first basic observation is that the total space of a cover $Y\to A$ which has property (PB) remains connected after pull-back along   \'etale covers of $A$.
	
	\begin{lemma}\label{lemma:PBStaysIrreducibleUnderPullbackByArbitraryIsogenies}
		Let $A$ be an abelian variety over $\TheFieldd$ and let  $\pi:Y\to A$  be a  cover of $A$. Then $\pi$ satisfies (PB) if and only if, for every isogeny $\phi:A'\to A_{\overline{\TheFieldd}}$ of abelian varieties over $\overline{\TheFieldd}$, the fibre-product $Y_{\overline{\TheFieldd}}\times_{\overline{\pi},A_{\overline{\TheFieldd}}, \phi} A'$ is connected.
	\end{lemma}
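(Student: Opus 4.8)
The plan is to reduce the statement to an assertion about connectedness of schemes over $\overline{\TheFieldd}$, where everything in sight is automatically normal. First I would record the following: for any isogeny $\phi : A' \to A_{\overline{\TheFieldd}}$ of abelian varieties over $\overline{\TheFieldd}$ (in particular, for $\phi = [m] : A_{\overline{\TheFieldd}} \to A_{\overline{\TheFieldd}}$), the projection $Y_{\overline{\TheFieldd}} \times_{\overline{\pi}, A_{\overline{\TheFieldd}}, \phi} A' \to Y_{\overline{\TheFieldd}}$ is obtained from $\phi$ by base change, hence is finite \'etale since $\TheFieldd$ has characteristic zero; as $\TheFieldd$ is perfect and $Y$ is a normal variety, $Y_{\overline{\TheFieldd}}$ is normal, and therefore so is every such pullback. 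Consequently $([m]^* Y)_{\overline{\TheFieldd}} = Y_{\overline{\TheFieldd}} \times_{A_{\overline{\TheFieldd}}, [m]} A_{\overline{\TheFieldd}}$ is a normal scheme of finite type over $\overline{\TheFieldd}$ for every $m \geq 1$, so it is integral if and only if it is connected. In particular, $\pi$ satisfies (PB) if and only if $([m]^* Y)_{\overline{\TheFieldd}}$ is connected for every $m \geq 1$, and it remains to prove that this condition is equivalent to the connectedness of $Y_{\overline{\TheFieldd}} \times_{A_{\overline{\TheFieldd}}, \phi} A'$ for every isogeny $\phi : A' \to A_{\overline{\TheFieldd}}$.

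The implication ``$\Leftarrow$'' is immediate by specializing the hypothesis to $\phi = [m]$. For ``$\Rightarrow$'', I would fix an isogeny $\phi : A' \to A_{\overline{\TheFieldd}}$, set $n = \deg \phi$, and use the ``complementary'' isogeny: since $\ker \phi$ is a finite abelian group killed by $n$, we have $\ker\phi \subseteq A'[n]$, so multiplication by $n$ on $A'$ factors as $[n]_{A'} = \psi \circ \phi$ for some isogeny $\psi : A_{\overline{\TheFieldd}} \to A'$; composing on the left with $\phi$ and cancelling the epimorphism $\phi$ gives $\phi \circ \psi = [n]_{A_{\overline{\TheFieldd}}}$. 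Pasting fibre products along the factorization $A_{\overline{\TheFieldd}} \xrightarrow{\psi} A' \xrightarrow{\phi} A_{\overline{\TheFieldd}}$ of $[n]$ then yields an isomorphism
\[
\bigl( [n]^* Y \bigr)_{\overline{\TheFieldd}} \;=\; Y_{\overline{\TheFieldd}} \times_{\phi \circ \psi,\, A_{\overline{\TheFieldd}}} A_{\overline{\TheFieldd}} \;\cong\; \bigl( Y_{\overline{\TheFieldd}} \times_{\phi,\, A_{\overline{\TheFieldd}}} A' \bigr) \times_{\psi,\, A'} A_{\overline{\TheFieldd}}
\]
over $Y_{\overline{\TheFieldd}}$, whose right-hand projection is the base change of the surjection $\psi$ and is therefore surjective. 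Hence $Y_{\overline{\TheFieldd}} \times_{\phi, A_{\overline{\TheFieldd}}} A'$ is a continuous image of $\bigl( [n]^* Y \bigr)_{\overline{\TheFieldd}}$, which is connected because $\pi$ satisfies (PB); thus $Y_{\overline{\TheFieldd}} \times_{\phi, A_{\overline{\TheFieldd}}} A'$ is connected, as wanted.

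The whole argument is essentially formal: the only genuine input is the existence of the complementary isogeny $\psi$, and characteristic zero is used only to ensure that the maps $[m]$ are \'etale and that $Y$ is geometrically normal (so that ``connected'' and ``integral'' coincide for the relevant pullbacks). Accordingly I do not expect any substantial obstacle; the bookkeeping with iterated fibre products is the only thing requiring care.
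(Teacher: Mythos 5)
Your proposal is correct and is essentially the same as the paper's own proof: both observe that the pullbacks are normal (being \'etale over the normal $Y_{\overline{\TheFieldd}}$), take the complementary isogeny $\psi$ with $\phi \circ \psi = [\deg\phi]$, and deduce connectedness of $Y_{\overline{\TheFieldd}}\times_{\phi}A'$ from connectedness of $([\deg\phi]^*Y)_{\overline{\TheFieldd}}$ via the surjection obtained by pasting Cartesian squares.
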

	\begin{proof}
		We notice that $Y_{\overline{\TheFieldd}}\times_{\overline{\pi},A_{\overline{\TheFieldd}}, \phi} A'$, being étale over $Y_{\overline{\TheFieldd}}$, which is normal, is itself normal.
		The `if' direction follows since $[N]$ is in particular an isogeny and normal connected schemes are integral.
		For the `only if' direction, observe that given any isogeny $\phi : A' \to A_{\overline{\TheFieldd}}$ we may find another isogeny $\psi : A_{\overline{\TheFieldd}} \to A'$ such that $\phi \circ \psi = [\deg \phi]$.
		Since $\pi$ satisfies (PB), the fibre product $Y_{\overline{\TheFieldd}} \times_{\overline{\pi}, A_{\overline{\TheFieldd}}, [\deg \phi]} A_{\overline{\TheFieldd}}$ is integral. The claim follows from the commutative diagram
		
		\[
		\xymatrix{
			Y_{\overline{\TheFieldd}} \times_{\overline{\pi}, A_{\overline{\TheFieldd}}, [\deg \phi]} A_{\overline{\TheFieldd}} \ar[r] \ar[d] & Y_{\overline{\TheFieldd}} \times_{\overline{\pi},A_{\overline{\TheFieldd}}, \phi} A' \ar[r] \ar[d] & Y_{\overline{\TheFieldd}} \ar[d]_{\overline{\pi}} \\
			A_{\overline{\TheFieldd}} \ar[r]_{\psi} \ar@/_2pc/[rr]_{[\deg \phi]} & A' \ar[r]_{\phi} & A_{\overline{\TheFieldd}}
		}
		\]
		where both squares are Cartesian, the horizontal morphisms are surjective, and $Y_{\overline{\TheFieldd}} \times_{\overline{\pi}, A_{\overline{\TheFieldd}}, [\deg \phi]} A_{\overline{\TheFieldd}}$ is irreducible, hence so is $Y_{\overline{\TheFieldd}} \times_{\overline{\pi},A_{\overline{\TheFieldd}}, \phi} A'$ as desired.
	\end{proof}

	\begin{remark}\label{remark:topology}
		Let $f:X \rightarrow Y$ be a continuous, surjective and open morphism of topological spaces. Assume that $Y$ is connected and that, for all points $y \in Y$, the fibre $X_{y}$ is connected. Then $X$ is connected.
		Indeed, 
		assume by contradiction that $X = U_1 \sqcup U_2$, with $U_1$ and $U_2$  nonempty open subsets of $X$. Then $Y=f(U_1)\cup f(U_2)$. Since $f$ is open, $f(U_1)$ and $f(U_2)$ are open, and, since $Y$ is connected, $f(U_1)\cap f(U_2)\neq \emptyset$. Let $y \in f(U_1)\cap f(U_2)$.  Then $X_y = (X_y \cap U_1) \sqcup (X_y \cap U_2)$, and both  $X_y \cap U_1$ and $X_y \cap U_2$ are nonempty. Contradiction.
	\end{remark}

	As a consequence of  Lemma \ref{lemma:PBStaysIrreducibleUnderPullbackByArbitraryIsogenies} and  Remark \ref{remark:topology}, we   show that a (PB)-cover remains a (PB)-cover after pull-back along any dominant morphism of abelian varieties. In other words, the property of being a (PB)-cover of an abelian variety commutes with dominant base change.
	
	\begin{lemma}\label{Lem:PB_pullback}
		Let $\phi:Z \rightarrow A$ be a (PB)-cover of an abelian variety $A$ over $k$, and let $\lambda:A' \rightarrow A$ be a dominant morphism of abelian varieties. Then $\lambda^*Z:=A'\times_{\lambda, A,\phi} Z$ is geometrically integral and $\lambda^* Z\to A'$ is a (PB)-cover.
	\end{lemma}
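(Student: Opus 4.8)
The strategy is to split $\lambda$ into an isogeny and a fibration with connected fibres, handling the former with Lemma~\ref{lemma:PBStaysIrreducibleUnderPullbackByArbitraryIsogenies} and the latter with Remark~\ref{remark:topology}. \emph{Reductions.} Since $A,A'$ are proper and $\lambda$ is dominant, $\lambda$ is surjective. By rigidity, $\lambda=t_c\circ h$, where $t_c$ is translation by $c:=\lambda(0_{A'})\in A(k)$ and $h\colon A'\to A$ is a homomorphism; a direct check identifies $\lambda^*Z$ with $h^*\bigl(Z,\,t_{-c}\circ\phi\bigr)$ as covers of $A'$. The cover $t_{-c}\circ\phi\colon Z\to A$ is again a (PB)-cover: for $m\ge 1$, picking $b\in A(\overline{k})$ with $mb=-c$, translation by $b$ on $A_{\overline{k}}$ carries $[m]^*(Z,\phi)_{\overline{k}}$ isomorphically onto $[m]^*(Z,t_{-c}\circ\phi)_{\overline{k}}$, and the former is integral by hypothesis. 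So we may assume $\lambda$ is a surjective homomorphism. Put $B:=(\ker\lambda)^\circ$, an abelian subvariety of $A'$, and factor $\lambda=\mu\circ p$, where $p\colon A'\to A'':=A'/B$ is the quotient (a surjective homomorphism whose geometric fibres are torsors under the connected group $B$) and $\mu\colon A''\to A$ is an isogeny. Since $\lambda^*Z=p^*(\mu^*Z)$, it suffices to show: $(\ast)$ the pullback of a (PB)-cover along an isogeny is a (PB)-cover; and $(\ast\ast)$ the pullback of a (PB)-cover along $p$ is a (PB)-cover.

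\emph{Isogeny case $(\ast)$.} Let $\mu\colon A''\to A$ be an isogeny and $W\to A$ a (PB)-cover. Then $\mu^*W\to W$ is finite étale (characteristic $0$), so $\mu^*W$ is a normal variety and $\mu^*W\to A''$ is a cover. For any isogeny $\psi\colon A'''\to A''_{\overline{k}}$, the composite $\mu_{\overline{k}}\circ\psi$ is an isogeny, so by Lemma~\ref{lemma:PBStaysIrreducibleUnderPullbackByArbitraryIsogenies} applied to $W$ the scheme $(\mu^*W)_{\overline{k}}\times_{A''_{\overline{k}},\psi}A'''=W_{\overline{k}}\times_{A_{\overline{k}},\,\mu_{\overline{k}}\psi}A'''$ is connected. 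Taking $\psi=\mathrm{id}$ shows $\mu^*W$ is geometrically connected, hence, being normal, geometrically integral; the general case, via Lemma~\ref{lemma:PBStaysIrreducibleUnderPullbackByArbitraryIsogenies} again, shows $\mu^*W$ is a (PB)-cover.

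\emph{Connected--fibre case $(\ast\ast)$.} Let $W\to A''$ be a (PB)-cover. The map $p^*W\to W$ is a base change of $p$, hence smooth surjective with geometrically connected fibres; since $W$ is normal, $p^*W$ is normal, so $p^*W\to A'$ is a cover. Over $\overline{k}$, the map $(p^*W)_{\overline{k}}\to W_{\overline{k}}$ is smooth (hence open) and surjective with connected fibres, and $W_{\overline{k}}$ is connected (as $W$ is geometrically integral); by Remark~\ref{remark:topology}, $(p^*W)_{\overline{k}}$ is connected, and it is normal (recall $k$ is perfect, so $W_{\overline{k}}$ is normal and $(p^*W)_{\overline{k}}$ is smooth over it), hence integral. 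Thus $p^*W$ is geometrically integral. For the (PB) property, since $p$ is a homomorphism we have $[m]_{A'}^*(p^*W)=p^*([m]_{A''}^*W)$ for every $m\ge 1$, and $[m]_{A''}^*W\to A''$ is itself a (PB)-cover (because $[m']^*[m]^*W=[mm']^*W$ is integral for all $m'$), so what we just proved shows $p^*([m]_{A''}^*W)$ is geometrically integral. Hence $p^*W$ is a (PB)-cover.

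Applying $(\ast\ast)$ to $W=\mu^*Z$, which is a (PB)-cover by $(\ast)$, we conclude that $\lambda^*Z=p^*(\mu^*Z)\to A'$ is a geometrically integral (PB)-cover. I expect the main difficulty to be purely organizational: the identification $\lambda^*Z\cong h^*(Z,t_{-c}\circ\phi)$, the translation-invariance of (PB), and the normality bookkeeping after base change to $\overline{k}$ (which uses that $k$ is perfect, so that ``normal'' and ``geometrically normal'' coincide). The one genuinely structural point is that Lemma~\ref{lemma:PBStaysIrreducibleUnderPullbackByArbitraryIsogenies} only controls pullback along \emph{isogenies}, whereas a general surjective homomorphism has positive-dimensional kernel; this is exactly the gap bridged by peeling off the connected part $B$ of $\ker\lambda$ and appealing to Remark~\ref{remark:topology}.
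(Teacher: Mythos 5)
Your proof is correct and takes essentially the same route as the paper's: peel off the connected component of $\ker\lambda$ so that $\lambda$ factors as a quotient with geometrically connected fibres followed by an isogeny, handle the isogeny via Lemma \ref{lemma:PBStaysIrreducibleUnderPullbackByArbitraryIsogenies} and the connected-fibre part via Remark \ref{remark:topology} together with normality of smooth schemes over a normal base. The only difference is organizational: the paper Stein-factorizes $\lambda\circ[n]$ for each $n\geq 1$ at once, whereas you factor $\lambda$ a single time and then commute $[m]$ past the quotient map $p$; the two computations are equivalent.
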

	\begin{proof}
		Let $n\geq 1$ be an integer and let $\mu:= [n]_{A'}$ be multiplication by $n$ on $A'$.   By using properties of Stein factorization, we may factor the morphism $\lambda\circ \mu$ as
		\[
		\lambda\circ \mu=\lambda_1 \circ \lambda_2: A' \xrightarrow{\lambda_2} A'' \xrightarrow{\lambda_1} A,
		\]
		where $\lambda_2$ has geometrically integral fibres and $\lambda_1$ is an isogeny. Note that $(\lambda\circ \mu)^*Z \cong \lambda_2^*\lambda_1^*Z$. Moreover, since $\phi$ is (PB),  by Lemma \ref{lemma:PBStaysIrreducibleUnderPullbackByArbitraryIsogenies}, the morphism  $\lambda_1^*Z\rightarrow A''$ is a (PB)-cover, so that $\lambda_1^*Z$ is  geometrically connected. Since ${\lambda_2}$ is smooth and has geometrically connected fibres, the morphism ${\lambda_2}^* ({\lambda_1}^*{Z}) \rightarrow {\lambda_1}^*{Z}$ is smooth and has geometrically connected fibres. Since ${\lambda_1}^*{Z}$ is geometrically connected and smooth morphisms are flat hence open, we have that ${\lambda_2}^*({\lambda_1}^*{Z})$ is geometrically connected by Remark \ref{remark:topology}. Since ${\lambda_1}^*{Z}$ is normal (hence geometrically normal), ${\lambda_2}^*({\lambda_1}^*{Z})$ is geometrically normal by \cite[Tag 034F]{stacks-project}. Since $\lambda_2^*(\lambda_1^*Z)$ is geometrically normal and connected, it is geometrically integral.
		Therefore, $(\lambda\circ \mu)^*Z\cong \lambda_2^*\lambda_1^*Z$ is as well. Since this holds for every integer $n\geq 1$, this proves that $\lambda^*Z$ is geometrically integral and that $\lambda^\ast Z\to A'$ satisfies the (PB)-property.
	\end{proof}
	
	\subsection{Spreading out  (PB)-covers}
	We now show that a cover of an abelian variety over a function field which satisfies the (PB)-property can be spread out to a cover of an abelian scheme which fibrewise satisfies the (PB)-property.
	
	\begin{lemma}\label{Lem:PBopen}
		Let $S$ be an integral scheme of characteristic zero with function field $k(S)$ and let $\mathcal{A}\to S$ be an abelian scheme. Let $Z\to \mathcal{A}$ be a finite surjective morphism    such that   $Z_{k(S)}\to \mathcal{A}_{k(S)}$ is a (PB)-cover of $\mathcal{A}_{k(S)}$ over $k(S)$. Then, there exists a dense open subscheme $U \subset S$ such that, for every $u \in U$, the morphism $ Z_u \to \mathcal{A}_u$ is a (PB)-cover of $\mathcal{A}_u$ over $k(u)$.
	\end{lemma}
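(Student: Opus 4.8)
The plan is to collapse the infinitely many conditions that define the (PB)-property into a single geometric-integrality condition via Lemma~\ref{Lem:Zannier}, and then to spread out that condition from the generic fibre of $Z\to S$ to all fibres over a dense open, using standard constructibility results.

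Set $d\defeq\deg(Z_{k(S)}/\mathcal{A}_{k(S)})$. Since $[d]\colon\mathcal{A}\to\mathcal{A}$ is a morphism of $S$-schemes, form the $S$-scheme $\mathcal{W}\defeq Z\times_{\mathcal{A},[d]}\mathcal{A}$ together with its second projection $p\colon\mathcal{W}\to\mathcal{A}$; this $p$ is the base change of $Z\to\mathcal{A}$ along $[d]$, hence finite and surjective. As multiplication-by-$d$ commutes with base change, for every $u\in S$ one has compatibly $\mathcal{W}_u\cong Z_u\times_{\mathcal{A}_u,[d]}\mathcal{A}_u$ with $p_u\colon\mathcal{W}_u\to\mathcal{A}_u$ finite surjective, and in particular $\mathcal{W}_{k(S)}\cong Z_{k(S)}\times_{\mathcal{A}_{k(S)},[d]}\mathcal{A}_{k(S)}$; the latter is geometrically integral because $Z_{k(S)}\to\mathcal{A}_{k(S)}$ is a (PB)-cover of degree $d$ (Lemma~\ref{Lem:Zannier}). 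The point of this construction is that once $Z_u$ is known to be a normal, geometrically integral variety and $Z_u\to\mathcal{A}_u$ is known to have degree $d$, Lemma~\ref{Lem:Zannier} applies and shows that $Z_u\to\mathcal{A}_u$ is a (PB)-cover if and only if $\mathcal{W}_u$ is geometrically integral.

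Hence it suffices to produce a dense open $U\subseteq S$ over which simultaneously (a) $\mathcal{W}_u$ is geometrically integral, (b) the fibre $Z_u$ of $Z\to S$ is geometrically normal, and (c) $Z_u\to\mathcal{A}_u$ has degree $d$ (there is no loss of generality in assuming $S$ noetherian). For (a): the locus of $s\in S$ with geometrically integral fibre of $\mathcal{W}\to S$ is locally constructible \cite[9.7.7]{EGAIVIII} and, by the previous paragraph, contains the generic point of the irreducible scheme $S$, hence contains a dense open $U_1$. For (b): since $\car k(S)=0$ and $Z_{k(S)}$ is normal (being a cover), $Z_{k(S)}$ is geometrically normal, and the corresponding locus is locally constructible \cite[12.2.4]{EGAIVIII}, hence contains a dense open $U_2$; moreover over $U_1$ the morphism $\mathcal{W}_u\to Z_u$ is finite, surjective, and étale (as $[d]$ is étale in characteristic $0$), so $\mathcal{W}_u$ geometrically integral forces $Z_u$ geometrically integral, and over $U_1\cap U_2$ the morphism $Z_u\to\mathcal{A}_u$ is a genuine cover of normal varieties. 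For (c): by generic flatness \cite[Theorem~6.9.1]{EGAIVII} there is a dense open $V\subseteq\mathcal{A}$ with $Z|_V\to V$ finite locally free of rank $d$; putting $F\defeq\mathcal{A}\setminus V$ we have $\dim F<\dim\mathcal{A}$, so by semicontinuity of fibre dimension \cite[13.1.3]{EGAIVIII} the set $\{s\in S:\dim F_s\geq\dim\mathcal{A}_s\}$ is closed, and by the dimension formula it cannot contain the generic point of $S$; over its complement $U_3$ the open subset $\mathcal{A}_u\cap V$ is dense in the abelian variety $\mathcal{A}_u$ and $Z_u\to\mathcal{A}_u$ is finite locally free of rank $d$ there, so $\deg(Z_u/\mathcal{A}_u)=d$ once $Z_u$ is integral. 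Then $U\defeq U_1\cap U_2\cap U_3$ does the job.

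The only genuinely delicate point is (c): since the non-flat locus of $Z\to\mathcal{A}$ inside $\mathcal{A}$ may well dominate $S$, one cannot simply arrange $Z\to\mathcal{A}$ to be finite locally free after shrinking $S$, and one is forced to argue with fibre dimensions to guarantee that no whole geometric fibre $\mathcal{A}_u$ gets swallowed by the non-flat locus. Everything else is a formal consequence of Lemma~\ref{Lem:Zannier} together with the constructibility of the relevant loci for finitely presented morphisms.
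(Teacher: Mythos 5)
Your proof is correct and follows essentially the same route as the paper's: pull back along $[d]$, observe via Lemma \ref{Lem:Zannier} that the (PB)-property of the generic fibre is equivalent to geometric integrality of $[d]^*Z_{k(S)}$, spread out normality and geometric integrality over a dense open of $S$ using the cited EGA constructibility results, and apply Lemma \ref{Lem:Zannier} again fibrewise. The only difference is that you spell out the constancy of the degree of $Z_u\to\mathcal{A}_u$, a point the paper's proof leaves implicit.
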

	\begin{proof}
		Let $d$ be the degree of $Z$ over $\mathcal{A}$. The isomorphism $[d]^* Z_{k(S)} \cong ( [d]^*Z ) \times_S \TheFieldd(S)$ and the assumption that $Z_{\TheFieldd(S)} \to \mathcal{A}_{\TheFieldd(S)}$ is a  (PB)-cover show that the generic fibre of $[d]^*Z \to \mathcal{A}$ is geometrically irreducible. Since $Z$ is normal and $S$ is of characteristic zero, the scheme $Z_{k(S)}$ is normal. By spreading out (see \cite[Théorème~6.9.1]{EGAIVII} and \cite[Théorème 9.7.7 (iv) and Théorème~12.2.4 (iv)]{EGAIVIII}), there is a dense open subscheme $U$ of $S$ such that, for all $u \in U$, the scheme $[d]^*Z_u$ is normal and geometrically irreducible over $k(u)$. In particular, by Lemma \ref{Lem:Zannier},  for every $u \in U$, the cover $Z_u \to \mathcal{A}_u$ of degree $d$ is a (PB)-cover, as required.
	\end{proof}

	\subsection{Invariance under translation}
	
	Since  the class of (PB)-covers  (resp.~ramified covers)  is invariant by translation on $A$, we obtain the following proposition.
	\begin{proposition}\label{Prop:eq_definitions}  
		Let $A$ be  an abelian variety   over $\TheFieldd$, and let $r$ be a positive integer.
		Then the following two statements are equivalent.
		\begin{enumerate}
			\item For every non-trivial (PB)-cover $Z \rightarrow  A$  and every finitely generated Zariski-dense subgroup $\Omega \subset A(\TheFieldd)$ of rank at most $r$, there exists a finite index coset $C$ of $\Omega$ such that, for every $c  \in C$, the scheme $Z_c$ is étale over $\TheFieldd$ and has no $\TheFieldd$-points (respectively is integral).
			\item For every non-trivial (PB)-cover $Z \rightarrow  A$, for every $b\in A(\TheFieldd)$, and  every finitely generated Zariski-dense  subgroup $\Omega \subset A(\TheFieldd)$ of rank at most $r$, there exists a finite index coset $C$ of $\Omega$ such that, for every $c  \in C$, the scheme $Z_{b+c}$ is étale over $\TheFieldd$ and has no $\TheFieldd$-points  (respectively is integral).
		\end{enumerate}
		The same equivalence holds with (PB) replaced by (Ram). \qed
	\end{proposition}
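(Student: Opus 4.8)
The plan is to deduce the equivalence by pulling back along a translation, using — as recorded immediately before the proposition — that both the class of (PB)-covers and the class of ramified covers of $A$ are stable under such pullbacks. The implication (2)$\Rightarrow$(1) is trivial, since one may take $b=0$, so I would concentrate on (1)$\Rightarrow$(2). Fix a non-trivial (PB)-cover (resp.\ ramified cover) $\pi:Z\to A$, a point $b\in A(\TheFieldd)$, and a finitely generated Zariski-dense subgroup $\Omega\subseteq A(\TheFieldd)$ of rank at most $r$. Let $t_b:A\to A$ denote translation by $b$, an automorphism of $A$ over $\TheFieldd$, and set $W\defeq Z\times_{\pi,A,t_b}A$ with its second projection $\pi_b:W\to A$. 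First I would record the elementary features of $\pi_b$: since $t_b$ is an isomorphism, the first projection $W\to Z$ is an isomorphism, so $W$ is a normal, geometrically integral variety with $\deg\pi_b=\deg\pi\ge 2$; its branch locus is $t_b^{-1}$ of that of $\pi$, so $\pi_b$ is ramified if and only if $\pi$ is; and for every point $c$ of $A$ the scheme-theoretic fibre $(\pi_b)^{-1}(c)$ is canonically identified with $\pi^{-1}(b+c)=Z_{b+c}$.

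The one point that requires a short argument is that $\pi_b$ inherits property (PB) from $\pi$. Since (PB) concerns geometric integrality, it may be checked after base change to $\overline{\TheFieldd}$. For a positive integer $m$, I would choose $b'\in A(\overline{\TheFieldd})$ with $[m]b'=b$ (possible as $[m]$ is surjective on $\overline{\TheFieldd}$-points), use the identity $t_b\circ[m]=[m]\circ t_{b'}$ of endomorphisms of $A_{\overline{\TheFieldd}}$, and compute
\[
([m]^*W)_{\overline{\TheFieldd}}\;\cong\;(t_b\circ[m])^*Z_{\overline{\TheFieldd}}\;\cong\;t_{b'}^*\bigl([m]^*Z_{\overline{\TheFieldd}}\bigr),
\]
which is integral because $[m]^*Z$ is geometrically integral (as $\pi$ is (PB)) and $t_{b'}$ is an automorphism of $A_{\overline{\TheFieldd}}$. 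Hence $[m]^*W$ is geometrically integral for every $m\ge 1$, i.e.\ $\pi_b$ has property (PB).

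Applying hypothesis (1) to the non-trivial (PB)-cover $\pi_b:W\to A$ and the subgroup $\Omega$ then yields a finite-index coset $C$ of $\Omega$ over which every fibre of $\pi_b$ — that is, every $Z_{b+c}$ with $c\in C$ — is étale over $\TheFieldd$ and has no $\TheFieldd$-point (resp.\ is integral), which is exactly statement (2). The "(Ram)" version of the equivalence is obtained by the same argument, simply omitting the verification of (PB) and using only that pullback along the isomorphism $t_b$ preserves (un)ramifiedness. I do not expect any genuine obstacle here: the only mildly delicate point is that the auxiliary translate $b'=b/m$ used to check (PB) for $W$ need not be $\TheFieldd$-rational, but this is harmless precisely because (PB) is a condition on geometric integrality and so may be tested over $\overline{\TheFieldd}$.
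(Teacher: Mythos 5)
Your proposal is correct and takes essentially the same route as the paper, which dispatches the proposition in one line by invoking translation-invariance of the classes (PB) and (Ram); your explicit verification that $t_b^*Z$ remains (PB), via the identity $t_b\circ[m]=[m]\circ t_{b'}$ over $\overline{\TheFieldd}$, supplies exactly the detail the paper leaves implicit (and is needed, since the paper's pullback lemmas for (PB)-covers only cover homomorphisms/isogenies, not translations).
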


	\begin{remark}
		In the case of (Ram) we actually only care about the equivalence of (1) and (2) for (PF) since, by Remark \ref{Rmk:Ram_not_IF}, the fibres $\phi^{-1}(c)$ will rarely be integral.
	\end{remark}

	\subsection{From one cover to many covers}

	\begin{lemma}\label{Lem:Many_covers}
		Let $A$ be an abelian variety over $k$, and let $r$ be a positive integer.
		Then the following statements hold.
		\begin{enumerate}
			\item Suppose that every (PB)-cover of $A$ satisfies property (IF) (resp. (PF)) up to rank $r$. Then, for any finite collection $(\pi_i : Y_i \to A)_{i=1,\ldots,n}$ of non-trivial (PB)-covers and for every finitely generated Zariski-dense subgroup $\Omega\subset A(\TheFieldd)$ of rank at most $r$, there is a finite-index coset $C$ of $\Omega$ such that, for every $c \in C$ and $i \in \{1,\ldots,n\}$, the scheme $\pi_i^{-1}(c)$ is integral (resp.~reduced with no $\TheFieldd$-rational points);
			\item Suppose that every (Ram)-cover of $A$ satisfies property (PF) up to rank $r$. Then, for any finite collection $(\pi_i : Y_i \to A)_{i=1,\ldots,n}$ of  ramified covers and for every finitely generated Zariski-dense  subgroup $\Omega\subset A(\TheFieldd)$ of rank at most $r$, there is a finite-index coset $C$ of $\Omega$ such that, for every $c \in C$ and $i \in \{1,\ldots,n\}$, the scheme $\pi_i^{-1}(c)$ is reduced and has no $\TheFieldd$-rational points.
		\end{enumerate}
	\end{lemma}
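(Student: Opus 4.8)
The plan is to prove both statements simultaneously by induction on the number $n$ of covers, the only subtlety being that after treating $\pi_1,\dots,\pi_{n-1}$ one is forced to work over a \emph{coset} of $\Omega$ rather than over a subgroup, which is handled by translating the next cover.

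The case $n=1$ is nothing but the hypothesis: in part (1) it says precisely that the non-trivial (PB)-cover $\pi_1$ satisfies (IF) (resp.\ (PF)) with respect to the rank-$\le r$ subgroup $\Omega$, and in part (2) it says precisely that the ramified cover $\pi_1$ satisfies (PF) with respect to $\Omega$. For the inductive step, suppose we have found a finite-index coset $C'=P+\Omega'$ of $\Omega$, with $\Omega'\le\Omega$ of finite index, over which each of $\pi_1,\dots,\pi_{n-1}$ already has the desired fibrewise property. Then $\Omega'$ is finitely generated of rank $\le r$, and it is still Zariski-dense in $A$: its Zariski closure is a closed subset of $A$, and $A=\overline{\Omega}$ is covered by the finitely many translates $g+\overline{\Omega'}$ with $g$ running over coset representatives of $\Omega'$ in $\Omega$, so the irreducibility of $A$ forces $\overline{\Omega'}=A$. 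Writing $t_{-P}\colon A\to A$ for translation by $-P$, consider the cover $\rho:=t_{-P}\circ\pi_n\colon Y_n\to A$. Since $t_{-P}$ is an automorphism of $A$ and the class of (PB)-covers (resp.\ of ramified covers) of $A$ is stable under translation (this is the observation underlying Proposition \ref{Prop:eq_definitions}), $\rho$ is again a non-trivial (PB)-cover (resp.\ a ramified cover). Applying the hypothesis to the pair $(\rho,\Omega')$ produces a finite-index coset $C''\subseteq\Omega'$ such that $\rho^{-1}(c)$ is integral (resp.\ reduced with no $k$-rational points) for all $c\in C''$. But $\rho^{-1}(c)=\pi_n^{-1}(P+c)$, so the finite-index coset $C:=P+C''$ of $\Omega$, which is contained in $C'$, satisfies: for every $d\in C$, say $d=P+c$ with $c\in C''$, the fibre $\pi_n^{-1}(d)=\pi_n^{-1}(P+c)$ is integral (resp.\ reduced with no $k$-rational points), while $\pi_i^{-1}(d)$ has the required property for $i<n$ because $d\in C'$. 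This closes the induction and proves both (1) and (2).

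This is essentially a bookkeeping argument, so I do not anticipate a genuine obstacle; the only point deserving a line of justification is that a finite-index subgroup of a finitely generated Zariski-dense subgroup of $A(k)$ is again finitely generated, Zariski-dense and of rank $\le r$, which is what makes the inductive hypothesis applicable to the translated cover $t_{-P}\circ\pi_n$. One could, if preferred, replace the explicit translation $t_{-P}\circ\pi_n$ by a direct appeal to Proposition \ref{Prop:eq_definitions}, which packages exactly this translation-invariance (and which, if one wants the fibres to be étale rather than merely reduced, is combined with Corollary \ref{Cor:Etalecoset} to shrink to a sub-coset avoiding the branch locus).
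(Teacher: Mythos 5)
Your proof is correct and follows essentially the same route as the paper: induction on $n$, with the inductive step handled by translating the last cover so that the hypothesis can be applied to the finite-index subgroup underlying the coset produced for the first $n-1$ covers — exactly the translation-invariance packaged in Proposition \ref{Prop:eq_definitions}, which is what the paper's proof cites at this point.
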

	\begin{proof}
		We argue by induction on $n$; the case $n=1$ being true by assumption. Suppose now the claim is true for $n-1$; we prove it for $n$. Let $\pi_1,\ldots,\pi_n$ be non-trivial covers of $A$ that satisfy (PB) (respectively (Ram)) and let $\Omega$ be a Zariski-dense subgroup of $A(\TheFieldd)$ of rank at most $r$. By the inductive hypothesis, there exists a finite-index coset $C_{n-1} \subseteq \Omega$ such that, for every $c$ in $C_{n-1}$ and $i\in\{1,\ldots,n-1\}$, the scheme $\pi_i^{-1}(c)$ is integral (respectively is reduced and has no $\TheFieldd$-rational points). Let $\Omega'\subset \Omega$ be a finite index (Zariski-dense) subgroup (of rank at most $r$) and let $c_{n-1}\in A$ be such that  $C_{n-1} = c_{n-1} + \Omega'$.  
 	By  Proposition \ref{Prop:eq_definitions}, there  is a finite-index coset $C$ of $\Omega'$ such that, for every $c$ in $C$, the fibre $\pi_n^{-1}(c)$ is integral (respectively is reduced and has no $\TheFieldd$-rational points). It is   immediate to check that $C$ is a finite-index coset of $\Omega$ with the desired properties.
	\end{proof}

	\subsection{Invariance under isogeny}
	
	The analogue of  Proposition \ref{prop:smooth_image} in the   context of the Hilbert property for pairs $(A,\Omega)$  reads as follows.
	
	\begin{proposition}\label{Prop:IsogenyInvariance}
		Let $\phi:A\to B$ be an isogeny of abelian varieties over $\TheFieldd$. If  every (PB)-cover of $A$ has property (PF), then every (PB)-cover of $B$ has property (PF).
	\end{proposition}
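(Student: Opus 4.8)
The plan is: given a (PB)-cover $\pi:Y\to B$ and a finitely generated Zariski-dense subgroup $\Omega\subseteq B(k)$, pull $\pi$ back along $\phi$ to a (PB)-cover of $A$, apply the hypothesis (PF) there, and transport the resulting finite-index coset back to $B$ via $\phi$. The only point requiring care is manufacturing, out of $\Omega$, a finitely generated Zariski-dense subgroup of $A(k)$ whose image under $\phi$ is of finite index in $\Omega$; for this I would use a complementary isogeny.

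First I would dispose of the trivial case $\deg\pi=1$ (where (PF) holds by definition) and assume $\deg\pi\ge 2$. Set $d=\deg\phi$ and fix an isogeny $\psi:B\to A$ with $\psi\circ\phi=[d]_A$; applying $\phi$ on the left and using that $\phi$ is an epimorphism yields $\phi\circ\psi=[d]_B$. Let $Z:=A\times_{\phi,B,\pi}Y$ with projection $\rho:Z\to A$. Since an isogeny is a dominant morphism of abelian varieties, Lemma~\ref{Lem:PB_pullback} gives that $Z$ is geometrically integral and that $\rho:Z\to A$ is a (PB)-cover; moreover $\deg\rho=\deg\pi\ge 2$. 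Put $\Omega_A:=\psi(\Omega)\subseteq A(k)$. Then $\Omega_A$ is finitely generated, being the image of a finitely generated group under a homomorphism, and it is Zariski-dense in $A$: an isogeny is a finite, hence closed, morphism, so $\overline{\psi(\Omega)}=\psi(\overline{\Omega})=\psi(B)=A$.

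By the hypothesis applied to the (PB)-cover $\rho$, the pair $(\rho,\Omega_A)$ satisfies (PF); since $\deg\rho\ge2$, there is a finite-index coset $C_A\subseteq\Omega_A$ such that $Z_a$ is reduced and has no $k$-rational point for every $a\in C_A$. For a $k$-point $a$ of $A$, associativity of fibre products yields a canonical $k$-isomorphism $Z_a\cong Y_{\phi(a)}$. I would then take $C:=\phi(C_A)$. Writing $C_A=a_0+\Omega_A'$ with $a_0\in\Omega_A$ and $\Omega_A'\subseteq\Omega_A$ a finite-index subgroup, one has $C=\phi(a_0)+\phi(\Omega_A')$, where $\phi(a_0)\in\phi(\Omega_A)=\phi\psi(\Omega)=d\,\Omega\subseteq\Omega$ and $\phi(\Omega_A')$ is of finite index in $\phi(\Omega_A)=d\,\Omega$, which is itself of finite index in $\Omega$ (as $\Omega$ is finitely generated). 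Hence $C$ is a finite-index coset of $\Omega$ contained in $\Omega$. Finally, for $c\in C$, choosing $a\in C_A$ with $\phi(a)=c$ gives $\pi^{-1}(c)=Y_{\phi(a)}\cong Z_a$, which is reduced with no $k$-rational point. Thus $(\pi,\Omega)$ satisfies (PF), as desired.

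The substantive ingredient is Lemma~\ref{Lem:PB_pullback} (stability of the (PB) property under dominant base change); the rest is formal bookkeeping. The one thing to watch is the last step: one must ensure that pushing the coset $C_A$ forward along $\phi$ both lands inside $\Omega$ and stays of finite index there, which is precisely why one works with the complementary isogeny $\psi$, so that $\phi(\Omega_A)=d\,\Omega$ is a genuine finite-index subgroup of $\Omega$ rather than merely commensurable with it.
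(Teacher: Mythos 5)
Your proof is correct and follows essentially the same route as the paper: pull the cover back along $\phi$, invoke the stability of (PB) under pullback (the paper cites Lemma \ref{lemma:PBStaysIrreducibleUnderPullbackByArbitraryIsogenies} directly where you cite Lemma \ref{Lem:PB_pullback}), apply the hypothesis on $A$, and push the resulting coset forward via $\phi$. The only cosmetic difference is that you transport $\Omega$ upstairs as $\psi(\Omega)$ via a complementary isogeny, whereas the paper first shrinks $\Omega$ to a finite-index subgroup inside $\phi(A(k))$ and then takes the full preimage $\phi^{-1}(\Omega)\cap A(k)$; both yield a finitely generated Zariski-dense subgroup of $A(k)$ whose image controls a finite-index coset of $\Omega$.
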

	\begin{proof}  
		Let $\pi : Z\to B$ be a non-trivial (PB)-cover and let $\Omega\subset B(\TheFieldd)$ be a finitely generated Zariski-dense subgroup. To show that there is a finite index coset $C\subset \Omega$ such that $\pi^{-1}(c)$ has no $\TheFieldd$-rational points for every $c\in C$, replacing $\Omega$ by a finite index subgroup if necessary, we may and do assume that $\Omega$ is in the image of $A(\TheFieldd) \xrightarrow{\phi} B(\TheFieldd)$. Then $\Omega':=\phi^{-1}\Omega\cap A(\TheFieldd)$ is a finitely generated Zariski-dense subgroup of $A$. Moreover, let $\pi':Z'\to A$ be the pull-back of $\pi:Z\to B$ along $\phi:A\to B$.  Since $Z\to B$ is a (PB)-cover, it follows that $Z'$ is integral (Lemma \ref{lemma:PBStaysIrreducibleUnderPullbackByArbitraryIsogenies}). Moreover,   since $Z'\to Z$ is finite \'etale, we see that $Z'$ is normal. In particular, again by  Lemma \ref{lemma:PBStaysIrreducibleUnderPullbackByArbitraryIsogenies}, it follows that the cover  $Z'\to A$ is a non-trivial (PB)-cover.  Since every (PB)-cover of $A$ has   property (PF), the (non-trivial) cover $Z' \to A$   satisfies (PF). In particular,  we can choose a finite-index coset $C'$ of $\Omega'$ such that, for all $c $ in $C'$, the fibre $(\pi')^{-1}(c)$ has no $\TheFieldd$-rational points and is reduced. The universal property of fibre products then implies that $(\pi')^{-1}(c) \cong \pi^{-1}(\phi(c))$, so that for every $c$ in the finite-index coset $C:=\phi(C') \subseteq \Omega$ the fibre $\pi^{-1}(c)$ has no $\TheFieldd$-rational points and is reduced. This proves that $\pi : Z \to B$ satisfies (PF) as desired.
	\end{proof}

	\subsection{A special class of products}
	
	\begin{lemma}\label{Lem:Reduction_to_products}
		Assume that for all positive integers $r$, for all abelian varieties $A_1, \ldots, A_r$ over $\TheFieldd$, and for all  cyclic, Zariski-dense subgroups $\Omega_i\subset A_i(\TheFieldd)$ (for $i=1,\ldots,r$) property (PB) implies property (IF) for the pair $( \prod_{i=1}^r A_i, \prod_{i=1}^r \Omega_i)$.
		Then (PB) implies (IF) for all pairs $(A,\Omega)$, where $A$ is an abelian variety $A$ over $\TheFieldd$ and $\Omega$ is a finitely generated Zariski-dense subgroup of $A(\TheFieldd)$.
	\end{lemma}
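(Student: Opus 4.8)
The plan is to reduce an arbitrary pair $(A,\Omega)$ to a product pair of the shape treated by the hypothesis, by means of a summation homomorphism. First I would replace $\Omega$ by a finite-index subgroup, which is harmless: a finite-index coset of a finite-index subgroup of $\Omega$ is again a finite-index coset of $\Omega$, so (IF) for the smaller subgroup implies (IF) for $\Omega$. Choosing $P_1,\dots,P_r\in\Omega$ whose classes form a basis of $\Omega/\Omega_{\mathrm{tors}}$, we may assume $\Omega=\langle P_1,\dots,P_r\rangle$ with each $P_i$ of infinite order. For each $i$, let $B_i\subset A$ be the identity component of the Zariski closure of $\langle P_i\rangle$; by the structure of Zariski closures of finitely generated subgroups of abelian varieties (a finite union of translates of an abelian subvariety), $B_i$ is a positive-dimensional abelian subvariety of $A$, and there is an integer $m_i\geq 1$ such that $Q_i\defeq m_iP_i\in B_i(\TheFieldd)$ and $\langle Q_i\rangle$ is Zariski-dense in $B_i$. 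Note that $\Omega_0\defeq\langle Q_1,\dots,Q_r\rangle$ has finite index in $\Omega$, hence is still Zariski-dense in $A$.

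Next I would introduce the summation homomorphism $\sigma:\prod_{i=1}^{r}B_i\to A$, $(x_1,\dots,x_r)\mapsto x_1+\dots+x_r$. Its image is an abelian subvariety of $A$ containing the Zariski-dense set $\sigma\bigl(\prod_i\langle Q_i\rangle\bigr)=\Omega_0$, so $\sigma$ is surjective, i.e.\ a dominant morphism of abelian varieties. Moreover $\prod_i\langle Q_i\rangle$ is a product of cyclic groups and, since a product of Zariski-dense subsets is Zariski-dense, it is Zariski-dense in the abelian variety $\prod_iB_i$; thus it is exactly of the form required by the hypothesis.

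Now let $\pi:Z\to A$ be a (PB)-cover; if $\deg\pi=1$ then (IF) is trivial, so assume $\pi$ is non-trivial. Pulling back along $\sigma$ and applying Lemma \ref{Lem:PB_pullback}, the cover $\pi':Z'\defeq Z\times_{\pi,A,\sigma}\prod_iB_i\to\prod_iB_i$ is again a non-trivial (PB)-cover, with $Z'$ geometrically integral. By the standing assumption applied to the pair $\bigl(\prod_iB_i,\prod_i\langle Q_i\rangle\bigr)$, the pair $(\pi',\prod_i\langle Q_i\rangle)$ satisfies (IF): there is a finite-index coset $C'=q'+H'\subseteq\prod_i\langle Q_i\rangle$ such that $(\pi')^{-1}(c')$ is integral for every $c'\in C'$. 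Since $(\pi')^{-1}(c')\cong\pi^{-1}(\sigma(c'))$ by the universal property of the fibre product, and since $C\defeq\sigma(C')$ is a coset of $\sigma(H')$, a finite-index subgroup of $\Omega$, we conclude that $C$ is a finite-index coset of $\Omega$ over which every fibre of $\pi$ is integral. Hence $(\pi,\Omega)$ satisfies (IF), and as $\pi$ was an arbitrary (PB)-cover of $A$, this shows that (PB) implies (IF) for $(A,\Omega)$.

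I expect the only genuinely delicate points to be the bookkeeping — checking that $\Omega_0$ and the subgroups $\sigma(H')$ stay of finite index, so that Zariski-density and the ``finite-index coset'' condition are preserved when passing to $\prod_iB_i$ and back — together with the appeal to the structure theory of Zariski closures of finitely generated subgroups of abelian varieties to produce $B_i$ and $Q_i$. The transfer of the (PB)-property through the dominant morphism $\sigma$ is precisely Lemma \ref{Lem:PB_pullback}, so no new geometric input is needed beyond what is already established in the excerpt.
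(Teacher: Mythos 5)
Your proof is correct and follows essentially the same route as the paper's: reduce $\Omega$ to a free subgroup generated by $P_1,\dots,P_r$ with irreducible Zariski closures $B_i$, pull back the (PB)-cover along the summation map $\prod_i B_i\to A$ (the role of Lemma \ref{Lem:PB_pullback}), apply the hypothesis to the product pair, and transfer the finite-index coset back via the summation map. The only cosmetic differences are that you explicitly pass to identity components and rescale by $m_i$ where the paper simply replaces $\Omega$ by a finite-index subgroup to achieve irreducibility of the closures, and you spell out the fibre-product identification more fully; the underlying argument is the same.
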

	\begin{proof}
		Let $A$ be an abelian variety over $k$, $\Omega \subset A(\TheFieldd)$ be a finitely generated Zariski-dense subgroup, and $\phi:Z \rightarrow A$ a     (PB)-cover. We want to prove that there exists a finite index coset $C \subset \Omega$ such that, for every $c$ in $C$, the scheme  $\phi^{-1}(c)$ is integral.
		Since every finitely generated abelian group contains a free abelian subgroup of finite index, we can assume, up to replacing $\Omega$ with a finite-index subgroup, that $\Omega= \bigoplus_{i=1}^r \Omega_i$, with $\Omega_i= \langle \omega_i \rangle \subset \Omega$ cyclic of infinite order. Moreover, we can assume without loss of generality (up to   replacing $\Omega$ with a finite-index subgroup again) that the Zariski closure $A_i$ of $\Omega_i$ is irreducible, and hence an abelian subvariety of $A$.
		Let $\lambda: \bigoplus_{i=1}^rA_i \rightarrow A$ be the natural surjective morphism (defined by the colimit property of the direct sum). It follows from Lemma \ref{Lem:PB_pullback} that $\lambda^*Z$ is a  (PB)-cover of $ \bigoplus_{i=1}^rA_i$. Moreover, ${\lambda}|_{\oplus_i \Omega_i}: \bigoplus_{i=1}^r \Omega_i  \rightarrow \Omega$ is an isomorphism.
		
		Let $C'$ be a finite index coset of $\bigoplus_{i=1}^r \Omega_i \subset  \bigoplus_{i=1}^rA_i $ such that, for each $c$  in $C'$, the scheme $(\lambda^*\phi)^{-1}(c)$ is integral. Then $C:=\lambda(C')$ is a finite index coset of $\Omega$. Moreover, since $(\lambda^*\phi)^{-1}(c) \cong \phi^{-1}({\lambda(c)})$ for every $c$ in $C'$, it follows that, for every $c$ in $C$, the scheme $\phi^{-1}(c)$ is integral, as required.
	\end{proof}

	\subsection{From (PB)-covers to ramified covers}

	The following proposition shows that, when studying the weak-Hilbert property for abelian varieties,  it is natural to only consider (PB)-covers.

	\begin{proposition}\label{prop:PBPFRamPF} Suppose that, for every abelian variety $B$ over $\TheFieldd$,   every (PB)-cover $\pi :Y \to B$  has property (PF). Then, for every abelian variety $A$ over $\TheFieldd$, every (Ram)-cover $\psi : Z \to A$  has property (PF).
	\end{proposition}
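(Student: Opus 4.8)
Fix a ramified cover $\psi : Z \to A$ of an abelian variety over $\TheFieldd$ and a finitely generated Zariski-dense subgroup $\Omega \subseteq A(\TheFieldd)$; the goal is a finite-index coset $C\subseteq\Omega$ over which the scheme-theoretic fibres of $\psi$ are reduced with no $\TheFieldd$-points. First I would dispose of the case where $Z$ is not geometrically integral: then $Z(\TheFieldd)=\emptyset$ (Remark~\ref{Rmk:geom_int}), so \emph{every} fibre of $\psi$ is pointless, and Corollary~\ref{Cor:Etalecoset} supplies a finite-index coset over which the fibres are in addition étale, hence reduced. So assume $Z$ is geometrically integral and apply Lemma~\ref{lemma:Zannier} to factor $\psi$ as $Z\xrightarrow{\phi}B\xrightarrow{\lambda}A$ with $\lambda$ unramified, hence étale by Lemma~\ref{lemma:EtaleOrRamified}, and $\phi_{\overline{\TheFieldd}}$ a (PB)-cover. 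Since $\lambda$ is étale and $\psi$ is ramified, $\phi$ is ramified (in particular $\deg\phi\geq 2$); and since "having no non-trivial étale subcover" is detected over $\overline{\TheFieldd}$ (as in the proof of Lemma~\ref{lemma:Zannier}), the cover $\phi:Z\to B$ has no non-trivial étale subcover over $\TheFieldd$.

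The plan is to make $\phi$ into a (PB)-cover of an \emph{abelian variety} and feed it into the hypothesis. If $B(\TheFieldd)=\emptyset$ we are done as above, since a $\TheFieldd$-point of any fibre of $\psi$ would map to a $\TheFieldd$-point of $B$. Otherwise pick $b_0\in B(\TheFieldd)$: as $B\to A$ is a connected finite étale cover of an abelian variety with a $\TheFieldd$-point, $B$ acquires an abelian variety structure with origin $b_0$, $\mu:=t_{-\lambda(b_0)}\circ\lambda$ is an isogeny $B\to A$ with $\lambda=t_{a_0}\circ\mu$ where $a_0:=\lambda(b_0)$, and $\phi:Z\to B$ is a (PB)-cover of $B$ by Lemma~\ref{lemma:Zannier}. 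Choosing a complementary isogeny $\widehat{\mu}$ with $\mu\circ\widehat{\mu}=[\deg\mu]_A$ and replacing $\Omega$ by the finite-index (still finitely generated and Zariski-dense) subgroup $[\deg\mu]\,\Omega$ — harmless, since a finite-index coset of a finite-index subgroup of $\Omega$ is one of $\Omega$ — we may assume $\Omega\subseteq\mu(B(\TheFieldd))$. If furthermore $a_0\notin\mu(B(\TheFieldd))$, then $\{b\in B(\TheFieldd):\lambda(b)=c\}=\{b\in B(\TheFieldd):\mu(b)=c-a_0\}=\emptyset$ for every $c\in\Omega$, so all fibres of $\psi$ over $\Omega$ are pointless and Corollary~\ref{Cor:Etalecoset} finishes the argument. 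So the essential case is $a_0=\mu(\alpha_0)$ with $\alpha_0\in B(\TheFieldd)$, where $\lambda(b)=\mu(b+\alpha_0)$ and, for any $c\in A(\TheFieldd)$,
\[
\psi^{-1}(c)(\TheFieldd)=\bigsqcup_{b'\in\mu^{-1}(c)\cap B(\TheFieldd)}\phi^{-1}(b'-\alpha_0)(\TheFieldd).
\]

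Here is the crux. Put $F:=(\ker\mu)(\TheFieldd)$, a finite group, and $\Omega_B:=\mu^{-1}(\Omega)\cap B(\TheFieldd)$; this is finitely generated and Zariski-dense in $B$ (its image under the finite morphism $\mu$ is the Zariski-dense group $\Omega$), and $\mu$ maps $\Omega_B$ onto $\Omega$ with kernel $F\subseteq\Omega_B$. For each $f\in F$ the translate $\phi_f:Z\to B$, $z\mapsto\phi(z)+\alpha_0-f$, is again a (PB)-cover of $B$. Applying Lemma~\ref{Lem:Many_covers}(1) — which is where the hypothesis over $B$ enters — to the finite family $(\phi_f)_{f\in F}$ and to $\Omega_B$, I obtain a finite-index coset $b_1+\Gamma$ of $\Omega_B$ over which all the $\phi_f^{-1}$ are pointless; equivalently $\phi^{-1}(b''-\alpha_0)(\TheFieldd)=\emptyset$ for all $b''\in b_1+(\Gamma+F)$. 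Set $\Omega_B':=\Gamma+F$, a finite-index subgroup of $\Omega_B$ containing $F$. Then $\widetilde C:=\mu(b_1)+\mu(\Omega_B')$ is a finite-index coset of $\Omega$, and for $c\in\widetilde C$ every $b'\in\mu^{-1}(c)\cap B(\TheFieldd)$ lies in $b_1+\Omega_B'$: indeed $b'-b_1\in\Omega_B$ and $\mu(b'-b_1)\in\mu(\Omega_B')$ force $b'-b_1\in\Omega_B'+F=\Omega_B'$. By the displayed identity, $\psi^{-1}(c)$ is then pointless for all $c\in\widetilde C$. Finally, since $\widetilde C$ is Zariski-dense in $A$, the argument of Lemma~\ref{Lem:AvoidingBranchLocus} applied inside $\widetilde C$ produces a finite-index sub-coset $C\subseteq\widetilde C$ missing the branch locus of $\psi$, so that the fibres of $\psi$ over $C$ are étale (Lemma~\ref{lemma:EtaleOrRamified}), hence reduced; this $C$ witnesses (PF) for $(\psi,\Omega)$.

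The main obstacle is exactly the step isolated in the previous paragraph: pulling an $\Omega$-coset back along $\lambda$ yields a subset of $B(\TheFieldd)$ that is automatically stable under translation by $F=(\ker\mu)(\TheFieldd)$, so no single application of the hypothesis to $\phi$ controls all the fibres over it — one must apply the hypothesis simultaneously to the finitely many translates $\phi_f$, which is precisely what Lemma~\ref{Lem:Many_covers} permits. Everything else is routine bookkeeping with isogenies and finite-index cosets.
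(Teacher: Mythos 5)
Your proof is correct and follows essentially the same route as the paper's: factor $\psi$ through its maximal \'etale subcover via Lemma \ref{lemma:Zannier}, pull $\Omega$ back to the intermediate abelian variety, and apply Lemma \ref{Lem:Many_covers} simultaneously to the finitely many translates of the resulting (PB)-cover by the rational points of the kernel of the isogeny. Your bookkeeping with $\alpha_0$ and the separate case $a_0\notin\mu(B(\TheFieldd))$ is just a slightly more explicit version of what the paper's proof leaves implicit.
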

	\begin{proof} Let  $A$ be an abelian variety over $k$,   let $\psi:Z\to A$ be a ramified cover, and let $\Omega\subset A(k)$ be a finitely generated Zariski-dense subgroup. To prove the proposition, by Corollary \ref{Cor:Etalecoset}, translating the origin on $A$ and replacing $\Omega$ by a finite index  subgroup if necessary, we may and do assume that $\psi:Z\to A$ is \'etale over every $c$ in $\Omega$.  
		
		By Lemma \ref{lemma:Zannier}, there exists a factorization
		\[
		\psi: Z \xrightarrow{\psi_0} A' \xrightarrow{\lambda} A,
		\]
		where $\lambda$ is étale and $\psi_0$ is a non-trivial (PB)-cover. If $A'(\TheFieldd) = \emptyset$, then the claim trivially holds, so that we can assume that $A'(\TheFieldd) \neq \emptyset$ and that $\lambda$ is an isogeny.
		We define $\Omega' \defeq \lambda^{-1}(\Omega) \cap A'(\TheFieldd)$ and $N \defeq A'(\TheFieldd) \cap \Ker \lambda$, and note that $N$ is a finite group. We have a natural exact sequence
		\begin{equation}\label{Eq:short_exact_seq_Prop_reduction}
		N \hookrightarrow \Omega' \rightarrow \Omega.
		\end{equation}
		Note that the cokernel of the last morphism is finite, as every isogeny between abelian varieties is   a left and right factor of multiplication by some integer $m \in \N_{>0}$.
		
		For each $n$ in the finite group $ N$, we define $\psi_n$ to be the cover of $A'$ given by the composition $Z \xrightarrow{\psi_0} A' \xrightarrow{+n} A'$. By Lemma \ref{Lem:Many_covers}, as every $\psi_n$ is a (PB)-cover, there exists a finite index coset $C \subset \Omega'$ such that, for every $n \in N$ and every $c \in C$, the scheme $\psi_n^{-1}(c) $ has no $k$-points. Let $c \in \lambda(C) \subset \Omega$, and choose a representative $c_0 \in C$ such that $\lambda(c_0)=c$. We have
		\[
		\psi^{-1}(c)=\psi_0^{-1}\left( \lambda^{-1}(c) \setminus A'(\TheFieldd) \right) \cup \bigcup_{n \in N} \psi_0^{-1}(c_0 + n)=\psi_0^{-1}\left( \lambda^{-1}(c) \setminus A'(\TheFieldd) \right) \cup \bigcup_{n \in N} \psi_n^{-1}(c_0),
		\]
		and, by construction, this is étale and has no $\TheFieldd$-points. Since $\lambda(C) \subset \Omega$ is a coset of finite index, this concludes the proof. 
		%
		%
		%
		%
	\end{proof}

	\subsection{Galois closures and (PB)-covers}
	
	The next proposition is very close in spirit to the first part of the proof of \cite[Theorem 1]{ZannierDuke}.  
	
	
	\begin{proposition}\label{prop:ReductionToGaloisPB}
		Let $A$ be an abelian variety over $\TheFieldd$, and let $r$  be a positive integer.
		\begin{enumerate}
			\item Suppose that, for every finite field extension $L/k$,  every (GPB)-cover $\pi : Y \to A_L$  of $A_L$ has property (IF) up to rank $r$ over $L$. Then, for every finite field extension $M/k$, every (PB)-cover $\pi : Y \to A_M$ of $A_M$ has property (IF) up to rank $r$ over $M$.  
			\item  Suppose that, for every finite field extension $L/k$,  every (GPB)-cover $\pi : Y \to A_L$  of $A_L$ has property (PF) up to rank $r$ over $L$. Then, for every finite field extension $M/k$, every (PB)-cover $\pi : Y \to A_M$ of $A_M$ has property (PF) up to rank $r$ over $M$.   
		\end{enumerate}
	\end{proposition}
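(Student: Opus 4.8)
Both parts are proved by the same argument; I treat (IF), indicating at the end the cosmetic modification for (PF). Unravelling the definitions, it suffices to fix a finite extension $M/\TheFieldd$, a non-trivial (PB)-cover $\pi:Y\to A_M$ (so $Y$ is geometrically integral, by (PB) with $m=1$), and a finitely generated Zariski-dense subgroup $\Omega\subseteq A(M)$ of rank at most $r$, and to produce a finite-index coset $C\subseteq\Omega$ with $\pi^{-1}(c)$ integral for all $c\in C$.

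\emph{Key claim: there is a positive integer $m$ for which $[m]^{\ast}Y\defeq Y\times_{\pi,A,[m]}A\to A_M$ is a \textup{(GPB)}-cover.} I argue geometrically. Let $\widehat Y\to A$ be the Galois closure of $\pi$, with (geometric) Galois group $G$, and let $\widehat Y\to B'\xrightarrow{\mu}A$ be the factorization of Lemma \ref{lemma:Zannier}: thus $\widehat Y\to B'$ is a Galois (PB)-cover, $B'$ is an abelian variety over $\overline M$, $\mu$ is an isogeny, and $N'\defeq\ker\mu\cong\Gal(B'/A)$ is the finite abelian Galois group of the maximal étale subcover of $\widehat Y\to A$; take $m$ to be the exponent of $N'$ (any multiple, e.g.\ $|G|$, works). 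Writing $N=\Gal(\widehat Y/B')\trianglelefteq G$ and $Y=\widehat Y/H$ with $H$ core-free in $G$, the hypothesis that $\pi$ has no non-trivial étale subcover gives $HN=G$; hence $Y\times_A B'$ is connected, equals $\widehat Y/(H\cap N)$, and its Galois closure over $B'$ is $\widehat Y\to B'$ (since $\operatorname{core}_G(H)=\{e\}$ forces $\operatorname{core}_N(H\cap N)=\{e\}$), which is (PB); so $Y\times_A B'\to B'$ is a (GPB)-cover of $B'$. As $N'$ is killed by $m$, the map $[m]_A$ factors as $[m]_A=\mu\circ\sigma$ for an isogeny $\sigma:A\to B'$, whence $[m]^{\ast}Y=Y\times_{A,\mu\circ\sigma}A=\sigma^{\ast}(Y\times_A B')$. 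Pulling back the (GPB)-cover $Y\times_A B'\to B'$ along the isogeny $\sigma$, Lemmas \ref{lemma:PBStaysIrreducibleUnderPullbackByArbitraryIsogenies} and \ref{Lem:PB_pullback} together with Corollary \ref{Cor:covers_Galois_closure} show that $\sigma^{\ast}(\widehat Y)\to A$ is a connected (PB) Galois cover which is the Galois closure of $[m]^{\ast}Y\to A$; thus $[m]^{\ast}Y\to A$ is (GPB) over $\overline M$, and since being (PB) is a geometric property (Lemma \ref{lemma:Zannier}) the cover $[m]^{\ast}Y\to A_M$ is (GPB), as claimed.

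To conclude, record that $[m]^{\ast}Y\to A_M$ (with structure map the second projection) is geometrically integral and non-trivial (its degree is $\deg\pi\ge 2$), and that its fibre over any point $c'$ of $A$ is canonically $\pi^{-1}([m]c')=Y_{[m]c'}$. Choose a finite extension $L/M$ — so $L/\TheFieldd$ is finite — and elements $x_1,\dots,x_s\in A(L)$ with $[m]x_i=\omega_i$ for a generating set $\omega_1,\dots,\omega_s$ of $\Omega$; put $\Omega'\defeq\langle x_1,\dots,x_s\rangle\subseteq A(L)$. Then $\Omega'$ is finitely generated, Zariski-dense (its image $[m]\Omega'=\Omega$ is, and $[m]$ is finite surjective) and of rank at most $\operatorname{rank}\Omega\le r$. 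By hypothesis the (GPB)-cover $[m]^{\ast}Y\to A_L$ has property (IF) up to rank $r$ over $L$, so there is a finite-index coset $C'\subseteq\Omega'$ with $([m]^{\ast}Y)_{c'}$ integral over $L$ for all $c'\in C'$. Set $C\defeq[m](C')$; since $[m]|_{\Omega'}:\Omega'\to\Omega$ has finite kernel and surjects onto $\Omega$, $C$ is a finite-index coset of $\Omega$, and $C\subseteq A(M)$ because $C\subseteq[m]\Omega'=\Omega$. For $c=[m]c'\in C$ we have $Y_c=Y_{[m]c'}\cong([m]^{\ast}Y)_{c'}$, so $Y_c\times_M L$ is integral over $L$; as $L/M$ is finite and separable ($\operatorname{char}\TheFieldd=0$), a finite $M$-algebra that becomes a field after $\otimes_M L$ is itself a field, hence $Y_c$ is integral over $M$. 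This proves (IF) for $(\pi,\Omega)$, and part (1) follows. For part (2) the same computation applies with ``integral'' replaced by ``reduced and without $M$-rational points'': the hypothesis gives $([m]^{\ast}Y)_{c'}$ reduced and $L$-pointless for $c'\in C'$, and both properties descend to $Y_c$ over $M$ (reducedness since $L/M$ is separable, pointlessness since an $M$-point would produce an $L$-point after base change).

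\emph{Main obstacle.} The crux is the Key Claim. A (PB)-cover of $A$ need not have a (PB) Galois closure — its Galois closure can acquire a non-trivial étale subcover $B'\to A$ — but this étale part is ``trivialized'' by pulling $Y$ back along a suitable multiplication $[m]$, and the point is that $[m]^{\ast}Y$ thereby becomes a (GPB)-cover of $A$ \emph{itself}, not merely of the intermediate variety $B'$ (for which the hypothesis would be useless). This rests on the behaviour of (PB)- and of Galois covers under pull-back by isogenies (Lemmas \ref{lemma:PBStaysIrreducibleUnderPullbackByArbitraryIsogenies}, \ref{Lem:PB_pullback}, Corollary \ref{Cor:covers_Galois_closure}); granting it, passing to $[m]$-preimages of $\Omega$ over a finite extension and descending the conclusion are routine.
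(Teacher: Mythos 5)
Your proof follows essentially the same strategy as the paper's (pull $Y$ back along a multiplication map to kill the étale part of the Galois closure, apply the hypothesis, descend), and the geometric part of the Key Claim — that $[m]^{\ast}Y\to A_{\overline M}$ has a (PB) Galois closure over $\overline M$ — is correct; the group-theoretic verification via $HN=G$ and $\operatorname{core}_N(H\cap N)=1$ is a valid (if somewhat elaborate) alternative to what the paper does with the Cartesian diagram and Lemma \ref{Lem:PB_pullback}. However, there is a genuine gap in the last sentence of the Key Claim: you assert that because (PB) is a geometric property, $[m]^{\ast}Y\to A_M$ is (GPB) over $M$ (and, a fortiori, over the later-chosen $L$). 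This does not follow. By Definition \ref{def:PB_GPB_Ram}, (GPB) over $M$ requires the \emph{arithmetic} Galois closure $W\to A_M$ — the normalization of $A_M$ in the Galois closure of $M([m]^{\ast}Y)/M(A)$ — to be a (PB)-cover, and in particular (taking $n=1$) to be geometrically integral. That is an arithmetic condition: the Galois closure over $M$ can acquire a nontrivial constant field, in which case $W_{\overline M}$ is disconnected and $W$ is not (PB), regardless of how nice the geometric Galois closure is. Lemma \ref{lemma:Zannier} tells you (PB) is geometric \emph{for covers with geometrically integral total space}; it does not produce that geometric integrality for you. Remark \ref{Rmk:straightforward}(ii) makes the obstruction explicit: $W_{\overline M}$ equals the geometric Galois closure only modulo the constant field extension of $W$.

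The fix is small and is exactly what the paper does at the corresponding step: before applying the hypothesis, enlarge $L$ further so that the Galois closure of $[m]^{\ast}Y_L\to A_L$ over $L$ is geometrically integral (the paper also arranges $\ker\psi$ to be $L$-rational at the same time). Once that is done, Remark \ref{Rmk:straightforward}(ii) identifies the base change of the $L$-Galois closure with the geometric one, your computation over $\overline M$ does show it is (PB), and $[m]^{\ast}Y_L\to A_L$ is genuinely (GPB) over $L$. You already pass to a finite extension $L/M$ to produce the $[m]$-division points of the $\omega_i$, so adding this requirement costs nothing; but as written, the proof invokes the hypothesis for a cover that you have not shown to be (GPB) over the field where you use it. The rest of your argument — the identification of fibres $([m]^{\ast}Y_L)_{c'}\cong Y_{[m]c'}\times_M L$, the rank and index bookkeeping for $\Omega'$ and $C=[m](C')$, and the descent of integrality (resp.\ reducedness and pointlessness) along the finite separable extension $L/M$ — is correct.
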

	\begin{proof}   
		We only prove (1), as the proof of (2) is essentially identical.
		First, we may and do assume that $M=k$. Thus, let $\pi:Y\to A$ be a (PB)-cover of $A$, and
		let $\Omega\subset A(k)$ be a  finitely generated Zariski-dense subgroup of rank at most $r$. Define  $B:=(\deg \pi)!$    and consider $[B]^\ast \pi:[B]^*Y \to A$. To prove that $(A,\Omega)$ has property (IF), replacing $\Omega$ with its finite-index subgroup $[B]\Omega = \{[B]\omega : \omega \in \Omega \}$, we may assume that the image of $A(\TheFieldd) \xrightarrow{[B]} A(\TheFieldd)$ contains $\Omega$. Let $[B]^*\Omega=\{x \in A(k): Bx \in \Omega\}$.  
	
		We now choose a finite field extension $L/k$ such that,  
		if $\widehat{Y_L}\to Y_L\to A_L$ is the Galois closure of $Y_L\to A_L$, then $\widehat{Y_L}$ is geometrically integral over $L$. Replacing $L$ by a finite field extension if necessary,  let $\psi:A'\to A_L$ be an isogeny of abelian varieties such that   $\widehat{Y_L}\to A'\to A_L$ is the (PB)-factorization of $\widehat{Y_L}\to A_L$ (see Lemma \ref{lemma:Zannier}).  Replacing $L$ by a finite field extension if necessary, we may suppose that $\ker \psi$ is $L$-rational. We observe that $\deg \psi$ divides the degree of $\widehat{Y_L}\to A_L$, which divides $(\deg \pi)!=B$. Consider the following diagram in which every square is Cartesian:
		\[
		\xymatrix{ 
			V  \ar[d] \ar[rr]  & & \widehat{Y_L}\times_{A_L, [B]} A_L \ar[rr] \ar[d] & & \widehat{Y_L} \ar[d] \\
			A_L\times\{0\} \ar[rr] & & A_L\times \ker \psi = A_L\times_{[B],A_L,\psi} A'  \ar[d] \ar[rr] & & A' \ar[d]_{\psi} \\
			& &  A_L \ar[rr]_{[B]} & & A_L
		}
		\] 
		As we previously observed, $[B]$ kills $\Ker \psi$. It follows that $\psi$ is an isogeny factor of $[B]$, which gives the identity at the center of the diagram.
		Note that the composed morphism $A_L\cong A_L\times \{0\}\to A_L\times \ker \psi\to A'$ is an isogeny. Therefore, since $\widehat{Y_L}\to A'$ is a (PB)-cover, it follows from Lemma \ref{Lem:PB_pullback}  that $V\to A_L$ is a (PB)-cover, so that, in particular, $V$ is geometrically integral over $L$.
		
		We now apply Corollary \ref{Cor:covers_Galois_closure} to see that  the composed morphism $V\to [B]^\ast Y_L\to A_L$ is a Galois closure of the morphism $[B]^\ast Y_L\to A_L$. Explicitly: applying Corollary \ref{Cor:covers_Galois_closure} with $$X:=A_L, X':= A_L,  Z := Y_L,$$ and with $X'\to X$ given by multiplication with $B$, we deduce that the Galois closure $\widehat{[B]^*Y_L}$ of $[B]^*Y_L \rightarrow A_L$ embeds as a connected component of $\widehat{Y_L}\times_{A_L, [B]} A_L$, and the embedding commutes with projection to $A_L$. Since $\Ker \psi(\overline{K})= \Ker \psi(K)$, all connected components of $\widehat{Y_L}\times_{A_L, [B]} A_L$ are isomorphic, over $A_L$, to $V$. Hence we deduce that there exists an isomorphism $\widehat{[B]^*Y_L}\cong V$ which commutes with projection to $A_L$. Since $V\to A_L$ is a (PB)-cover, we conclude that $[B]^\ast Y_L\to A_L$ is a (GPB)-cover of $A_L$. 

		For $x \in A_L(L)$ we have the following cartesian diagram:

		
		\begin{equation}\label{Eq:Cartesian_diagram}
		\begin{tikzcd}
		{([B]^*\pi_L)^{-1}(x)} \arrow[d] \arrow[r] & {[B]^*Y_L} \arrow[d, "{[B]^*\pi_L}"'] \arrow[r] & Y_L \arrow[d, "\pi_L"'] \\
		x \arrow[r]                                & A_L \arrow[r, "{[B]}"] 
		& A_L 
		\end{tikzcd},
		\end{equation}
		
		and hence a canonical isomorphism of schemes (with Galois action):
		\[
		([B]^* \pi_L)^{-1}(x)  \cong 
		\pi_L^{-1}([B]x).
		\]
		Since every (GPB)-cover of $A_L$ has property (IF) up to rank $r$ over $L$, there is a finite-index coset $C$ of $[B]^*\Omega$ such that the fibre of $[B]^*\pi_L$ over every $c \in C$ is integral. The previous isomorphism shows that $[B]C$ is a finite-index coset of $\Omega$ such that the fibre of $\pi: Y_L \to A_L$ over any $[B]c \in [B]C$ is integral, hence  $(\pi_L:Y_L\to A_L,\Omega)$ has property (IF) over $L$. This implies (readily) that $(\pi :Y\to A, \Omega)$ has property (IF)  over $k$, as required.
		%
		%
		%
	\end{proof}

	\begin{proposition}\label{prop:PBPFPBIF}
		Let $A$ be an abelian variety   over $\TheFieldd$, and let $r$ be a positive integer. Suppose that, for every finite field extension $L/k$, every (GPB)-cover $\pi : Y \to A_L$ of $A_L$ has property (PF)  up to rank $r$ over $L$. Then, for every finite field extension $M/k$, every (PB)-cover $\pi : Y \to A_M$ of $A_M$ has property (IF) up to rank $r$ over $M$.
	\end{proposition}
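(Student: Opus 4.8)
The plan is to reduce everything to Proposition~\ref{prop:ReductionToGaloisPB}(1): that proposition already shows that it suffices to prove that, for \emph{every} finite extension $L/k$, every \emph{(GPB)}-cover $\pi:Y\to A_L$ of $A_L$ has property (IF) up to rank $r$ over $L$. So I would fix such an $L$, a (GPB)-cover $\pi:Y\to A_L$, and a finitely generated Zariski-dense subgroup $\Omega\subseteq A_L(L)$ of rank at most $r$. Let $\rho:\widehat Y\to A_L$ be the Galois closure of $\pi$, with group $G$; by the very definition of (GPB) the cover $\rho$ is a (PB)-cover, so $\widehat Y$ is geometrically integral over $L$, and we may write $Y=\widehat Y/H$ for a subgroup $H\le G$ by Proposition~\ref{thm:GaloisCorrespondence}.

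The first key step is the observation that every \emph{proper} intermediate cover is again a (GPB)-cover. Fix $H'\subsetneq G$. The cover $\mu_{H'}:\widehat Y/H'\to A_L$ is non-trivial (of degree $[G:H']>1$), and its Galois closure is $\widehat Y/H''$, where $H''\trianglelefteq G$ is the normal core of $H'$ in $G$ — this is just the Galois-theoretic computation of the Galois closure of the intermediate field $L(\widehat Y)^{H'}$ inside the Galois extension $L(\widehat Y)/L(A_L)$. Now $\widehat Y\to \widehat Y/H''\to A_L$ is a factorization of the (PB)-cover $\rho$, so by Lemma~\ref{lemma:Zannier} (a (PB)-cover has no non-trivial étale subcover) the cover $\widehat Y/H''\to A_L$ has no non-trivial étale subcover either — such a subcover would be one for $\rho$ — and hence, again by Lemma~\ref{lemma:Zannier}, it is itself a (PB)-cover. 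Thus $\mu_{H'}$ is a (GPB)-cover, so by hypothesis the pair $(\mu_{H'},\Omega)$ has property (PF).

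The second step is to merge these finitely many (PF) statements, together with an étale-locus condition, into a single finite-index coset. First I would use Corollary~\ref{Cor:Etalecoset} to shrink $\Omega$ and translate so that $\rho$ is étale over every point of $\Omega$. Then I would treat the finitely many subgroups $H'\subsetneq G$ one at a time, at each step applying the hypothesis to $\mu_{H'}$ and the current Zariski-dense subgroup to produce a finite-index coset over which $(\widehat Y/H')^{-1}(c)$ has no $L$-point, and passing to that coset before handling the next $H'$. This nesting is legitimate because the class of (GPB)-covers, like that of (PB)-covers, is stable under translation, so the inductive argument of Lemma~\ref{Lem:Many_covers} (and the translation-invariance of Proposition~\ref{Prop:eq_definitions}) carries over; alternatively, note that by Proposition~\ref{prop:ReductionToGaloisPB}(2) the hypothesis already implies (PF) up to rank $r$ for every (PB)-cover of $A_L$, and feed a suitable finite family into Lemma~\ref{Lem:Many_covers}(1). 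Either way one obtains a finite-index coset $C\subseteq\Omega$ such that $\rho$ is étale over every $c\in C$ and, for every $c\in C$ and every $H'\subsetneq G$, the fibre $(\widehat Y/H')_c$ has no $L$-rational point.

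Finally, fix $c\in C$; I claim $\pi^{-1}(c)=(\widehat Y/H)_c$ is integral. Since $\rho$ is étale over $c$, the scheme $\widehat Y_c$ is reduced, and Proposition~\ref{Prop:irreducibility_fibre} applied to the Galois cover $\rho$ says that $\widehat Y_c$ can fail to be irreducible only if some $(\widehat Y/H')_c$ with $H'\subsetneq G$ carries an $L$-point — which, by the choice of $C$, it does not. Hence $\widehat Y_c=\Spec L'$ for a finite field extension $L'/L$, and then $(\widehat Y/H)_c$, being the quotient $\widehat Y_c/H=\Spec(L')^{H}$, is again the spectrum of a field, hence integral. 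This shows $(\pi,\Omega)$ has property (IF), and Proposition~\ref{prop:ReductionToGaloisPB}(1) then gives the statement. The one genuinely delicate point is the bookkeeping in the third step: one must make sure the translation and finite-index-subgroup reductions used to merge the (PF)-statements really apply to (GPB)-covers (for which Proposition~\ref{Prop:eq_definitions} and Lemma~\ref{Lem:Many_covers} are not literally stated, though their proofs go through) and that the rank bound $r$ is preserved throughout; everything else — the identification of the Galois closure of $\widehat Y/H'$, the reduction via Proposition~\ref{prop:ReductionToGaloisPB}(1), and the passage from irreducibility of $\widehat Y_c$ to integrality of $\pi^{-1}(c)$ — is essentially formal.
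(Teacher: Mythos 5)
Your proposal is correct and follows essentially the same route as the paper: reduce via Proposition~\ref{prop:ReductionToGaloisPB}(1) to proving (IF) for (GPB)-covers, apply the (PF) hypothesis to the intermediate covers $\widehat Y/H'$ for all proper $H'\subsetneq G$ (the paper does this by observing these are (PB)-covers and invoking Proposition~\ref{prop:ReductionToGaloisPB}(2) together with Lemma~\ref{Lem:Many_covers}(1) --- exactly the ``alternative'' you describe, which avoids your bookkeeping worries), and conclude with Proposition~\ref{Prop:irreducibility_fibre}. Your explicit final step passing from integrality of $\widehat Y_c$ to integrality of $Y_c=(\widehat Y/H)_c$ is left implicit in the paper and is correct.
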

	\begin{proof}
		By Proposition \ref{prop:ReductionToGaloisPB} (1), it suffices to prove that, for every finite field extension $L/k$, every (GPB)-cover $\pi : Y \to A_L$ has property (IF) up to rank $r$ over $L$.     To do so,
		let $\Omega\subset A(k)$ be a finitely generated Zariski-dense subgroup of rank at most $r$, let     $\pi : Y \to A_L$ be a (GPB)-cover of $A_L$, and let $\widehat{\pi} : \widehat{Y} \to A_L$ be its Galois closure with group $G$.

		Let $H_1:=\{e\}, \ldots, H_{s}$ be an enumeration of the proper subgroups of $G$. By our definition of (GPB)-cover, the cover $\widehat{\pi}$ satisfies the (PB)-property, hence for each $i=1,\ldots, s$ so does its subcover $\widehat{Y}_{H_i} \to A_L$. By Proposition \ref{prop:ReductionToGaloisPB} (2), the hypothesis implies that every (PB)-cover $\pi : Y \to A_L$ has property (PF) up to rank $r$, so that by Lemma \ref{Lem:Many_covers} there is a finite-index coset $C$ of $\Omega$ such that, for each $c \in C$ and each $i=1,\ldots,s$, the fibre of $\widehat{Y}_{H_i} \to A_L$ over $c$ has no $L$-rational points and is reduced. Since $\widehat{Y} = \widehat{Y}_{H_1} \to A_L$ is \'etale over the points of $C$,   by Proposition \ref{Prop:irreducibility_fibre}, we conclude that the fibre   $\widehat{\pi}^{-1}(c)$  is integral, as required.
	\end{proof}

	\section{A product theorem  for abelian varieties}\label{sect:GeneralCaseNF}
	Throughout this section we let $\TheField$ be a number field.
	
	\subsection{Local considerations}\label{subsect:LocalConsiderations}

	\begin{lemma}\label{Lem:irrimpliesnonsplitprime}
		If $E/\TheField$ is a nontrivial finite extension of number fields, then there exist infinitely many places $v $ of $K$ such that $(\spec E)(\TheField_v)=\emptyset$.  
	\end{lemma}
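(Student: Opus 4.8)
The plan is to reduce the statement to the Chebotarev density theorem together with an elementary fact about permutation groups. First I would pass to the Galois closure: let $L/\TheField$ be the Galois closure of $E/\TheField$, put $G := \Gal(L/\TheField)$ and $H := \Gal(L/E)$, so that $E = L^H$ and, since $E \neq \TheField$, the subgroup $H$ is proper in $G$, with $[G:H] = [E:\TheField] \geq 2$. For a finite place $v$ of $\TheField$ unramified in $L$, I would invoke the standard dictionary for the splitting of $v$ in $E$: the places $w$ of $E$ above $v$ are in bijection with the orbits of $\langle \Fr_v\rangle$ on $G/H$, where $\Fr_v \in G$ denotes a Frobenius element at $v$ (well defined up to conjugacy), and the local degree $[E_w : \TheField_v]$ equals the cardinality of the corresponding orbit. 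Consequently $(\spec E)(\TheField_v) \neq \emptyset$ — equivalently, $E \otimes_\TheField \TheField_v$ admits a factor isomorphic to $\TheField_v$ — if and only if $\Fr_v$ has a fixed point on $G/H$. (Alternatively, one can read off this equivalence from Proposition \ref{Prop:GoverH_frobenius} applied to the finite étale cover $\spec E \to \spec \TheField$, whose Galois closure is $\spec L$; all but finitely many $v$ are of good reduction for it.)

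Next I would use the classical lemma of Jordan that a finite group acting transitively on a set of cardinality at least $2$ possesses an element with no fixed point. Concretely, applying the Cauchy--Frobenius (Burnside) formula to the transitive action of $G$ on $G/H$ gives $\sum_{g \in G} \#\mathrm{Fix}(g) = \#G$, whereas the identity fixes all $[G:H] \geq 2$ elements of $G/H$; hence there exists $g_0 \in G$ with $\mathrm{Fix}(g_0) = \emptyset$ (otherwise every summand would be at least $1$ and at least one would be at least $2$, forcing the sum to exceed $\#G$).

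Finally I would invoke Chebotarev. The set of finite places $v$ of $\TheField$ that are unramified in $L$ and whose Frobenius conjugacy class equals that of $g_0$ has positive density $\#C/\#G$, where $C$ is the conjugacy class of $g_0$; in particular it is infinite. For each such $v$ the element $\Fr_v$ is conjugate to $g_0$, hence acts on $G/H$ without fixed points, so by the dictionary of the first paragraph $(\spec E)(\TheField_v) = \emptyset$. This exhibits the desired infinite family of places, completing the proof.

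I do not expect a genuine obstacle here; the only points that require a little care are discarding the finitely many places of $\TheField$ ramified in $L$, and phrasing the splitting dictionary in terms of $\Fr_v$ rather than of the cyclic group it generates — these agree for the property ``has a fixed point on $G/H$'', since a point fixed by $\langle \Fr_v\rangle$ is exactly a point fixed by $\Fr_v$.
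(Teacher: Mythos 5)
Your proof is correct. The paper's proof reduces to the case that $E/\TheField$ is Galois; in that case any unramified, non-totally-split place $v$ has $E \otimes_\TheField \TheField_v \cong M^\ell$ with $M/\TheField_v$ nontrivial, hence $(\Spec E)(\TheField_v) = \emptyset$, and Chebotarev supplies infinitely many such $v$. You instead work directly with the Galois closure $L$ and the proper subgroup $H = \Gal(L/E)$ of $G = \Gal(L/\TheField)$, invoke Jordan's lemma (Proposition~\ref{Prop:free_element}, which you also re-derive via the Cauchy--Frobenius count) to find $g_0 \in G$ acting on $G/H$ without fixed points, and then apply Chebotarev to realize $g_0$ as a Frobenius. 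Your route is worth preferring, because the paper's opening reduction ``we may assume $E$ is Galois'' is not self-evident: $(\Spec L)(\TheField_v) = \emptyset$ for the Galois closure $L$ does \emph{not} imply $(\Spec E)(\TheField_v) = \emptyset$ (an embedding of $L$ into $\TheField_v$ restricts to one of $E$, but not conversely). For instance, with $\TheField = \mathbb{Q}$ and $E = \mathbb{Q}(\sqrt[3]{2})$, the prime $5$ is not totally split in $E$ or its Galois closure, yet $x^3 - 2 \equiv (x-3)(x^2+3x+4) \pmod{5}$ shows $(\Spec E)(\mathbb{Q}_5) \neq \emptyset$; thus ``not totally split'' is strictly weaker than ``$\Fr_v$ acts on $G/H$ without fixed points,'' and Jordan's lemma is exactly what guarantees that the stronger condition can be met. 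Your argument both establishes the lemma and closes this small gap in the paper's terser proof.
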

	
	\begin{proof}
		We may and do assume that $E$ is Galois over $K$.  By Chebotarev's density theorem, the set of finite places $v$ of $K$ which are unramified and not totally split in $E$ is infinite.  Since $E$ over $K$ is Galois, for any of the (infinitely many) finite places $v$ of $K$   unramified and   not totally split in $E$, the $K_v$-algebra $E\otimes_K K_v$ is isomorphic to the power $M^\ell$ of a non-trivial extension $M$ of $K_v$ (with $\ell\geq 1$ an integer). 
For such a $v$, since $M$ is non-trivial over $K_v$, we have that 
\[
(\Spec E)(K_v) = (\Spec E\otimes_K K_v)(K_v) =\big( (\Spec M )(K_v)\big)^\ell = \emptyset
\] This concludes the proof.  
	\end{proof}
	%
	
	For the next two propositions we let $\mathbb{A}^{\fin}_{\TheField}$ be the ring of finite adèles of $\TheField$. We also implicitly consider $\TheField$ as embedded diagonally in $\mathbb{A}^{\fin}_{\TheField}$.  
	
	Recall that, for a    finite type separated scheme $X$ over $K$, the adelic topology is defined as follows. Fix a model $\mathcal{X}$ (separated, of finite type) of $X$ over $\Spec \mathcal{O}_\TheField$. The adelic topology on $X(\mathbb{A}_\TheField^{\fin})\subseteq \prod_{v \in M_\TheField^{\operatorname{fin}}} X(\TheField_v)$ is by definition the topology generated by all subsets of the form
	\[
	\prod_{v \in S} U_v \times \prod_{v \not \in S} \mathcal{X}(\mathcal{O}_v),
	\]
	where $S$ is a finite subset of $M_\TheField^{\operatorname{fin}}$, each $U_v$ is an open subset of $X(\TheField_v)$ (for its natural $v$-adic topology), and $\mathcal{O}_v$ is the ring of integers of $\TheField_v$. One checks that this definition is independent of the choice of model $\mathcal{X}$. When $X$ is proper over $\TheField$, the valuative criterion for properness easily implies that $X(\mathbb{A}_\TheField^{\fin}) = \prod_{v \in M_K^{\fin}} X(\TheField_v)$ as topological spaces.
	
	\begin{proposition}\label{Prop:proper_adelic}
		If $f:Z' \rightarrow Z$ is a   projective morphism of separated schemes of finite type over $\TheField$, then the induced morphism $f:Z'(\mathbb{A}_K^{\fin}) \rightarrow Z(\mathbb{A}_K^{\fin})$ is closed.
	\end{proposition}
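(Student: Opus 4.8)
Since $f$ is projective it factors as a closed immersion $i\colon Z'\hookrightarrow \mathbb{P}^n_Z$ followed by the structural projection $p\colon \mathbb{P}^n_Z=\mathbb{P}^n_\TheField\times_\TheField Z\to Z$, for some integer $n\geq 0$. As a composition of closed maps of topological spaces is closed, it suffices to treat separately the two cases: (a) $f$ is a closed immersion, and (b) $f$ is the projection $\mathbb{P}^n_\TheField\times_\TheField Z\to Z$.

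\emph{Case (a): closed immersions.} Fix a model $\mathcal{Z}$ of $Z$ over $\mathcal{O}_\TheField$ and set $\mathbb{P}^n_{\mathcal{Z}}:=\mathbb{P}^n_{\mathcal{O}_\TheField}\times_{\mathcal{O}_\TheField}\mathcal{Z}$, a model of $\mathbb{P}^n_Z$. By standard spreading-out arguments, and after replacing $\Spec\mathcal{O}_\TheField$ by a dense open subscheme (which does not affect the adelic topology), the closed immersion $i$ extends to a closed immersion $\mathcal{Z}'\hookrightarrow \mathbb{P}^n_{\mathcal{Z}}$ of $\mathcal{O}_\TheField$-schemes, with $\mathcal{Z}'$ a model of $Z'$. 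For every $v\in M_\TheField^{\fin}$, a closed immersion of schemes of finite type over $\TheField_v$ induces a closed topological embedding on $v$-adic points, so $Z'(\TheField_v)$ is closed in $\mathbb{P}^n_Z(\TheField_v)$ and carries the induced topology; likewise $\mathcal{Z}'(\mathcal{O}_v)$ is cut out in $\mathbb{P}^n_{\mathcal{Z}}(\mathcal{O}_v)$ by the vanishing of finitely many regular functions, hence is closed there. Since closedness of a subset may be checked on an open cover, inspecting the basic open subsets of the adelic topology shows that $Z'(\mathbb{A}_\TheField^{\fin})$ maps homeomorphically onto a closed subset of $\mathbb{P}^n_Z(\mathbb{A}_\TheField^{\fin})$. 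A closed topological embedding is in particular a closed map, which settles case (a).

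\emph{Case (b): projection off projective space.} Using compatible integral models as above, one checks directly from the definition of the adelic topology that $\mathbb{P}^n_Z(\mathbb{A}_\TheField^{\fin})$ is homeomorphic to the product $\mathbb{P}^n_\TheField(\mathbb{A}_\TheField^{\fin})\times Z(\mathbb{A}_\TheField^{\fin})$, with $p$ corresponding to the second projection. Since $\mathbb{P}^n_\TheField$ is proper over $\TheField$, the discussion preceding the statement gives $\mathbb{P}^n_\TheField(\mathbb{A}_\TheField^{\fin})=\prod_{v\in M_\TheField^{\fin}}\mathbb{P}^n_\TheField(\TheField_v)$ as topological spaces. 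Each factor equals $\mathbb{P}^n_{\mathcal{O}_v}(\mathcal{O}_v)$ by the valuative criterion of properness, and this space is compact: it is covered by the $\mathcal{O}_v$-points of the standard affine charts of $\mathbb{P}^n$, each homeomorphic to the compact space $\mathcal{O}_v^n$ (a section $\Spec\mathcal{O}_v\to\mathbb{P}^n_{\mathcal{O}_v}$ factors through some chart since $\Spec\mathcal{O}_v$ is local). By Tychonoff's theorem $\mathbb{P}^n_\TheField(\mathbb{A}_\TheField^{\fin})$ is compact, and the projection $C\times X\to X$ off a compact factor $C$ is always a closed map. This proves case (b) and completes the plan.

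\textbf{Main difficulty.} None of the steps is deep: the mathematical content reduces to the two elementary topological facts used above (a closed embedding is a closed map; projecting away a compact factor is a closed map), together with the compactness of $\mathbb{P}^n_\TheField(\mathbb{A}_\TheField^{\fin})$. The only point requiring a little care is the bookkeeping with integral models --- verifying that, for compatible choices, the adelic topology on $\mathbb{P}^n_Z(\mathbb{A}_\TheField^{\fin})$ is the product topology, that $\mathcal{Z}'(\mathcal{O}_v)$ is closed in $\mathbb{P}^n_{\mathcal{Z}}(\mathcal{O}_v)$ (for which one may take $\mathcal{Z}'$ to be the scheme-theoretic image of $Z'$ and shrink $\Spec\mathcal{O}_\TheField$), and the passage from ``closed in each basic adelic open'' to ``closed'' via the local nature of closedness.
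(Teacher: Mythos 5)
Your proof is correct and rests on the same two ingredients as the paper's: the factorization of a projective morphism as a closed immersion into $\mathbb{P}^n_Z$ followed by the projection, and the compactness of projective-space points. The two arguments differ in where the factorization is applied. The paper works place-by-place, using the factorization to show that each local map $Z'(\TheField_v)\to Z(\TheField_v)$ is proper (hence closed), and then invokes an integral model and the valuative criterion of properness to obtain $\phi(\mathcal{Z}'(\mathcal{O}_v))=f(Z'(\TheField_v))\cap\mathcal{Z}(\mathcal{O}_v)$ for almost all $v$, leaving the final passage to the adelic statement as a ``straightforward verification''. You instead apply the factorization directly at the adelic level and handle the two pieces there: a closed immersion induces a closed embedding on adelic points (checked on basic adelic opens via compatible integral models), and the projection $\mathbb{P}^n_\TheField(\mathbb{A}_\TheField^{\fin})\times Z(\mathbb{A}_\TheField^{\fin})\to Z(\mathbb{A}_\TheField^{\fin})$ is closed because the first factor is compact by Tychonoff. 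The underlying mathematics is the same, but your organization has the merit of making explicit precisely the model-theoretic bookkeeping that the paper's proof elides in its last line.
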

	\begin{proof} Let $v$ be a finite place of $\TheField$.
		Since $f$ is projective,  there exists $n \in \N$ such that $f$ factors as $X \hookrightarrow \mathbb{P}_{n,Y} \rightarrow Y$, where $X \hookrightarrow \mathbb{P}_{n,Y}$ is a closed embedding, and $\mathbb{P}_{n,Y} \rightarrow Y$ is the standard projection. It is straightforward to see that $X(K_v) \hookrightarrow \mathbb{P}_{n,Y}(K_v)$ is a closed embedding, hence it is a  proper map of topological spaces. Moreover, since    $\mathbb{P}_n(K_v)$ is compact, the composed map $\mathbb{P}_{n,Y}(K_v)=  \mathbb{P}_n(K_v) \times Y(K_v) \rightarrow Y(K_v)$  is proper. Since composition of proper maps is proper, the map $f_{\TheField_v}:Z'(\TheField_v) \rightarrow Z(\TheField_v)$ is proper, hence closed.
		Choose a finite set of finite places $S $ of $K$ and a proper morphism  $\phi:\mathcal{Z}' \rightarrow \mathcal{Z}$ of finite type separated $\mathcal{O}_{K,S}$-schemes  extending  $f:Z'\to Z$ over $  \mathcal{O}_{\TheField,S}$.
		By the valuative criterion for properness, for all $v \notin S$,  we have   $\phi (\mathcal{Z}'(\mathcal{O}_v))=f({Z}'(\TheField_v)) \cap \mathcal{Z}(\mathcal{O}_v)$. 
		It is now a straightforward verification to see that $f:Z'(\mathbb{A}_\TheField^{\fin}) \rightarrow Z(\mathbb{A}_\TheField^{\fin})$ is closed. 
	\end{proof}

	\begin{corollary}\label{Cor:covers}  Let   $\phi:Z \rightarrow A$ be a cover of normal varieties  over $\TheField$,  and let $P \in A(\TheField)$ such  that $\phi$ is étale over $P$.
		Suppose that one of the following conditions holds:
		\begin{enumerate}
			\item the set $Z_P(\mathbb{A}_K^{\fin})$ is empty;
			\item the morphism $\phi$ is Galois and $Z_P(\TheField)=\emptyset$;
			\item the scheme $Z_P $ is integral and $\deg \phi>1$.
		\end{enumerate}
		Then there is an adelic open neighbourhood $ U_P \subset A(\mathbb{A}_K^{\fin})$ of $P$    such that, for every $Q $ in $U_P \cap A(\TheField)$, the set $Z_Q(\mathbb{A}_K^{\fin})$ is empty.
	\end{corollary}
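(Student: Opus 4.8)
The plan is to first reduce conditions (2) and (3) to condition (1), and then to obtain the statement from the adelic closedness property of Proposition \ref{Prop:proper_adelic}. Note at the outset that, since $\phi$ is finite, every fibre $Z_Q$ (in particular $Z_P$) is finite, hence proper, over $K$, so $Z_Q(\mathbb{A}_K^{\fin}) = \prod_{v} Z_Q(K_v)$, which is empty as soon as one factor is empty.

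\emph{Reducing (3) to (1).} Since $\phi$ is étale over $P$, the fibre $Z_P$ is a finite étale $K$-scheme; being integral, it equals $\Spec E$ for a finite field extension $E/K$ with $[E:K] = \deg\phi \geq 2$, so $E \neq K$. Then Lemma \ref{Lem:irrimpliesnonsplitprime} supplies a finite place $v_0$ with $(\Spec E)(K_{v_0}) = \emptyset$, i.e. $Z_P(K_{v_0}) = \emptyset$, whence $Z_P(\mathbb{A}_K^{\fin}) = \emptyset$.

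\emph{Reducing (2) to (1).} Here I would use that, $\phi$ being Galois with group $G$ and étale over $P$, the fibre $Z_P$ is the scheme underlying a $G$-torsor over $\Spec K$; such a torsor is classified by a (decomposition) homomorphism $\mathfrak{D}\colon \Gamma_K \to G$, and, its action being free, $Z_P$ is isomorphic as a scheme to a disjoint union of copies of $\Spec E$, where $E := \overline{K}^{\ker \mathfrak{D}}$ is a finite Galois extension of $K$ with $\Gal(E/K) \cong \mathrm{im}\, \mathfrak{D}$. This is essentially the content of the discussion of the action on fibres, in particular Lemma \ref{Rmk:indipendence_of_the_point}, which moreover shows that $Z_P(K) = \emptyset$ is equivalent to $\mathfrak{D}$ being non-trivial, i.e. to $E \neq K$. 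One then concludes exactly as in the previous case that $Z_P(\mathbb{A}_K^{\fin}) = \emptyset$.

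\emph{Case (1).} Now $\phi\colon Z \to A$ is finite, hence projective, so Proposition \ref{Prop:proper_adelic} shows that the image $\phi\big(Z(\mathbb{A}_K^{\fin})\big)$ is closed in $A(\mathbb{A}_K^{\fin})$. Since the fibres of $\phi$ are proper over $K$, an adelic point $Q \in A(K) \subset A(\mathbb{A}_K^{\fin})$ (embedded diagonally) lies in this image if and only if $Z_Q(\mathbb{A}_K^{\fin}) \neq \emptyset$; in particular, $P$ does not lie in $\phi\big(Z(\mathbb{A}_K^{\fin})\big)$ by hypothesis (1). Hence $U_P := A(\mathbb{A}_K^{\fin}) \setminus \phi\big(Z(\mathbb{A}_K^{\fin})\big)$ is an adelic open neighbourhood of $P$, and for every $Q$ in $U_P \cap A(K)$ we get $Z_Q(\mathbb{A}_K^{\fin}) = \emptyset$, as required.

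I expect the one delicate point to be the reduction of case (2): one must check that, although $G$ need not permute the connected components of $Z_P$ transitively, they are nonetheless all isomorphic to a single $\Spec E$ — this is exactly where freeness of the torsor action (triviality of the point stabilizers) is used — so that the existence of a single place of $K$ not splitting completely in $E$ already forces $Z_P(\mathbb{A}_K^{\fin})$ to be empty. The remaining steps are formal, given Lemma \ref{Lem:irrimpliesnonsplitprime} and Proposition \ref{Prop:proper_adelic}.
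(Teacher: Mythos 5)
Your proposal is correct and follows essentially the same route as the paper's proof: both reduce cases (2) and (3) to case (1) via Lemma \ref{Lem:irrimpliesnonsplitprime} (using in case (2) that $Z_P$ is a disjoint union of copies of $\Spec E$ for a single Galois extension $E/K$), and then take $U_P$ to be the complement of the closed set $\phi\big(Z(\mathbb{A}_K^{\fin})\big)$ furnished by Proposition \ref{Prop:proper_adelic}. The extra detail you give on the torsor structure in case (2) is exactly the justification the paper leaves implicit.
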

	\begin{proof} We first show that (2)   implies (1) and (3) implies (1), respectively. Indeed, 
		if $\phi$ is Galois, since $\phi$ is \'etale over $P$,  the scheme $Z_P$ is a Galois étale cover of $P \cong \spec K$. Therefore, if $Z_P(K)=\emptyset$, then  $Z_P \cong \spec E \sqcup \ldots \sqcup \spec E$, where $E$ is a finite non-trivial Galois extension of $\TheField$. By Lemma \ref{Lem:irrimpliesnonsplitprime}, there is a finite place $v$ of $\TheField$ such that $Z_P(\TheField_v)=\emptyset$. In particular, the set $Z_P(\mathbb{A}_\TheField^{\fin})$ is empty.  This shows that (2) implies (1).
		
		Now, assume (3) holds, so that  $Z_P$ is integral. Then, since $Z_P\to \Spec K$ has degree $>1$ (as $\deg \phi>1$ and $Z_P$ is integral),  by Lemma \ref{Lem:irrimpliesnonsplitprime},   there exists a finite place $v$ of $K$ such that $Z_P(\TheField_v)=\emptyset$, so that again $Z_P(\mathbb{A}_K^{\fin})=\emptyset$. This shows that $(3)$ implies $(1)$.

		 Thus, to prove the corollary, we may assume that $(1)$ holds. 
		Since $\phi$ is finite, hence projective \cite[Tag~0B3I]{stacks-project}, the subset $\phi(Z(\mathbb{A}_\TheField^{\fin})) \subset A(\mathbb{A}_\TheField^{\fin})$ is closed by Proposition \ref{Prop:proper_adelic}. Moreover, as $\phi(Z(\mathbb{A}_\TheField^{\fin}))$ does not contain $P$ by (1), the corollary follows by taking   $U_P$ to be the complement of $\phi(Z(\mathbb{A}_\TheField^{\fin}))$ in $A(\mathbb{A}_\TheField^{\fin})$.
	\end{proof}
	
	\begin{corollary}\label{Cor:fibres_local_considerations}	Let $\phi: Z \rightarrow A$ be a cover of an abelian variety $A$ over $\TheField$ with $\deg \phi>1$, and let $\Omega \subset A(\TheField)$ be a subgroup. If there is a point  $P \in \Omega$  such that $Z_P$ is integral, then there exists a finite index coset $C$ of $\Omega$ such that, for each $c$ in $C$,  the set $Z_c(\TheField)$ is empty.
	\end{corollary}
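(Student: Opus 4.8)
The plan is to deduce the statement from part (3) of Corollary \ref{Cor:covers}, together with a routine argument converting the resulting adelic open set into a finite-index coset. First I would note that, since $Z_P$ is integral and $\car K = 0$, the fibre $Z_P$ is the spectrum of a finite separable field extension $L/K$ of degree $\deg\phi > 1$; in particular $Z_P \to \Spec K$ is étale, so $\phi$ is étale over $P$. Part (3) of Corollary \ref{Cor:covers} therefore applies and yields an adelic open neighbourhood $U_P \subset A(\mathbb{A}_K^{\fin})$ of $P$ such that $Z_Q(\mathbb{A}_K^{\fin}) = \emptyset$, and hence $Z_Q(K) = \emptyset$, for every $Q \in U_P \cap A(K)$.

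Next I would unwind the adelic topology. Since $A$ is proper we have $A(\mathbb{A}_K^{\fin}) = \prod_{v \in M_K^{\fin}} A(K_v)$ with the product topology, so $U_P$ contains a basic open set of the form $\prod_{v \in S} U_v \times \prod_{v \notin S} A(K_v)$, where $S \subset M_K^{\fin}$ is finite and each $U_v$ is a $v$-adic open neighbourhood of $P$ in $A(K_v)$. For each $v \in S$ the topological group $A(K_v)$ admits a neighbourhood basis of the identity consisting of open subgroups (for instance the kernels of the reduction-modulo-$\mathfrak{m}_v^n$ maps on the $\mathcal{O}_{K_v}$-points of a smooth model of $A$), so I may pick an open subgroup $H_v \subseteq A(K_v)$ with $P + H_v \subseteq U_v$.

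Finally, let $\Omega_0$ be the kernel of the homomorphism $\Omega \to \prod_{v \in S} A(K_v)/H_v$. Each quotient $A(K_v)/H_v$ is finite, so $\Omega_0$ has finite index in $\Omega$, and since $P \in \Omega$ the set $C := P + \Omega_0$ is a finite-index coset of $\Omega$. For $c = P + x \in C$ with $x \in \Omega_0$ one checks directly that $c$ lies in the basic open set above --- at the places $v \in S$ because $c \in P + H_v \subseteq U_v$, and trivially at all other places --- so $c \in U_P \cap A(K)$, and therefore $Z_c(K) = \emptyset$. This is the desired conclusion, and it requires no hypothesis that $\Omega$ be finitely generated. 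The statement is essentially a packaging of Corollary \ref{Cor:covers}(3), so I do not anticipate a serious obstacle; the only points needing a little care are that ``$\phi$ étale over $P$'' holds automatically once $Z_P$ is integral (using $\car K = 0$), and that the passage from the adelic open $U_P$ to a finite-index coset is exactly the finite-index-reduction-kernel mechanism already used in the proof of Lemma \ref{Lem:AvoidingBranchLocus}.
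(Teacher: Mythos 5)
Your argument is correct and follows essentially the same route as the paper: verify the hypothesis of Corollary \ref{Cor:covers}(3), use it to obtain an adelic open neighbourhood of $P$, unwind the product topology, and extract a finite-index coset of $\Omega$. The two small points where you deviate are both welcome streamlinings rather than a different method --- you explicitly check that $Z_P$ integral and $\operatorname{char} K = 0$ force $\phi$ to be étale over $P$ (which the paper leaves implicit), and you obtain the finite-index subgroup as $\ker\bigl(\Omega \to \prod_{v\in S} A(K_v)/H_v\bigr)$, whose target is finite by compactness of each $A(K_v)$, instead of the paper's $N\Omega$ via Mattuck's description of $A(K_v)$ plus Mordell--Weil, so the finite-generation of $\Omega$ never enters.
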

	\begin{proof}
		By Corollary \ref{Cor:covers}, there is an open adelic neighborhood $  U_P \subset A(\mathbb{A}_\TheField^{\fin})$  of $P$ such that the set $\phi^{-1}(U_P)(\mathbb{A}_\TheField^{\fin})$ is empty. Since $A$ is proper over $\TheField$, the adelic topology on $A(\mathbb{A}_\TheField^{\fin})=\prod_{v \in M_\TheField^{\operatorname{fin}}}A(\TheField_v)$ coincides with the product topology. Therefore, we may assume that there is a finite subset of finite places $V \defeq \{v_1, \ldots, v_r\} \subset M_\TheField^{\operatorname{fin}}$ and, for each $i=1,\ldots, r$, an open subset $ U_i \subset A(\TheField_{v_i})$   containing $P$ such that $$\prod_{i=1}^r U_i \times \prod_{v \notin V} A(\TheField_v) \subset U_P.$$ For $i=1,\ldots, r$, let $W_i \defeq -P + U_i \subset A(\TheField_{v_i})$, and note that $W_i$ is an open subset of $A(K_{v_i})$ containing the identity.
		
		Let $p_i$ be the residue characteristic of $v_i$. 
		By \cite{MR71116}, the topological group $A(K_{v_i})$ contains a finite index open subgroup isomorphic to  the additive group $\mathcal{O}_{v_i}^{\dim A}$. Let $t_i:=[A(K_{v_i}): \mathcal{O}_{v_i}^{\dim A}]$.
		Then, for $n_i$ sufficiently large, the   subgroup $ (p_i^{n_i} t_i) \Omega$ is contained in  $W_i$. Define $N:=\prod_{i=1}^r p_i^{n_i} t_i$, and $\omega:= N\Omega$.  Since $\Omega$ is a finitely generated abelian group (by Mordell-Weil), the subgroup $\omega$  is of finite index, so that the finite index coset $C \defeq \omega + P$  of $\Omega$  satisfies the condition in the statement.
	\end{proof}

	\subsection{Products of abelian varieties}\label{subsect:ProductsAV}
	The main result of  this section (Corollary \ref{Cor:PBIFproducts}) will allow us to reduce the proof of Hilbert's irreducibility theorem for pairs $(A,\Omega)$ to the case that $\Omega$ is cyclic.
	\begin{lemma}\label{Lem:Stein_fact_PB}
		Let $A, B$ be abelian varieties over $\TheField$. Let $\pi: Z \rightarrow A \times B$ be a  (PB)-cover of $A\times B$, and let $Z \rightarrow S \xrightarrow{\phi} A$ be the Stein factorization of $p_1 \circ \pi$, where $p_1: A \times B \rightarrow A$ denotes the projection. Then $\phi$ is a (PB)-cover of $A$.
	\end{lemma}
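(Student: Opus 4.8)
The plan is to verify the defining property (PB) for $\phi\colon S\to A$ directly from Definition \ref{def:PB_GPB_Ram}, i.e. to show that for every positive integer $m$ the scheme $[m]^*S := S\times_{\phi,A,[m]}A$ is geometrically integral. I would first record the easy structural facts. Since $\pi$ satisfies (PB), the scheme $Z$ is geometrically integral (the case $m=1$) and normal; as $\operatorname{char}K=0$ it is in fact geometrically normal. The morphism $p_1\circ\pi\colon Z\to A$ is proper (being the composition of the finite $\pi$ with the proper $p_1$), so its Stein factorization $Z\xrightarrow{g}S\xrightarrow{\phi}A$ has $\phi$ finite surjective, and $S$ is normal because the Stein factorization of a proper morphism from a normal scheme has normal target; moreover the Stein factorization commutes with the flat base change $\overline{K}/K$, so $S_{\overline{K}}$ is the Stein target of the normal integral scheme $Z_{\overline{K}}$, hence connected and normal, i.e. integral. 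Thus $S$ is a normal geometrically integral variety and $\phi\colon S\to A$ is a cover of $A$.

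The key step is to recognize $[m]^*S$ as part of the Stein factorization of a pullback of $Z$ along an isogeny of $A\times B$. I would consider $\psi := [m]_A\times\operatorname{id}_B\colon A\times B\to A\times B$, a finite étale isogeny with kernel $A[m]\times\{0\}$, and check on functors of points that the pullback $\psi^*Z = Z\times_{\pi,A\times B,\psi}(A\times B)$ is isomorphic, as a scheme over $A$, to $Z' := Z\times_{p_1\circ\pi,A,[m]}A$: indeed, for a test scheme $T$ the $B$-component of a point of $\psi^*Z$ is forced to equal $p_2\circ\pi$ of the $Z$-component. Since $\psi$ is a dominant morphism of abelian varieties and $\pi\colon Z\to A\times B$ is a (PB)-cover, Lemma \ref{Lem:PB_pullback} applies and yields that $Z'\cong\psi^*Z$ is geometrically integral; and $Z'\to Z$ is finite étale (being a base change of $\psi$), so $Z'$ is normal.

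Next I would base-change the Stein factorization $Z\xrightarrow{g}S\xrightarrow{\phi}A$ along the finite flat morphism $[m]\colon A\to A$ (it is finite étale since $\operatorname{char}K=0$). Flat base change applied to the identity $g_*\mathcal{O}_Z=\mathcal{O}_S$ shows that the resulting $Z'\to[m]^*S\to A$ is the Stein factorization of $Z'\to A$. Running the same reasoning after base change to $\overline{K}$ shows that $([m]^*S)_{\overline{K}}$ is the Stein target of $Z'_{\overline{K}}\to A_{\overline{K}}$; since $Z'_{\overline{K}}$ is integral and normal, $([m]^*S)_{\overline{K}}$ is connected and normal, hence integral. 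Therefore $[m]^*S$ is geometrically integral for every $m$, which is precisely property (PB) for $\phi$.

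The main obstacle is purely the bookkeeping around Stein factorization and base change: I need (i) that Stein factorization commutes with the flat base changes $[m]\colon A\to A$ and $\operatorname{Spec}\overline{K}\to\operatorname{Spec}K$, which is flat base change for $R^0g_*\mathcal{O}_Z$, and (ii) that the target of the Stein factorization of a proper morphism from a normal scheme is again normal, together with the fact that in characteristic zero normality is preserved under arbitrary field extension. A possible alternative, avoiding (ii), would be to argue via Lemma \ref{lemma:Zannier}: a nontrivial étale subcover $S\to T\xrightarrow{\mu}A$ of $\phi$ would give a factorization of $\pi$ through the étale cover $\mu\times\operatorname{id}_B\colon T\times B\to A\times B$ of degree $\deg\mu>1$, contradicting that $\pi$ is a (PB)-cover — but making this precise still requires knowing that $T\times B$ is a normal variety, so I would likely keep the first approach.
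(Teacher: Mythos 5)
Your argument is correct, but it takes a genuinely different and considerably longer route than the paper. The paper's proof is two lines: by Lemma \ref{lemma:Zannier}, (PB) is equivalent to having no non-trivial \'etale subcovers, and any non-trivial \'etale subcover $S \to S' \to A$ of $\phi$ would produce (via $Z \to S\times B \to S'\times B$ and the \'etale morphism $S'\times B \to A\times B$ of degree $\deg(S'/A)>1$) a non-trivial \'etale subcover of $\pi$, contradicting (PB) for $\pi$. This is exactly the alternative you sketch and then set aside; the normality worry you raise about $T\times B$ is harmless, since $T$ is normal by the definition of a subcover and $T \times B \to T$ is smooth, so $T\times B$ is normal by the same reference (\cite[Tag~034F]{stacks-project}) the paper uses elsewhere. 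Your primary argument instead verifies the definition of (PB) directly: the identification $([m]_A\times\operatorname{id}_B)^*Z \cong Z\times_{p_1\circ\pi,A,[m]}A$ is correct, Lemma \ref{Lem:PB_pullback} does give geometric integrality of this pullback, and the commutation of Stein factorization with the flat base changes $[m]\colon A\to A$ and $\overline{K}/K$, together with normality and integrality of the Stein target of a proper morphism from a normal integral scheme, correctly yields that $[m]^*S$ is geometrically integral. What your approach buys is independence from Lemma \ref{lemma:Zannier} (and hence from \cite[Proposition~2.1]{ZannierDuke}), at the cost of more bookkeeping; what the paper's approach buys is brevity, by exploiting the \'etale-subcover characterization that it has already established.
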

	\begin{proof} By the universal property of fibre products, if $S\to S'\to A$ is a non-trivial \'etale subcover of $S\to A$,   the finite \'etale morphism $S'\times B \to A\times B$  is a non-trivial \'etale subcover of $Z\to A\times B$.   Therefore, the result   follows from  Lemma \ref{lemma:Zannier}.
	\end{proof}
		%
		%

	\begin{proposition}\label{prop:Products}  
		Let $A, B$ be abelian varieties over $\TheField$, and $\Omega_A \subset A(\TheField), \ \Omega_B \subset B(\TheField)$ be Zariski-dense subgroups. 
		Suppose that, for every finite extension $L$ of $\TheField$, (PB) implies (IF) for both $(A_L,\Omega_A)$ and $(B_L,\Omega_B)$. Then (PB) implies (PF) for $(A \times B, \Omega_A \times \Omega_B)$.
	\end{proposition}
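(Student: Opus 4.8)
Proof plan for Proposition \ref{prop:Products}:

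\textbf{Reductions.} Let $\pi\colon Z\to A\times B$ be a (PB)-cover; we may assume $\deg\pi>1$ (otherwise (PF) is vacuous) and, since (PB) forces it, that $Z$ is geometrically integral. Write $\Omega:=\Omega_A\times\Omega_B$. First, it suffices to exhibit a finite-index coset $C\subseteq\Omega$ over which the fibres of $\pi$ are pointless: given such a $C=c^\ast+\Omega^\ast$, applying Corollary \ref{Cor:Etalecoset} to the translated cover and the Zariski-dense subgroup $\Omega^\ast$ produces a finite-index sub-coset $C'\subseteq C$ over which $\pi$ is moreover étale, so that the fibres over $C'$ are both reduced (being étale over $\TheField$) and pointless, as required. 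Second, since $A\times B$ is an abelian variety over the number field $\TheField$, $\deg\pi>1$, and $\Omega$ is a subgroup of $(A\times B)(\TheField)$, Corollary \ref{Cor:fibres_local_considerations} shows it even suffices to exhibit a single point $P\in\Omega$ with $\pi^{-1}(P)$ integral. We will produce, in each case, either such a $P$ or directly a coset of pointless fibres.

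\textbf{Stein dichotomy and Case I.} Let $Z\to S\xrightarrow{\psi}A$ be the Stein factorization of $p_1\circ\pi$. By Lemma \ref{Lem:Stein_fact_PB}, $\psi$ is a (PB)-cover of $A$; since a (PB)-cover has no non-trivial étale subcovers (Lemma \ref{lemma:Zannier}), Lemma \ref{lemma:EtaleOrRamified} forces $\psi$ to be either an isomorphism or ramified. If $\psi$ is ramified, it is a non-trivial (PB)-cover of $A$ over $\TheField$, so by hypothesis (with $L=\TheField$) it satisfies (IF) for $(A,\Omega_A)$; intersecting the resulting coset with the étale locus of $\psi$ along $\Omega_A$ (Corollary \ref{Cor:Etalecoset}), we get a finite-index coset $C_A\subseteq\Omega_A$ such that for every $a\in C_A$ the fibre $\psi^{-1}(a)$ is étale over $\TheField$ and integral, hence $\psi^{-1}(a)=\Spec E_a$ with $E_a/\TheField$ a field extension of degree $\deg\psi>1$; in particular $\psi^{-1}(a)(\TheField)=\emptyset$. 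As $\pi^{-1}((a,b))$ maps to $\psi^{-1}(a)$ for every $b$, the coset $C:=C_A\times\Omega_B$ has pointless fibres, and we are done. The case in which the Stein factorization $\psi'\colon S'\to B$ of $p_2\circ\pi$ is ramified is handled symmetrically using (IF) for $(B,\Omega_B)$.

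\textbf{Case II: both Stein factorizations are isomorphisms.} Then $Z\to A$ and $Z\to B$ both have geometrically connected fibres. The branch locus $D$ of $\pi$ is non-empty ($\pi$ being ramified) and pure of codimension one in $A\times B$; a dimension count shows each component of $D$ either dominates $A$ or equals a prime divisor of $A$ times $B$ (hence dominates $B$), so, interchanging $A$ and $B$ if necessary, we may assume $D$ dominates $A$. Then the generic fibre $Z_\eta:=Z\times_A\Spec\TheField(A)\to B_{\TheField(A)}$ is a ramified cover; I claim it is in fact a (PB)-cover of $B_{\TheField(A)}$ over $\TheField(A)$. By Lemma \ref{Lem:Zannier} this amounts to checking that $Z_\eta\times_{B_{\TheField(A)},[\deg\pi]}B_{\TheField(A)}$ is geometrically integral over $\TheField(A)$; this scheme is the generic fibre over $A$ of $W:=(\mathrm{id}_A\times[\deg\pi]_B)^\ast Z$, and since $\mathrm{id}_A\times[\deg\pi]_B$ is an isogeny of $A\times B$ and $\pi$ is (PB), Lemma \ref{lemma:PBStaysIrreducibleUnderPullbackByArbitraryIsogenies} shows $W$ is a geometrically integral normal variety; one then checks — using that $Z\to A$, hence (via the finite étale morphism $W\to Z$) also $W\to A$, has geometrically connected fibres — that the generic fibre of $W\to A$ remains geometrically integral over $\TheField(A)$. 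Granting the claim, the spreading-out Lemma \ref{Lem:PBopen} (applied with base $A$ and the constant abelian scheme $A\times B\to A$) yields a dense open $U\subseteq A$ over which every fibre $Z_a\to B$ is a (PB)-cover of $B$ over $\TheField$. Choosing $a_0\in\Omega_A\cap U(\TheField)$ (possible since $\Omega_A$ is Zariski-dense), the cover $Z_{a_0}\to B$ is a (PB)-cover of $B$, so by hypothesis it satisfies (IF) for $(B,\Omega_B)$ and there is $b_0\in\Omega_B$ with $Z_{a_0}^{-1}(b_0)$ integral; then $P:=(a_0,b_0)\in\Omega$ satisfies $\pi^{-1}(P)\cong Z_{a_0}^{-1}(b_0)$ integral, which finishes the proof.

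\textbf{The main obstacle.} Everything reduces, via the Stein factorization and the dimension count on $D$, to the hypotheses ((IF) for $(A_L,\Omega_A)$ and $(B_L,\Omega_B)$, in fact only with $L=\TheField$) together with the local–global principle of Corollary \ref{Cor:fibres_local_considerations}; the one genuinely non-formal step is the claim in Case II that the generic fibre $Z_\eta\to B_{\TheField(A)}$ is again a (PB)-cover, equivalently that the geometrically integral variety $W$ has geometrically connected generic fibre over $A$. The delicate point is that geometric connectedness of the fibres of $Z\to A$ need not propagate along an arbitrary finite étale cover; here one must exploit the specific shape $W=(\mathrm{id}_A\times[\deg\pi]_B)^\ast Z$, namely that the additional fibre directions arise from the étale isogeny $[\deg\pi]_B$ of $B$, a variety which stays geometrically integral over the base $\TheField(A)$, so that no new algebraic constants over $\TheField(A)$ are introduced (and, should the generic fibre fail to be (PB), one instead invokes Lemma \ref{lemma:StructureVerticallyRamifiedMorphisms} together with the Chevalley–Weil theorem and the argument of Proposition \ref{prop:PBPFRamPF} to reduce to the fibres of a ramified cover of $A_L$ or $B_L$). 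I expect verifying this propagation of geometric connectedness to be where the real work lies.
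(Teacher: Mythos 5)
Your reductions, your Case I (the Stein factorization $Z\to S\to A$ with $S\to A$ ramified), and your use of Corollary \ref{Cor:fibres_local_considerations} to conclude from a single integral fibre all match the paper. The gap is in Case II: the claim that, once $Z\to A$ has geometrically connected fibres and the branch locus dominates $A$, the generic fibre $Z_{\eta_A}\to B_{\TheField(A)}$ is again a (PB)-cover. This is false, and the point you flag as "delicate" is exactly where it breaks. The (PB) property of $Z\to A\times B$ does not descend to the generic fibre over $B_{\TheField(A)}$: the generic fibre can factor through a non-trivial \'etale isogeny $B'\to B$ even though $Z\to A\times B$ itself has no \'etale subcovers. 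A concrete obstruction: take $Z_1=(C_A\times B')/G$ with $B'\to B$ a degree-two isogeny with kernel $G$ and $C_A\to A$ a ramified $G$-cover with $C_A/G=A$ (the abelian-variety analogue of Example \ref{example}), take the mirror $Z_2=(A''\times C_B)/H$, and let $Z$ be the normalization of the fibre product $Z_1\times_{A\times B}Z_2$. Then $Z\to A\times B$ is a ramified (PB)-cover whose two Stein factorizations are isomorphisms and whose branch locus dominates both factors, yet $Z_{\eta_A}\to B_{\eta_A}$ factors through the non-trivial \'etale cover $B'_{\TheField(A)}\to B_{\TheField(A)}$ and so is not (PB). Your proposed verification via $W=(\mathrm{id}_A\times[\deg\pi]_B)^*Z$ cannot work: the fibre $W_a=Z_a\times_{B,[\deg\pi]}B$ being connected is precisely the (PB) property of $Z_a\to B$ that is in question, and geometric connectedness of fibres does not propagate along the finite \'etale morphism $W\to Z$.

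The paper closes this gap differently. It applies Lemma \ref{lemma:Zannier} to the generic fibre to extract the maximal \'etale subcover, normalizes to get a factorization $Z\to\tilde Z\xrightarrow{\lambda}A\times B$, and splits into two subcases. If $\deg\lambda=1$ the generic fibre really is (PB) and your argument (spreading out via Lemma \ref{Lem:PBopen}, then (IF) for $(B,\Omega_B)$ and Corollary \ref{Cor:fibres_local_considerations}) goes through. If $\deg\lambda\geq 2$, one reduces (PF) for $\pi$ to (PF) for $\lambda$ (Remark \ref{Rmk:Trivial}); crucially $\lambda$ is \emph{vertically ramified over $A$} (\'etale over $U\times B$, i.e., its branch locus does \emph{not} dominate $A$ — the opposite of your normalization "WLOG $D$ dominates $A$", which takes you outside the hypotheses of the structure lemma), so Lemma \ref{lemma:StructureVerticallyRamifiedMorphisms} applies over $\overline{\TheField}$ and yields, after descent to a finite extension $L/\TheField$ with $\Omega_B\subset\phi_B(B'(L))$, a ramified (PB)-cover $S'\to A_L$ to which the hypothesis for $(A_L,\Omega_A)$ is applied. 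This is also where the quantifier "for every finite extension $L$" in the hypothesis is genuinely used, contrary to your remark that $L=\TheField$ suffices. Your parenthetical escape hatch gestures at this mechanism but does not carry it out, and as written the main line of your Case II does not prove the proposition.
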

	\begin{proof}
		Let $\pi : Z \to A \times B$ be a (PB)-cover, which we may assume to be of degree at least $2$.
		By Corollary \ref{Cor:Etalecoset}, there exist   finite index cosets $C_A\subset \Omega_A$ and $C_B\subset \Omega_B$  such that $\pi$ is étale over $C_A \times C_B$. To prove the proposition, up to composing with a translation of $A \times B$, we may and do assume that $C_A$ and $C_B$ are actually subgroups. Therefore, replacing $\Omega_A$ (resp. $\Omega_B$) with $C_A$ (resp. $C_B$),  we may assume that $\pi$ is étale over $\Omega_A \times \Omega_B$.
		
		Let $Z \to S \to A$ be the Stein factorization of the composition $Z \to A \times B \to A$.
		\[
		\xymatrix{
			Z \ar[d]_{\pi} \ar[dr]^{\psi} \\
			A \times B \ar[d] & S \ar[ld]^{\phi} \\
			A
		}
		\]
		We notice that $S \to A$ has the (PB) property by Lemma \ref{Lem:Stein_fact_PB}. Note that $S \to A$ could be an isomorphism.  

		We now distinguish two cases:
		\begin{enumerate}
			\item Assume $S \to A$ is not an isomorphism. Applying the assumption to the (PB)-cover $\phi : S \to A$ and to the (finitely generated) subgroup $\Omega_A$, we get a finite-index coset $C \subseteq \Omega_A$ with the property that, for every $c \in C$, the fibre $\phi^{-1}(c)$ has no $\TheField$-rational points. It follows immediately that $C \times \Omega_B$ is a finite-index coset of $\Omega_A \times \Omega_B$ such that, for any $(c,b) \in C \times \Omega_B$, we have $\pi^{-1}((c,b))(\TheField) \subseteq \psi^{-1} \phi^{-1}(c) (\TheField) = \emptyset$. Since $\pi^{-1}((c,b))$ is reduced by assumption, this proves that $\pi:Z\to A\times B$ has property (PF) in this case.
			\item Assume that $S \to A$ is an isomorphism, so that $Z \to A$ has geometrically connected fibres. Since $\TheField$ is of characteristic zero and $Z$ is a normal geometrically integral variety over $\TheField$, $Z_{\overline{\TheField}}$ is normal by \cite[Tag 034F]{stacks-project}. Hence, the generic fibre of $Z_{\overline{\TheField}} \rightarrow {A}_{\overline{\TheField}}$ is normal and integral by a simple localization argument. Therefore, by \cite[Theorem 12.2.4(iv)]{EGAIVIII} and \cite[Theorem 6.9.1]{EGAIVII}, there is a dense open subscheme $\mathcal{U}$ of $A$ over which the geometric fibres of $Z \to A$ are     normal (integral) varieties. In particular, for every $a \in \mathcal{U}(K)$, the morphism $Z_a \rightarrow \{a\} \times B$ is a cover of $B$ over $K$.
			
			Let $\eta_A \in A$ be the generic point.
			By Lemma \ref{lemma:Zannier}, there is a finite \'etale cover $\widetilde{B}\to B_{\eta_A}$ and a cover $Z_{\eta_A}\to \widetilde{B}$  such that  $Z_{\eta_A}\to B_{\eta_A}$ factors as
			\begin{equation}\label{Eq:Fact_KB}
			Z_{\eta_A} \rightarrow \widetilde{B} \rightarrow B_{\eta_A},
			\end{equation} and the morphism $Z_{\overline{K(A)} }\to \widetilde{B}_{\overline{K(A)}}$ is a (PB)-cover of the abelian variety $\widetilde{B}_{\overline{K(A)}}$. Let $\tilde{Z}$ be the   normalization of $A \times B$ in (the function field of) $\widetilde{B}$. Note that  $\tilde{Z}$ is a normal variety over $\TheField$ and that the factorization \ref{Eq:Fact_KB} induces a factorization
			\begin{equation}\label{Eq:Fact}
			Z \rightarrow \tilde{Z} \xrightarrow{\lambda} A \times B, 
			\end{equation} with $\tilde{Z}_{\eta_A}= \widetilde{B}$.
			By construction $\lambda$ is vertically ramified over $A$ (see Definition \ref{Def:vertically_ramified}). Moreover, $\lambda$  is   a (PB)-cover of $A\times B$, as  it is a factor of the (PB)-cover $Z \to A \times B$.

			
			\begin{enumerate}
				\item Suppose that $\tilde{Z}=A \times B$ (i.e., $\deg \lambda =1$), so that $Z_{\eta_A}\to B_{\eta_A}$ is a (PB)-cover.  Therefore, by applying Lemma \ref{Lem:PBopen} to  the abelian scheme $\mathcal{A} := A\times B\to A=:S$, there is a dense open subscheme  $U\subset A$ such that, for every $a$ in $U(K)$, the  morphism $Z_a \to \{a\} \times B$ is (PB)-cover of $B$. 
				Since $\Omega_A$ is dense in $A$, there is a $K$-rational point $a$ of $U$ contained in $ \Omega_A$.
				In particular, the assumption on $(B,\Omega_B)$    implies that there is a finite-index coset $C$ of $\Omega_B$ such that, for every $c$ in $C$, the scheme $\pi^{-1}((a,c))$ is integral. By Corollary \ref{Cor:fibres_local_considerations}, the existence of such a point implies that $(\pi, \Omega_A \times \Omega_B)$ satisfies (PF) as desired.
				\item 
				Suppose that $\deg \lambda \geq 2$. Then,     property (PF) for $(\pi, \Omega_A \times \Omega_B)$ is implied by property (PF) for $(\lambda, \Omega_A \times \Omega_B)$ (see Remark \ref{Rmk:Trivial}). Thus,  we may and do assume that $\tilde{Z}=Z$, so that   $\pi:Z \to A \times B$ is vertically ramified over $A$. Applying Lemma \ref{lemma:StructureVerticallyRamifiedMorphisms} to ${\pi}_{\overline{K}}: Z_{\overline{K}} \rightarrow A_{\overline{K}} \times B_{\overline{K}}$ yields the existence of a commutative diagram
				\begin{equation}\label{Diagr:Universal}
				\xymatrix{
					& \overline{Z'}\cong \overline{S'} \times \overline{B'} \ar[r] \ar[d] \ar[ld]    		& Z_{\overline{K}} \ar[d] \\
					\overline{S'} \ar[rd]^{\overline{\phi'}} & A_{\overline{K}} \times \overline{B'} \ar[r] \ar[d]  & A_{\overline{K}} \times B_{\overline{K}} & \hspace{-0.5in}, \\
					& A_{\overline{K}}
				}
				\end{equation}
				where:
				\begin{enumerate}
					\item $\overline{S'}, \overline{B'}, \overline{Z'}$ are normal varieties over $\overline{K}$;
					\item $\overline{S'} \to A_{\overline{\TheField}}$ is a ramified cover;
					\item $\overline{Z'} \to Z_{\overline{\TheField}}$ and $\phi_B:\overline{B'} \to B_{\overline{K}} $ are finite étale; in particular, we may fix a structure of abelian variety on $\overline{B'}$ that makes $\phi_B$ an isogeny;
					\item $\overline{Z'}$ is a connected component of the fibred product $Z_{\overline{\TheField}} \times_{B_{\overline{\TheField}}} \overline{B'}$. In particular, since $Z_{\overline{\TheField}} \times_{B_{\overline{\TheField}}} \overline{B'}$ is connected by Lemma \ref{lemma:PBStaysIrreducibleUnderPullbackByArbitraryIsogenies}, the square is cartesian;
					\item $\overline{Z'} \to \overline{S'} \to A_{\overline{K}}$ is the Stein factorization of $\overline{Z'} \to A_{\overline{K}}$; hence $\overline{S'} \to A_{\overline{K}}$ is a (PB)-cover by Lemma \ref{Lem:Stein_fact_PB}. 
				\end{enumerate}

				%
				
				We choose a finite extension $L/\TheField$ over which the entire diagram (\ref{Diagr:Universal}) is defined. More precisely, we let $S', Z'$ be varieties over $L$ and $B'$ be an abelian variety over $L$ such that $S'_{\overline{K}}\cong \overline{S'}$, $B'_{\overline{K}}\cong \overline{B'}$ (as abelian varieties over $\overline{K}$), and $Z'_{\overline{K}}\cong \overline{Z'}$. Moreover, let $\phi':S' \to A_L$ be a morphism such that $\phi'_{\overline{K}}=\overline{\phi'}$. Also,   replacing $L$ by a finite extension if necessary, we have that $\Omega_B \subset \phi_B(B'(L))$.

				Since $S' \to A_L$ is a non-trivial (PB)-cover of $A_L$, our assumption on $(A_L,\Omega_A)$ implies that there is a  finite-index coset $C \subseteq \Omega_A \subset A_L(L)$ such that, for every $c \in C$, the scheme $(\phi')^{-1}(c)$ is integral. In particular, as the degree of $\phi'$ is at least two,   the set  $((\phi')^{-1}(c))(L)$ is empty.  Thus, for every   $(c,b)$ in the finite index coset $ C\times \Omega_B$ of $\Omega_A\times \Omega_B$, we have that $  \pi_L^{-1}((c,b))(L)$ (hence $\pi^{-1}((c,b))(K)$) is empty. It follows that $\pi$ satisfies (PF) as desired.

				
			\end{enumerate}
		\end{enumerate}
		This concludes the proof.
	\end{proof}

	\begin{corollary}\label{Cor:PBIFproducts}
		Let $A, B$ be abelian varieties over $\TheField$, and $\Omega_A \subset A(\TheField), \ \Omega_B \subset B(\TheField)$ be Zariski-dense subgroups. 
		Suppose that, for every finite extension $L$ of $\TheField$, (PB) implies (IF) for both $(A_L,\Omega_A)$ and $(B_L,\Omega_B)$. Then, for every finite extension $E$ of $\TheField$, (PB) implies (IF) for $(A_E \times B_E, \Omega_A \times \Omega_B)$.
	\end{corollary}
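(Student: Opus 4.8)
The plan is to reduce, by means of Proposition~\ref{prop:Products}, to running the argument of Proposition~\ref{prop:PBPFPBIF} relative to the fixed product subgroup $\Omega_A\times\Omega_B$. First observe that the hypotheses are inherited by finite extensions: any finite extension of a finite extension $E/\TheField$ is itself a finite extension of $\TheField$, and $\Omega_A\subset A(\TheField)\subset A(E)$, $\Omega_B\subset B(\TheField)\subset B(E)$ stay Zariski-dense, so the corollary over $E$ follows from the corollary over $\TheField$. Hence it suffices to prove: every (PB)-cover $\pi:Z\to A\times B$ of degree $\geq 2$ satisfies (IF) with respect to $\Omega:=\Omega_A\times\Omega_B$, which is finitely generated and Zariski-dense (the Zariski closure of a product of subsets being the product of their closures).

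The key preliminary is a ``downward'' propagation of the hypothesis: if (PB) implies (IF) for $(A_L,\Omega_A)$ for every finite $L/\TheField$, then the same holds for $(A_L,\Omega_A')$ for every finite-index subgroup $\Omega_A'\subseteq\Omega_A$. Indeed $\Omega_A'\supseteq m\Omega_A$ with $m=[\Omega_A:\Omega_A']$, and a finite-index coset of $m\Omega_A$ is one of $\Omega_A'$, so it suffices to treat $\Omega_A'=m\Omega_A$; and given a (PB)-cover $\psi:Y\to A_L$, the pullback $[m]^*\psi$ is again (PB) by Lemma~\ref{Lem:PB_pullback}, so (IF) for $(A_L,\Omega_A)$ supplies a finite-index coset $c_0+H_0\subseteq\Omega_A$ over which $[m]^*\psi$ has integral fibres; as the fibre of $[m]^*\psi$ over $x$ is that of $\psi$ over $mx$, the coset $mc_0+mH_0\subseteq m\Omega_A$ does the job for $\psi$. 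Combining this and its analogue for $B$ with Proposition~\ref{prop:Products} (applied over every finite $L/\TheField$) yields: for every finite $L/\TheField$ and every finite-index product subgroup $\Omega_A'\times\Omega_B'\subseteq\Omega_A\times\Omega_B$ with Zariski-dense factors, (PB) implies (PF) for $(A_L\times B_L,\,\Omega_A'\times\Omega_B')$.

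With this input I would then mimic the proof of Proposition~\ref{prop:PBPFPBIF}. The $[N]^*$-pullback argument in the proof of Proposition~\ref{prop:ReductionToGaloisPB}(1), applied to the abelian variety $A\times B$ with $N=(\deg\pi)!$ and using that $[N]$ and $[N]^*$ carry product subgroups to product subgroups, reduces us — after a harmless finite base change — to the case that $\pi$ is a (GPB)-cover. Let $\widehat\pi:\widehat Z\to A\times B$ be its Galois closure, with group $G$; this is a (PB)-cover, and for each proper $H\subsetneq G$ the intermediate cover $\widehat Z_H\to A\times B$ is again (PB), since a subcover of a (PB)-cover has no non-trivial étale subcover, hence is (PB) by Lemma~\ref{lemma:Zannier}. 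Running the induction in the proof of Lemma~\ref{Lem:Many_covers} on the finite family $(\widehat Z_H)_{H\subsetneq G}$ — at each stage passing to a finite-index \emph{product} sub-coset, relocating it by a translation via Proposition~\ref{Prop:eq_definitions}, and invoking the (PF) statement above for the resulting product subgroup — produces a single finite-index coset $C\subseteq\Omega_A\times\Omega_B$ such that for every $c\in C$ and every proper $H\subsetneq G$ the fibre $(\widehat Z_H)_c$ is étale over $\TheField$, reduced, and has no $\TheField$-rational point. By Proposition~\ref{Prop:irreducibility_fibre}, $\widehat\pi^{-1}(c)$ is then integral for every $c\in C$, hence so is $\pi^{-1}(c)$ (it is $\Spec$ of the subfield of $\TheField(\widehat\pi^{-1}(c))$ fixed by the subgroup cutting out $Z$ inside $\widehat Z$). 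Pushing $C$ forward along $[N]$, translating back, and descending from the auxiliary extension undoes the reductions and gives (IF) for the original $\pi$ and $\Omega$.

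I expect the main obstacle to be this last step: transplanting the machinery of Proposition~\ref{prop:ReductionToGaloisPB} and Lemma~\ref{Lem:Many_covers}, which are phrased ``up to rank $r$'' (quantifying over all subgroups of bounded rank at once), to a setting where the only available input is (PF) for finite-index \emph{product} subgroups of the fixed $\Omega_A\times\Omega_B$. One must check that every subgroup operation occurring in those proofs — the $[N]^*$-pullback, intersecting and shrinking cosets, and translating — can be carried out keeping all relevant subgroups of product form and invoking the hypotheses only for finite-index subgroups of $\Omega_A$ and $\Omega_B$; the ``downward'' propagation above is exactly what makes this legitimate. The rest is routine coset bookkeeping in finitely generated abelian groups.
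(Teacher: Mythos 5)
Your approach uses the same two pillars as the paper's: Proposition~\ref{prop:Products} to obtain (PF) for $(A_E\times B_E,\Omega_A\times\Omega_B)$, followed by the Galois-closure machinery underlying Proposition~\ref{prop:PBPFPBIF} to upgrade (PF) to (IF). The paper's own proof is a two-line invocation of these two propositions, and you rightly flag that the stated hypothesis of Proposition~\ref{prop:PBPFPBIF} --- ``(PF) up to rank $r$'', quantifying over \emph{all} Zariski-dense subgroups of rank $\le r$ --- is formally stronger than what Proposition~\ref{prop:Products} directly supplies, namely (PF) for the single subgroup $\Omega_A\times\Omega_B$. Your downward-propagation lemma (pull back the (PB)-cover by $[m]$, which stays (PB) by Lemma~\ref{Lem:PB_pullback}, apply (IF) over $\Omega_A$, and push the resulting coset forward by $[m]$) is correct; combined with the trivial upward propagation along finite-index inclusions (a finite-index coset of a finite-index subgroup is a finite-index coset of the ambient group), it shows that ``(PB) implies (IF)'' for $(A,\Omega)$ depends only on the commensurability class of $\Omega$. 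Every subgroup arising in Proposition~\ref{prop:ReductionToGaloisPB}, Proposition~\ref{prop:PBPFPBIF}, and Lemma~\ref{Lem:Many_covers} --- the $[N]\Omega$, $[N]^*\Omega$, and shrinking cosets --- stays within the commensurability class of $\Omega_A\times\Omega_B$, so your careful re-run of that machinery (replacing ``up to rank $r$'' by ``for finite-index product subgroups and their commensurates'') is exactly what makes the paper's terse deduction airtight. The argument is correct, and the coset bookkeeping you defer at the end is genuinely routine.
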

	\begin{proof} Note that $\Omega_A$ and $ \Omega_B$ are finitely generated by the Mordell-Weil theorem. We know by Proposition \ref{prop:Products} that, for all finite extensions $E/\TheField$, (PB) implies (PF) for $(A_E \times B_E, \Omega_A \times \Omega_B)$. Applying, for all finite extensions $E/\TheField$ and all $r \geq 1$, Proposition \ref{prop:PBPFPBIF} to $(A_E \times B_E, \Omega_A \times \Omega_B)$, we deduce that (PB) implies (IF)  for $(A_E \times B_E, \Omega_A \times \Omega_B)$.
	\end{proof}

	\section{Abelian varieties with a nondegenerate point} \label{sect:NondegeneratePoint}
	The main result of this section (Theorem \ref{thm:CyclicCase}) says, roughly speaking, that an abelian variety over a  number field endowed with a  non-degenerate point has the weak-Hilbert property. We prove this theorem using Kummer theory for abelian varieties.
	\subsection{Constructing a suitable torsion point}\label{sect:TorsionPoint}
	
	
	\begin{proposition}\label{Prop:Cardinality_ltorsion}
		Let $A$ be an abelian variety over a finite field $\F_q$ of cardinality $q$, and let $\ell$ be a rational prime coprime with $q$. Then:
		\[
		 v_{\ell}(\det (\rho(\Fr_q)-1) )= v_{\ell}(\#A[\ell^\infty](\F_q)),
		\]
		where $\Fr_q\in \Gamma_{\F_q}$ is the Frobenius of $\F_q$ and $\rho: \Gamma_{\F_q} \rightarrow T_\ell A$ is the Galois action on the $\ell$-adic Tate module.
	\end{proposition}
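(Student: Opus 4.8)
The plan is to reduce the statement to a purely algebraic fact about the action of a single invertible operator on a free $\mathbb{Z}_\ell$-module of finite rank. Set $T = T_\ell A$, a free $\mathbb{Z}_\ell$-module of rank $2\dim A$, and let $\Phi = \rho(\Fr_q) \in \mathrm{GL}(T)$ denote the Frobenius endomorphism acting on it. The key observation is that $A[\ell^n](\mathbb{F}_q)$ is exactly the kernel of $\Phi - 1$ acting on $A[\ell^n] = T/\ell^n T$; that is, $A[\ell^\infty](\mathbb{F}_q) = \ker(\Phi - 1 : T\otimes \mathbb{Q}_\ell/\mathbb{Z}_\ell \to T\otimes\mathbb{Q}_\ell/\mathbb{Z}_\ell)$, using that Frobenius acts on the geometric torsion and a torsion point is $\mathbb{F}_q$-rational iff it is fixed by $\Phi$. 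Since $\ell \nmid q$ and Weil's theorem (or the Weil conjectures for abelian varieties over finite fields) gives that $1$ is not an eigenvalue of $\Phi$ — equivalently $\det(\Phi - 1) \neq 0$ — the operator $\Phi - 1$ is injective on $T$, so $T / (\Phi-1)T$ is a finite $\mathbb{Z}_\ell$-module.

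First I would establish the identity $\#\bigl(T/(\Phi-1)T\bigr) = \ell^{v_\ell(\det(\Phi-1))}$. This is the standard fact that for an injective endomorphism $u$ of a free $\mathbb{Z}_\ell$-module of finite rank, the index of the image equals $|\det u|_\ell^{-1} = \ell^{v_\ell(\det u)}$; it follows from the Smith normal form (elementary divisors) of $u$ over the PID $\mathbb{Z}_\ell$. Second I would identify $A[\ell^\infty](\mathbb{F}_q)$ with $T/(\Phi - 1)T$ as groups. For this, apply the snake lemma to the endomorphism $\Phi - 1$ of the short exact sequence $0 \to T \to T\otimes\mathbb{Q}_\ell \to T\otimes\mathbb{Q}_\ell/\mathbb{Z}_\ell \to 0$: since $\Phi - 1$ is an automorphism of $T\otimes\mathbb{Q}_\ell$ (its determinant is nonzero), the connecting map yields an isomorphism $\ker(\Phi-1 \mid T\otimes\mathbb{Q}_\ell/\mathbb{Z}_\ell) \cong \mathrm{coker}(\Phi - 1 \mid T) = T/(\Phi-1)T$. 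The left-hand side is precisely $A[\ell^\infty](\overline{\mathbb{F}_q})^{\Phi} = A[\ell^\infty](\mathbb{F}_q)$. Combining the two steps gives $v_\ell(\#A[\ell^\infty](\mathbb{F}_q)) = v_\ell(\det(\Phi - 1))$, which is the claim after noting $\det(\rho(\Fr_q) - 1) = (-1)^{2\dim A}\det(1 - \rho(\Fr_q))$, a unit times the characteristic-polynomial value, so the $\ell$-adic valuations agree.

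I don't anticipate a genuine obstacle here — the argument is essentially a bookkeeping exercise once the right exact sequence is in place. The one point requiring a little care is the non-degeneracy input $\det(\Phi - 1) \neq 0$: one should cite that no eigenvalue of Frobenius equals $1$, which follows because the eigenvalues are Weil numbers of absolute value $q^{1/2} \neq 1$ (here $q \geq 2$). If one wanted to avoid invoking the Riemann hypothesis for abelian varieties, one could instead argue directly that $A(\mathbb{F}_q)$ is finite, hence $A[\ell^\infty](\mathbb{F}_q)$ is finite, which forces $\Phi - 1$ to have trivial kernel on $T\otimes\mathbb{Q}_\ell/\mathbb{Z}_\ell$ and therefore nonzero determinant on $T$; this keeps the proof self-contained. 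I would phrase the final write-up around the snake-lemma isomorphism and the Smith normal form computation, presenting both as short, self-contained lemmas or inline remarks.
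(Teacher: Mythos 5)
Your argument is correct, but it is genuinely different from the one in the paper. The paper's proof goes through the global identity $\det(\rho(\Fr_q)-1)=\deg(\phi-1)=\#\Ker(\phi-1)=\#A(\F_q)$, where $\phi$ is the Frobenius endomorphism of $A$: it invokes Weil's theorem that the characteristic polynomial of $\phi$ on $T_\ell A$ agrees with the characteristic polynomial of $\phi$ in the isogeny algebra (so that $\det(\rho(\Fr_q)-1)=P_\phi(1)=\deg(\phi-1)$), and the fact that $\phi-1$ is a \emph{separable} isogeny so its degree equals the cardinality of its kernel. Taking $v_\ell$ and observing that $A[\ell^\infty](\F_q)$ is the $\ell$-Sylow subgroup of the finite group $A(\F_q)$ then finishes. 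Your approach instead works entirely on the $\ell$-adic side: the snake lemma applied to $\Phi-1$ acting on $0\to T\to T\otimes\mathbb{Q}_\ell\to T\otimes\mathbb{Q}_\ell/\mathbb{Z}_\ell\to 0$ identifies $A[\ell^\infty](\F_q)$ with the cokernel $T/(\Phi-1)T$, and the elementary divisor theorem over the PID $\mathbb{Z}_\ell$ identifies the order of that cokernel with $\ell^{v_\ell(\det(\Phi-1))}$. What each buys: the paper's route yields the sharper integral identity $\det(\rho(\Fr_q)-1)=\#A(\F_q)$ for free, at the cost of citing the degree--kernel theory for separable isogenies and Weil's comparison of characteristic polynomials; your route is self-contained in commutative algebra and Galois modules, sees only the $\ell$-part (which is all that is needed here), and uses a cokernel computation that recurs verbatim in Kummer and Iwasawa theory, so it is arguably the more portable argument. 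Both need the input $\det(\Phi-1)\neq 0$, and your fallback argument (finiteness of $A(\F_q)$ forcing injectivity of $\Phi-1$ on $T$) is a nice way to avoid the Riemann hypothesis for abelian varieties over finite fields, although in this paper the Weil bounds are invoked elsewhere anyway.
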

	\begin{proof}
		It follows from the standard theory of abelian varieties and Tate modules that, if $\phi:A \rightarrow A$ denotes the Frobenius endomorphism, then $\det (\rho(\Fr_q)-1)=\deg (\phi-1)= \# \Ker(\phi-1)$. Since $\Ker(\phi-1)=A(\F_q)$, the proposition follows.
	\end{proof}
	 
	The following elementary result in group theory goes back to Jordan \cite{Jordan1872}:
	\begin{proposition}\label{Prop:free_element}
		Let $G$ be a group acting transitively on   a set $X$. If $|X| \geq 2$, then there exists $g \in G$ that acts on $X$ with no fixed points.
	\end{proposition}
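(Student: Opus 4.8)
The plan is to give the classical double-counting argument (Cauchy--Frobenius / ``Burnside''), which is how Jordan's result is usually proved. Note first that the statement only makes sense (and is only true) when $X$ is finite, as it is in all our applications, where $X$ is a coset space $G/H$ for $G$ a finite group; without finiteness of $X$ the conclusion fails, e.g.\ for the group of finitely supported permutations of an infinite set. So I would begin by reducing to the case that $G$ itself is finite: the kernel $K$ of the permutation representation $G \to \mathrm{Sym}(X)$ acts trivially on $X$, so replacing $G$ by its image $G/K \hookrightarrow \mathrm{Sym}(X)$ (which is finite since $X$ is) changes neither transitivity nor the set of elements acting without fixed points. Hence we may assume $G$ is finite.

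Next I would count the set $P = \{(g,x) \in G \times X : g\cdot x = x\}$ in two ways. Fixing $x$ and using transitivity, all point stabilizers are conjugate, hence of equal order, and the orbit--stabilizer relation gives $|G| = |X|\cdot |\Stab_G(x)|$, so $|\Stab_G(x)| = |G|/|X|$ for every $x$. Summing over $x$ yields $|P| = |G|$. On the other hand, writing $X^g = \{x \in X : g\cdot x = x\}$ for the fixed-point set of $g$, summing over $g$ first gives
\[
\sum_{g \in G} |X^g| = |P| = |G|.
\]
(Equivalently, this is Burnside's lemma: the number of orbits, here $1$, equals $\tfrac{1}{|G|}\sum_g |X^g|$.)

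Finally I would argue by contradiction: suppose every $g \in G$ acts with at least one fixed point, i.e.\ $|X^g| \ge 1$ for all $g$. The identity satisfies $|X^{e}| = |X| \ge 2$, and the remaining $|G|-1$ elements each contribute at least $1$, so $\sum_{g \in G} |X^g| \ge |X| + (|G|-1) \ge |G| + 1$, contradicting the identity $\sum_{g\in G}|X^g| = |G|$ established above. Hence some $g \in G$ has $X^g = \emptyset$, which is the assertion. I do not expect any genuine obstacle here: the argument is a two-line double count, and the only point meriting a word of care is the reduction to finite $G$ together with the tacit (and necessary) assumption that $X$ is finite.
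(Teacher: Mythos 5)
The paper does not prove this proposition; it simply cites Jordan \cite{Jordan1872} and moves on. Your proof is the standard Cauchy--Frobenius/Burnside counting argument, and it is correct: the double count gives $\sum_{g\in G}|X^g| = |G|$, the identity contributes $|X|\geq 2$, and if every nontrivial element also fixed a point the sum would exceed $|G|$, a contradiction. Your preliminary observations are also well taken: the statement is false for infinite $X$ (your example of finitely supported permutations of an infinite set works), so finiteness of $X$ is a tacit hypothesis, and passing to the image of $G$ in $\mathrm{Sym}(X)$ reduces cleanly to finite $G$. In the paper's only use of this proposition, $X = G/H$ for $G$ a finite Galois group, so the implicit finiteness is harmless; but your flagging of it is the right instinct.
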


	We recall that throughout this paper, we let $K$ denote a number field.
	The main result of this subsection is the following lemma which  gives a torsion point $\zeta$ over which the fibre has no $K(\zeta)$-points. 
	\begin{lemma}\label{Lem:Torsion_point_lemma}  
		Let $\phi:Y \rightarrow A$ be a non-trivial (GPB)-cover of an abelian variety $A$ over $K$ with Galois closure $\widehat{\phi}:\widehat{Y} \to A$ and Galois group $G$. For every finite set $S$ of prime numbers, there exists a torsion point $\zeta \in A(\overline{\TheField})_{\operatorname{tors}}$ such that $\ord (\zeta)$ is coprime with all the finite primes in $S$, the fibre $\phi^{-1}(\zeta)$ is \'etale over $K(\zeta)$, and there exists an element $g$ in a decomposition group at $\zeta$ for $\widehat{\phi}$ such that $g$ acts with no fixed points on the geometric fibre $\phi^{-1}(\overline{\zeta})$.  
	\end{lemma}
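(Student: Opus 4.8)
The plan is to produce $\zeta$ by reduction modulo a well-chosen prime, using crucially that the Galois closure $\widehat\phi$ has no non-trivial \'etale subcover (which is what ``(GPB)'' means, by Lemma~\ref{lemma:Zannier}).

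\textbf{Reduction to a Frobenius statement.} Writing $Y\cong\widehat Y/H$, non-triviality of $\phi$ gives $[G:H]\ge 2$, so $G$ acts transitively and non-trivially on $G/H\cong\phi^{-1}(\overline\zeta)$; Jordan's theorem (Proposition~\ref{Prop:free_element}) then supplies an element $g_0\in G$ acting on $G/H$ without fixed points. Since ``acts without fixed points on $G/H$'' is conjugation-invariant, and a decomposition group at a point is only well-defined up to conjugacy, it suffices to produce a torsion point $\zeta\in A(\overline K)$, lying off the branch locus of $\widehat\phi$ and of order coprime to every prime in $S$, such that the decomposition group $D_\zeta$ for $\widehat\phi$ contains an element conjugate to $g_0$ (then apply Lemma~\ref{Rmk:indipendence_of_the_point}). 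Fix once and for all an auxiliary prime $\ell\notin S$; the point $\zeta$ will be taken of $\ell$-power order, which makes its order automatically coprime to $S$ and, as soon as the chosen place does not divide $\ell$, coprime to the residue characteristic --- the latter being what allows us to lift it from positive characteristic.

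\textbf{Spreading out.} Choose a finite set $S'$ of places of $K$ containing the places above $S\cup\{\ell\}$ and the places of bad reduction, an abelian scheme $\mathcal A/\mathcal O_{K,S'}$ with $\mathcal A_K\cong A$, and a finite model $\widehat{\mathcal Y}\to\mathcal A$ of $\widehat\phi$ which is \'etale over the complement $\mathcal U$ of the closure of the branch locus; by Lemma~\ref{Lem:PBopen} we may also arrange that $\widehat{\mathcal Y}_{\F_v}\to\mathcal A_{\F_v}$ is a (PB)-cover for every $v\notin S'$. By Lemmas~\ref{lemma:PBStaysIrreducibleUnderPullbackByArbitraryIsogenies} and~\ref{Lem:PB_pullback}, for $v\notin S'$ and any $N\ge 1$ the pullback $[\ell^{N}]^{\ast}\widehat{\mathcal Y}_{\F_v}\to\mathcal A_{\F_v}$ along the isogeny $[\ell^{N}]$ (an isogeny since $v\nmid\ell$) is again a geometrically integral, Galois, group-$G$ cover, \'etale over $[\ell^{N}]^{-1}(\mathcal U_{\F_v})$, and the fibre of $\widehat\phi_v$ over $[\ell^{N}]a$ is $G$-equivariantly identified with the fibre of $[\ell^{N}]^{\ast}\widehat\phi_v$ over $a$.

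\textbf{The core construction.} The heart of the proof is to exhibit, for a suitable $v\notin S'$ and a suitable $N$, an $\ell^{N}$-torsion point $\overline\zeta\in\mathcal A(\overline{\F_v})$ lying in $\mathcal U$ whose Frobenius conjugacy class in $G$ equals that of $g_0$; having done so, one lifts $\overline\zeta$ (via the reduction isomorphism on prime-to-$p$ torsion, $p\nmid\ell$) to a torsion point $\zeta\in A(\overline K)$ of order $\ell^{N}$, sets $L=K(\zeta)$, notes that $\zeta$ lies off the branch locus (so $\phi^{-1}(\zeta)$ is \'etale over $L$), and, choosing a place $w\mid v$ of $L$, uses the good-reduction compatibility of decomposition morphisms (Proposition~\ref{Prop:Diagram6} and Proposition~\ref{Prop:GoverH_frobenius}) to identify a suitable power of $\Fr_{w,\zeta}\in D_\zeta$ with the Frobenius of $\overline\zeta$, hence with an element conjugate to $g_0$; this finishes the argument by the first paragraph. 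To find $\overline\zeta$ one proceeds as follows: by Proposition~\ref{Prop:Cardinality_ltorsion} the size of $\mathcal A[\ell^{\infty}](\F_{q^{m}})$ is governed by $v_\ell(\det(\Fr_{q^m}-1))$ on the Tate module, so --- taking $v$ split in $K(A[\ell^{N_0}])$ for prescribed $N_0$ and passing to extensions $\F_{q^m}$ --- the groups $\mathcal A[\ell^{N}](\F_{q^m})$ can be made arbitrarily large; on the other hand, since $\widehat\phi_v$ is a (PB)-cover and stays so after pullback along every $[\ell^{N'}]$, the Lang--Weil/Chebotarev equidistribution of Frobenius conjugacy classes for the covers $[\ell^{N'}]^{\ast}\widehat{\mathcal Y}_{\F_v}\to\mathcal A_{\F_v}$ over the finite field, combined with the identification of fibres over $[\ell^{N'}]a$ recorded above and the torsion count of Proposition~\ref{Prop:Cardinality_ltorsion}, forces the conjugacy class of $g_0$ to be attained by the Frobenius at some $\ell$-power torsion point inside $\mathcal U$ --- were this not so, the Frobenii at $\ell$-power torsion points would all avoid a fixed non-empty conjugacy class, from which one extracts a non-trivial \'etale subcover of $\widehat\phi$, contradicting (GPB).

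\textbf{Main obstacle.} The delicate step is the last one: converting ``$\widehat\phi$ has no \'etale subcover'' into the existence of a torsion point of \emph{prescribed $\ell$-power order} at which the Frobenius realises the prescribed fixed-point-free class $g_0$. The tension is that the available equidistribution inputs (Lang--Weil, Chebotarev over finite fields) naturally range over all points of $\mathcal A_{\F_v}$, whereas one must land inside the sparse set of $\ell$-power torsion points; reconciling the two is precisely where the full strength of the (PB)-property --- its stability under pullback along every multiplication map --- together with the explicit control on rational torsion in Proposition~\ref{Prop:Cardinality_ltorsion}, is indispensable.
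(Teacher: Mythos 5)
Your reduction to a Frobenius statement (Jordan's theorem, conjugation-invariance of the fixed-point-free condition, lifting prime-to-$p$ torsion from $\overline{\F_v}$ to $\overline{K}$, and the good-reduction compatibility of decomposition groups) matches the paper. The gap is in your ``core construction''. You insist that $\zeta$ have $\ell$-power order and then claim that Lang--Weil/Chebotarev equidistribution for the covers $[\ell^{N'}]^{\ast}\widehat{\mathcal Y}_{\F_v}\to\mathcal A_{\F_v}$ forces the class of $g_0$ to be attained at some $\ell$-power torsion point, ``else one extracts a non-trivial \'etale subcover''. Neither half of this is substantiated: Chebotarev over a finite field equidistributes Frobenius classes over \emph{all} closed points (or all $\F_{q^m}$-points as $m\to\infty$), not over the sparse subset of $\ell$-power torsion points, and pulling back by $[\ell^{N'}]$ does not convert arbitrary $\F_{q^m}$-points into $\ell$-power torsion points (a point $a$ has $[\ell^{N'}]a$ of $\ell$-power order only if $a$ already does). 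Making $\mathcal A[\ell^N](\F_{q^m})$ large by splitting $v$ in $K(A[\ell^{N_0}])$ says nothing about the Frobenius classes of the cover at those particular points, and no mechanism is given for producing an \'etale subcover from the hypothetical avoidance. As you yourself flag in the ``Main obstacle'' paragraph, this is exactly the step that is missing.

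The paper resolves the tension by \emph{not} prescribing the prime factorization of $\operatorname{ord}(\zeta)$: over $\F_v$ every rational point of $\mathcal A$ is already torsion, so one only needs to kill the $S$-part of its order. Concretely, Proposition \ref{Prop:Cardinality_ltorsion} shows that the condition $v_{\ell_i}(\#\mathcal A[\ell_i^\infty](\F_v))\le v_{\ell_i}(C)$ for all $\ell_i\in S$ is controlled by $v_{\ell_i}(\det(\rho_{\ell_i}(\operatorname{Fr}_v)-1))$, hence factors through a finite quotient $\Gal(M/K)$; one fixes $C$ so that this condition is satisfiable (take a Frobenius at one auxiliary good place), and then applies geometric Chebotarev \emph{once}, to the connected Galois cover $[C]^{\ast}\hat{\mathcal V}\times_{\mathcal O_{K,S'}}\mathcal O_{M,S'}\to[C]^{-1}\mathcal U$ with group $\Gal(M/K)\times G$ and target class $(\gamma_0,g_0)$. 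This produces a place $v$ and an \emph{arbitrary} point $P_v\in[C]^{-1}\mathcal U(\F_v)$; the torsion point is $\zeta_v=[C]P_v$, whose order is automatically coprime to $S$ because $[C]$ annihilates the $S$-primary part of the finite group $\mathcal A(\F_v)$, and whose Frobenius for $\hat\psi$ is $g_0$ by the fibre identification you already noted. The (PB)-hypothesis enters only to guarantee that $[C]^{\ast}\hat{\mathcal V}$ is connected, so that Chebotarev applies with the full group $G$ — not through any equidistribution over torsion points. Your argument as written does not close without replacing the core construction by something of this kind.
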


%

	\begin{proof}
	By Remark \ref{Rmk:straightforward} (ii),  the Galois closure of $Y_{\overline{\TheField}} \to A_{\overline{\TheField}}$ is the base change to $\overline{\TheField}$ of the Galois closure of $Y \to A$, hence the Galois closure of $Y_{\overline{\TheField}} \to A_{\overline{\TheField}}$ is a   (PB)-cover.

		Choose a finite set of places $S' \subset M_\TheField^{\operatorname{fin}}$ and an abelian scheme $\mathcal{A}\to\spec_{\mathcal{O}_{\TheField,S'}}$ extending $A \to \Spec \TheField$; this is possible by spreading-out.
		After possibly enlarging $S'$, we may choose a normal integral model $\mathcal{Y}$ of $Y$ over $\mathcal{O}_{\TheField,S'}$ and a finite surjective morphism $\psi:\mathcal{Y} \rightarrow \mathcal{A}$ extending $\phi$.
		We let $\mathcal{B} \subset \mathcal{A}$ denote the branch locus of $\psi$; note that $\mathcal{B}$ is a closed subscheme of $\mathcal{A}$ of pure codimension 1 by \cite[Th\'eor\`eme~X.3.1]{SGA1}.
		As $\mathcal{A}$ is normal and $\psi$ is finite surjective, the morphism $\psi$ is étale precisely where it is unramified (Lemma \ref{lemma:EtaleOrRamified}), hence $\restricts{\psi}{\psi^{-1}(\mathcal{A}\setminus \mathcal{B})}$ is étale.
		We define $\mathcal{U} \defeq \mathcal{A} \setminus \mathcal{B}$ and $\mathcal{V}\defeq \psi^{-1}(\mathcal{U})$. Also, we define $U:=\mathcal{U}_K$ and $V:=\mathcal{V}_K$. 
		
		
		Let now $\hat{\psi}:\hat{\mathcal{V}} \rightarrow \mathcal{U}$ be a Galois closure of the finite étale morphism $\mathcal{V} \rightarrow \mathcal{U}$ (with Galois group $G$) and let $\hat{V} \defeq \hat{\mathcal{V}}_{\TheField}$. Since $\hat{\mathcal{V}}$ is regular and connected, it is integral. We let $H \subset G$ be the subgroup corresponding to $\mathcal{V}$ by Galois theory   \cite[\href{https://stacks.math.columbia.edu/tag/0BND}{Tag 0BND}]{stacks-project}, so that $\hat{\mathcal{V}}/H \rightarrow \mathcal{V}$ is an isomorphism.

		
		
		Since $G$ acts transitively on $G/H$, by Proposition \ref{Prop:free_element} there exists an element $g \in G$ that acts with no fixed points on $G/H$.

		If $v$ is a finite place of $K$ not in $S'$ and $\zeta_v \in \mathcal{U}({\F_v})$ is a torsion point of order $m$ coprime with $S$ such that a Frobenius of $\zeta_v$ for $\hat{\psi}$ is $g$, then any $\zeta \in \mathcal{A}[m](\TheField_v)$ that reduces to $\zeta_v$ modulo $v$ is such that $\psi^{-1}(\zeta)$ has no $\TheField_v$-points by Proposition \ref{Prop:GoverH_frobenius}.
		In particular, for such $v, \zeta_v$ and $\zeta$, the fibre $\psi^{-1}(\zeta)$ is reduced and has no $\TheField(\zeta)$-points. Thus to prove the lemma it suffices to show that there exist $v$ and $\zeta_v$ as above.

		For each prime number $\ell$, we denote by $\rho_\ell : \Gamma_\TheField \to \Aut(T_\ell(A))$ the natural $\ell$-adic Galois representation attached to $A$.
		For every positive integer $C$ we define
		the following open subset of $\Gamma_K$:
		\begin{equation}\label{Eq:Open:subset}
		\square_C \defeq \{ \gamma \in \Gamma_K \ \vert \    v_{\ell_i} (\det (\rho_{\ell_i}(\gamma)-1) ) \leq v_{\ell_i}(C) \text{ for all } \ell_i \in S \}.
		\end{equation}
		%
		We claim that there exists a positive integer $C$ such that $\square_C \neq \emptyset$. To see this, choose a finite place $w_0$ of $K$ at which $A$ has good reduction and that does not lie over  any prime in $S$. Let now $\Fr_{w_0} \in \Gamma_{K}$ be a Frobenius element corresponding to a place of $\overline{K}$ lying over $w_0$, and let $C \defeq \prod_{\ell_i \in S} \{\det (\rho_{\ell_i}(\Fr_{w_0})-1)\}_{\ell_i}$ (where, for a rational number $q \in \Q$ and a rational prime $\ell$, $\{q\}_\ell\defeq \ell^{v_\ell(q)}$). By Proposition \ref{Prop:Cardinality_ltorsion}, this number is nonzero, and $\Fr_{w_0} \in \square_C$, so that the claim holds. We fix from now on a value of $C$ for which $\square_C \neq \emptyset$.
		
		Let $M$ be a finite Galois extension of $\TheField$   such that $\square_C= \pi_{{\overline{\TheField}/M}}^{-1}(\square'_C)$ for some subset $\square'_C \subset \Gal(M/\TheField)$, where $\pi_{\overline{K}/M} : \Gamma_{\TheField} \rightarrow \Gal(M/\TheField)$ denotes the natural projection.  By enlarging $S'$ if necessary, we may assume that it contains all the places that ramify in the extension $M/\TheField$. Fix an element $\gamma_0 \in \square'_C$ and
		consider the following cartesian diagram
\[
		\xymatrix{
			[C]^* \hat{\mathcal{V}} \ar[rr] \ar[d]^{\text{étale}}_{[C]^*\hat{\psi}}   
			& & \hat{\mathcal{V}} \ar[d]^{\text{étale}} \\
			[C]^{-1} \mathcal{U} \ar[rr]^{[C]} & &  \mathcal{U}.}
\]
		Since $\widehat{Y}\to A$ is a (PB)-cover, it follows that   $[C]^*\hat{Y}$, and thus $[C]^*\hat{V}$, is geometrically integral over $K$.
		Consider the finite étale Galois morphism
		\[
		r:{[C]^*\hat{\mathcal{V}}} \times_{\mathcal{O}_{K,S'}} {\mathcal{O}_{M,S'}} \rightarrow{[C]^*\hat{\mathcal{V}}} \rightarrow  [C]^{-1}\mathcal{U},
		\]
		where ${[C]^*\hat{\mathcal{V}}} \times_{\mathcal{O}_{K,S'}} {\mathcal{O}_{M,S'}}$ is integral. Note that the Galois group $G'$ of $r$ can be identified with   $\Gal(M/K) \times G$.  Let $\gamma:=(\gamma_0,  g)$ in $G'$. 
		
		By the geometric Chebotarev theorem \cite[Theorem B.9]{Pink} there exist a finite place $v$ of $K$ not in $S'$ and a point $P_v \in [C]^{-1}\mathcal{U}(\F_v)$ such that a Frobenius at $P_v$ for $r$ is $\gamma$. Since the residue field at $P_v$ is $\mathbb{F}_v$, and since $\gamma_0$ is an element of $\square'_C$,  by Proposition \ref{Prop:Cardinality_ltorsion} we have $\{ \# \mathcal{A}(\F_v)\}_{\ell_i} \mid C$ for every $\ell_i \in S$. In particular, for every element $P \in \mathcal{A}(\F_v)$ and every $\ell_i \in S$, the $\ell_i$-part of the order of $P$ (which is a divisor of $\# \mathcal{A}(\F_v)$) divides $C$, hence the order of $[C]P$ is not divisible by any prime in $S$. 
		The place $v$ and the point $\zeta_v\defeq [C] P_v$ satisfy all the conditions we need: $v$ is not in $S'$, the order of $\zeta_v$ is not divisible by any prime in $S$, and a Frobenius at $\zeta_v$ for $\psi$ is the projection of $\gamma$ to $G$, which by definition is $g$.
	\end{proof}

	\subsection{Kummer theory}\label{sect:KummerTheory}
	Let $A$ be an abelian variety over the number field $\TheField$. Recall that a point $P\in A(\TheField)$ is said to be \textit{non-degenerate} if the set $\mathbb{Z} P := \{nP : n \in \mathbb{Z}\}$ is dense in $A$.
	We fix throughout a non-degenerate point $P\in A(K)$. 
 
 	The aim of this section is to construct a finite place $v$ of $K$ and a finite index coset $C$ of $\mathbb{Z} P$ whose elements all reduce modulo $v$ to the torsion point $\zeta$  constructed in      Lemma \ref{Lem:Torsion_point_lemma}.
	
	For every positive integer $m$, we set $\TheField_{m} := \TheField(A[m])$ and $\TheField_{P,m} := \TheField(A[m], [m]^{-1}P)$, where $$[m]^{-1}P=\{x \in A(\overline{\TheField}) \bigm\vert [m]x = P \}.$$ The extensions $\TheField_{P,m}/\TheField_m$, $\TheField_{P,m}/\TheField$ and $\TheField_m/\TheField$ are all Galois. {Notice that the extension of $\TheField$ generated by \textit{all} $m$-division points of $P$ also contains all (the coordinates of the) $m$-torsion points of $A$, because every $m$-torsion point is the difference of two $m$-division points of $P$.}
	The Galois group $
	G_m := \operatorname{Gal}\left(\TheField_{P,m}/\TheField \right)$ sits in the obvious exact sequence
	\[
	1 \to N_m \to G_m \to H_m \to 1,
	\]
	where $H_m = \operatorname{Gal}(\TheField_m/\TheField)$ and $N_m=\operatorname{Gal}(\TheField_{P,m}/\TheField_m)$. 
	Fix once and for all a compatible system $(P_m)_{m \geq 1}$ of points of $A(\overline{\TheField})$ that satisfy $nP_m = P_{\frac{m}{n}}$ for all positive integers $n \mid m$. This choice allows us to interpret $G_m$ as a subgroup of $A[m] \rtimes \operatorname{Aut} A[m]$ via the map
	\begin{equation}\label{eq:KummerInjection}
	\begin{array}{ccc}
	G_m & \hookrightarrow & A[m] \rtimes \operatorname{Aut} A[m] \\
	\sigma & \mapsto & \left( \sigma(P_m)-P_m , \sigma|_{A[m]}  \right).
	\end{array}
	\end{equation}
	In this way, $H_m$ gets identified with a subgroup of $\operatorname{Aut} A[m]$, and the kernel $N_m$ with a subgroup of $A[m]$.
	
	Restricting our attention to positive integers $m$ of the form $m=\ell^n$, where $\ell$ is a fixed prime, and passing to the limit $n \to \infty$ in \eqref{eq:KummerInjection},  we obtain an injection
	\[
	G_{\ell^\infty} := \operatorname{Gal}\left( \TheField(A[\ell^\infty], \ell^{-\infty} P ) /\TheField \right) \hookrightarrow T_\ell(A) \rtimes \operatorname{Aut} T_\ell(A),
	\]
	where $\ell^{-\infty} P := \{ x \in A(\overline{\TheField}) : \ell^nx = P \text{ for some }n \geq 1\}$. We may therefore write elements of $G_{\ell^\infty}$ as pairs $(t, M)$ with $t \in T_\ell(A)$ and $M \in \operatorname{Aut} T_\ell(A)$. { As above, note that $\TheField(A[\ell^\infty])$ is a subfield of $\TheField( \ell^{-\infty}P)$. 
		
		Finally, there are obvious variants of this construction where one works with finitely many primes $\ell_1,\ldots,\ell_r$. In this case, the Galois group of $\TheField(\ell_1^{-\infty}P, \ldots, \ell_r^{-\infty}P)$ over $\TheField$ embeds into $\prod_{i=1}^r T_{\ell_i}(A) \rtimes \prod_{i=1}^r \Aut(T_{\ell_i}(A))$ by passing to the inverse limit in \eqref{eq:KummerInjection} over positive integers $m$ all of whose prime factors lie in $\{\ell_1,\ldots,\ell_r\}$. Moreover, $\TheField(A[\ell_1^\infty], \ldots, A[\ell_r^\infty])$ is a subfield of $\TheField( \ell_1^{-\infty}P, \ldots, \ell_r^{-\infty}P)$, and the Galois group $$\operatorname{Gal}\left(\TheField(\ell_1^{-\infty}P, \ldots, \ell_r^{-\infty}P)/\TheField(A[\ell_1^\infty], \ldots, A[\ell_r^\infty]) \right)$$ is identified with a subgroup of $\prod_{i=1}^r T_{\ell_i}(A)$.
	}
	\subsubsection{The $\ell$-part of the reductions of $P$} Let $S^b$ be the set of finite places of bad reduction of $A$,   define $\mathcal{U}=\Spec \mathcal{O}_{K,S^b}$, and let $\mathcal{A}$ be the (smooth proper) N\'eron model of $A$ over $\mathcal{U}$. 
	The point $P$ gives rise to a point of infinite order of $\mathcal{A}(\mathcal{U})$ that we still denote by $P$. 
	For a finite place $v$ of $\TheField$ at which $A$ has good reduction we may then consider the image of $P$ in $\mathcal{A}(\mathbb{F}_v)$, where $\mathbb{F}_v$ is the residue field at $v$. For the sake of simplicity, we will denote this image by $P \bmod v$, and we will abuse notation and write simply $A(\mathbb{F}_v)$ instead of $\mathcal{A}(\mathbb{F}_v)$.

	We will need the notion of $\ell$-part of an element in an abelian group:
	\begin{definition}
		Let $B$ be an abelian group and $g \in B$ be an element of finite order. The subgroup of $B$ generated by $g$ is a finite abelian group that we may write as the (internal) direct product of its Sylow subgroups: let
		\[
		B = S_{\ell_1} \times \cdots \times S_{\ell_r},
		\]
		where the $\ell_i$ are pairwise distinct prime numbers and $S_{\ell_i}$ is the corresponding Sylow subgroup. Given a prime $\ell$, the \textit{$\ell$-part of $g$} is the projection of $g$ on $S_{\ell}$ (which may be trivial). We will denote it by $[g]_\ell$. 
	\end{definition}
	
	\begin{remark}
		An equivalent construction of the $\ell$-part of $g$ is as follows. Let $m=\operatorname{ord}(g)$ and let $\ell^e$ be the exact power of $\ell$ that divides $m$. Write $m=\ell^e q$ and choose an integer $s$ with $s \equiv 1 \pmod{\ell^e}, s \equiv 0 \pmod{q}$. The $\ell$-part of $g$ is then $s \cdot g$.
	\end{remark}
	
	Let $v$ be a  finite place of good reduction for $A$. 
	We may then consider the $\ell$-part of $P \bmod v$, where we (equivalently) view $P \bmod v$ as a point of finite order of either $A(\mathbb{F}_v)$ or $A(\overline{\mathbb{F}_v})$. We now wish to describe $[P \bmod v]_\ell$ in terms of Kummer theory. Notice that $[P \bmod v]_{\ell}$ is an element of $A(\mathbb{F}_v)[\ell^\infty] \subseteq A(\overline{\mathbb{F}_v})[\ell^\infty]$, and that -- provided that $\ell$ is different from the residue characteristic of $v$ -- there is a canonical identification $A(\overline{\TheField})[\ell^\infty] \cong A(\overline{\mathbb{F}_v})[\ell^\infty]$ given by reduction modulo (a place of $\overline{\TheField}$ over) $v$: indeed, for any positive integer $n$ coprime with the residue characteristic of $v$, the $n$-torsion subscheme of $\mathcal{A}$ is finite \'etale over   $  \mathcal{O}_{K_v}$. We may therefore also view $[P \bmod v]_{\ell}$ as an element of $A(\overline{\TheField})[\ell^\infty]$.
	Recall on the other hand that we have a canonical identification 
	\begin{eqnarray}\label{defn:pi}
	\pi_\ell : V_\ell(A) / T_\ell(A) &\cong& A(\overline{\TheField})[\ell^\infty].
	\end{eqnarray}
	It will be useful to make this identification more explicit:  
	\begin{remark}\label{rmk:VellModTell}
		Representing points in $T_\ell(A)$ as compatible sequences $(t_0, t_1,t_2,\ldots)$ of torsion points (with $t_i \in A[\ell^i]$ and $\ell t_i = t_{i-1}$) and elements of $V_\ell(A)$ as $\ell^{-n} (t_0, t_1,t_2,\ldots)$, the torsion point corresponding to $\ell^{-n} (t_0, t_1,t_2,\ldots) \bmod T_\ell(A)$ is simply $t_{n}$.
		The inverse map, from $A(\overline{K})[\ell^\infty]$ to $V_\ell(A)/T_\ell(A)$, may be described as follows. Let $t \in A(\overline{K})[\ell^\infty]$ be a torsion point of exact order $\ell^n$ and choose $t_{n+1}, t_{n+2}, \ldots \in A(\overline{K})[\ell^\infty]$ with the property that $\ell^i t_{n+i}=t$   and $\ell t_{n+i+1} = t_{n+i}$ for all $i\geq 1$. The element 
		\[
		\ell^{-n}(\ell^n t, \ell^{n-1} t, \cdots, \ell t, t, t_{n+1}, t_{n+2}, \ldots) \in V_\ell(A)
		\]
		is then well-defined up to addition of elements of $T_\ell(A)$, and its image via $\pi_\ell$ is $t$.
	\end{remark}
	
	A general principle of Kummer theory is that all the information about the $\ell$-part of the reductions of $P$ should be captured by the Galois group $G_{\ell^\infty}$. In our context, using the above identification $\pi_\ell$, this has been made concrete by Pink as follows:
	
	\begin{theorem}[{\cite[Proposition 3.2]{MR2089426}}]\label{thm:Pink} 
		Let $v$ be a finite place of $\TheField$ at which $A$ has good reduction  and let $\ell$ be a prime different from the residue characteristic of $v$. Let $\operatorname{Fr}_v \in G_{\ell^\infty}$ be a Frobenius element at $v$ for the extension $\TheField( \ell^{-\infty}P)/\TheField$. Writing $\operatorname{Fr}_v=(t,M) \in T_\ell(A) \rtimes \operatorname{Aut} T_\ell(A)$, we have $$[P \bmod v]_{\ell} = \pi_\ell\left( (M-\operatorname{Id})^{-1} t \right).$$
	\end{theorem}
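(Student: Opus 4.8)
The plan is to reconstruct Pink's argument. Fix once and for all a place $\bar v$ of $\overline{\TheField}$ above $v$; this pins down the Frobenius element $\operatorname{Fr}_v$ (hence the pair $(t,M)$) together with the reduction homomorphism $A(\overline{\TheField}) \to A(\overline{\mathbb{F}_v})$, $z \mapsto \overline z$, which on $\ell$-power torsion restricts to an isomorphism $A(\overline{\TheField})[\ell^\infty] \xrightarrow{\sim} A(\overline{\mathbb{F}_v})[\ell^\infty]$ since $A$ has good reduction at $v$ and $\ell$ is prime to the residue characteristic $p$. First I would observe that $M - \operatorname{Id}$ is invertible on $V_\ell(A)$: up to sign, $\det(M-\operatorname{Id})$ equals $\#A(\mathbb{F}_v) \neq 0$ (this is the computation in the proof of Proposition \ref{Prop:Cardinality_ltorsion}). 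Hence $w := (M-\operatorname{Id})^{-1}t \in V_\ell(A)$ is well-defined, and choosing $N$ large enough we may write $w = \ell^{-N}s$ with $s = (s_n)_{n \geq 0} \in T_\ell(A)$. By the explicit description of $\pi_\ell$ in Remark \ref{rmk:VellModTell} we have $\pi_\ell(w) = s_N$, so what must be proved is the identity $[P\bmod v]_\ell = s_N$ inside $A(\overline{\TheField})[\ell^\infty]$; I will verify it after applying $z\mapsto\overline z$.

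The next step isolates the relation satisfied by $s$. Applying $M - \operatorname{Id}$ to $w = \ell^{-N}s$ gives $(M-\operatorname{Id})s = \ell^N t$ in $T_\ell(A)$. Under the Kummer embedding \eqref{eq:KummerInjection} (passed to the limit), $M$ acts on each $A[\ell^n]$ as $\operatorname{Fr}_v$, and the $n$-th component of $t$ is $\tau_n := \operatorname{Fr}_v(P_{\ell^n}) - P_{\ell^n} \in A[\ell^n]$. Comparing $n$-th components yields $\operatorname{Fr}_v(s_n) - s_n = \ell^N\tau_n$; for $n = N$ this is $\ell^N\tau_N = 0$ (as $\tau_N \in A[\ell^N]$), while for $n > N$, using $\ell^N P_{\ell^n} = P_{\ell^{n-N}}$,
\[
\operatorname{Fr}_v(s_n) - s_n \;=\; \ell^N\tau_n \;=\; \operatorname{Fr}_v(P_{\ell^{n-N}}) - P_{\ell^{n-N}} \qquad (n > N).
\]
From $\operatorname{Fr}_v(s_N) = s_N$, the fact that $s_N$ is $\ell^N$-torsion with $\ell \neq p$ (so $A[\ell^N]$ is unramified at $v$) gives $\overline{s_N} \in A(\mathbb{F}_v)$. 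For $n > N$ the display says that $x_n := s_n - P_{\ell^{n-N}} \in A(\overline{\TheField})$ is fixed by $\operatorname{Fr}_v$; here $x_n$ need not be torsion, but the whole tower $\TheField(A[\ell^\infty], \ell^{-\infty}P)/\TheField$ is unramified at $v$ --- because $A$ has good reduction at $v$ and $\ell$ is invertible in a neighbourhood of $v$, so for each $n$ the scheme of $\ell^n$-division points of the section $P$ is a torsor under the finite étale group scheme $\mathcal{A}[\ell^n]$, hence finite étale, over that neighbourhood --- so $x_n$ is also fixed by the inertia group $I_{\bar v}$, hence by the full decomposition group $D_{\bar v}$, and therefore $\overline{x_n} \in A(\mathbb{F}_v)$.

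To conclude, from $\ell^{n-N}s_n = s_N$ and $\ell^{n-N}P_{\ell^{n-N}} = P$ we get $\ell^{n-N}x_n = s_N - P$ in $A(\overline{\TheField})$; applying $z \mapsto \overline z$ gives $\overline{s_N} - (P\bmod v) = \ell^{n-N}\,\overline{x_n} \in \ell^{n-N}A(\mathbb{F}_v)$ for every $n > N$. Since $A(\mathbb{F}_v)$ is a finite abelian group, $\bigcap_{k\geq 1}\ell^k A(\mathbb{F}_v)$ is exactly its prime-to-$\ell$ part, so $\overline{s_N} - (P\bmod v)$ has trivial $\ell$-part; as taking $\ell$-parts is a homomorphism and $\overline{s_N}$ is already $\ell$-power torsion, this yields $[P\bmod v]_\ell = \overline{s_N}$. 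Transporting this equality back through the isomorphism $A(\overline{\mathbb{F}_v})[\ell^\infty] \cong A(\overline{\TheField})[\ell^\infty]$ gives $[P\bmod v]_\ell = s_N = \pi_\ell(w) = \pi_\ell\big((M-\operatorname{Id})^{-1}t\big)$, as claimed.

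The argument is not deep, and I expect the only delicate points to be bookkeeping rather than substance: one must keep straight the two identifications in play --- reduction modulo $\bar v$ and the isomorphism $\pi_\ell$ of Remark \ref{rmk:VellModTell} --- so that the final identity is asserted in the correct group, and one must justify that the Kummer tower $\TheField(A[\ell^\infty], \ell^{-\infty}P)$ is unramified at $v$, which is precisely what upgrades $\operatorname{Fr}_v$-invariance of the non-torsion points $x_n$ to integrality of their reductions. The limiting step $n \to \infty$ is elementary but is what actually converts the string of congruences into the asserted equality, so it deserves to be written out with care.
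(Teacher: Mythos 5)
Your argument is correct. Note first that the paper does not prove this statement at all: it is quoted directly from Pink (\cite[Proposition 3.2]{MR2089426}), so there is no internal proof to compare against; what you have written is a self-contained reconstruction in the spirit of Pink's original argument, specialized to the identification $\pi_\ell$ of Remark \ref{rmk:VellModTell}. The chain of deductions checks out: $\det(M-\operatorname{Id})=\pm\#A(\mathbb{F}_v)\neq 0$ is exactly the computation in Proposition \ref{Prop:Cardinality_ltorsion}; writing $(M-\operatorname{Id})^{-1}t=\ell^{-N}s$ and comparing $n$-th components of $(M-\operatorname{Id})s=\ell^{N}t$ correctly uses that the $n$-th coordinate of $t$ under \eqref{eq:KummerInjection} is $\operatorname{Fr}_v(P_{\ell^n})-P_{\ell^n}$ and that $\ell^N P_{\ell^n}=P_{\ell^{n-N}}$; and the two delicate points you flag are indeed the ones that need care and are both handled properly --- the unramifiedness of $\TheField(A[\ell^\infty],\ell^{-\infty}P)/\TheField$ at $v$ (which follows, as you say, because $[\ell^n]^{-1}(P)$ is finite \'etale over a neighbourhood of $v$ in $\Spec\mathcal{O}_\TheField$, so that $\operatorname{Fr}_v$-invariance of $x_n=s_n-P_{\ell^{n-N}}$ upgrades to $\overline{x_n}\in A(\mathbb{F}_v)$), and the passage to the limit $n\to\infty$ via $\bigcap_k \ell^k A(\mathbb{F}_v)$ being the prime-to-$\ell$ part of the finite group $A(\mathbb{F}_v)$. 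The only cosmetic remark is that the conclusion $[P\bmod v]_\ell=\overline{s_N}$ is an identity in $A(\overline{\mathbb{F}_v})[\ell^\infty]$ which you then correctly transport back through the reduction isomorphism to $A(\overline{\TheField})[\ell^\infty]$, which is the group in which the theorem's equality is asserted.
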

	\begin{remark}
		Notice that, as a consequence of the Weil conjectures, the eigenvalues of $M$ (as in Theorem \ref{thm:Pink}) are all of absolute value $\sqrt{\#\mathbb{F}_v}$. In particular, $M-\operatorname{Id} : V_\ell(A) \to V_\ell(A)$ is invertible. Also notice that in general $(M-\operatorname{Id})^{-1}t$ is an element of $V_\ell(A) := T_\ell(A) \otimes_{\mathbb{Z}_\ell} \mathbb{Q}_\ell$, but not necessarily of $T_\ell(A)$.   Finally, to make the connection between our language and that of \cite{MR2089426}, notice that  
		\[
		\begin{array}{cccc}
		\lambda : & \mathbb{Z}[1/\ell] & \to & A(\overline{\TheField}) \\
		& \frac{b}{\ell^k} & \mapsto & b P_{\ell^k}
		\end{array}
		\]
		is a \textit{special splitting} in the sense of \cite[§2]{MR2089426}.
		
	\end{remark}

	
	\subsubsection{Adelic Kummer theory for abelian varieties}
	We now review the last result we need from Kummer theory. The following theorem is essentially due to Ribet \cite{MR552524}; for the version quoted here see Hindry \cite[§2, Proposition 1]{MR969244} or Bertrand \cite[Theorem 1]{MR971992}:
	\begin{theorem}\label{thm:Hindry}
		Let $A$ be an abelian variety over a number field $\TheField$, and let $P \in A(\TheField)$ be a non-degenerate point. There is a constant $c=c(A/\TheField, P)$, which only depends on $\TheField$, $A$, and $P$, such that, for every integer $m$, the index of $N_m$ in $A[m]$ (under the identification given by \eqref{eq:KummerInjection}) is bounded by $c$.
	\end{theorem}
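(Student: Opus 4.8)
The plan is to reconstruct the argument of Ribet \cite{MR552524}, in the streamlined form due to Hindry \cite[§2]{MR969244} and Bertrand \cite[Theorem~1]{MR971992}. The first step is a reduction to prime powers. By Serre's results on the adelic Galois representation attached to $A$, the image of $\Gamma_\TheField$ in $\prod_\ell \operatorname{Aut}(T_\ell A)$ has finite index in the product of the individual $\ell$-adic images; combining this with the fact that for $m=\prod_\ell \ell^{n_\ell}$ the field $\TheField(A[m],[m]^{-1}P)$ is the compositum over $\TheField(A[m])$ of the fields $\TheField(A[\ell^{n_\ell}],[\ell^{n_\ell}]^{-1}P)$, which are almost linearly disjoint over $\TheField(A[m])$ (the entanglement being governed by a fixed constant), one bounds $[A[m]:N_m]$ by a fixed constant times $\prod_\ell [A[\ell^{n_\ell}]:N_{\ell^{n_\ell}}]$. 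So it suffices to prove two things: (i) for each prime $\ell$ the index $[A[\ell^n]:N_{\ell^n}]$ is bounded uniformly in $n$, and (ii) this index equals $1$ for all but finitely many $\ell$ (and all $n$).

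For a fixed $\ell$, I would pass to the inverse limit over $n$ and set $W_\ell \defeq \operatorname{Gal}\bigl(\TheField(\ell^{-\infty}P)/\TheField(A[\ell^\infty])\bigr) \subseteq T_\ell(A)$, a closed $\mathbb{Z}_\ell$-submodule with $[A[\ell^n]:N_{\ell^n}]\le [T_\ell(A):W_\ell]$. As in the discussion preceding \eqref{eq:KummerInjection}, conjugation exhibits $W_\ell$ as a $\mathbb{Z}_\ell[G_\ell]$-submodule, where $G_\ell$ is the image of $\Gamma_\TheField$ in $\operatorname{Aut}(T_\ell A)$: concretely, writing $c(\gamma)=(\gamma P_{\ell^n}-P_{\ell^n})_n$ for the Kummer cocycle, one has $c(\sigma\tau\sigma^{-1}\tau^{-1})=(\rho_\ell(\sigma)-1)\,c(\tau)$ for $\sigma\in\Gamma_\TheField$ and $\tau\in\Gamma_{\TheField(A[\ell^\infty])}$, which gives the $G_\ell$-stability. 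The first main point is that non-degeneracy of $P$ forces $W_\ell\otimes\mathbb{Q}_\ell = V_\ell(A)$. Indeed, by Faltings' semisimplicity theorem $V_\ell(A)$ is a semisimple $\mathbb{Q}_\ell[G_\ell]$-module with $\operatorname{End}_{G_\ell}(V_\ell A)=\operatorname{End}^0(A)\otimes\mathbb{Q}_\ell$, so any proper $G_\ell$-stable subspace is (up to isogeny) the Tate module of a proper abelian subvariety $B\subsetneq A$; a cocycle/descent argument then shows that $W_\ell\otimes\mathbb{Q}_\ell\subseteq V_\ell(B)$ would put a nonzero multiple of $P$ into $B$ modulo torsion (possibly after a finite extension), contradicting that $\mathbb{Z}P$ is Zariski-dense in $A$. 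Hence $W_\ell$ has finite index in $T_\ell(A)$, which already establishes (i) with a constant depending a priori on $\ell$.

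For the uniformity (ii), by Nakayama's lemma it is enough to show that $W_\ell$ surjects onto $A[\ell]=T_\ell(A)/\ell T_\ell(A)$, i.e.\ that $N_\ell=A[\ell]$, for all but finitely many $\ell$. The $\mathbb{F}_\ell$-subspace $N_\ell\subseteq A[\ell]$ is stable under $\operatorname{Gal}(\TheField(A[\ell])/\TheField)$ and is cut out by the reduction mod $\ell$ of the Kummer cocycle of the non-degenerate point $P$; the claim is that it can be proper for only finitely many $\ell$. This rests on two inputs: first, that $P$ is not ``almost divisible'', i.e.\ there are only finitely many $\ell$ with $P\in\ell A\bigl(\TheField(A[\ell])\bigr)$ — which follows from Ribet's uniform bound on $\#A\bigl(\TheField(A[\ell])\bigr)_{\mathrm{tors}}$ combined with a reduction-modulo-$\mathfrak{p}$ and Chebotarev argument; and second, a classification of the $\operatorname{Gal}(\TheField(A[\ell])/\TheField)$-submodules of $A[\ell]$ compatible with the Kummer extension structure, which forces $N_\ell$ to be all of $A[\ell]$ once $P$ is not almost divisible and $\ell$ is large. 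Granting (i) and (ii), one sets $c=\prod_{\ell}[T_\ell(A):W_\ell]$, a finite product. The hard part of the whole argument is step (ii): controlling the finitely many ``bad'' primes for Kummer theory uniformly — this is exactly where the depth of Ribet's and Hindry's work lies, whereas the reduction to prime powers and the rational fullness $W_\ell\otimes\mathbb{Q}_\ell=V_\ell(A)$ are comparatively formal once one invokes Serre's and Faltings' theorems.
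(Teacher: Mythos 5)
The first thing to say is that the paper does not prove this statement at all: Theorem \ref{thm:Hindry} is imported verbatim from the literature (Ribet \cite{MR552524}, in the form given by Hindry \cite[\S 2, Proposition 1]{MR969244} or Bertrand \cite[Theorem 1]{MR971992}), so there is no internal proof to compare your argument against. What you have written is a reconstruction of the cited proof, and as a high-level outline it is faithful to it: the reduction to prime powers via almost-linear-disjointness and Serre's independence-of-$\ell$ results, the identification of $W_\ell \defeq \operatorname{Gal}(\TheField(\ell^{-\infty}P)/\TheField(A[\ell^\infty]))$ as a $G_\ell$-stable submodule of $T_\ell(A)$ via the commutator relation for the Kummer cocycle, the use of Faltings' semisimplicity and the Tate conjecture to convert a proper $G_\ell$-stable subspace into a proper abelian subvariety, and the final appeal to non-degeneracy are all exactly the skeleton of Hindry's and Bertrand's arguments. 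The inequality $[A[\ell^n]:N_{\ell^n}]\le[T_\ell(A):W_\ell]$ is also correct.

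That said, as a proof the proposal has a genuine gap: the two load-bearing steps are asserted rather than argued, and the one ingredient that makes both of them work is never named. The ``cocycle/descent argument'' that converts $W_\ell\otimes\mathbb{Q}_\ell\subseteq V_\ell(B)$ into a nonzero multiple of $P$ lying in $B$ modulo torsion requires knowing that the restriction map $H^1(\Gamma_\TheField, T_\ell A)\to H^1(\Gamma_{\TheField(A[\ell^\infty])}, T_\ell A)$ has finite (indeed, uniformly killed) kernel; this is where Bogomolov's theorem on homotheties in the image of $\rho_\ell$ enters (via Sah's lemma, $H^1(\operatorname{im}\rho_\ell, T_\ell A)$ is killed by $\lambda-1$ for a scalar $\lambda$ in the image), and without it the descent does not close. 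The same input, in the uniform form that for all but finitely many $\ell$ the image of $\Gamma_\TheField$ in $\operatorname{Aut}(A[\ell])$ contains a nontrivial homothety, is what drives your step (ii): the vanishing of $H^1(H_\ell,A[\ell])$ for large $\ell$, together with the mod-$\ell$ semisimplicity and isogeny theorems (Faltings--Zarhin) needed to identify $H_\ell$-submodules of $A[\ell]$ with (kernels of reductions of) abelian subvarieties, is what forces $N_\ell=A[\ell]$ once $P$ is not $\ell$-divisible over $\TheField(A[\ell])$. Your appeal to ``a classification of the submodules compatible with the Kummer extension structure'' is precisely the step where all the difficulty of \cite{MR552524} and \cite{MR969244} is concentrated, and it cannot be treated as a black box in a self-contained proof. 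In the context of this paper the intended ``proof'' is simply the citation; if you want to supply one, the missing pieces to add explicitly are Bogomolov's homothety theorem and the uniform $H^1$-vanishing it yields.
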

	
	\begin{remark}
		\cite[§2, Proposition 1]{MR969244} is stated under the assumption that $P$ be indivisible in $A(\TheField)$; the conclusion is then that the constant $c(A/\TheField, P)$ is independent of $P$. One may readily check that (allowing $c$ to also depend on $P$) the hypothesis that $P$ be indivisible may be removed; this is also the statement given in \cite{MR971992}.
	\end{remark}

	We will use this result in the form of the following obvious corollary:
	\begin{corollary}\label{cor:KummerTheory} Let $c$ be as in Theorem \ref{thm:Hindry}. Then, for any choice of primes $\ell_1, \ldots, \ell_r$, all strictly larger than $c$, the Galois group of
		\[
		\TheField( \ell_1^{-\infty}P, \ldots, \ell_r^{-\infty}P ) / \TheField(A[\ell_1^\infty], \ldots, A[\ell_r^\infty])
		\]
		is isomorphic to $\prod_{i=1}^r T_{\ell_i}(A)$ under the injection induced by \eqref{eq:KummerInjection}.
	\end{corollary}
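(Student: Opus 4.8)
The plan is to obtain the corollary directly from Theorem~\ref{thm:Hindry}, applied not only to prime powers but to all integers $m$ whose prime factors lie in $\{\ell_1,\dots,\ell_r\}$, and then to pass to the inverse limit.

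First I would fix such an integer, say $m = \ell_1^{n_1}\cdots\ell_r^{n_r}$ with $n_i \ge 0$, and examine the subgroup $N_m = \Gal(\TheField_{P,m}/\TheField_m)$ of the finite abelian group $A[m]$, the identification being the one furnished by \eqref{eq:KummerInjection}. The crucial point is that this subgroup must be all of $A[m]$. Indeed, on the one hand $[A[m]:N_m] \le c$ by Theorem~\ref{thm:Hindry}; on the other hand this index divides $\#A[m]$, which is a product of powers of $\ell_1,\dots,\ell_r$, and hence is of the form $\prod_{i=1}^r \ell_i^{b_i}$ with $b_i \ge 0$. If some $b_i$ were positive we would get $\ell_i \le [A[m]:N_m] \le c$, contradicting the hypothesis $\ell_i > c$. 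Therefore $[A[m]:N_m] = 1$, i.e.\ $N_m = A[m]$. Note that this step uses only that $A[m]$ has order supported on $\{\ell_1,\dots,\ell_r\}$, not any product decomposition of $N_m$ itself.

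Next I would pass to the limit. As recalled just before the corollary, $\TheField(\ell_1^{-\infty}P,\dots,\ell_r^{-\infty}P)$ is the union of the fields $\TheField_{P,m}$ and $\TheField(A[\ell_1^\infty],\dots,A[\ell_r^\infty])$ is the union of the fields $\TheField_m$, over all $m$ supported on $\{\ell_1,\dots,\ell_r\}$ and ordered by divisibility. By the previous paragraph $\Gal(\TheField_{P,m}/\TheField_m) = A[m]$ for every such $m$, so taking $\varprojlim_m$ and using that the injections \eqref{eq:KummerInjection} are compatible with the transition maps (a consequence of the fixed compatible system $(P_m)$), one gets that the Galois group in the statement equals $\varprojlim_m A[m] = \prod_{i=1}^r T_{\ell_i}(A)$, the isomorphism being the one induced by \eqref{eq:KummerInjection}.

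I do not expect a genuine obstacle here: all the real content is in Theorem~\ref{thm:Hindry}, and what makes the several–prime case fall out so cheaply is precisely that this theorem bounds $[A[m]:N_m]$ uniformly over \emph{all} $m$ rather than merely over prime powers — so no independence (Goursat-type) argument among the various $T_{\ell_i}(A)$ is needed. The only points that call for (routine) care are the identifications $\TheField(\ell_1^{-\infty}P,\dots,\ell_r^{-\infty}P) = \bigcup_m \TheField_{P,m}$ and $\TheField(A[\ell_1^\infty],\dots,A[\ell_r^\infty]) = \bigcup_m \TheField_m$, and the check that the limit of the injections \eqref{eq:KummerInjection} is again an isomorphism onto $\prod_{i=1}^r T_{\ell_i}(A)$.
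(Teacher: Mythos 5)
Your proof is correct and takes essentially the same approach as the paper: show $N_m = A[m]$ for every $m$ supported on $\{\ell_1,\dots,\ell_r\}$ by noting that $[A[m]:N_m]$ both divides $\#A[m]$ (hence is a product of $\ell_i$-powers, so if nontrivial is $\ge \min_i \ell_i > c$) and is $\le c$ by Theorem~\ref{thm:Hindry}, then pass to the inverse limit. The paper states this more tersely but the argument is identical.
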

	\begin{proof}
		It suffices to show that for each $m=\ell_1^{e_1} \cdots \ell_r^{e_r}$ the image of $N_m$ via \eqref{eq:KummerInjection} is all of $A[m] \rtimes \{1\}$. If this were not the case, then by our assumption that $\ell_1,\ldots, \ell_r >c$, the index $[A[m] : N_m]$ would be at least $\min\{\ell_i\} > c$, which contradicts Theorem \ref{thm:Hindry}.
	\end{proof}
	
	\subsection{The case of a cyclic dense subgroup}
	Let $A$ be an abelian variety over a number field $\TheField$, and let $P \in A(\TheField)$ be a non-degenerate point.
	Let $c=c(A/\TheField, P)$ be as in Theorem \ref{thm:Hindry} (and Corollary \ref{cor:KummerTheory}) and fix a positive integer $d$; when applying the results of this section to our study of covers of abelian varieties, the integer $d$ will represent the degree of a cover.
	We define
	\[
	S:= \{ \ell \ \textrm{prime number} \ \vert \ \ell \leq \max\{c,d\} \}.
	\]
	Note that  $S$ is clearly finite; it is the set of ``bad'' primes we will have to avoid in our arguments.
	Let $\zeta \in A(\overline{\TheField})$ be a torsion point of order $m=\ell_1^{e_1} \cdots \ell_r^{e_r}$, and assume that each prime factor $\ell_i$ is not in $S$. Let $L$ be a finite Galois extension of $\TheField(\zeta)$ of degree dividing $d!$ and write $\Delta$ for the Galois group of $L$ over $\TheField(\zeta)$.
	Finally fix a nonempty conjugacy-stable subset $\delta \subseteq \Delta$. 
	The following proposition and corollary show that, given such a torsion point $\zeta$, one can find a finite place $v$ of $K(\zeta)$  and a finite index coset $C$ of $\langle P\rangle$ such that every $c$ in $C$ is congruent to $\zeta$  modulo $v$, whilst simultaneously fixing the Frobenius conjugacy class of $v$ in the extension $L/\TheField(\zeta)$:

	\begin{proposition}\label{prop:Kummer}
		With the notation as above, there is a positive density set of  finite places  $v$ of $\TheField(\zeta)$ with the following properties:
		\begin{enumerate}
			\item $v$ is unramified in $L$, and the corresponding Frobenius element in $\Delta$ lies in $\delta$;
			\item $A \times_\TheField \TheField(\zeta)$ has good reduction at $v$, and $v$ does not divide $m=\ord(\zeta)$;
			\item the $\ell_i$-part of $P \bmod v$ is equal to the $\ell_i$-part of $\zeta$ for all $i=1,\ldots,r$.
		\end{enumerate}
	\end{proposition}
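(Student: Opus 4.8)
The plan is to package conditions (1)--(3) into a single Frobenius condition in one explicit finite Galois extension of $K(\zeta)$ and then apply the Chebotarev density theorem. First I would set $K' := K(\zeta)$ and $\zeta_i := [\zeta]_{\ell_i} \in A(K')$, an $\ell_i^{e_i}$-torsion point. Since $\zeta_i\in A[\ell_i^{e_i}]$ one has $K'\subseteq K(A[m])\subseteq K(A[\ell_1^\infty],\dots,A[\ell_r^\infty])\subseteq K(\ell_1^{-\infty}P,\dots,\ell_r^{-\infty}P)$, so passing from $K$ to $K'$ leaves the torsion field and the Kummer field unchanged; hence Corollary \ref{cor:KummerTheory} (valid because $\ell_i>c=c(A/K,P)$) descends to give
\[
\Gal\bigl(K'(\ell_1^{-\infty}P,\dots,\ell_r^{-\infty}P)/K'\bigr)=\Bigl(\textstyle\prod_{i=1}^r T_{\ell_i}(A)\Bigr)\rtimes H',\qquad H':=\Gal\bigl(K'(A[\ell_1^\infty],\dots,A[\ell_r^\infty])/K'\bigr),
\]
meaning that every pair $\bigl((t_i)_i,h\bigr)$, $t_i\in T_{\ell_i}(A)$, $h\in H'$, is realized. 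I would let $S'$ be the finite set of places of $K'$ lying over the primes in $S\cup\{p:p\mid m\}$, the bad-reduction places of $A_{K'}$, and the places ramified in $L$. Because every prime factor of $[L:K']\mid d!$ is $\le d<\ell_i$ for all $i$, while $\Gal\bigl(K'(\ell_1^{-\infty}P,\dots)/K'(A[\ell_1^\infty],\dots)\bigr)=\prod_iT_{\ell_i}(A)$ is pro-$\{\ell_1,\dots,\ell_r\}$ and each $\Gal\bigl(K'(A[\ell_i^\infty])/K'(A[\ell_i])\bigr)$ is a normal pro-$\ell_i$ subgroup, one checks $L\cap K'(\ell_1^{-\infty}P,\dots)=L_0\subseteq K'(A[\ell_1\cdots\ell_r])$, where $L_0/K'$ is Galois of degree dividing $d!$. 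Thus, writing $F_N:=K'\bigl(A[(\ell_1\cdots\ell_r)^N],\,[(\ell_1\cdots\ell_r)^N]^{-1}P\bigr)$, we get $L\cap F_N=L_0$ and $\Gal(L\cdot F_N/K')=\Delta\times_{\Gal(L_0/K')}\Gal(F_N/K')$ for every $N\ge1$.

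Next, fixing $\delta'\in\delta$, I would use Chebotarev for $L_0/K'$ to pick a place $w_0\notin S'$ of $K'$ with $\Fr_{w_0}|_{L_0}=\delta'|_{L_0}$, and set $M_i:=\rho_{\ell_i}(\Fr_{w_0})$. Good reduction and $w_0\nmid\ell_i$ force $M_i-\operatorname{Id}$ invertible on $V_{\ell_i}(A)$ (Weil bounds), with $d_i:=v_{\ell_i}\bigl(\det(M_i-\operatorname{Id})\bigr)=v_{\ell_i}\bigl(\#A(\mathbb{F}_{w_0})[\ell_i^\infty]\bigr)<\infty$. Choosing a lift $w_i\in V_{\ell_i}(A)$ of $\zeta_i$ (Remark \ref{rmk:VellModTell}), the key observation is that $\zeta_i\in A(K')$, so every element of $H'$ fixes $\zeta_i$ and therefore $t_i:=(M_i-\operatorname{Id})w_i\in T_{\ell_i}(A)$; by the maximal Kummer property the pair $g:=\bigl((t_i)_i,h\bigr)$, with $h$ the image of $\Fr_{w_0}$ in $H'$, is an element of $\Gal(K'(\ell_1^{-\infty}P,\dots)/K')$ restricting to $\delta'|_{L_0}$ on $L_0$. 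Since $(M_i-\operatorname{Id})^{-1}t_i=w_i$, Pink's formula (Theorem \ref{thm:Pink}) gives $[P\bmod v]_{\ell_i}=\pi_{\ell_i}(w_i)=\zeta_i$ for any place $v$ whose $\ell_i$-adic Frobenius equals $(t_i,M_i)$; a routine $\ell_i$-adic approximation (using that the elementary divisors of $M_i-\operatorname{Id}$ are $\le d_i$ and $w_i\in\ell_i^{-e_i}T_{\ell_i}(A)$) upgrades this to: it suffices that $\Fr_v$ agree with $g$ modulo level $N$, provided $N>\max_i(e_i+2d_i)$, which I fix.

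Finally, I would form $\widetilde g:=(\delta',\bar g)\in\Gal(L\cdot F_N/K')=\Delta\times_{\Gal(L_0/K')}\Gal(F_N/K')$ (well defined as $\delta'|_{L_0}=\bar g|_{L_0}$), and invoke Chebotarev for $L\cdot F_N/K'$: a positive-density set of places $v$ of $K'$ has Frobenius in the conjugacy class of $\widetilde g$. After discarding $S'$, each such $v$ is unramified in $L$ with Frobenius in $\delta$ (condition (1)), has good reduction and does not divide $m$ (condition (2)), and by the previous step satisfies $[P\bmod v]_{\ell_i}=\zeta_i$ for all $i$ (condition (3)). I expect the main subtlety to be exactly the reduction of (3) to a finite-level Frobenius condition: the two things that make it work are maximal Kummer theory over $K'$ (so that the pair $g$ actually occurs in the Galois group) and the rationality of $\zeta_i$ over $K'$ (which makes $t_i$ land in $T_{\ell_i}(A)$ and, in particular, forces $\ell_i^{e_i}\mid\#A(\mathbb{F}_{w_0})$, so no "$1$ is an eigenvalue" condition must be separately engineered); the $\ell_i$-adic approximation and the intersection computations $L\cap F_N$, $L\cap K'(A[\ell_i^\infty])$ are then bookkeeping.
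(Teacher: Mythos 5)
Your proof is correct and follows essentially the same route as the paper: full Kummer image over $K(\zeta)$ (Corollary \ref{cor:KummerTheory}), a Frobenius with $\det(M_i-\operatorname{Id})\neq 0$ via the Weil bounds, the observation that $\Gamma_{K(\zeta)}$ fixing $\zeta$ forces the Kummer correction term $(M_i-\operatorname{Id})w_i$ to be integral, and then Chebotarev combined with Pink's formula. The only difference is presentational — you reduce to an explicit finite level $F_N$ with an $\ell_i$-adic approximation, whereas the paper phrases the same conditions as a nonempty open subset of the profinite group $G_1$ and applies Chebotarev there.
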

	
	Before proving the proposition we briefly pause to explain its role in our approach. Given a (GPB) cover $\phi : Y \to A$, Lemma \ref{Lem:Torsion_point_lemma} will provide us with a torsion point $\zeta \in A(\overline{\TheField})$ that does not lift to $Y(\TheField(\zeta))$. Fix in addition a Zariski-dense cyclic subgroup $\Omega \subseteq A(\TheField)$. We will use Proposition \ref{prop:Kummer} to show that there is a finite index coset of $\Omega$ all of whose points reduce to $\zeta$ modulo a suitable place $v$ of $\TheField(\zeta)$: the idea is then to show that the points in this coset do not lift to $Y$. In order to draw this conclusion we also need to impose some additional conditions related to the behaviour of the place $v$ in the field over which the fibre $\phi^{-1}(\zeta)$ splits: in the present setting, such conditions are encapsulated by the field $L$ and by the conjugacy class $\delta$. Finally, we point out that this result has no analogue in \cite{ZannierDuke}, and replaces a completely different argument that used Serre's open image theorem for elliptic curves, a tool that is not available to us in the context of general abelian varieties.
	
	\begin{proof}
		Let $\TheField_{P,\infty}$ be the extension of $\TheField$ generated by $\ell_i^{-\infty} P$ for all $i=1,\ldots,r$ and $\TheField_\infty = \TheField(A[\ell_i^\infty] : i =1,\ldots, r)$. Note that $K_{\infty} = K(\zeta)(A[\ell_i^\infty] : i =1,\ldots, r) $ and similarly for $K_{P,\infty}$.
		Letting $G=\operatorname{Gal}(\TheField_{P,\infty}/\TheField(\zeta))$, $H=\operatorname{Gal}\left( \TheField_\infty /\TheField(\zeta)\right)$ and $N = \operatorname{Gal}(\TheField_{P,\infty} / \TheField_\infty) $, we have the exact sequence
		\[
		1 \to N \to G \to H \to 1.
		\]
		Writing $T := \prod_{i=1}^r T_{\ell_i}(A)$, by passing to the limit in \eqref{eq:KummerInjection} we may identify $G$ with a subgroup of $T \rtimes \Aut(T)$. Since every prime $\ell_i$ is strictly larger than $c$, by Corollary \ref{cor:KummerTheory}, the subgroup $N$ can be identified with $T \rtimes \{1\}$. We will write $t_i$ for the projection of an element $t \in T$ to $T_{\ell_i}(A)$.
		
		It will also be useful to fix $\mathbb{Z}_{\ell_i}$-bases of the Tate modules $T_{\ell_i}(A)$ as follows. 
		For each prime   divisor $\ell_i$ of $m$, write $[\zeta]_{\ell_i}$ as $n_i\zeta$, and note that it is a torsion point of order $\ell_i^{e_i}$. For every such $\ell_i$, fix a basis $v_{i,1},\ldots,v_{i,2g}$ of $T_{\ell_i}(A)$ with the property that $\pi_{\ell_i}(\ell_i^{-e_i} v_{i,1}) = n_i\zeta  $, where $\pi_{\ell_i}$ is defined by $\eqref{defn:pi}$. This is possible because $n_i\zeta$ is a torsion point of exact order $\ell^{e_i}_i$.
		We use these bases to identify $T \cong \prod_{i=1}^r T_{\ell_i}(A)$ with $\prod_{i=1}^r \mathbb{Z}_{\ell_i}^{2g}$ and $\Aut(T)$ with $\prod_{i=1}^r \operatorname{GL}_{2g}(\mathbb{Z}_{\ell_i})$. 
		By  the definition of $\pi_{\ell_i}$ in \eqref{defn:pi}  and Remark \ref{rmk:VellModTell},    we see that a representative in $V_{\ell_i}(A) \cong \mathbb{Q}_{\ell_i}^{2g}$ of $n_i\zeta$ is $\ell_i^{-e_i} (1 \ 0 \ \ldots \ 0)^T$.

		Let $F$ be the extension of $\TheField$ generated by $\TheField_{P,\infty}$ and by $L$. The Galois group $G_1$ of $F$ over $\TheField(\zeta)$ injects into $\Delta \times G \subseteq \Delta \times (T \rtimes \Aut(T))$. We will represent elements $g_1$ of $G_1$ as triples $(\gamma, (t,M))$ with $\gamma \in \Delta$, $t \in T$ and $M \in \Aut(T)$.
		Notice that $M$ corresponds to a collection $(M_1,\ldots,M_r)$ of matrices with $M_i \in \GL_{2g}(\mathbb{Z}_{\ell_i})$, or equivalently, $M$ is a matrix in $\GL_{2g}\left(\prod_{i=1}^r \mathbb{Z}_{\ell_i}\right)$. It thus makes sense to compute the determinant of $M$ as an element of the ring $\prod_{i=1}^r \mathbb{Z}_{\ell_i}$ (independent of the choice of basis); equivalently, $\det(M)$ is the collection $(\det(M_1),\ldots, \det(M_r))$.
		
		We claim that $G_1$ contains $\{1\} \times (T \rtimes \{1\})$. On the one hand, multiplication by $\#\Delta$ is invertible on the profinite abelian group $T$ (because, by assumption, $\#\Delta$ is relatively prime to the order of any finite quotient of $T$), so, given $t \in T$, we may find $t' \in T$ such that $[\#\Delta] t' =t$. On the other hand, basic Galois theory shows that $G_1$ surjects onto $G$, which contains $T \rtimes \{1\}$. Therefore, given any $t \in T$, we know that $G_1$ contains an element of the form $g_1=(\gamma, (t',1))$ for some $\gamma \in \Delta$ and $t' \in T$ such that $[\#\Delta] t'=t$. It follows that $G_1$ contains $g_1^{\#\Delta}=(1,(t,1))$, which proves the claim.
		
		We now consider
		\[
		U = \left\{ (\gamma, (t,M)) \in G_1 \bigm\vert \begin{array}{c}
		\det(M_i-\operatorname{Id}) \neq 0 \ \textrm{for all }  i=1,\ldots,r \\
		\gamma \in \delta \\
		
		\pi_{\ell_i}( ((M-\operatorname{Id})^{-1} t)_i ) = [\zeta]_{\ell_i} \text{ for } i=1,\ldots,r
		\end{array}  \right\}.
		\]
		Note that $U$ is open in $G_1$: indeed, the first condition is clearly open, while the condition $\gamma \in \delta$ factors via the finite quotient $G_1 \twoheadrightarrow \Delta$.
		Finally, $(\gamma, (t,M)) \mapsto \pi_{\ell_i}(((M-\operatorname{Id})^{-1}t)_i)=\pi_{\ell_i}( (M_i-\operatorname{Id})^{-1} t_i )$ is a continuous function with values in the discrete set $A[\ell_i^\infty]$, hence it is locally constant. 
		Since we are imposing only finitely many of these conditions, $U$ is the finite intersection of open subsets of $G_1$ and is therefore open.
		
		To prove the proposition, it suffices to show that    $U$ is nonempty. To see this, note that the Chebotarev density theorem yields the existence of a positive density set of  places   unramified in $L$ and whose corresponding Frobenius conjugacy class lies in $U$. Discarding the finitely many places that do not satisfy condition (2) still leaves us with a positive density set of places that satisfy all conditions in the statement (notice that all places whose Frobenius belongs to $U$ satisfy (3) thanks to Theorem \ref{thm:Pink}). Thus it suffices to show that $U$ is nonempty.

		We start by choosing an element $g_1=(\gamma,(t,M)) \in G_1$ with $\gamma \in \delta$ and $\det(M_i-\operatorname{Id}) \neq 0$ for every $i=1,\ldots, r$. To see that such an element exists, notice first that $G_1 \to \Delta$ is surjective, and that by the Chebotarev density theorem (applied to the extension $F/\TheField(\zeta)$, with Galois group $G_1$) we may find an element $g_1$ whose image in $\Delta$ lies in $\delta$ and which is the Frobenius of some place $w$ of $\TheField(\zeta)$. As a consequence of the Weil conjectures, all the eigenvalues of $M_i$ (which corresponds to the action of a Frobenius at $w$ on $T_{\ell_i}(A)$) then have absolute value $\sqrt{\#\mathbb{F}_w} \neq 1$, which ensures that $\det(M_i-\operatorname{Id})$ is nonzero for all $i$.


		The fact that $g_1=(\gamma, (t,M))$ fixes $\zeta$ (recall that $g_1 \in G_1 = \operatorname{Gal}(F/\TheField(\zeta))$) translates into the following condition. Write as before $M=(M_1,\ldots,M_r)$ with $M_i \in \GL_{2g}(\mathbb{Z}_{\ell_i})$. Reducing $M_i$ modulo $\ell_i^{e_i}$ gives the action of $g_1$ on $A[\ell_i^{e_i}]$, which must be trivial on the first basis element since $G_1$ fixes $\zeta$ (hence also $n_i\zeta$). It follows that we have
		\begin{equation}\label{eq:MatrixCongruence}
		M_i \equiv \begin{pmatrix}
		1 & * & \cdots & * \\
		0 & * & \cdots & * \\
		\vdots & \vdots & \cdots & \vdots \\
		0 & * & \cdots & * 
		\end{pmatrix} \pmod{\ell^{e_i}_i}
		\end{equation}
		where each entry $*$ may in principle be arbitrary. 
		We will now construct an element of $U$ as a product $(1,(t',1)) \cdot g_1$ for a suitable $t' \in T$. We have already observed that all elements of the form $(1,(t',1))$ are in $G_1$, and $g_1$ is in $G_1$ by construction, so for any choice of $t'$ we do indeed get an element of $G_1$. For $t'\in T$ (to  be determined later), consider the element
		\[
		(1,(t',1)) \cdot (\gamma, (t,M)) = (\gamma,(t'+t,M)).
		\]
		Note that its image in $\Delta$ is $\gamma \in \delta$, and that $\det(M_i-\operatorname{Id})\neq 0$ for all $i =1,\ldots, r$ by our choice of $M$. Hence, for this element to lie in $U$, we only need to ensure that we can satisfy the conditions
		\[
		\pi_{\ell_i} (((M-\operatorname{Id})^{-1} (t'+t))_i) = [\zeta]_{\ell_i} \quad \forall i=1,\ldots,r.
		\]  This condition is implied by the equality (which we write in the coordinates chosen above)
		\[
		(M_i-\operatorname{Id})^{-1} (t'+t)_i = \ell_i^{-e_i} (1 \ 0 \ \ldots \ 0)^T \in V_{\ell_i}(A).
		\]
		Since $M_i-\operatorname{Id}$ is invertible, the latter equality is in turn equivalent to
		$$
		(t'+t)_i = (M_i-\operatorname{Id}) \ell_i^{-e_i} (1 \ 0 \ \ldots \ 0)^T.
		$$
		This equation clearly has the unique solution $$t'_i = -t_i + (M_i-\operatorname{Id}) \ell_i^{-e_i} (1 \ 0 \ \ldots \ 0)^T$$ in $V_{\ell_i}(A)$, and we claim that this element $t'_i$ lies in $T_{\ell_i}(A)$. Indeed, the congruence in Equation \eqref{eq:MatrixCongruence} ensures that the first column of $M_i - \operatorname{Id}$ is an integral multiple of $\ell_i^{e_i}$, so that the vector $(M_i-\operatorname{Id}) \ell_i^{-e_i} (1 \ 0 \ \ldots \ 0)^T$ (which is simply $\ell_i^{-e_i}$ times the first column of $M_i - \operatorname{Id}$) has integral coordinates -- equivalently, given our identifications, it corresponds to an element of $T_{\ell_i}(A)$. We may then take $t'=(t'_1,\ldots,t'_r)$, where each $t_i'$ is as constructed above, to obtain as desired that the product $(1,(t',1)) \cdot g_1$ lies in $U$. This proves that $U$ is nonempty, which by our previous remarks finishes the proof of the proposition.
	\end{proof}
	
	\begin{corollary}\label{Cor:The_coset}
		Let $v$ be as in Proposition \ref{prop:Kummer}. Then there exist integers $s$ and $q$ such that every  $Q$ in the coset $sP + \mathbb{Z} (mq P)$ of $\mathbb{Z}P$ is congruent to  $\zeta$ modulo $v$.
	\end{corollary}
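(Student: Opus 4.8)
The plan is to combine the three conditions provided by Proposition~\ref{prop:Kummer} with the explicit description of the $\ell$-part of a torsion element (the Remark following the definition of $\ell$-part). Fix the place $v$ of $\TheField(\zeta)$ as in Proposition~\ref{prop:Kummer}, write $m = \ord(\zeta) = \ell_1^{e_1}\cdots\ell_r^{e_r}$, and recall that by construction each $\ell_i \notin S$, so in particular $v$ does not divide $m$ and $A$ has good reduction at $v$. The key point is that condition~(3) of Proposition~\ref{prop:Kummer} tells us $[P \bmod v]_{\ell_i} = [\zeta]_{\ell_i}$ for all $i$, i.e.\ the reduction $P\bmod v \in A(\F_v)$ has the \emph{same} $\ell_i$-primary part as $\zeta$ for every prime $\ell_i$ dividing $m$.

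First I would reduce, by a suitable integer multiple, the point $P\bmod v$ to exactly $\zeta$. Let $n = \#A(\F_v)$ (a nonzero integer, since $A$ has good reduction at $v$), and write $n = m' \cdot q$ where $m'$ is the largest divisor of $n$ supported on the primes $\ell_1,\dots,\ell_r$ and $\gcd(q, \ell_1\cdots\ell_r) = 1$; note $m \mid m'$ since $\zeta$ (whose order is $m$) lies in $A(\F_v)$ after reduction --- indeed $\zeta$ has order dividing $\#A(\overline{\F_v})$ and its reduction lies in $A(\F_v)$ because $v\nmid m$ and the $m$-torsion is étale over $\mathcal{O}_{\TheField(\zeta)_v}$, so actually $\zeta \bmod v \in A(\F_v)[m]$. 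Choose an integer $s$ with $s \equiv 1 \pmod{m'}$ and $s \equiv 0 \pmod q$; then for any $R \in A(\F_v)$ the element $sR$ equals the $\{\ell_1,\dots,\ell_r\}$-primary part of $R$, i.e.\ the sum $\sum_i [R]_{\ell_i}$. Applying this with $R = P \bmod v$ and using condition~(3) together with the fact that $\zeta = \sum_i [\zeta]_{\ell_i}$ (as $\ord(\zeta)$ is exactly $m = \prod \ell_i^{e_i}$), we get $sP \equiv \sum_i [P\bmod v]_{\ell_i} = \sum_i [\zeta]_{\ell_i} = \zeta \pmod v$.

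Next I would arrange that translating $sP$ by multiples of a suitable vector does not disturb this congruence. Set $q$ as above (or more simply take any $q$ with $mq$ annihilating $A(\F_v)$, e.g.\ $q = n/m$ if $m\mid n$, which holds since $m\mid m'\mid n$); then $mqP \equiv 0 \pmod v$, so for every integer $k$ we have $sP + k(mqP) \equiv sP \equiv \zeta \pmod v$. Hence every $Q$ in the coset $sP + \mathbb{Z}(mqP)$ of $\mathbb{Z}P$ reduces to $\zeta$ modulo $v$, which is exactly the assertion. The only mildly delicate point --- and the one I would state carefully --- is the claim $m \mid n$, i.e.\ that the exact order of $\zeta\bmod v$ in $A(\F_v)$ is $m$; this follows because reduction modulo a place over $v$ is injective on prime-to-$v$ torsion (the $N$-torsion subscheme of the Néron model is finite étale over $\mathcal{O}_{\TheField(\zeta)_v}$ for $N$ coprime to the residue characteristic, as recalled in \S\ref{sect:KummerTheory}), so $\zeta$ and $\zeta\bmod v$ have the same order $m$, and $\zeta\bmod v\in A(\F_v)$ forces $m\mid n$. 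I do not anticipate a genuine obstacle here: everything is a bookkeeping consequence of Proposition~\ref{prop:Kummer}(3) and the elementary description of $\ell$-parts, but I would be careful to choose $s$ and $q$ so that both $sP\equiv\zeta$ and $mqP\equiv 0$ hold simultaneously modulo $v$.
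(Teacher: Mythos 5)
Your argument is correct and follows essentially the same route as the paper's: reduce modulo $v$, use the Chinese Remainder Theorem to manufacture an integer $s$ that kills the prime-to-$\{\ell_1,\dots,\ell_r\}$ part of $P\bmod v$ while fixing its $\{\ell_i\}$-primary part, invoke Proposition~\ref{prop:Kummer}(3) to identify that primary part with $\zeta$, and then pick $q$ so that $mq\cdot(P\bmod v)=0$. The only (cosmetic) divergence is that the paper sets $n = \ord(P\bmod v)$ rather than $n=\#A(\F_v)$, which makes the last step immediate: since Proposition~\ref{prop:Kummer}(3) forces $v_\ell(\ord(P\bmod v))=v_\ell(m)$ for every $\ell\mid m$, one gets $m\mid n$ with $\gcd(m,n/m)=1$, and taking $q=n/m$ one has $mqP=nP\equiv 0\pmod v$ by definition. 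In your version the claim that $mq'P\equiv 0\pmod v$ for your first choice $q'$ (the prime-to-$\{\ell_i\}$ part of $\#A(\F_v)$) does hold, but it needs the small additional observation that $\ord(P\bmod v)=mq_d$ with $q_d\mid q'$ — you did not quite spell this out, though your fallback $q=n/m$ works trivially, so there is no genuine gap. One small inaccuracy worth correcting: with $n=\#A(\F_v)$ and $q=n/m$, one does not in general have $\gcd(q,\ell_1\cdots\ell_r)=1$ (only $m\mid m'$, not $m=m'$), so $q$ need not coincide with your earlier $q'$; this does not affect the conclusion because the statement of the corollary imposes no coprimality on $q$.
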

	\begin{proof} 
		Let $n=\operatorname{ord}(P \bmod v)$. Since the $\ell$-parts of $(P \bmod v)$ and $\zeta$ agree for all prime divisors $\ell$ of $m=\operatorname{ord}(\zeta)$, we see in particular that for all $\ell \mid m$ we have $v_\ell(m) = v_\ell(n)$. This shows that $m \mid n$ and that writing $n=mq$ we have $(m,q)=1$.  By the Chinese Remainder Theorem, there is an integer $s$ such that $s\equiv 0 \bmod q$ and $s\equiv 1 \bmod m$.  Since the order of $sP \bmod v$ equals $m$ and, for every prime divisor $\ell$ of $m$,
		$$
		[sP \bmod v]_{\ell} = [\zeta]_{\ell},
		$$   it follows that $sP$ is congruent to $\zeta$ modulo $v$. Since $mqP$ reduces to the trivial element modulo $v$, the integers $q$ and $s$ have the desired property.
	\end{proof}
	
	\subsection{Main theorem: Cyclic case}
	
	\begin{proposition}\label{Main_prop:cyclic_case}
		Let $A$ be an abelian variety   over a number field $\TheField$. Let $\Omega=\langle P \rangle \subset A(\TheField)$ be a Zariski-dense cyclic subgroup of $A$. 
		Then (GPB) implies (PF) for $(A, \Omega)$.
	\end{proposition}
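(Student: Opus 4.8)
The plan is to implement, for the pair $(A,\Omega)$ with $\Omega=\langle P\rangle$, the strategy sketched in the introduction: use Lemma~\ref{Lem:Torsion_point_lemma} to produce a torsion point $\zeta$ to which no rational point of the given cover lifts, and then use the Kummer-theoretic input of Proposition~\ref{prop:Kummer} together with Corollary~\ref{Cor:The_coset} to exhibit a finite-index coset of $\Omega$ all of whose elements reduce to $\zeta$ modulo a suitable place. Throughout, $P$ is non-degenerate since $\langle P\rangle$ is Zariski-dense.

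Let $\phi:Y\to A$ be a (GPB)-cover; we may assume $d:=\deg\phi\ge 2$, as $d=1$ is contained in the definition of (PF). First I would reduce to the case where $\phi$ is étale over every point of $\Omega$. Indeed, by Corollary~\ref{Cor:Etalecoset} there is a finite-index coset $P_0+\Omega'$ of $\Omega$ over which $\phi$ is étale; composing $\phi$ with the translation $x\mapsto x-P_0$ gives a cover $\psi:Y\to A$ which is still a (GPB)-cover (being obtained from $\phi$ by composing with an automorphism of $A$, which alters neither the geometric integrality of the pullbacks $[m]^{*}$ nor the field-theoretic Galois closure) and which is étale over $\Omega'=\langle P'\rangle$, a Zariski-dense cyclic subgroup generated by a non-degenerate point $P'$. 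Granting the assertion for $(\psi,\Omega')$ yields a finite-index coset of $\Omega'$ with reduced pointless fibres, and translating it by $P_0$ gives the desired coset for $(\phi,\Omega)$. So assume henceforth that $\phi$ is étale over all of $\Omega$; then every fibre $\phi^{-1}(c)$ with $c\in\Omega$ is automatically reduced, and it remains to produce a finite-index coset $C\subseteq\Omega$ with $\phi^{-1}(c)(\TheField)=\emptyset$ for all $c\in C$.

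Next, set $S=\{\ell\text{ prime}:\ell\le\max\{c(A/\TheField,P),d\}\}$, where $c(A/\TheField,P)$ is the constant of Theorem~\ref{thm:Hindry}. Applying Lemma~\ref{Lem:Torsion_point_lemma} to $\phi$ with this $S$ produces a torsion point $\zeta\in A(\overline{\TheField})$ of order $m$ with every prime factor of $m$ outside $S$, such that $\phi$ (hence also its Galois closure $\widehat\phi:\widehat Y\to A$, with group $G$) is étale over $\zeta$, and such that some element $g$ of a decomposition group $\Delta$ of $\zeta$ for $\widehat\phi$ acts without fixed points on $\phi^{-1}(\overline\zeta)\cong G/H$, where $H\subseteq G$ satisfies $Y=\widehat Y/H$. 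Since $\widehat\phi$ is étale over $\zeta$, the fibre $\widehat\phi^{-1}(\zeta)$ is a $G$-torsor over $\Spec\TheField(\zeta)$, which by the analysis of fibres of torsors in Section~\ref{sect:Prelim} becomes trivial over a finite Galois extension $L/\TheField(\zeta)$ whose Galois group is identified with $\Delta$ by the decomposition morphism, with $[L:\TheField(\zeta)]$ dividing $|G|$ and hence $d!$. Put $\delta:=\{h\in\Delta:h\text{ acts without fixed points on }G/H\}$, a nonempty conjugacy-stable subset of $\Delta$ containing $g$. Now Proposition~\ref{prop:Kummer}, applied with $\zeta,L,\Delta,\delta$, yields a positive-density set of finite places $v$ of $\TheField(\zeta)$ satisfying conditions (1)--(3) there; discarding the finitely many of these at which $\phi$ has bad reduction or at which $\zeta$ specializes into the branch locus, we obtain $v$ such that, in addition, $\phi$ has good reduction at $v$, $v\nmid m$, the $\ell_i$-part of $P\bmod v$ equals that of $\zeta$ for every prime $\ell_i\mid m$, and the Frobenius of $v$ in $\Gal(L/\TheField(\zeta))=\Delta$ lies in $\delta$.

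By Corollary~\ref{Cor:The_coset} there are integers $s,q$ such that every $Q$ in the finite-index coset $C:=sP+\mathbb{Z}\,(mqP)$ of $\Omega$ satisfies $Q\bmod v=\zeta\bmod v$ in $A(\F_v)$. Fix $Q\in C$. As $Q$ and $\zeta$ reduce to the same point of the good-reduction model over $v$, the Frobenius conjugacy class $\Fr_Q\in G$ attached to $\phi$ and $v$ equals $\Fr_\zeta$, which, by the compatibility between the decomposition morphism and its specialization recorded in Proposition~\ref{Prop:Diagram6}, is the image in $\Delta\subseteq G$ of the Frobenius of $v$ in $\Gal(L/\TheField(\zeta))$, and therefore lies in $\delta$; thus $\Fr_Q$ acts without fixed points on $G/H$. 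By Proposition~\ref{Prop:GoverH_frobenius} this forces $\phi_{\TheField(\zeta)}^{-1}(Q)(\TheField(\zeta)_v)=\emptyset$, hence $\phi^{-1}(Q)(\TheField)=\emptyset$; combined with the reducedness noted above, $(\phi,\Omega)$ satisfies (PF). The main obstacle I anticipate is precisely the Frobenius bookkeeping in this last step: matching the element $\Fr_Q=\Fr_\zeta$ that governs rational points on the fibres (Proposition~\ref{Prop:GoverH_frobenius}) with the Frobenius in the Galois group of the splitting field $L$ of $\widehat\phi^{-1}(\zeta)$ that Proposition~\ref{prop:Kummer} controls, carried out over the extension $\TheField(\zeta)$ rather than over $\TheField$; a secondary and more routine point is checking that (GPB) is preserved under the translation used in the reduction and that $v$ can be chosen to meet all good-reduction requirements simultaneously.
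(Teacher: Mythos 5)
Your proposal is correct and follows essentially the same route as the paper's proof: produce the torsion point $\zeta$ via Lemma~\ref{Lem:Torsion_point_lemma}, feed the splitting field $L$ of the fibre of the Galois closure and a conjugacy-stable set of fixed-point-free elements into Proposition~\ref{prop:Kummer} and Corollary~\ref{Cor:The_coset}, and conclude via the Frobenius criterion of Proposition~\ref{Prop:GoverH_frobenius}. The only organizational difference is that you secure reducedness by an upfront translation reduction (Corollary~\ref{Cor:Etalecoset} plus translation-invariance of (GPB), as in Proposition~\ref{Prop:eq_definitions}), whereas the paper gets it implicitly from the fact that every point of the coset reduces into the \'etale locus containing $\zeta$; both are fine.
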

	\begin{proof}
		Let $\phi:Y \rightarrow A$ be a (GPB)-cover of $A$ of degree $d$, $\hat{\phi}:\hat{Y}\rightarrow A$ be the Galois closure of $\phi$, and $G$ be its Galois group. We have to prove that there is a finite index coset $C$ of $\Omega$ such that, for each $c \in C$, the fibre $\phi^{-1}(c)$ is reduced and has no $\TheField$-rational points. 
		
		First, we choose a finite set $S^b$ of finite places of $K$, an abelian scheme $\mathcal{A}$ over $A$ extending $A$ over $\mathcal{O}_{K,S^b}$, a normal proper  model $\mathcal{Y}$ for $Y$ over $\mathcal{O}_{K,S^b}$, and a finite surjective morphism $\psi:\mathcal{Y}\to \mathcal{A}$ extending $\phi:Y\to A$ over $\mathcal{O}_{K,S^b}$. 
		Now, we let $c(A/\TheField, P)$ be the constant of Theorem \ref{thm:Hindry}, and let $S$ be a finite set of rational primes containing $\{\ell \bigm\vert \ell \leq \max\{c(A/\TheField, P),d\}\}$. By  Lemma \ref{Lem:Torsion_point_lemma}, there is a  torsion point $\zeta \in A(\overline{\TheField})$  with the following properties:
		\begin{enumerate}
			\item the order of $\zeta$ is coprime with $S$;
			\item the fibre $\phi^{-1}(\zeta)$ is \'etale over $\spec \TheField(\zeta)$;
			\item if $\Delta \subset G$ is a decomposition group for $\zeta$ (associated with an implicit choice of a geometric point $\overline{y_0} \in \hat{\phi}^{-1}(\zeta)(\overline{\TheField})$ lying over $\zeta$), there exists an element $\delta_0 \in \Delta$ such that $\delta_0$ acts with no fixed points on $\phi^{-1}(\overline{\zeta})$.
		\end{enumerate}
		We define $\TheField_0 \defeq \TheField(\zeta)$.
		Let $\mathcal{U}$ be a dense open subscheme of $\mathcal{A}$ such that $\psi$ is étale over $\mathcal{U}$ and $\mathcal{U}(\TheField_0)$ contains $\zeta$. After possibly enlarging $S^b$, we may assume that the Zariski closure $\overline{\{\zeta\}} \subset \mathcal{A} $ is contained in $\mathcal{U}$.  Write $\mathcal{V} \defeq \psi^{-1}(\mathcal{U})$, so   that $\psi|_{\mathcal{V}}:\mathcal{V}\to \mathcal{U}$ is finite   étale. Let $\hat{\psi}:\hat{\mathcal{V}} \rightarrow \mathcal{U}$ be the Galois closure of $\psi|_{\mathcal{V}}$, and note that its Galois group is $G$.
		
		
		Let $L/K_0$ be the finite Galois extension corresponding to   $\Ker(\mathfrak{D}_{\zeta}:\Gamma_{K_0} \rightarrow G)$.  The canonical identification of  $\mathrm{Gal}(L/K_0)$ with $\Delta$ provided by $\mathfrak{D}_{\zeta}$ allows us to assume, with a slight abuse of notation, that these two groups are the same. 
		
		
		Note that $L$ is the Galois closure over $K_0$ of the minimal field of definition of $\overline{y_0}$, so that $L/K_0$ has degree dividing $d!$. In particular, we may apply  Proposition \ref{prop:Kummer}  and Corollary  \ref{Cor:The_coset} to see that there exist a finite place $v \in M_{K_0}$ not lying over any place of  $S^b$, and a finite index coset $C$ of $\Omega$ such that each $c \in C$ reduces to $\zeta \bmod v$ and such that a Frobenius (after implicitly fixing a place $w \in M_{\overline{\TheField}}$ lying over $v$) for $v \in M_{K_0}$ in $\Delta$ is $\delta_0$. In particular, for each $c$ in $C$, a decomposition group for $c$ contains the Frobenius $ \delta_0=\Fr_{\overline{y_0}, w, \hat{\psi}} \in \Delta$.  The latter acts without fixed points on $\psi^{-1}(c) \bmod v$, so that  by Lemma \ref{Rmk:indipendence_of_the_point} the    $\mathcal{O}_{K_{v}}$-scheme $\psi^{-1}(c)_{\mathcal{O}_{K_v}}$  has no $\F_v$-points. As the $\mathcal{O}_{K_{v}}$-scheme $\psi^{-1}(c)_{\mathcal{O}_{K_v}}$  is finite over $\mathcal{O}_{K_v}$, it has no  $\TheField_v$-points either. Therefore, the scheme $\phi^{-1}(c)$ has no $K$-points, as required. This concludes the proof.
		\end{proof}

	\begin{theorem}\label{Main_theorem:cyclic_case}\label{thm:CyclicCase}
		Let $A$ be an abelian variety  over a number field $\TheField$. Let $\Omega=\langle P \rangle \subset A(\TheField)$ be a Zariski-dense cyclic subgroup of $A$. Then, for every (PB)-cover $\pi: Y \to A$ the pair $(\pi,\Omega)$ satisfies property (IF).
	\end{theorem}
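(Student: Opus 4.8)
The strategy is to combine Proposition~\ref{Main_prop:cyclic_case}, which establishes that (GPB) implies (PF) for pairs $(A,\Omega)$ with $\Omega$ a Zariski-dense \emph{cyclic} subgroup, with the formal reduction machinery of Section~\ref{sect:FormalReductions}; concretely, I would invoke Proposition~\ref{prop:PBPFPBIF} with rank parameter $r=1$. If $\dim A = 0$ the statement is trivial, since then the only (PB)-cover of $A$ is the identity, so we may and do assume $\dim A \geq 1$.

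The first step is to upgrade Proposition~\ref{Main_prop:cyclic_case} from cyclic subgroups to finitely generated Zariski-dense subgroups of rank at most one. Let $L/\TheField$ be a finite extension — still a number field — let $\pi : Y \to A_L$ be a (GPB)-cover, and let $\Omega \subseteq A(L)$ be a finitely generated Zariski-dense subgroup of rank at most one. Since $\dim A \geq 1$, the group $\Omega$ has rank exactly one, so it contains an element $P$ of infinite order; then $\Omega/\langle P\rangle$ is finitely generated of rank zero, hence finite, so $\langle P\rangle$ has finite index in $\Omega$. As $\overline{\Omega}$ is a finite union of translates of $\overline{\langle P\rangle}$ and $A_L$ is irreducible, $\langle P\rangle$ is itself Zariski-dense. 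Applying Proposition~\ref{Main_prop:cyclic_case} over $L$ to the pair $(\pi,\langle P\rangle)$ yields a finite-index coset $C$ of $\langle P\rangle$ over which every fibre of $\pi$ is reduced with no $L$-rational point; since $\langle P\rangle$ has finite index in $\Omega$, the coset $C$ is also a finite-index coset of $\Omega$, and so $(\pi,\Omega)$ satisfies (PF). This verifies the hypothesis of Proposition~\ref{prop:PBPFPBIF} with $r=1$: for every finite extension $L/\TheField$, every (GPB)-cover of $A_L$ has property (PF) up to rank one over $L$.

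It then remains only to quote the conclusion of Proposition~\ref{prop:PBPFPBIF}, applied with base field $\TheField$, abelian variety $A$, and $r=1$: for every finite extension $M/\TheField$ — in particular for $M = \TheField$ — every (PB)-cover $\pi : Y \to A_M$ has property (IF) up to rank one over $M$. Specializing to $M = \TheField$ and to the cyclic (hence rank at most one) subgroup $\Omega = \langle P\rangle$ of the statement produces a finite-index coset $C \subseteq \Omega$ such that $\pi^{-1}(c)$ is integral for every $c \in C$, which is exactly property (IF) for $(\pi,\Omega)$, and in degree $1$ the statement is trivial.

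I do not anticipate any genuine difficulty here: all the substantive arithmetic content has been isolated in Proposition~\ref{Main_prop:cyclic_case} (via Lemma~\ref{Lem:Torsion_point_lemma} and the Kummer-theoretic input of Proposition~\ref{prop:Kummer}) and in the formal reductions of Section~\ref{sect:FormalReductions}. The only points demanding mild care are bookkeeping ones — making sure that Proposition~\ref{Main_prop:cyclic_case} is legitimately available over \emph{every} finite extension $L/\TheField$ (it is, as such an $L$ is again a number field and the proposition is stated over an arbitrary number field), and the elementary group-theoretic observation that a Zariski-dense finitely generated subgroup of rank at most one contains a Zariski-dense cyclic subgroup of finite index.
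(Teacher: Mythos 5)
Your proposal is correct and follows exactly the paper's own route: the paper's proof of Theorem \ref{thm:CyclicCase} likewise observes that Proposition \ref{Main_prop:cyclic_case} gives ``(GPB) implies (PF)'' for all rank-one finitely generated Zariski-dense subgroups over every number field, and then concludes by Proposition \ref{prop:PBPFPBIF} with $r=1$. The only difference is that you spell out the (correct) elementary bookkeeping -- that a Zariski-dense finitely generated subgroup of rank at most one contains a Zariski-dense cyclic subgroup of finite index -- which the paper compresses into ``it immediately follows.''
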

	\begin{proof} 
		It immediately follows from  Proposition \ref{Main_prop:cyclic_case} that, for every number field $K$, every abelian variety $A$ over $K$, and every finitely generated Zariski-dense subgroup $\Omega$ of rank  one, we have that (GPB) implies (PF) for $(A,\Omega)$. Therefore, the result follows from  Proposition \ref{prop:PBPFPBIF} (with $r=1$).
	\end{proof}

	\section{Proof of the main results}\label{sect:MainResults}
	If $k$ is a finitely generated field of characteristic zero and $A$ is an abelian variety over $k$, then $A(k)$ is a finitely generated abelian group \cite[Corollary~7.2]{ConradTrace}.  Therefore,   the results of Section \ref{sect:FormalReductions} apply to \emph{every} subgroup $\Omega$ of $A(k)$.
	
	\subsection{From number fields to finitely generated fields over $\mathbb{Q}$}\label{section:nf_to_fingen}
	We will deduce the weak-Hilbert property for all abelian varieties over finitely generated fields by a specialization argument to number fields (see Proposition \ref{Lem:finitely_generated} below). For the proof we rely on a general result essentially due to Serre, for which we need some notation. Let $S$ be a variety over a number field $\TheField$, with generic point $\eta$, and let $\mathcal{A} \to S$ be an abelian scheme. 
	\begin{definition} For  positive integers $d$ and $h$, 
		we let  $S(d,h)$ be the set of closed points of $S$ of degree at most $d$ over $\TheField$ and of logarithmic   height at most $  h$ (after some appropriate embedding $S \hookrightarrow \mathbb{P}_{N, \TheField}$ has been chosen). 
	\end{definition}
	
	\begin{theorem}\label{thm:SerreSpecialization}
		There exist an integer $d_0$ and constant $c>0$ such that the set
		\[
		S(d_0,h)_{\mathrm{End}}:=\{ s \in S(d_0,h) : \End_{\eta}\mathcal{A}_{\eta} \xrightarrow{\sim} \End_{k(s)}\mathcal{A}_s \}
		\]
		has cardinality at least $e^{ch}$ for $h \gg 1$, and the set $\cup_{h\geq 1} S(d_0,h)_{\mathrm{End}}$ is Zariski-dense in $S$.
	\end{theorem}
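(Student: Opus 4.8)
The statement is a variant of Serre's specialization theorem (see \cite{MR387283}, and the treatment in \cite[\S 10.6]{Serre}), so the plan is to reduce to the situation covered there and then upgrade the conclusion to the quantitative count $e^{ch}$. First I would reduce to the case where $S$ is a smooth affine variety over $\TheField$ with a chosen projective embedding, by shrinking $S$ (the complement of a dense open is a lower-dimensional subvariety, and both the height count and the Zariski-density assertions localize well). I would also fix, once and for all, a finite set of $\ell$-adic representations that ``see'' the endomorphism algebra: by Faltings, $\End_\eta \mathcal{A}_\eta \otimes \Q_\ell$ is the commutant of the image of $\Gamma_{k(\eta)}$ in $\End(V_\ell \mathcal{A}_\eta)$, and the same holds fibrewise at every closed point $s$. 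Hence the locus $S(d_0,h)_{\mathrm{End}}$ differs from the full $S(d_0,h)$ by those $s$ at which the image of $\Gamma_{k(s)}$ is \emph{strictly smaller} (in the sense of having a strictly larger commutant) than the image of $\Gamma_{k(\eta)}$; it suffices to control a single prime $\ell$, since the generic endomorphism algebra is determined by one faithful $V_\ell$.

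\textbf{Main argument.} Choose $\ell$ so that the image $G \defeq \im(\Gamma_{k(\eta)} \to \operatorname{GL}(T_\ell \mathcal{A}_\eta))$ is ``as large as possible'' for the given $\mathcal{A}$; after spreading out, the monodromy representation $\pi_1(S_0) \to \operatorname{GL}(T_\ell \mathcal{A}_\eta)$ (for a suitable dense open $S_0 \subset S$ and after removing primes of bad reduction) has the same image $G$, by the comparison between the geometric-generic fibre and nearby fibres. The ``bad'' closed points $s$ — those where the decomposition group inside $G$ has a strictly larger commutant — are exactly the points whose Frobenius conjugacy class (equivalently, whose associated element of $\pi_1$) lands in a proper closed subgroup $H \subsetneq G$ drawn from a \emph{finite} list of maximal ones (finiteness because $G$ is a compact $\ell$-adic group and the relevant $H$ have bounded index, or can be arranged to by working modulo a fixed power $\ell^n$; this is precisely the mechanism in \cite[\S 10.6, Théorème 4]{Serre}). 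So the count of bad points of degree $\le d_0$ and height $\le h$ is a ``thin set'' count in Serre's sense, and by the standard sieve/large-sieve estimate for the height-distribution of thin sets (the key input behind Hilbertianity of number fields, cf. \cite[\S 13.1]{Serre}) it grows strictly more slowly than the total count $\#S(d_0,h)$. On the other hand, $\#S(d_0,h)$ itself is $\ge e^{c'h}$ for some $c' > 0$ and $h \gg 1$: embedding $S$ into $\mathbb{P}_N$ and using that $S$ is positive-dimensional over $\TheField$, one gets even $\ge c'' (e^h)^{\dim S \cdot [\TheField:\Q]}$ many points of bounded degree (e.g.\ by restricting to a generic curve inside $S$ and counting $\TheField$-points or small-degree points on it, or directly invoking Schanuel-type counts). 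Subtracting the thin-set contribution leaves $\ge e^{ch}$ points in $S(d_0,h)_{\mathrm{End}}$ for a slightly smaller $c>0$ and $h \gg 1$. Finally, Zariski-density of $\bigcup_h S(d_0,h)_{\mathrm{End}}$ follows from the same dichotomy applied after restricting to any proper closed subvariety's complement, or more directly: if the set were contained in a proper closed $Z \subsetneq S$, then all closed points of $S\setminus Z$ of bounded degree would be bad, contradicting that $S \setminus Z$ (being open dense, hence positive-dimensional) has exponentially many closed points of bounded degree while the bad locus is thin.

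\textbf{Expected obstacle.} The routine part is the reduction to $\ell$-adic monodromy and the identification of the bad locus with a thin set — this is essentially bookkeeping on top of Faltings' theorem and Serre's framework. The genuinely delicate point is matching the two exponential rates: one needs that the thin-set count is bounded by $O(e^{\varepsilon h})$ for every $\varepsilon>0$ (or at least by $c_1 e^{c_2 h}$ with $c_2$ strictly smaller than the main-term rate), \emph{uniformly} over the finite family of subgroups $H$, so that the difference is still $\gg e^{ch}$; here I would quote the large-sieve estimate for thin sets over number fields in the precise form that gives a power saving (as in \cite[\S 13.1]{Serre} or Cohen's refinement), being careful that the ``degree $\le d_0$'' points are also covered by the sieve and not just the $\TheField$-rational ones. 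Handling degree-$d_0$ closed points (rather than just $\TheField$-points) requires passing to the appropriate symmetric-power / Weil-restriction parametrization, or working over the extensions $L/\TheField$ of degree $\le d_0$ one at a time, which is where most of the care is needed — but conceptually it is the same sieve. Everything else then assembles formally.
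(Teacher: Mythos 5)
Your overall architecture coincides with the paper's: both arguments rest on (i) Faltings' theorem identifying $\End_{k(s)}\mathcal{A}_s\otimes\mathbb{Q}_\ell$ with the commutant of the specialized Galois image, (ii) Serre's $\ell$-adic Frattini argument producing a \emph{finite} étale cover (equivalently, a fixed finite quotient of the generic monodromy group $G$) such that the endomorphism ring can only jump at points whose specialized image fails to surject onto that quotient — so the bad locus is thin — and (iii) a counting estimate for thin sets. So this is essentially the paper's route, not a different one.

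There are, however, two points where your write-up has real gaps. First, and most seriously, your quantitative step counts \emph{all} closed points of $S$ of degree $\le d_0$ and height $\le h$ and then subtracts a thin-set contribution; as you yourself note, this requires a sieve bound for thin sets of \emph{bounded-degree} points, which is not the standard statement and which you do not actually establish (working "over the extensions $L/\TheField$ of degree $\le d_0$ one at a time" does not give a count, since there are infinitely many such $L$, and a symmetric-power parametrization is left entirely to the reader). The paper avoids this difficulty altogether: Serre's construction furnishes a dense open $X_0\subset\mathbb{A}^{\dim S}_{\mathbb{Q}}$ and a finite map $\pi:X\to X_0$ of degree $d_0$, one counts only $\mathbb{Q}$-\emph{rational} points of $X_0$ outside a thin set — where the exponential lower bound is exactly \cite[Theorem 3.4.4]{Serre} — and then pulls these back along $\pi$ to get closed points of $S$ of degree $\le d_0$. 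Your argument should be reorganized in this way (produce a large good set directly on an affine base, rather than subtracting a bad set among all bounded-degree points). Second, Faltings only gives an isomorphism $\End_{\eta}\mathcal{A}_\eta\otimes\mathbb{Q}_\ell\xrightarrow{\sim}\End_{k(s)}\mathcal{A}_s\otimes\mathbb{Q}_\ell$ at the good points; to conclude that the integral specialization map $\End_\eta\mathcal{A}_\eta\to\End_{k(s)}\mathcal{A}_s$ is itself an isomorphism, as the statement requires, you must also invoke that this injection has torsion-free cokernel. This is standard but it is a step your proof skips.
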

	\begin{proof}  This can be deduced   from the main results of Masser \cite{Masser96} or   upon combining work of Serre \cite[Letter to Ken Ribet of January 1st, 1981, §1]{SerreOeuvres} and Noot \cite[Corollary 1.5]{Noot}, as we explain now.
		
		Fix an auxiliary prime $\ell$. 
		Serre   constructs the following data:
		\begin{enumerate}
			\item a dense open $X_0 \subset \mathbb{A}^{\dim S}_{\mathbb{Q}}$,
			\item a dense open $X\subset S$ and a finite \'etale morphism $\pi:X\to X_0$,
			\item an \'etale cover $X^*\to X$,
		\end{enumerate}
		such that, if  $B_0$  is the set of points $x_0 \in X_0(\mathbb{Q})$ for which the fibre of $X^* \to X_0$ is irreducible, and   $B$ is its    inverse image under $\pi$, then  the images of the $\ell$-adic Galois representations attached to $\mathcal{A}_s$ are equal to those attached to $\mathcal{A}_\eta$ for all $s \in B$. Note that, by Hilbert's (classical) irreducibility theorem,  $B_0$ is the complement of a thin set.
		
		Let $k= k(\eta)$ be the function field of $S$. Then, for all $s$ in $B$, we have \[\operatorname{End}_{\operatorname{Gal}(\overline{k}/k)}(T_\ell \mathcal{A}_\eta) \otimes \mathbb{Q}_\ell \cong \operatorname{End}_{\operatorname{Gal}(\overline{k(s)}/k(s))}(T_\ell \mathcal{A}_s) \otimes \mathbb{Q}_\ell.\] By Tate's conjecture on endomorphisms (which holds for all finitely generated fields of characteristic 0, see \cite{MR766574}) we thus obtain $\operatorname{End}_{k}( \mathcal{A}_\eta) \otimes \mathbb{Q}_\ell \cong \operatorname{End}_{k(s)}(\mathcal{A}_s) \otimes \mathbb{Q}_\ell$. Since the inclusion $\operatorname{End}_{k}( \mathcal{A}_\eta) \hookrightarrow \operatorname{End}_{k(s)}(\mathcal{A}_s)$ has torsion-free cokernel, we see that all points in $B$ satisfy the desired property concerning endomorphisms and have bounded degree over $\mathbb{Q}$ (we take in particular $d_0 = \deg \pi$). Note that $B$ is Zariski dense, so that  $\cup_{h\geq 1} S(d_0,h)_{\mathrm{End}}$ is  Zariski dense. To conclude the proof, it only remains to show that the cardinality of $S(d_0,h)_{\mathrm{End}}$  grows exponentially in $h$. This follows from the fact that the complement of a thin set of $\mathbb{A}_{\mathbb{Q}}^{\dim S}$ contains $\gg e^{ch}$ points of logarithmic height $h$ for some positive constant $c$ (see \cite[Theorem 3.4.4]{Serre}).
	\end{proof}

	

	\begin{lemma}\label{Lem:field_reduction}
		Let $\Omega\subset \mathcal{A}(S)$ be a Zariski-dense subgroup. There exists a Zariski-dense set of closed points $s \in S$ of bounded degree over $\TheField$ such that the image $\Omega_s \defeq \im_{\mathcal{A}(S)\rightarrow \mathcal{A}(k(s))}(\Omega)$ is Zariski-dense in $\mathcal{A}_s$ and the restriction morphism $\Omega \rightarrow \Omega_s$ is an isomorphism.
	\end{lemma}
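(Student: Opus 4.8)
The plan is to deduce the lemma from Theorem~\ref{thm:SerreSpecialization} together with the classical specialization theorem of N\'eron, using the preservation of endomorphism rings to transport the Zariski-density of $\Omega$ from the generic fibre to a specialization.

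\textbf{Reductions.} Since $\mathcal{A}\to S$ is an abelian scheme over a variety over a number field, the group $\mathcal{A}_\eta(k(\eta))$ is finitely generated (Lang--N\'eron; cf.\ \cite[Corollary~7.2]{ConradTrace}); write $\Lambda\defeq \mathcal{A}_\eta(k(\eta))$, so that $\Omega\subseteq \mathcal{A}(S)\subseteq \Lambda$. I may and do shrink $S$ to a normal dense open, which affects neither the hypothesis nor the Zariski-density of the set of closed points we shall produce. A spreading-out argument shows that $\Omega$, being Zariski-dense in $\mathcal{A}$, is Zariski-dense in $\mathcal{A}_\eta$ over $k(\eta)$: otherwise $\Omega$ would be contained in a proper closed $W\subsetneq \mathcal{A}_\eta$, which spreads out to a proper closed subscheme of $\mathcal{A}$ over a dense open of $S$ still containing all the sections $P(S)$ there, contradicting density. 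Finally, if $\Omega$ is a torsion group then every $\Omega_s$ is automatically Zariski-dense in $\mathcal{A}_s$, so I may assume $\Omega$ has positive rank.

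\textbf{Choice of points.} By N\'eron's specialization theorem, the set of closed points $s$ of $S$ at which the specialization homomorphism $\Lambda\to \mathcal{A}_s(k(s))$ fails to be injective is thin in $S$. I would then intersect the Zariski-dense set $\bigcup_{h}S(d_0,h)_{\mathrm{End}}$ furnished by Theorem~\ref{thm:SerreSpecialization} with the complement of this thin set, running this through exactly the same mechanism as in the proof of Theorem~\ref{thm:SerreSpecialization} (the finite map $X\to X_0\subseteq \aff^{\dim S}$ appearing there, Hilbert's irreducibility theorem, and \cite[Theorem~3.4.4]{Serre}) to obtain a Zariski-dense set $\Sigma$ of closed points $s$ of degree at most $d_0$ over $K$ such that, for every $s\in\Sigma$, one has $\End_{k(\eta)}\mathcal{A}_\eta\xrightarrow{\sim}\End_{k(s)}\mathcal{A}_s$ and the map $\Lambda\to\mathcal{A}_s(k(s))$ is injective. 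This is the step I expect to be the main obstacle: one has to verify that intersecting with the N\'eron bad locus (which is only thin, not closed, and lives on a possibly higher-dimensional base) does not destroy Zariski-density of the bounded-degree points, which is precisely why the argument must be threaded through the proof of Theorem~\ref{thm:SerreSpecialization} rather than through a naive removal of a proper closed subvariety.

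\textbf{Verification.} For $s\in\Sigma$ the map $\Omega\to\Omega_s$ is injective because $\Omega\subseteq\Lambda$, and since it is always surjective it is an isomorphism. For the Zariski-density of $\Omega_s$ in $\mathcal{A}_s$, suppose it fails and let $B_s$ be the identity component of the Zariski closure of $\Omega_s$; it is a proper abelian subvariety of $\mathcal{A}_s$, and $M\Omega_s\subseteq B_s$ for some $M\geq 1$. Choosing a complement $C_s$ with $\mathcal{A}_s\sim B_s\times C_s$, let $e'\in\End^0(\mathcal{A}_s)$ be the idempotent onto $C_s$ and let $f_s\in\End(\mathcal{A}_s)$ be a nonzero integer multiple of $e'$ annihilating $B_s$, so that $f_s(\Omega_s)=0$ and $f_s\neq 0$. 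Spreading $f_s$ out to an endomorphism $f$ of $\mathcal{A}$ over a dense open containing $s$ (legitimate because $\End_{k(s)}\mathcal{A}_s$ is exhausted by endomorphisms of $\mathcal{A}$), its generic fibre $f_\eta$ is nonzero, while $f(\Omega)$ lies in $\Lambda$ and specializes to $f_s(\Omega_s)=0$ at $s$; by injectivity of $\Lambda\to\mathcal{A}_s(k(s))$ this forces $f(\Omega)=0$, hence $\Omega\subseteq\ker(f_\eta)(k(\eta))\subsetneq\mathcal{A}_\eta$, contradicting the Zariski-density of $\Omega$ in $\mathcal{A}_\eta$. Thus both conclusions hold for every $s\in\Sigma$, and $\Sigma$ is the desired Zariski-dense set of bounded-degree closed points.
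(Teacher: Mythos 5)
Your overall architecture — use Theorem~\ref{thm:SerreSpecialization} to preserve endomorphism rings, use a specialization-injectivity theorem for the Mordell--Weil group, intersect the two good loci, and then run the Poincar\'e-reducibility argument to lift a hypothetical annihilating endomorphism from $\mathcal{A}_s$ to $\mathcal{A}_\eta$ — is exactly the paper's, and your verification step is essentially identical to theirs (they phrase the density criterion via ``no nonzero endomorphism kills the generators,'' and like you they must quietly compose with $[M]$ to pass from the identity component of $\overline{\Omega_s}$ to $\Omega_s$ itself; they also apply the injectivity not to $\Omega$ but to the $\operatorname{End}_S(\mathcal{A})$-module $\Gamma=\operatorname{End}_S(\mathcal{A})\cdot\Omega$, which plays the role of your $\Lambda$). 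Where you genuinely diverge is in the specialization-injectivity input, and this is also where your proposal stops short: you invoke N\'eron's theorem, whose classical form says the non-injectivity locus is \emph{thin among rational points of the base}, whereas the points supplied by Theorem~\ref{thm:SerreSpecialization} are closed points of degree $d_0>1$ lying over $\mathbb{Q}$-points of an auxiliary $X_0\subset\mathbb{A}^{\dim S}$. You correctly identify the need to push the injectivity condition down to $X_0(\mathbb{Q})$ and show it is thin there, but you leave this as ``the main obstacle'' rather than carrying it out; as written, the intersection step is therefore not proved. To be fair, the authors themselves assert in Remark~\ref{remark:referee}'s neighbourhood (their Remark following the lemma) that the statement \emph{can} be deduced from N\'eron's Th\'eor\`eme~6, so your route is one they consider viable — but it is not the one they execute.

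What the paper actually does is replace N\'eron by Masser's quantitative specialization theorem, which directly bounds the number of closed points $s$ of degree $\le d_0$ and logarithmic height $\le h$ at which $\Gamma\to\mathcal{A}_s(k(s))$ fails to be injective by $C\cdot h^{k}$ — a polynomial bound, already formulated for bounded-degree points. Combined with the exponential lower bound $e^{ch}$ on $\#U(d_0,h)_{\mathrm{End}}$ for every dense open $U$, the intersection is nonempty for $h\gg 1$ by a one-line counting comparison, with no thin-set bookkeeping on $X_0$ and no need to worry about how the N\'eron-bad locus interacts with the finite cover $X\to X_0$. This is precisely the device that dissolves the obstacle you flag; if you want to keep the N\'eron route instead, you would need to either prove a bounded-degree analogue of his theorem (which is essentially Masser's result) or show that the set of $x_0\in X_0(\mathbb{Q})$ admitting a preimage in $X$ with non-injective specialization is thin, neither of which is done in your proposal.
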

	\begin{proof}
		We will need the following observation, which follows at once from Poincaré's reducibility theorem for abelian varieties: if $A$ is an abelian variety over a field $\TheFieldd$, and $B\subset A$ is an abelian subvariety, there exists an endomorphism $\phi:A \rightarrow A$ such that $B \subset \Ker \phi$, and the quotient $(\Ker \phi)/B$ is finite. In particular, a finitely generated subgroup $\langle Q_1,\ldots,Q_r\rangle  \subset A(\TheFieldd)$ is Zariski-dense in $A$ if and only if there does not exist any non-zero endomorphism $\phi \in \End_{\TheFieldd}(A)$ such that $\phi(Q_i)=0$ for all $i=1,\ldots,r$.
				
				Since $\Omega$ is finitely generated (see \cite[Corollary~7.2]{ConradTrace}), we may write $\Omega=\langle P_1,\ldots,P_r \rangle$. Let $\Gamma \defeq \End_{S}(\mathcal{A})\cdot \Omega\subset \mathcal{A}(S)$.  
By the main theorem of \cite{MasserSpecialization},   the cardinality of the set of $s \in S(d_0,h)$ such that the reduction morphism $\Gamma \rightarrow \mathcal{A}_s(k(s))$ is not injective is bounded above by $C\cdot h^k$, for some $k \in \N$ and $C \in \mathbb{R}_{>0}$.  Defining  \[
	S(d_0,h)_{\textrm{good}} :=	\{s\in S(d_0,h) \  | \ \End_{\eta}\mathcal{A}_{\eta} \xrightarrow{\sim} \End_{k(s)}\mathcal{A}_s  \textrm{ and }  \Gamma \rightarrow \mathcal{A}_s(k(s))  \textrm{ is injective} \}, 
		\]  we claim that $\cup_{h\geq 1}S(d_0,h)_{\textrm{good}}$ is Zariski-dense in $S$.
Indeed, let $U \subset S$ be a dense open subset.
By Theorem \ref{thm:SerreSpecialization}, there exist an integer $d_0 $ and a   constant $c>0$  (possibly depending on $U$) such that, for every $h\geq 1$, the set
		\[
		U(d_0,h)_{\mathrm{End}}:=\{ s \in U(d_0,h) : \End_{\TheFieldd}\mathcal{A}_{\eta} \xrightarrow{\sim} \End_{k(s)}\mathcal{A}_s \}
		\]
		has cardinality at least $e^{ch}$.
		 By comparing   cardinality bounds,  since $e^{ch} > C \cdot h^k$ for $h$ sufficiently large,  there is an integer $h\geq 1$ such that the intersection of $U$ and $
	S(d_0,h)_{\textrm{good}}$ is non-empty,  so that $\cup_{h\geq 1}S(d_0,h)_{\textrm{good}}$ meets all open dense subschemes $U$ as desired.
		
		We claim that, for   a point $s$ in  $\cup_{h\geq 1}S(d_0,h)_{\textrm{good}}$,  the reduction $\Omega_s= \langle \overline{P_1},\ldots,\overline{P_r} \rangle$ is Zariski-dense in $\mathcal{A}_s$, where $\overline{P_i}$ denotes the reduction of $P_i \in \mathcal{A}(S)$ in $\mathcal{A}_s(k(s))$. 
		
		Assume by contradiction that this does not hold. By the observation in the first paragraph, there exists a nonzero endomorphism $\bar{\phi} \in \End_{k(s)}(\mathcal{A}_s)$ such that $\bar{\phi}(\overline{P_i})=0$ for each $i=1,\ldots, r$.
		Since $\End_{\eta}\mathcal{A}_{\eta} \xrightarrow{\sim} \End_{k(s)}\mathcal{A}_s$, the endomorphism $ \bar{\phi}$ is the reduction of some (necessarily nonzero) $\phi \in \End_{\eta}(\mathcal{A}_{\eta})$. As $\Gamma \rightarrow \mathcal{A}_s(k(s))$ is injective, we obtain $\phi(P_i)=0$ for each $i=1,\ldots,r$. In particular, we see that  $\Omega \subset \Ker \phi$, contradicting the Zariski-density of $\Omega$. 
	\end{proof}
	
\begin{remark}
Lemma \ref{Lem:field_reduction} can also be deduced from N\'eron's \cite[Th\'eor\`eme~6]{Neron}.
\end{remark}	
	
	\begin{remark}
		A different proof of Lemma \ref{Lem:field_reduction} can also be obtained by combining Silverman's specialization theorem \cite{SilvermanSpecialization}, which gives boundedness of height for the points $s \in S^{(0)}$ for which the specialization map $\mathcal{A}(S) \to \mathcal{A}_s(k(s))$ is not injective, with a weaker version of Theorem \ref{thm:SerreSpecialization}, where we only require that the set in question be infinite.  
	\end{remark}
	
	\begin{proposition}\label{Lem:finitely_generated}
		Assume that, for every number field $L$, every abelian variety $A$ over $L$, and every Zariski-dense subgroup $\Omega\subset A(L)$, we have that  property (PB) implies (IF) for  $(A,\Omega)$.  Then,    for every finitely generated field $\TheFieldd$ of characteristic zero,  every abelian variety $A'$ over $\TheFieldd$, and every Zariski-dense subgroup $\Omega'\subset A'(\TheFieldd)$, we have that property (PB) implies (IF) for $(A',\Omega')$.
	\end{proposition}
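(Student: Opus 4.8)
The plan is to reduce the statement to the number-field case by spreading out and specializing, the crucial tool being Lemma~\ref{Lem:field_reduction}, which supplies a closed point of the spreading-out base over which the subgroup $\Omega'$ specializes injectively with Zariski-dense image (while the control on endomorphism rings keeps the relevant geometry in good shape). So: fix a finitely generated field $\TheFieldd$ of characteristic zero, an abelian variety $A'/\TheFieldd$, a Zariski-dense subgroup $\Omega'\subseteq A'(\TheFieldd)$ --- automatically finitely generated by \cite[Corollary~7.2]{ConradTrace} --- and a (PB)-cover $\phi':Z'\to A'$, which we may assume has degree $\geq 2$. Write $\TheFieldd=K(S)$ for $S$ a smooth integral variety over a number field $K$. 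By standard spreading-out (\cite[Théorème~6.9.1]{EGAIVII}, \cite[Théorème~12.2.4~(iv)]{EGAIVIII}) together with Lemma~\ref{Lem:PBopen}, after shrinking $S$ we may assume there are an abelian scheme $\mathcal{A}\to S$ with generic fibre $A'$, a finite surjective morphism $\psi:\mathcal{Z}\to\mathcal{A}$ of integral normal schemes extending $\phi'$, an inclusion $\Omega'\subseteq\mathcal{A}(S)$, and moreover that $\mathcal{Z}_u\to\mathcal{A}_u$ is a (PB)-cover of the abelian variety $\mathcal{A}_u$ for \emph{every} $u\in S$. I would stress that no further shrinking is needed to control the branch locus: whether a finite morphism is étale over a point of the target depends only on the corresponding fibre (\cite[Tag~00UV]{stacks-project}), so the set $N(\psi)\subseteq\mathcal{A}$ of points over which $\psi$ fails to be étale is closed and satisfies $N(\psi)\cap\mathcal{A}_u=\mathrm{Branch}(\mathcal{Z}_u\to\mathcal{A}_u)$ for all $u$.

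Then I would apply Lemma~\ref{Lem:field_reduction} to obtain a Zariski-dense set of closed points $s\in S$, of bounded degree over $K$, such that the reduction $\Omega'_s:=\mathrm{im}(\Omega'\to\mathcal{A}(k(s)))$ is Zariski-dense in $\mathcal{A}_s$ and $\Omega'\to\Omega'_s$ is an isomorphism; choose such an $s$ lying also in the dense open over which the conclusions above hold. Now $k(s)$ is a number field, $\mathcal{A}_s$ an abelian variety over it, $\mathcal{Z}_s\to\mathcal{A}_s$ a non-trivial (PB)-cover, and $\Omega'_s\subseteq\mathcal{A}_s(k(s))$ Zariski-dense. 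Applying the hypothesis of the proposition over $k(s)$, the pair $(\mathcal{A}_s,\Omega'_s)$ satisfies (PB)$\Rightarrow$(IF): there is a finite-index coset $C_s\subseteq\Omega'_s$ with $(\mathcal{Z}_s)_{\bar c}$ integral for all $\bar c\in C_s$. Transporting along the isomorphism $\Omega'\xrightarrow{\sim}\Omega'_s$ then yields a finite-index coset $C\subseteq\Omega'$.

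It remains to show that $(\phi')^{-1}(c)$ is integral for every $c\in C$. Fix such a $c$, viewed as a section $c:S\to\mathcal{A}$, and put $\bar c=c(s)\in C_s$. Since $(\mathcal{Z}_s)_{\bar c}$ is integral it is reduced, hence étale over $k(s)$ (characteristic $0$), so $\mathcal{Z}_s\to\mathcal{A}_s$ is étale over $\bar c$, i.e.\ $\bar c\notin N(\psi)$. Thus $c^{-1}(N(\psi))$ is a proper closed subset of $S$ not containing $s$, and $S_0:=S\setminus c^{-1}(N(\psi))$ is a dense open containing both $s$ and the generic point $\eta$. Over $\mathcal{A}\setminus N(\psi)$ the finite surjective morphism $\psi$ is unramified, hence étale by Lemma~\ref{lemma:EtaleOrRamified}, so $W:=\psi^{-1}(c(S_0))=\mathcal{Z}\times_{\mathcal{A},c}S_0\to S_0$ is finite étale; its fibre over $s$ is $(\mathcal{Z}_s)_{\bar c}$, which is connected. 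By the standard clopen-image argument for finite étale morphisms over a connected base, $W$ is connected, hence integral (being étale over the normal $S_0$). Since $W$ is integral, finite over the integral normal $S_0$, and dominant, its generic fibre $W_\eta=(\phi')^{-1}(c)$ is the spectrum of a finite reduced $\TheFieldd$-algebra which is also a domain, i.e.\ a finite field extension of $\TheFieldd$; so $(\phi')^{-1}(c)$ is integral, as required.

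The step I expect to require the most care is the bookkeeping in the first two paragraphs: one must ensure that all the dense opens of $S$ introduced by spreading out $\mathcal{A}$, $\psi$, $\Omega'$ and the (PB)-ness and normality of the fibres (Lemma~\ref{Lem:PBopen}) are compatible with the Zariski-dense --- but not open --- set of admissible points supplied by Lemma~\ref{Lem:field_reduction}, so that a single point $s$ meets all requirements; this works because each geometric constraint removes only a nowhere-dense closed subset of $S$, but it is easy to be cavalier about it. A related point to verify carefully is the implication ``$(\mathcal{Z}_s)_{\bar c}$ integral $\Rightarrow$ $\psi$ étale over $\bar c$ as a map to $\mathcal{A}$, not merely to the fibre $\mathcal{A}_s$'', since this is exactly what makes the branch locus on the total space automatically behave well under specialization and hence lets us avoid any extra shrinking there.
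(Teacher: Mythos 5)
Your proof is correct and follows essentially the same route as the paper's: spread out over a base $S$ with $K(S)=\TheFieldd$, use Lemma \ref{Lem:PBopen} to keep the fibres (PB), use Lemma \ref{Lem:field_reduction} to find a closed point $s$ where $\Omega'$ specializes isomorphically onto a Zariski-dense subgroup, apply the number-field hypothesis there, and pull the resulting coset back. Your final paragraph (restricting to $S_0=S\setminus c^{-1}(N(\psi))$ and running the clopen argument for the finite étale scheme $W\to S_0$) is a careful and welcome expansion of the step the paper states only tersely.
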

	\begin{proof}
		Let $A$ be an abelian variety defined over a finitely generated field $\TheFieldd$ of characteristic $0$, and let $\Omega\subset A(\TheFieldd)$ be a Zariski-dense (finitely generated) subgroup. Let $S$ be a smooth geometrically connected variety over a number field $\TheField$ such that $\TheField(S)=\TheFieldd$, and let $\eta \in S$ be its generic point. Since $\Omega$ is finitely generated, by spreading out (and possibly restricting $S$), we may assume that $A$ extends to an abelian scheme $\mathcal{A} \rightarrow S$ with generic fibre $\mathcal{A}_{\eta}=A$, and that $\Omega \subset \mathcal{A}(S)$. 
		Let us show that (PB) implies (IF) for $(A,\Omega)$.
		
		Let $\phi:X \rightarrow A$ be a (PB)-cover.   Replacing $S$ by a dense open subscheme if necessary, we may assume that $\phi:X\to A$ extends to a finite surjective morphism $\psi:\mathcal{X} \rightarrow \mathcal{A}$. By  Lemma \ref{Lem:PBopen}, replacing $S$ by a dense open subscheme if necessary, for every $s$ in $S$, the specialization $\psi_s :\mathcal{X}_s\rightarrow \mathcal{A}_s$ is a (PB)-cover of the abelian variety $\mathcal{A}_s$ over  $\kappa(s)$.
		
		For $s$ in $S$, let $\mathrm{red}_s:\mathcal{A}(S)\to \mathcal{A}_s(\kappa(s))$ denote the reduction map. Then, 
		by Lemma \ref{Lem:field_reduction}, there exists a closed point $s \in S$   such that the specialization $\Omega_s:= \mathrm{red}_s(\Omega)$ is isomorphic to $\Omega$ and Zariski-dense in $\mathcal{A}_s$. In particular, by our assumption on the pair $(\mathcal{A}_s, \Omega_s)$ over the number field $\kappa(s)$, there exists a finite index coset $C$ of $\Omega_s \cong \Omega$     such that, for all $c \in C$,  the scheme $\psi_s^{-1}(c) \cong \psi^{-1}(c)$ is integral.  
		
		Define $C':=\mathrm{red}_s^{-1}(C)$ to be the inverse image of $C$ in $\Omega$, and note that $C'\subset \Omega$ is a finite index coset. Let $c$ be an element of $C'\subset \mathcal{A}(S)$. 
		The $c(S)$-scheme $\restricts{\mathcal{X}}{c(S)}$ is connected, and  finite étale  over the point $c(s)$. In particular, the fibre of $\restricts{\mathcal{X}}{c(S)} \to c(S)$  over $c(\eta)$ is étale and  integral. 
		Thus, for every $c $ in the finite index coset $ C'$ of $\Omega$, the scheme $\phi^{-1}(c)=\psi^{-1}(\eta_{c})$ is integral. We conclude that property (IF) holds for $(A, \Omega)$.
	\end{proof}

	\subsection{Main results}\label{section:final}

	We now show that  every (PB)-cover $Y \to A$ of an abelian variety over a finitely generated field of characteristic zero satisfies (IF), and that every (Ram)-cover $Y \to A$ satisfies (PF). 
	
	\begin{theorem}\label{thm:new_main}
		Let $\fgField$ be a finitely generated field of characteristic 0, let $A$ be an abelian variety over $\fgField$,  let $\Omega\subset A(\fgField)$ be a Zariski-dense subgroup, and let $(\pi_i:Y_i \to A)_{i=1}^n$ be a finite collection of (PB)-covers. Then, there is a finite index coset $C\subset \Omega$ such that, for every $c$ in $C$ and every $i=1,\ldots, n$, the scheme $Y_{i,c}$ is integral.
	\end{theorem}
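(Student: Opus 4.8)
The plan is to assemble Theorem~\ref{thm:new_main} from the single-cover, cyclic-subgroup, number-field case of Theorem~\ref{thm:CyclicCase} by unwinding the chain of reductions established in Sections~\ref{sect:FormalReductions}--\ref{sect:MainResults}. Throughout, recall that $A(k)$ is finitely generated (as $k$ is finitely generated of characteristic zero), so $\Omega$ is finitely generated and has some finite rank $r$.

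First I would reduce to a single cover. It suffices to prove the following statement ($\ast$): for every finitely generated field $k$ of characteristic zero, every abelian variety $A/k$, and every Zariski-dense subgroup $\Omega\subset A(k)$, property (PB) implies property (IF) for $(A,\Omega)$. Indeed, granting ($\ast$), Lemma~\ref{Lem:Many_covers}(1) applied to the finite collection $(\pi_i)$ (after discarding any $\pi_i$ of degree $1$, for which $Y_{i,c}\cong\Spec k$ is automatically integral) produces a single finite-index coset $C\subset\Omega$ that simultaneously makes every $Y_{i,c}$ integral, which is exactly the assertion of the theorem.

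To prove ($\ast$), I would first invoke Proposition~\ref{Lem:finitely_generated} to reduce to the case that $k$ is a \emph{number field}; then Lemma~\ref{Lem:Reduction_to_products} further reduces ($\ast$) over number fields to the following product statement: for every $r\ge 1$, every number field $F$, all abelian varieties $A_1,\dots,A_r$ over $F$, and all cyclic Zariski-dense subgroups $\Omega_i\subset A_i(F)$, property (PB) implies (IF) for $\bigl(\prod_i A_i,\prod_i\Omega_i\bigr)$. I would prove this last statement by induction on $r$, in the slightly strengthened form that includes an arbitrary finite base extension: for every $r$, every number field $F$, every such datum $(A_i,\Omega_i)_{i=1}^r$, and every finite extension $E/F$, (PB) implies (IF) for $\bigl(\prod_i (A_i)_E,\prod_i\Omega_i\bigr)$. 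The base case $r=1$ is Theorem~\ref{thm:CyclicCase} applied over the number field $E$ (noting that a cyclic Zariski-dense subgroup of $A_1(F)$ stays cyclic and Zariski-dense after base change to $E$). For the inductive step, set $A=\prod_{i<r}(A_i)_E$, $B=(A_r)_E$, $\Omega_A=\prod_{i<r}\Omega_i$, $\Omega_B=\Omega_r$; the inductive hypothesis (for any finite $L/F$, hence in particular for any finite $L/E$) supplies (PB) $\Rightarrow$ (IF) for $(A_L,\Omega_A)$, while Theorem~\ref{thm:CyclicCase} supplies it for $(B_L,\Omega_B)$, so Corollary~\ref{Cor:PBIFproducts} yields (PB) $\Rightarrow$ (IF) for $(A_E\times B_E,\Omega_A\times\Omega_B)=\bigl(\prod_i (A_i)_E,\prod_i\Omega_i\bigr)$, completing the induction and hence the proof.

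The genuine arithmetic content is entirely contained in Theorem~\ref{thm:CyclicCase} (the construction of the torsion point $\zeta$ via the geometric Chebotarev theorem in Lemma~\ref{Lem:Torsion_point_lemma}, the Kummer-theoretic input of Corollary~\ref{cor:KummerTheory}, and the place-finding of Proposition~\ref{prop:Kummer}), together with the product machinery of Theorem~\ref{thm3} repackaged for pairs in Corollary~\ref{Cor:PBIFproducts} and the specialization argument of Proposition~\ref{Lem:finitely_generated}. Thus at this stage there is no new obstacle of substance; the only point requiring care is to propagate the ``for every finite extension'' clause faithfully through each application of Corollary~\ref{Cor:PBIFproducts} and through Proposition~\ref{Lem:finitely_generated}, which works precisely because Theorem~\ref{thm:CyclicCase} is established uniformly over all number fields, so the hypotheses never degrade under base change.
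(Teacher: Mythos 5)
Your proposal is correct and follows the same chain of reductions as the paper's proof: Lemma~\ref{Lem:Many_covers}(1) to reduce to a single cover, Proposition~\ref{Lem:finitely_generated} to specialize to a number field, Lemma~\ref{Lem:Reduction_to_products} to reduce to products of pairs with cyclic $\Omega_i$, and then Theorem~\ref{thm:CyclicCase} plus Corollary~\ref{Cor:PBIFproducts} with an induction on the number of factors. You have also correctly identified the one subtlety — that the uniformity of Theorem~\ref{thm:CyclicCase} over all number fields is what allows the ``for every finite extension'' hypothesis of Corollary~\ref{Cor:PBIFproducts} to be fed through the induction — which the paper handles identically.
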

	\begin{proof} By Lemma \ref{Lem:Many_covers} (1), we may and do assume that $n=1$.
		By Proposition \ref{Lem:finitely_generated},    it suffices to prove that (PB) implies (IF) for all covers $Y \to A$ and all $\Omega \subseteq A(K)$, where $K$ is a number field,  $A$ is an abelian variety over $K$ and $\Omega$ is a (finitely generated) Zariski-dense  subgroup of $A(K)$.
		By Lemma \ref{Lem:Reduction_to_products} it then suffices to prove this statement for $(A,\Omega) = (\prod A_i, \prod \Omega_i)$, where each $\Omega_i$ is cyclic and Zariski-dense in $A_i$.  By Theorem \ref{thm:CyclicCase},  for every finite field extension $L/K$, we know that (PB) implies (IF) over $L$ for every pair $((A_i)_L,\Omega_i)$. Therefore, the statement follows from Corollary \ref{Cor:PBIFproducts} and an immediate induction.
	\end{proof}

	\begin{proof}[Proof of Theorem \ref{thm2}]  It suffices to prove the first statement. To do so, note that, 
		by Lemma \ref{Lem:Many_covers} (2), we may  assume that $n=1$.   The result then follows from Proposition \ref{prop:PBPFRamPF} and Theorem \ref{thm:new_main}.  
	\end{proof}

\begin{proof}[Proof of Theorem \ref{thm:new_main_intro}]	  This is   Theorem \ref{thm:new_main}, as (PB)-covers are precisely covers with no non-trivial \'etale subcovers (Lemma \ref{lemma:Zannier}).
\end{proof}

	\begin{remark}\label{Rmk:Ram_not_IF}
		Notice that (Ram) does not imply (IF) in general. Indeed, if $\phi : A \to A'$ is a nontrivial isogeny of abelian varieties such that all points in $\ker \phi$ are defined over $\TheFieldd$,   and $X \to A$ is any ramified cover, then the composition $\pi : X \to A'$ is ramified. However,  for every $P $ in the subgroup $\Omega :=\mathrm{Im}(A(k)\to A'(k))$, the fibre of $\pi$ over $P$ splits into at least $\#\ker \phi$ components.  
	\end{remark}

	\begin{remark}[Necessity of density]\label{remark:referee}
	In Theorems \ref{thm2} and \ref{thm:new_main_intro} it is necessary to assume that the subgroup $\Omega$ be Zariski-dense. Indeed, 
 let $E$ be an elliptic curve over a number field $K$ with $E(K)$ infinite. Let $\pi:C\to E$ be a ramified double cover with $C$ a smooth projective connected curve of genus at least two such that $0_E \in \pi(C(K))$. Let $\Omega = E(K) \times \{0\}$. Note that $\Omega$ is an infinite  (non-dense) subgroup of $A:=E\times E$. Define $Y:= E\times C$ and note that $Y\to A$ is a ramified (PB)-cover. Note that $\Omega \setminus \pi(Y(K))$ is empty. Thus, the group $\Omega$ does not contain a finite index coset (nor even a non-empty subset) $C$ such that $Y_c$ has no $K$-points. 
	\end{remark}

	\begin{proof}[Proof of Theorem \ref{thm:TrivialTangentBundle}]
		Since the weak-Hilbert property is a birational invariant among smooth proper varieties (Proposition \ref{prop:BirationalInvariance}), we may and do assume that $X$ has trivial tangent bundle. In particular, since $k$ is of characteristic zero, there is a  finite field extension $L/k$ such that $X_L$ is an abelian variety (see \cite[Corollary~2.3]{BrionSpherical}). Replacing $L$ by a finite field extension if necessary, by the potential density of rational points on abelian varieties (see \cite[\S3]{JAut}), we may assume that $X(L)$ is Zariski-dense in $X$. It then follows from Theorem \ref{thm2} that $X_L$ has the weak-Hilbert property over $L$. Finally, it is shown in \cite{Campana} that $X_{\overline{k}}$ is special. This concludes the proof.
	\end{proof}

	We record the following consequence of Theorem \ref{thm2} which, roughly speaking, says that abelian varieties also satisfy a version of  the Hilbert property with respect to the complex-analytic  topology.
	
	\begin{remark}[Zariski density versus complex-analytic density]\label{remark:analytic}
	Let $\fgField$ be a finitely generated field of characteristic 0, let $A$ be an abelian variety over $\fgField$, and let $(\pi_i:Y_i \to A)_{i=1}^n$ be a finite collection of ramified covers. 
	 Choose an embedding $k\to\mathbb{C}$ and assume   that $A(k)$ is dense in $A(\mathbb{C})$ with respect to the complex-analytic topology on $A(\mathbb{C})$. Then, since   $A(k)\setminus \cup_{i=1}^n \pi_i(Y_i(k))$ contains a finite index coset of $A(k)$ (by Theorem \ref{thm2}), the subset $A(k)\setminus \cup_{i=1}^n \pi_i(Y_i(k))$  is   complex-analytically dense in   $A(\mathbb{C})$.   (Indeed, a finite index coset $C$ of a  complex-analytically dense subgroup $\Omega$ of $A(\mathbb{C})$ is   complex-analytically dense.)  
	\end{remark}
	
	We conclude our paper by showing that our main results   concerning rational points on ramified covers of abelian varieties can be used to obtain similar conclusions for   rational points on    varieties which only admit a dominant generically finite rational map to an abelian variety.

	\begin{theorem} [Improving Theorem \ref{thm:new_main_intro}]\label{thm:birrrr}
		Let $\fgField$ be a finitely generated field of characteristic 0, let $A$ be an abelian variety over $\fgField$,  and let $\Omega\subset A(\fgField)$ be a Zariski-dense subgroup. For $i=1,\ldots, n$,  let    $Z_i$ be a normal projective variety and let $\pi_i:Z_i \dashrightarrow A$ be a dominant generically finite rational map  which does not (rationally) dominate any non-trivial \'etale cover of $A$. Then, there is a finite index coset $C\subset \Omega$ such that, for every $c$ in $C$ and every $i=1,\ldots, n$, the rational map $\pi_i$ is defined above $c$ and the $k$-scheme $Z_{i,c}$ is integral.  
	\end{theorem}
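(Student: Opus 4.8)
\emph{Proof plan.} The plan is to reduce Theorem \ref{thm:birrrr} to Theorem \ref{thm:new_main} by replacing each rational map $\pi_i$ with a genuine cover of $A$ having the same generic fibre, and then to carve out, inside $\Omega$, a finite-index coset lying away from the (finitely many) loci where this replacement fails to be an isomorphism on fibres. Fix $i$. I would first note that the hypothesis forces $Z_i$ to be geometrically integral over $k$: if $k'$ denotes the algebraic closure of $k$ in $k(Z_i)$ and $k'\neq k$, then, since $k$ has characteristic zero, $A_{k'}\to A$ is a non-trivial finite étale cover of $A$, and the inclusions $k(A)\hookrightarrow k(Z_i)\hookleftarrow k'$ induce a dominant rational map $Z_i\dashrightarrow A_{k'}$, contradicting the assumption that $\pi_i$ does not rationally dominate a non-trivial étale cover of $A$. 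Granting this, I would let $f_i\colon Y_i\to A$ be the normalization of $A$ in the finite field extension $k(A)\hookrightarrow k(Z_i)$; equivalently, $f_i$ is the finite part of the Stein factorization of $\widehat{\Gamma}_i\to A$, where $\widehat{\Gamma}_i$ is the normalization of the closure of the graph of $\pi_i$ in $Z_i\times_k A$ (this is where the projectivity of $Z_i$ enters). Then $Y_i$ is a normal, geometrically integral projective variety, $f_i$ is a cover of $A$ of degree $\deg\pi_i$, the schemes $Z_i$ and $Y_i$ are birational over $A$, and $\pi_i$ factors as $Z_i\dashrightarrow Y_i\xrightarrow{f_i}A$.

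Next I would check that each $f_i$ is either an isomorphism or a (PB)-cover. If $\deg f_i=1$, then $f_i$ is finite and birational onto the normal variety $A$, hence an isomorphism by Zariski's Main Theorem, and $Y_{i,c}=\Spec k$ is integral for every $c\in A(k)$. If $\deg f_i\geq 2$ and $f_i$ factored through a non-trivial étale subcover $A'\to A$, then the composite dominant rational map $Z_i\dashrightarrow Y_i\to A'$ would again contradict the hypothesis; so $f_i$ has no non-trivial étale subcover, and therefore $f_i$ is a (PB)-cover by Lemma \ref{lemma:Zannier}.

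Then I would apply Theorem \ref{thm:new_main} to $\Omega$ and the finite collection of those $f_i$ with $\deg f_i\geq 2$, obtaining a finite-index coset $C_0\subseteq\Omega$ over every point of which all $Y_{i,c}$ are integral (automatically so for the indices $i$ with $\deg f_i=1$). It then remains to arrange that $\pi_i$ is defined above $c$ and that $Z_{i,c}\cong Y_{i,c}$. Since $Z_i$ and $Y_i$ are birational over $A$, I would fix dense opens $U_i\subseteq Z_i$, $V_i\subseteq Y_i$ together with an isomorphism $U_i\xrightarrow{\sim}V_i$ commuting with the maps to $A$ (so that $\pi_i$ restricts to a morphism on $U_i$); because $f_i$ is finite, $G_i:=f_i(Y_i\setminus V_i)$ is a proper closed subset of $A$. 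Writing $C_0=\omega_0+\Omega_0$ with $\Omega_0\subseteq\Omega$ of finite index, Lemma \ref{Lem:AvoidingBranchLocus} applied to $\Omega_0$ and the proper closed subscheme $\bigcup_i(G_i-\omega_0)\subsetneq A$ produces a finite-index coset $C\subseteq C_0\subseteq\Omega$ disjoint from $\bigcup_i G_i$. For $c\in C$ the fibre $Y_{i,c}=f_i^{-1}(c)$ is disjoint from $Y_i\setminus V_i$, hence contained in $V_i\cong U_i\subseteq Z_i$; under this identification $\pi_i$ is a morphism on a Zariski-open neighbourhood of $Y_{i,c}$, so $\pi_i$ is defined above $c$ and $Z_{i,c}\cong Y_{i,c}$, which is integral. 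This yields the desired coset $C$.

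The arithmetic content is entirely packaged into Theorem \ref{thm:new_main}; the remaining work is birational-geometric bookkeeping, and the one genuinely delicate point is the comparison in the last step — namely, verifying that for $c$ outside the finitely many bad loci the scheme-theoretic fibre of $\pi_i$ is defined and agrees with $f_i^{-1}(c)$, and that the normalization $Y_i$ is geometrically integral (which combines the forced geometric integrality of $Z_i$ with the good behaviour of normalization under the geometrically normal base change $\overline{k}/k$, as already exploited elsewhere in the paper). I would close with the observation that, taking $\Omega=A(k)$ dense, the very same Stein-factorization argument shows that the ``rational maps'' and ``morphisms'' formulations of the weak-Hilbert property coincide.
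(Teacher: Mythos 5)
Your proposal is correct and follows essentially the same route as the paper's proof: pass to the normalization of the graph closure, take the Stein factorization to obtain a genuine finite cover $Y_i\to A$ with no non-trivial \'etale subcovers, apply Theorem \ref{thm:new_main_intro}/\ref{thm:new_main}, and use Lemma \ref{Lem:AvoidingBranchLocus} to avoid the locus in $A$ over which the birational comparison $Z_{i,c}\cong Y_{i,c}$ could fail. The only differences are cosmetic (you apply the avoidance lemma after the main theorem rather than before, and you make explicit the geometric integrality of $Z_i$, which is anyway forced by the no-\'etale-subcover hypothesis).
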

	\begin{proof} For $i=1,\ldots, n$, let $W_i$ be a normal projective variety over $\fgField$ and let 
	 $W_i\to Z_i$ be a proper birational surjective morphism such that the composed rational map $W_i\to Z_i\dashrightarrow A$   is a  (proper   surjective generically finite) morphism. Let $W_i\to Y_i\to A$ be the Stein factorization of $W_i\to A$, so that $Y_i$ is a normal projective variety, $Y_i\to A$ is a finite morphism and $W_i\to Y_i$ is a proper birational surjective morphism.  Since $W_i\to Z_i$ and $W_i\to Y_i$ are proper birational surjective morphisms, we can choose a closed subset $W_i^0\subset W_i$ such that $W_i\to Z_i$ and $W_i\to Y_i$ are isomorphisms away from $W_i^0$. Let $Z\subset A$ be the image of $\cup_{i=1}^n W_i^0$ in $A$.   By 	 Lemma \ref{Lem:AvoidingBranchLocus}, replacing $\Omega$ by a finite index coset if necessary and translating the origin of $A$, we may and do assume that $\Omega\cap Z=\emptyset$.
	 
	 Since  $Z_i \dashrightarrow A$ does not (rationally) dominate any non-trivial \'etale cover of $A$, it follows that $Y_i\to A$ has no non-trivial \'etale subcovers. In particular, by Theorem \ref{thm:new_main_intro}, there is a finite index coset  $C\subset \Omega$ such that, for every $c$ in $C$ and every $i=1,\ldots, n$, the $k$-scheme $Y_{i,c}$ is integral.  Since $C\cap Z=\emptyset$, it follows that $Z_{i,c} = W_{i,c} = Y_{i,c}$ is integral, as required.
	 \end{proof}

	\bibliography{biblio}{}
	\bibliographystyle{alpha}

\end{document}